\documentclass[12pt,a4paper,reqno,twoside]{amsart}

\usepackage[T1]{fontenc}
\usepackage[utf8]{inputenc}
\usepackage[scaled=.95,helvratio=.96,tighter,theoremfont]{newtxtext}
\usepackage{makecell}
\usepackage[12pt]{moresize}
\usepackage[margin=1.5in]{geometry}

\usepackage{amsmath,amsthm}
\usepackage{amssymb}
\usepackage{mathrsfs}
\usepackage{graphicx}
\usepackage{color,tikz,siunitx,xfrac,dsfont,bropd,etoolbox,xparse,pgffor,longtable,booktabs,makecell,array,placeins,xcolor,array,microtype,multirow,tabularx}
\usepackage[shortlabels]{enumitem}
\setlist{
    font=\upshape,
    itemsep=0.25\baselineskip,
    leftmargin=\parindent,
    parsep=0pt,
    topsep=0.25\baselineskip
}
\usetikzlibrary{arrows.meta,calc,decorations.pathreplacing,positioning}
\definecolor{freeviolet}{RGB}{0,0,255}
\definecolor{contactred}{RGB}{0,0,0}
\definecolor{phasegreen}{RGB}{255,255,255}

\newtheoremstyle{plain}{6.5pt}{6.5pt}{\itshape}{}{\bfseries}{.}{.5em}{}
\newtheoremstyle{definition}{6.5pt}{6.5pt}{\normalfont}{}{\bfseries}{.}{.5em}{}

\theoremstyle{plain}
\newtheorem{theorem}{Theorem}[section]
\newtheorem{lemma}[theorem]{Lemma}
\newtheorem{corollary}[theorem]{Corollary}
\newtheorem{proposition}[theorem]{Proposition}

\theoremstyle{definition}
\newtheorem{definition}[theorem]{Definition}
\newtheorem{example}[theorem]{Example}
\newtheorem{remark}[theorem]{Remark}

\theoremstyle{plain}
\newtheorem{innercustomgeneric}{\customgenericname}
\providecommand{\customgenericname}{}
\newcommand{\newcustomtheorem}[2]{%
    \newenvironment{#1}[1]
    {%
        \renewcommand\customgenericname{#2}%
        \renewcommand\theinnercustomgeneric{##1}%
        \innercustomgeneric
    }
    {\endinnercustomgeneric}
}
\newcustomtheorem{maintheorem}{Theorem}

\numberwithin{equation}{section}

\usepackage[pdfa,colorlinks,allcolors=blue]{hyperref}

\usepackage[pdfa, colorlinks, allcolors=blue]{hyperref}

\makeatletter

\let\keywords\relax
\let\msc\relax
\let\abstract\relax

\gdef\@keywords{}
\gdef\@msc{}

\newcommand{\keywords}[1]{%
    \gdef\@keywords{\textit{Keywords:} #1.}%
}

\newcommand{\msc}[2][]{%
    \gdef\@msc{%
        \textit{Mathematics Subject Classification 2020:} #2%
        \ifx&#1&\else\ (#1)\fi.%
    }%
}

\newcommand{\ems@titlefootnote}[1]{%
    \insert\footins{%
        \reset@font\footnotesize
        \interlinepenalty\interfootnotelinepenalty
        \splittopskip\footnotesep
        \splitmaxdepth\dp\strutbox
        \floatingpenalty\@MM
        \hsize\columnwidth
        \@parboxrestore
        \color@begingroup
            \noindent\strut #1\par
        \color@endgroup
    }%
}

\newbox\emsabstractbox
\newenvironment{abstract}{%
    \global\setbox\emsabstractbox\vbox\bgroup
        \normalcolor\small
        \noindent\strut\textbf{Abstract.}\enskip\ignorespaces
}{%
        \unskip\strut\par
    \egroup
}

\renewcommand{\maketitle}{%
    \thispagestyle{empty}
    \begingroup
        \vspace*{26pt}
        \centering
        {\Large\bfseries\boldmath \@title \par}
        \vspace{8mm}
        {\large \@author \par}
        \vspace{5mm}
    \endgroup
    \ifx\@msc\@empty
    \else
        \ems@titlefootnote{\@msc}%
    \fi
    \ifx\@keywords\@empty
    \else
        \ems@titlefootnote{\@keywords}%
    \fi
    \ifvoid\emsabstractbox
    \else
        \noindent\unvbox\emsabstractbox\par
    \fi
    \vspace{26pt}
}

\newcounter{new@auth}
\gdef\@author{}

\newcommand{\Author}[3]{%
    \stepcounter{new@auth}%
    \begingroup
        \def\givenname##1{\expandafter\gdef\csname auth@#1@first\endcsname{##1}}%
        \def\surname##1{\expandafter\gdef\csname auth@#1@last\endcsname{##1}}%
        \def\mrid##1{\expandafter\gdef\csname auth@#1@mrid\endcsname{##1}}%
        \def\zblid##1{\expandafter\gdef\csname auth@#1@zblid\endcsname{##1}}%
        \def\orcid##1{\expandafter\gdef\csname auth@#1@orcid\endcsname{##1}}%
        #2%
    \endgroup
    \ifnum\value{new@auth}=1
        \expandafter\gdef\expandafter\@author\expandafter{%
            \csname auth@#1@first\endcsname\ %
            \csname auth@#1@last\endcsname
        }%
    \else
        \expandafter\g@addto@macro\expandafter\@author\expandafter{%
            , \csname auth@#1@first\endcsname\ %
            \csname auth@#1@last\endcsname
        }%
    \fi
}

\def\new@setmeta#1#2#3{%
    \expandafter\gdef\csname affil@#1@#2\endcsname{#3}%
}

\def\new@makeaffilmacro#1{%
    \expandafter\def\csname #1\endcsname##1{%
        \@ifnextchar\bgroup
            {\csname ems@#1@two\endcsname{##1}}%
            {\csname ems@#1@one\endcsname{##1}}%
    }%
    \expandafter\def\csname ems@#1@two\endcsname##1##2{%
        \new@setmeta{\current@affil@id @##1}{#1}{##2}%
    }%
    \expandafter\def\csname ems@#1@one\endcsname##1{%
        \new@setmeta{\current@affil@id @1}{#1}{##1}%
    }%
}

\newcommand{\Allil}[2]{%
    \def\current@affil@id{#1}%
    \begingroup
        \new@makeaffilmacro{department}%
        \new@makeaffilmacro{organisation}%
        \new@makeaffilmacro{address}%
        \new@makeaffilmacro{post}%
        \new@makeaffilmacro{city}%
        \new@makeaffilmacro{country}%
        \new@makeaffilmacro{affemail}%
        #2%
    \endgroup
}

\def\ems@printid#1#2#3{%
    \ifcsdef{auth@\aid @#1}{%
        \edef\tempval{\csname auth@\aid @#1\endcsname}%
        \expandafter\ifblank\expandafter{\tempval}{}{%
            #2 \href{#3\tempval}{\tempval}\space
            \gdef\hasid{1}%
        }%
    }{}%
}

\AtEndDocument{%
    \par \addvspace{19.5pt}
    \begingroup
    \small \raggedright
    \ifnum\value{new@auth}>0
        \foreach \i in {1,...,\value{new@auth}} {%
            \def\aid{\i}%
            {\bfseries
                \csname auth@\aid @first\endcsname\ %
                \csname auth@\aid @last\endcsname
            }\par\nobreak
            \foreach \j in {1,2,3,4} {
                \def\affkey{\aid @\j}%
                \@ifundefined{affil@\affkey @organisation}{}{%
                    \@ifundefined{affil@\affkey @department}{}{%
                        \csname affil@\affkey @department\endcsname, %
                    }%
                    \csname affil@\affkey @organisation\endcsname\par
                    \@ifundefined{affil@\affkey @address}{}{%
                        \csname affil@\affkey @address\endcsname, %
                    }%
                    \@ifundefined{affil@\affkey @post}{}{%
                        \csname affil@\affkey @post\endcsname\ %
                    }%
                    \@ifundefined{affil@\affkey @city}{}{%
                        \csname affil@\affkey @city\endcsname, %
                    }%
                    \@ifundefined{affil@\affkey @country}{}{%
                        \csname affil@\affkey @country\endcsname
                    }%
                    \par
                    \ifcsdef{affil@\affkey @affemail}{%
                        \edef\tempemail{\csname affil@\affkey @affemail\endcsname}%
                        \expandafter\ifblank\expandafter{\tempemail}{}{%
                            \href{mailto:\tempemail}{\tempemail}\par
                        }%
                    }{}%
                }%
            }%
            \gdef\hasid{0}%
            \setbox0=\hbox{%
                \ems@printid{zblid}{zbMATH}{https://zbmath.org/authors/}%
                \ems@printid{mrid}{MR}{https://mathscinet.ams.org/mathscinet/MRAuthorID/}%
                \ems@printid{orcid}{ORCID}{https://orcid.org/}%
            }%
            \ifnum\hasid=1
                \noindent Author IDs:\space \unhbox0 \par
            \fi
            \vspace{13pt}%
        }%
    \fi
    \endgroup
}

\makeatother

\makeatletter
\renewcommand*\thebibliography[1]{%
    \section*{References}%
    \list{\@biblabel{\@arabic\c@enumiv}}{%
        \small
        \settowidth\labelwidth{\@biblabel{#1}}%
        \leftmargin \dimexpr\labelwidth+\labelsep\relax
        \itemsep 0pt
        \parsep 0pt
        \usecounter{enumiv}%
        \let\p@enumiv\@empty
        \renewcommand\theenumiv{\@arabic\c@enumiv}%
    }%
    \interlinepenalty \@M
    \emergencystretch 1em
    \sfcode`\.\@m
}
\makeatother

\newcommand*\arxiv[1]{%
    , arXiv:\href{https://arxiv.org/abs/#1}{#1}%
}

\newcommand{\dx}[1]{\mathop{}\!\mathrm{d}#1}
\newcommand{\dH}[1]{\mathop{}\!\mathrm{d}\mathcal{H}^{#1}}

\DeclareMathOperator{\dist}{dist}

\def\Xint#1{%
    \mathchoice
        {\XXint\displaystyle\textstyle{#1}}%
        {\XXint\textstyle\scriptstyle{#1}}%
        {\XXint\scriptstyle\scriptscriptstyle{#1}}%
        {\XXint\scriptscriptstyle\scriptscriptstyle{#1}}%
    \!\int
}

\def\XXint#1#2#3{%
    {%
        \setbox0=\hbox{$#1{#2#3}{\int}$}%
        \vcenter{\hbox{$#2#3$}}%
        \kern-.53\wd0
    }%
}

\def\barint{\Xint-}

\newcommand{\N}{\mathbb{N}}

\newcommand{\R}{\mathbb{R}}

\newcommand{\cC}{\ensuremath{\mathcal C}}

\newcommand{\cH}{\ensuremath{\mathcal H}}

\newcommand{\cJ}{\ensuremath{\mathcal J}}

\newcommand{\cL}{\ensuremath{\mathcal L}}

\newcommand{\cN}{\ensuremath{\mathcal N}}
\newcommand{\cO}{\ensuremath{\mathcal O}}

\newcommand{\cS}{\ensuremath{\mathcal S}}

\let\ge\geqslant
\let\le\leqslant

\newcommand{\mres}{\mathbin{\vrule height 1.6ex depth 0pt width 0.13ex\vrule height 0.13ex depth 0pt width 1.3ex}}
\newcommand{\msf}{\mathsf}

\graphicspath{{figures/}}

\begin{document}

\title{A Minimization Problem for a Cooperative System with a Degenerate Bernoulli Weight}

\Author{1}{
    \givenname{Lili}
    \surname{Du}
    \mrid{}
    \zblid{}
    \orcid{}
}{L.~Du}

\Author{2}{
    \givenname{Chunlei}
    \surname{Yang}
    \mrid{}
    \zblid{}
}{C.~Yang}

\Author{3}{
    \givenname{Jing}
    \surname{Yang}
    \mrid{}
    \zblid{}
    \orcid{}
}{J.~Yang}

\Allil{1}{
    \department{Department of Mathematics}
    \organisation{Sichuan University}
    \address{No. 24, South Section 1, Yihuan Road}
    \post{610000,}
    \city{Chengdu, Sichuan Province}
    \country{China}
    \affemail{dulili@scu.edu.cn}
}

\Allil{2}{
    \department{School of Mathematical Sciences}
    \organisation{Shenzhen University}
    \address{3688 Nanhai Avenue, Nanshan District}
    \post{518000,}
    \city{Shenzhen, Guangdong Province}
    \country{China}
    \affemail{yangchunlei@szu.edu.cn}
}

\Allil{3}{
    \department{Department of Mathematics}
    \organisation{Sichuan University}
    \address{No. 24, Wuhou District}
    \post{610000,}
    \city{Chengdu, Sichuan Province}
    \country{China}
    \affemail{yangjing6@stu.scu.edu.cn}
}

\msc[49J10]{35R35}
\keywords{Vectorial Bernoulli free boundary problems; Cooperative systems; Degenerate singularities; Blow-up analysis}

\begin{abstract}
  In this paper, we study local minimizers of the energy functional
  \[
  J(\mathbf{u})=\int_D\left(|\nabla \mathbf{u}|^2+Q^2(x)\chi_{\Omega_{\mathbf{u}}}\right)\,\dx{x},
  \]
  which give rise to a singular cooperative system. Here \(\mathbf{u}=(u_1,\dots,u_m): D\to\R_+^m\) is a vector-valued unknown function, \(\Omega_{\mathbf{u}}:=\{|\mathbf{u}|>0\}\) is the positive set, \(\chi_{\Omega_{\mathbf{u}}}\) is the characteristic function of the positive set \(\Omega_{\mathbf{u}}\), and \(Q(x)\) is the Bernoulli weight function. This problem was first introduced by Caffarelli, Shahgholian, and Yeressian in the poineer work ({\it Duke Math. J.} \textbf{167}(10), 2018), where the regularity theory for minimizers and for the free boundary \(\partial\Omega_{\mathbf{u}}\) was established for nondegenerate Bernoulli weights, i.e., \(Q(x)\ge Q_{\rm min}>0\). The present paper studies the same free boundary problem but for a degenerate Bernoulli weight. The main difficulty lies in the coupling between the nondegeneracy of \(Q(x)\) and the vector-valued nature of the problem.
  
  Our main results show that the free boundary set \(\partial\Omega_{\mathbf{u}}\cap\{Q(x)=0\}\) decomposes into a nondegenerate part and a degenerate part. Every nondegenerate point admits a nontrivial blow-up limit, while degenerate points have zero weighted density. Moreover, we further classify nondegenerate points into single-phase, multi-phase non-branching, and multi-phase branching points, and study the geometric structure of these types of free boundary points. In particular, the branching-point singular structure for multi-phase points is a purely vectorial phenomenon and appears to be new in this setting.
  
  Finally, as a byproduct, in the spirit of Naber--Valtorta ({\it Ann. Math.} \textbf{185}(1), 2017) and Edelen--Engelstein ({\it Trans. Amer. Math. Soc.} \textbf{371}(3), 2019), we prove that the nondegenerate set has locally finite \(\cH^{n-2}\)-measure and is countably \((n-2)\)-rectifiable.
\end{abstract}

\maketitle

\tableofcontents

\section*{Notation}

We first list some notational conventions that are frequently used in the paper. Throughout the paper, \(n\ge 2\), \(m\ge 1\) are fixed integers. 

\begin{itemize}
    \item \(\R^n\): Euclidean space; 
    \item \(\R_+^n\): The set \(\{x=(x_1,x_2,\dots,x_n)\in\R^n: x_i\ge 0,\ i=1,\dots,n\}\); 
    \item \(\mathbb S^{n-1}\): The unit \((n-1)\)-dimensional sphere; 
    \item \(x\cdot y\): \(\sum_{i=1}^n x_i y_i\),\quad \(x,y\in \R^n\); 
	\item \(\dist(x,E)\): \(\inf_{y\in E} |x-y|\),\quad \(x\in\R^n\),\quad \(E\subset \R^n\); 
	\item \(\Sigma\): A given \(C^{1,\mathrm{Dini}}\) closed embedded hypersurface; 
	\item \(T_x\Sigma\): The tangent space of \(\Sigma\) at \(x\in \Sigma\); 
	\item \(\msf d_\Sigma(x)\): \(\dist(x,\Sigma)\), the distance from \(x\) to \(\Sigma\); 
	\item \(\Pi_\Sigma(x)\): The set of nearest points to \(x\) on \(\Sigma\); 
    \item \(\pi_\Sigma(x)\): The set of  nearest points when \(\Pi_\Sigma(x)\) is a singleton; 
	\item \(B_r(x)\): \(\{y\in \R^n: |y-x|<r\}\); 
	\item \(\partial B_r(x)\): \(\{y\in \R^n: |y-x|=r\}\); 
	\item \(N_\epsilon(E)\): The \(\epsilon\)-neighborhood of \(E\subset\R^n\), \(\{x\in \R^n: \dist(x,E)<\epsilon\}\); 
	\item \(\cH^k\): The \(k\)-dimensional Hausdorff measure (on \(\R^n\)); 
	\item \(\chi_A\): The characteristic function of a set \(A\) (\(A\subset\R^n\)); 
    \item \(\# A\): The cardinality of a set \(A\); 
	\item \(f_+,f_-\): \(\max\{f,0\},\max\{-f,0\}\); 
	\item \(A\triangle B\): \((A\setminus B)\cup(B\setminus A)\). 
	\item \(\barint_A f\,\dx{\mu}\): Integral mean \(\frac{1}{\mu(A)}\int_A f\,\mathrm d\mu\); 
    \item \(\Omega_{\mathbf{u}}\), \(\Omega_0\), \(\Omega_{u_i}\): \(\{|\mathbf{u}|>0\}\), \(\{|\mathbf{u}_0|>0\}\), \(\{u_i>0\}\); 
    \item \(\partial\Omega_{\mathbf{u}}\), \(\partial\Omega_{u_i}\), \(\partial^*\Omega_{\mathbf{u}}\): \(\partial\{|\mathbf{u}|>0\}\), \(\partial\{u_i>0\}\), the reduced free boundary of \(\Omega_{\mathbf{u}}\); 
    \item \(\Gamma_\Sigma(\mathbf{u})\): \(\partial\{|\mathbf{u}|>0\}\cap\Sigma\cap D\); 
    \item \(\subset\subset\): Compactly contained. 
\end{itemize}

\section{Introduction and Main Results}

\subsection{Introduction to the variational problem}

Free boundary problems are a special type of boundary value problem, in which the domain where the PDE is to be satisfied is not known a priori and depends on the solution. The aim of this paper is to study local minimizers \(\mathbf{u}=(u_1,\dots,u_m)\) of the functional
\begin{equation}\label{eq.eng}
	J(\mathbf{u};D)=\int_D\bigl(|\nabla \mathbf{u}|^2+Q^2(x)\chi_{\Omega_{\mathbf{u}}}\bigr)\,\dx{x},
\end{equation}
over the admissible class
\[
    \begin{aligned}
        \mathbb{A}:={}\bigl\{\mathbf{u}\in W^{1,2}(D;\R^m):&\ \mathbf{u}-\mathbf{g}\in W_0^{1,2}(D;\R^m),\\ 
        &u_i\ge 0 \text{ a.e.\ in }D,\ i=1,\dots,m\bigr\},
    \end{aligned}
\]
where \(D\subset\R^n\) is a bounded open set and \(\mathbf{g}=(g_1,\dots,g_m)\in W^{1,2}(D;\R^m)\) is the prescribed boundary datum, with \(g_i\ge0\) a.e. in \(D\) for every \(i=1,\dots,m\). The real-valued function \(Q(x):D\to\R_+\) in \eqref{eq.eng} is usually referred to as the \textit{Bernoulli weight function}. The main theoretical challenges associated with the above minimization problem concern the regularity theory of minimizers and of free boundaries. Suppose that the Bernoulli weight function \(Q(x)\) is H\"{o}lder continuous and satisfies the following non-degeneracy condition
\begin{equation}\label{eq.ndc}
	Q(x) \ge Q_{\mathrm{min}}>0\qquad\text{ in }D,
\end{equation}
for some positive constant \(Q_{\mathrm{min}}>0\). For each \(x_0\in\partial\Omega_{\mathbf{u}}\cap D\), the local NTA construction in \cite[Theorem 5]{CSY18} provides an NTA domain \footnote{See Definitions 6 and 7 on p. 1859 of \cite{CSY18} for the precise definition for NTA domain. Classical examples are Lipschitz domains.} (nontangentially accessible domain) \(G\) and radii \(0<\rho<R\), with \(B_R(x_0)\subset\subset D\), such that
\[
    \Omega_{\mathbf{u}}\cap B_\rho(x_0)
    \subset G
    \subset\Omega_{\mathbf{u}}\cap B_R(x_0).
\]
A suitable component \(u_{i_0}\) is positive in \(G\). The boundary Harnack principle gives local H\"older extensions of the ratios \(u_j/u_{i_0}\) to the free boundary near \(x_0\). Consequently, the effective scalar Bernoulli weight
\[
   Q(x)\frac{u_{i_0}}{|\mathbf{u}|}
    =\frac{Q(x)}{
        \sqrt{1+\sum_{j\ne i_0}(u_j/u_{i_0})^2}}
    \qquad\text{in }G,
\]
extends as a positive H\"older continuous function to the nearby free boundary. The scalar reduction developed in \cite{CSY18}, together with the scalar regularity theory, then gives $C^{1,\alpha}$ regularity of the regular part of the free boundary. Higher regularity requires corresponding additional regularity of the Bernoulli coefficient.

The regularity results for vector-valued cooperative free boundary problems established in \cite{CSY18} represent a foundational contribution to the field. Following this work, a systematic regularity theory for cooperative systems has been further developed in \cite{DST20,BFS24,SV26a}, among many others. In particular, Mazzoleni, Terracini and Velichkov \cite{MTV20} removed the sign constraint \(u_i\ge 0\) on components. Under the choice of \(Q(x) \equiv \Lambda > 0\) for some positive constant \(\Lambda\) and the non-degeneracy condition \eqref{eq.ndc}, they identified a class of singular points with weighted density one, and decomposed the free boundary in arbitrary dimensions into a smooth regular part, a scalar-type one-phase singular set, and a density-one singular set. This development is analogous to the extension from one-phase to two-phase problems in scalar Bernoulli-type free boundary problems. In the two-dimensional case, Spolaor and Velichkov \cite{SV19} had earlier obtained a fine description of the free boundary by means of an epiperimetric inequality. The epiperimetric approach and the fine two-dimensional analysis developed in \cite{SV19} provide an alternative idea and technical inspiration for further investigation of the regularity and structure of singular points in vector-valued free boundary problems.

In parallel to the minimization problem formulation, cooperative systems also arise naturally in spectral shape optimization \cite{BM93}, where they correspond to Bernoulli quasi-minimizers. The regularity of solutions and their free boundaries can be derived entirely from this perspective; we refer the reader to \cite{MTV17,KL18,KL19} for core developments along this path. Additionally, there is a line of research that operates under the non-degeneracy condition \eqref{eq.ndc} but imposes alternative constraints in place of the componentwise nonnegativity constraint; representative works in this direction include \cite{DS24,FV25}.

However, we emphasize that the results discussed above for vector-valued free boundary problems rely crucially on the non-degeneracy condition \eqref{eq.ndc}. The nondegenerate theory applies locally wherever the Bernoulli weight function \(Q(x)\) is bounded below by a positive constant. Our focus is the behavior at contact points where this uniform lower bound fails. To this end, we consider the following Bernoulli weight function 
\begin{equation}\label{eq.Q}
    Q(x)=[\dist(x,\Sigma)]^\gamma:=\msf d_\Sigma^\gamma(x),\quad x\in D,
\end{equation}
where \(\gamma>0\) is a given constant and \(\Sigma\subset\R^n\) is a given closed embedded hypersurface of class \(C^{1,\rm Dini}\). By construction, it follows immediately that \(Q(x_0)=0\) whenever \(x_0\in\partial\Omega_{\mathbf{u}}\cap\Sigma\), the intersection of the free boundary and the given hypersurface. We formally define

\begin{definition}[Contact point  and contact set, cf. Figure \ref{fig:contact_set}]\label{def.contact}
    Let \(x_0\in \partial \Omega_{\mathbf{u}}\cap D\). We say that \(x_0\) is a \emph{contact point} if \(x_0\in \partial\Omega_{\mathbf{u}}\cap\Sigma\); otherwise, we refer to \(x_0\) as a \emph{non-contact point}. The set of all contact points is called the \emph{contact set}, denoted by
    \[
        \Gamma_\Sigma(\mathbf{u}):=\partial\Omega_{\mathbf{u}}\cap\Sigma\cap D.
    \]
    We focus on the case \(\Gamma_\Sigma(\mathbf{u})\ne\varnothing\). If \(\Gamma_\Sigma(\mathbf{u})=\varnothing\), the nondegenerate theory applies locally along the interior free boundary; this does not exclude its possible singular points.
\end{definition}

\begin{figure}[!htbp]
    \centering
    \includegraphics[width=\textwidth]{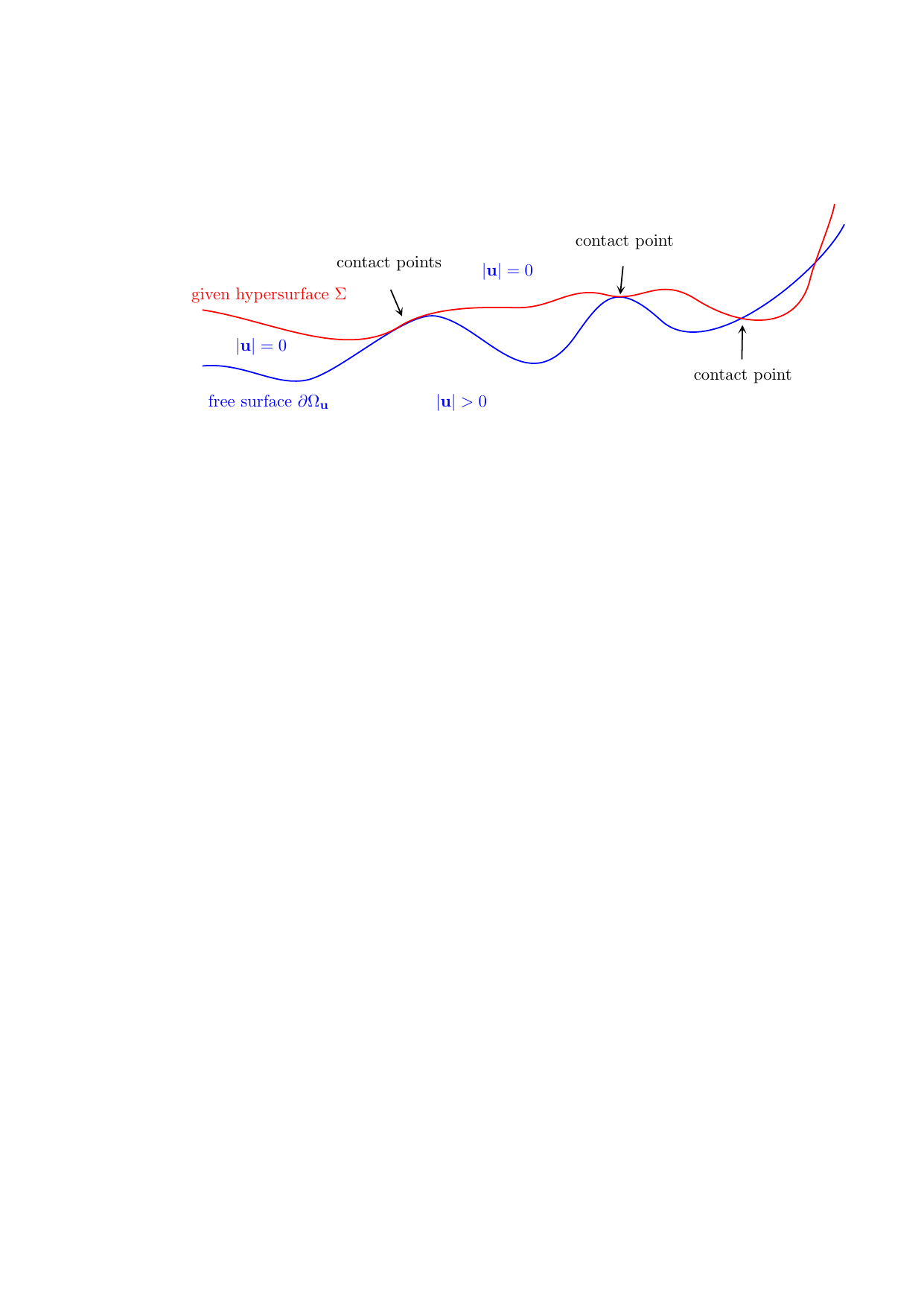}
    \caption{An illustration of the contact set \(\Gamma_\Sigma(\mathbf{u})\) where the free boundary \(\partial\Omega_{\mathbf{u}}\) intersects the hypersurface \(\Sigma\).}
    \label{fig:contact_set}
\end{figure}

The choice of the weight function in \eqref{eq.Q} is also motivated by some physical considerations. In the scalar two-dimensional case (\(m=1\) and \(n=2\)), we set \(\Sigma=\{x_2=0\}\), a flat smooth hypersurface and choose \(\gamma=1/2\). This choice is closely related to the famous Stokes conjecture in fluid mechanics. Note that under this choice, we obtain \(\msf d_\Sigma(x)=|x_2|\), and the energy functional \eqref{eq.eng} reduces to
\begin{equation}\label{eq.vw}
    J(u;D)=\int_D\left( |\nabla u|^2+|x_2|\chi_{\Omega_u} \right)\,\dx{x}.
\end{equation}
The contact set and contact points for \eqref{eq.vw} are thoroughly studied in \cite{VW11} under the physical consideration \(u\equiv 0\) in \(\{x_2\le 0\}\). We note that critical points (in particular, local minimizers) of \eqref{eq.vw} describe the motion of a two-dimensional traveling water wave under gravity. Correspondingly, contact points correspond to stagnation points of the wave. A well-known conjecture concerning stagnation points was formulated by G. Stokes in 1880 \cite{Sto80}, which states that the interface between water and air forms a \(120^\circ\) opening angle at the stagnation point. The Stokes conjecture and its variants have been extensively studied over the past 50 years \cite{AFT82,VW11,VW12,VW14,McC24,DPY25}, and serve as a canonical example of the interplay between mathematics and physics. In this paper, we investigate the contact set in the more general vectorial setting with \(m\ge 2\) and \(n\ge 2\).

Let us remark that the class of weights \(Q(x)=\msf d_\Sigma^\gamma(x)\) defined in \eqref{eq.Q} contains both flat and curved degeneracy geometries. We list some examples of \(\Sigma\), the corresponding Bernoulli weight function \(Q(x)\) and the corresponding contact sets below.

\begin{example}
    \leavevmode
    \begin{enumerate}
        \item 
        {\it Hyperplanes.}\quad Let \(\Sigma=\{x\in\R^n:x\cdot\nu=0\}\) be a hyperplane with unit normal \(\nu\in\mathbb S^{n-1}\). Then \(\Sigma\) is a \(C^\infty\), hence \(C^{1,\rm Dini}\), embedded hypersurface. The distance function reads \(\msf d_\Sigma(x)=|x\cdot\nu|\), so \(Q(x)=|x\cdot\nu|^\gamma\). In particular, taking \(\nu=e_n\) yields \(\Sigma=\{x_n=0\}\) and \(Q(x)=|x_n|^\gamma\). This geometry is closely tied to stagnation point analysis for gravity water waves with \(m=1,n=2\) and \(\gamma=1/2\). In this example, the contact set is \(\Gamma_\Sigma(\mathbf{u})=\partial\Omega_{\mathbf{u}}\cap D\cap\{x\cdot\nu=0\}\).
        \item {\it Ellipsoids.}\quad Let \(a_1,\dots,a_n>0\), and set  
        \[
            \Sigma:=\left\{
                x\in\R^n:
                \sum_{i=1}^n \frac{x_i^2}{a_i^2}=1
            \right\}.
        \]
        Then \(\Sigma\) is a compact \(C^\infty\) embedded hypersurface, hence in particular \(C^{1,\rm Dini}\). For any point \(x\in D\) with \(\sum_{i=1}^n(x_i^2/a_i^2)\ge 1\), a direct computation yields 
        \[
            \msf d_\Sigma(x)=\sqrt{\sum_{i=1}^n\frac{\lambda^2(x)x_i^2}{(a_i^2+\lambda(x))^2}},
        \]
        where \(\lambda(x)\ge 0\) is the unique solution to \(\sum_{i=1}^n(a_i^2x_i^2)/(a_i^2+\lambda)^2=1\). The Bernoulli weight function \(Q(x)=\msf d_\Sigma^\gamma(x)\) follows immediately, and the contact set is given by \(\Gamma_\Sigma(\mathbf{u})=\partial\Omega_{\mathbf{u}}\cap D\cap\bigl\{\sum_{i=1}^nx_i^2/a_i^2=1\bigr\}\).
        \item {\it Paraboloids.}\quad Write \(x=(x',x_n)\in\R^{n-1}\times\R\) and define
        \[
            \Sigma=\left\{
                (x',x_n)\in\R^n:
                x_n= \frac12 |x'|^2
            \right\}.
        \]
        As a closed \(C^\infty\) embedded hypersurface, \(\Sigma\) is automatically \(C^{1,\rm Dini}\). For any \(x=(x',x_n)\in\R^n\), direct computation yields
        \[
            \msf d_\Sigma(x)=\sqrt{\min_{y'\in\R^{n-1}}\left\lbrace
                    |x'-y'|^2+\left(x_n-\frac12|y'|^2\right)^2
                \right\rbrace},
        \]
        which automatically gives the weight function \(Q(x)=\msf d_\Sigma^\gamma(x)\). The contact set is \(\Gamma_\Sigma(\mathbf{u})=\partial\Omega_{\mathbf{u}}\cap D\cap\{x_n=\tfrac 12 |x'|^2\}\).
        \item {\it A standard torus in \(\R^3\).}\quad Let \(R>a>0\) and define
        \[
            \Sigma=\left\{
                x=(x_1,x_2,x_3)\in\R^3:
                \left(\sqrt{x_1^2+x_2^2}-R\right)^2+x_3^2=a^2
            \right\}.
        \]
        This is a compact, connected \(C^\infty\) embedded hypersurface in \(\R^3\), and hence \(C^{1,\rm Dini}\). The distance function is given by 
        \[
            \msf d_\Sigma(x):=\left|
                \sqrt{
                    \left( 
                        \sqrt{x_1^2+x_2^2}-R
                    \right)^2+x_3^2
                }
                -a
            \right|,
        \]
        which immediately gives the weight function \(Q(x)=\msf d_\Sigma^\gamma(x)\) and the contact set \(\Gamma_\Sigma(\mathbf{u})=\partial\Omega_{\mathbf{u}}\cap D\cap\{(\sqrt{x_1^2+x_2^2}-R)^2+x_3^2-a^2=0\}\).
    \end{enumerate}
\end{example}

\subsection{Main results}

Before presenting our main results, we first fix some basic definitions and notation used throughout the paper.

\begin{definition}[Absolute and local minimizers]
    A vector-valued function \(\mathbf{u}\in\mathbb A\) is called an \textit{absolute minimizer} of the energy \(J\) in \eqref{eq.eng} if 
    \[
        J(\mathbf{u};D)\le J(\mathbf{v};D)\quad\text{ for every }\mathbf{v}\in\mathbb A.
    \]
    We say that \(\mathbf{u}\in\mathbb A\) is an \textit{\(\varepsilon_0\)-local minimizer} of \(J\) if there exists \(\varepsilon_0>0\) such that \(J(\mathbf{u};D)\le J(\mathbf{v};D)\) for every \(\mathbf{v}\in\mathbb A\) satisfying 
        \begin{equation}\label{eq.local-min}
        \|\nabla (\mathbf{u}-\mathbf{v})\|_{L^2(D)}^2+\|\chi_{\Omega_{\mathbf{u}}}-\chi_{\Omega_{\mathbf{v}}}\|_{L^1(D)}<\varepsilon_0.
        \end{equation} 
\end{definition}

\begin{remark}
    In \cite{CSY18}, \(\varepsilon_0\)-local minimizers are defined via the following metric \(d\) on the Sobolev space \(W^{1,2}(D;\R^m)\) 
    \[
        d(\mathbf{u},\mathbf{v}):=\|\mathbf{u}-\mathbf{v}\|_{W^{1,2}(D;\R^m)}+\|\chi_{\Omega_{\mathbf{u}}}-\chi_{\Omega_{\mathbf{v}}}\|_{L^1(D)}.
    \]
    Note that since \(\mathbf{u}-\mathbf{v}\in W_0^{1,2}(D;\R^m)\) for every \(\mathbf{u},\mathbf{v}\in \mathbb A\), these two definitions are equivalent after possibly changing the threshold \(\varepsilon_0\), by Poincaré's inequality. Our formulation \eqref{eq.local-min} is adopted only for technical convenience, to simplify subsequent computations.
\end{remark}

\begin{remark}
    The existence of absolute minimizers is standard, following from the direct method of the calculus of variations. We refer the reader to \cite[Theorem 1]{CSY18} for a full proof.
\end{remark}

Now let \(\mathbf{u}\) be an \(\varepsilon_0\)-local minimizer of \(J\). We define the \textit{weighted density} of the positive set \(\Omega_{\mathbf{u}}\) at a contact point \(x_0\in \Gamma_\Sigma(\mathbf{u})\) by 
\begin{equation}\label{eq.weighted-density}
    \begin{aligned}
        \Theta_\Sigma(\mathbf{u};x_0,r)
        :=&{}
        r^{-n-2\gamma}
        \int_{B_r(x_0)}
        Q^2(x)\chi_{\Omega_{\mathbf{u}}}\,\dx{x}\\
        =&{}
        r^{-n-2\gamma}
        \int_{B_r(x_0)}
        \msf d_\Sigma^{2\gamma}(x)\chi_{\Omega_{\mathbf{u}}}\,\dx{x}.
    \end{aligned}
\end{equation}

Our main results fall into four parts, centered on the geometry of contact points and multi-phase contact points on the free boundary. First, we establish the density dichotomy and global geometric characterization of the contact set (cf. Theorem \ref{thm:main}). Second, we provide a classification of branching structures and local topological properties for multi-phase contact points (cf. Theorem \ref{thm:structure-ND} and Theorem \ref{thm:structure-mp-br}). Third, we prove the existence, homogeneity and vectorial structure of blow-up limits at contact points (cf. Theorem \ref{thm:blow-up-ND}), and obtain the full classification of limit solutions in two dimensions (cf. Theorem \ref{thm:planar-blow-up-classification}). 
\begin{table}[htbp!]
    \centering
    \footnotesize
    \setlength{\tabcolsep}{4pt}
    \renewcommand{\arraystretch}{1.25}
    \caption{Classification of contact points.}
    \label{tab:contact-classification}
   \begin{tabular}{|p{0.25\textwidth}|p{0.33\textwidth}|p{0.36\textwidth}|}
        \hline
        \multicolumn{3}{|c|}{\textbf{Contact set \(\Gamma_\Sigma(\mathbf{u})\)}} \\
        \hline\hline
\multirow{2}{*}{\makecell[l]{\textbf{Degenerate} contact point \\ \(x_0\in\Gamma_{\Sigma,\rm D}(\mathbf{u})\), \\ i.e. \(\Theta_\Sigma=0\)}}
& \multicolumn{2}{l|}{\textbf{Non-degenerate} contact point \(x_0\in\Gamma_{\Sigma,\rm ND}(\mathbf{u})\), i.e. \(\Theta_\Sigma>0\)} \\
\cline{2-3}
      &\makecell[l]{\textbf{Single-phase }point \\ \(x_0\in\Gamma_{\rm sp}(\mathbf{u})\), i.e. \(\#\,\mathcal{I}_{\mathbf{u}}(x_0)=1\)} 
      &\makecell[l]{\textbf{Multi-phase }point\\ \(x_0\in\Gamma_{\rm mp}(\mathbf{u})\), i.e. \(\#\,\mathcal{I}_{\mathbf{u}}(x_0)\geq 2\)} \\
      \hline
 & \multirow{2}{*}{\makecell[l]{Singular profile is reduced to \\ \protect\cite{VW11,McC24}}}
        & \textit{Non-branching} \(\Gamma_{\rm mp}^{\rm nb}\): the geometric structure and the blow-up limits are studied. \\
        \cline{3-3}
        & & \textit{Branching} \(\Gamma_{\rm mp}^{\rm br}\): the geometric structure and the blow-up limits are studied. \\
        \hline
    \end{tabular}
\end{table}
As by-products of the above analysis, we also derive Hausdorff measure estimates for the set of contact points (cf. Theorem \ref{thm:measure-GammaND}), and rectifiability results for the free boundary near non-degenerate contact points (cf. Theorem \ref{thm:measure-GammaND}). The results can also be summarized into Table \ref{tab:contact-classification}.

\begin{maintheorem}{I}[Structure of the contact points]\label{thm:main}
    Let \(\mathbf{u}\) be an \(\varepsilon_0\)-local minimizer of \(J\) in \(\mathbb A\). For every \(x_0\in \Gamma_\Sigma(\mathbf{u})\), the limit \(\Theta_\Sigma(\mathbf{u};x_0,0^+):=\lim_{r\to 0^+}\Theta_\Sigma(\mathbf{u};x_0,r)\) exists, and there are two positive constants \(\theta_1(n,\gamma)\), \(\theta_2(n,\gamma)\) depending on \(n\) and \(\gamma\) such that 
    \begin{equation}\label{eq.density-gap}
        \Theta_\Sigma(\mathbf{u};x_0,0^+)\in\{0\}\cup[\theta_1(n,\gamma),\theta_2(n,\gamma)].
    \end{equation}
    In particular, the contact set \(\Gamma_\Sigma(\mathbf{u})\) can be decomposed into two disjoint sets
    \[
        \Gamma_\Sigma(\mathbf{u}) = \Gamma_{\Sigma,\rm ND}(\mathbf{u})\cup \Gamma_{\Sigma,\rm D}(\mathbf{u}),
    \]
    where \(\Gamma_{\Sigma,\rm ND}(\mathbf{u})\) and \(\Gamma_{\Sigma,\rm D}(\mathbf{u})\) are defined by 
    \begin{equation}\label{eq:GammaND-definition-density}
        \Gamma_{\Sigma,\rm ND}(\mathbf{u})
        :=\left\{ 
            x_0\in \Gamma_\Sigma(\mathbf{u}):\Theta_\Sigma(\mathbf{u};x_0,0^+)>0
        \right\},
    \end{equation}
    and 
    \begin{equation}\label{eq:GammaD-definition-density}
        \Gamma_{\Sigma,\rm D}(\mathbf{u})
        :=
        \left\{
            x_0\in\Gamma_\Sigma(\mathbf{u}):
            \Theta_\Sigma(\mathbf{u};x_0,0^+)=0
        \right\}.
    \end{equation}
\end{maintheorem}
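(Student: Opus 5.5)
The plan is to follow the Weiss-monotonicity route used for degenerate Bernoulli problems (Varvaruca--Weiss), adapted to the vectorial energy and to a curved \(C^{1,\rm Dini}\) hypersurface \(\Sigma\).

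\emph{Step 1: preliminary regularity.} At a contact point \(x_0\in\Gamma_\Sigma(\mathbf{u})\) we have \(Q\lesssim r^\gamma\) on \(B_r(x_0)\). Comparing \(\mathbf{u}\) with its componentwise harmonic replacement in \(B_r(x_0)\) then gives the growth bound \(\sup_{B_r(x_0)}|\mathbf{u}|\le Cr^{1+\gamma}\). I would prove this by the standard Campanato/iteration argument: each \(u_i\) is subharmonic, and the harmonic-replacement energy defect is at most \(\int_{B_r}Q^2\le Cr^{n+2\gamma}\). The bound is uniform for contact points in a compact subset of \(D\). For small \(r\), competitors with small support change are admissible in the \(\varepsilon_0\)-local class, so local minimality suffices.

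\emph{Step 2: monotonicity formula.} Define the Weiss-type functional
\[
W(r)=r^{-n-2\gamma}\int_{B_r(x_0)}\bigl(|\nabla\mathbf{u}|^2+Q^2\chi_{\Omega_{\mathbf{u}}}\bigr)\,\dx{x}-(1+\gamma)r^{-n-1-2\gamma}\int_{\partial B_r(x_0)}|\mathbf{u}|^2\,\dH{n-1}.
\]
Domain variations (inner variations) are legitimate for local minimizers. They give
\[
W'(r)\ge 2r^{-n-2\gamma}\int_{\partial B_r}\bigl|\partial_\nu\mathbf{u}-(1+\gamma)\mathbf{u}/r\bigr|^2-\omega(r)/r,
\]
with a Dini modulus \(\omega\). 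The error comes from \(x\cdot\nabla(Q^2)\ne 2\gamma Q^2\) when \(\Sigma\) is curved: in normal coordinates near \(x_0\), \(\msf d_\Sigma(x)=|(x-x_0)\cdot\nu|(1+O(\omega_\Sigma(|x-x_0|)))\). Combined with Step 1, this yields \(|W'|\)-type error bounds that are integrable. Hence \(W(r)+\int_0^r\omega(s)/s\,ds\) is monotone and \(W(0^+)\) exists. I expect this step to be the main obstacle: the Dini error control for the distance function to a curved surface must be checked carefully against the degenerate scaling \(r^{-n-2\gamma}\).

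\emph{Step 3: blow-ups and density.} Set \(\mathbf{u}_r(x)=\mathbf{u}(x_0+rx)/r^{1+\gamma}\). By Step 1 these are bounded in \(W^{1,2}_{\rm loc}\cap C^{0,\alpha}_{\rm loc}\). Standard arguments (strong \(W^{1,2}\) convergence via minimality, and \(L^1_{\rm loc}\) convergence of \(\chi_{\Omega_{\mathbf{u}_r}}\) using the uniform growth) show that every limit \(\mathbf{u}_0\) is a \((1+\gamma)\)-homogeneous minimizer of the frozen energy with weight \(|x\cdot\nu|^{2\gamma}\). Integrating by parts on a homogeneous limit gives \(\int_{B_1}|\nabla\mathbf{u}_0|^2=(1+\gamma)\int_{\partial B_1}|\mathbf{u}_0|^2\), so \(W(0^+)=\int_{B_1}|x\cdot\nu|^{2\gamma}\chi_{\{|\mathbf{u}_0|>0\}}\). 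Using the growth bound again, the difference between \(\Theta_\Sigma(\mathbf{u};x_0,r)\) and the weighted density of \(\mathbf{u}_r\) tends to zero, and \(W(r)-\Theta_\Sigma(r)\to 0\) along the blow-up sequence. Since \(W(0^+)\) is independent of the sequence, \(\Theta_\Sigma(\mathbf{u};x_0,0^+)\) exists and equals it.

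\emph{Step 4: the gap.} The upper bound is immediate: \(\theta_2=\int_{B_1}|x_n|^{2\gamma}\,\dx{x}\). For the lower bound, suppose \(\Theta_\Sigma(0^+)>0\). Then \(\mathbf{u}_0\not\equiv 0\), since otherwise the density of \(\Omega_{\mathbf{u}_0}\) would vanish. I would also use nondegeneracy: if \(\mathbf{u}_0\equiv 0\) but \(\chi\) does not converge to \(0\), this contradicts the standard lemma that \(\sup_{B_r}|\mathbf{u}|\) small forces \(\mathbf{u}\equiv 0\) on \(B_{r/2}\) in the region where \(Q\gtrsim r^\gamma\). So \(\mathbf{u}_0\) is a nontrivial homogeneous minimizer, and \(\Omega_{\mathbf{u}_0}\) is a nonempty open cone. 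Some component \(u_{0,i}\) is positive and homogeneous of degree \(1+\gamma\).

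To get a uniform lower bound, I would apply the density estimate for nontrivial minimizers at points \(y\) on \(\partial\Omega_{\mathbf{u}_0}\cap\partial B_{1/2}\) off \(\{x\cdot\nu=0\}\), where the weight is nondegenerate. There the scalar/vector positive-density estimate gives \(|\Omega_{\mathbf{u}_0}\cap B_\rho(y)|\ge c\rho^n\), for \(\rho\) comparable to \(|y\cdot\nu|\). A compactness argument over the class of homogeneous minimizers with \(W\le\theta_2\) then yields \(\theta_1(n,\gamma)>0\). Concretely, a sequence of nontrivial homogeneous minimizers with densities tending to \(0\) would converge to one with positive-measure set empty; with normalization \(\|\mathbf{u}\|_{L^2(\partial B_1)}=1\) this contradicts nondegeneracy.
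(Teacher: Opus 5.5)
Your Steps 1--3 follow the paper's route: growth bound, Weiss functional with a Dini-integrable remainder, homogeneous blow-ups, and \(\Theta_\Sigma(\mathbf{u};x_0,0^+)=W(0^+)\). The gap is in Step 4, which carries the actual content of the density gap, and as written it does not close.

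The frozen weight \(\msf d_0^{2\gamma}=|x\cdot\nu|^{2\gamma}\) vanishes on \(T\). So the only way a nontrivial homogeneous \(\mathbf{u}_0\) can have small weighted density is for its positivity cone to concentrate in a thin band around \(T\). Both tools you invoke degenerate exactly there.

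\begin{itemize}
\item The density estimate at \(y\in\partial\Omega_{\mathbf{u}_0}\) is only available at scales \(\rho\lesssim|y\cdot\nu|\), so it contributes \(\gtrsim|y\cdot\nu|^{n+2\gamma}\). You have no uniform lower bound on \(|y\cdot\nu|\), and such a bound is essentially what has to be proved.
\item Nondegeneracy carries the factor \((\msf d_0(x)-sr)_+^\gamma\), as in \eqref{eq:boundary-nondegeneracy}. It therefore says nothing about a cone contained in a slab \(\{|x\cdot\nu|<\epsilon|x|\}\).
\end{itemize}

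In your compactness argument, the unnormalized limit is \(\mathbf{v}\equiv0\). That contradicts nothing, since no lower bound on the amplitude of the \(\mathbf{v}_k\) is available. With the normalization \(\|\mathbf{u}\|_{L^2(\partial B_1)}=1\) you leave the class: \(h^{-1/2}\mathbf{v}\) minimizes the functional with coefficient \(h^{-1}\msf d_0^{2\gamma}\). Hence the Lipschitz compactness and the lower semicontinuity of the positivity sets are lost precisely when \(h\to0\). A lower bound on \(h\) is essentially equivalent to the gap; the paper derives its height bound \(c_H\) from \(\theta_1\), not the other way round.

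The missing idea is a quantitative non-concentration property of the cone, coming from the fixed homogeneity. Take a component \(\mathcal C\) of \(\Omega_0\) and set \(\mathcal S=\mathcal C\cap\mathbb S^{n-1}\). A positive component \(u_{0,i}=r^{1+\gamma}\varphi_i\) makes \(\varphi_i\in H^1_0(\mathcal S)\) a positive Dirichlet eigenfunction with eigenvalue \((1+\gamma)(n+\gamma-1)\). Sobolev plus H\"older on \(\mathbb S^{n-1}\), a Faber--Krahn type bound, then give \(\cH^{n-1}(\mathcal S)\ge\mu_0(n,\gamma)>0\). Removing a band \(\{|\theta\cdot\nu|<\delta_0\}\) of measure at most \(\mu_0/2\) yields \(\theta_1=\delta_0^{2\gamma}\mu_0/(2(n+2\gamma))\) directly, with no compactness and no nondegeneracy.

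If you prefer to keep your normalized picture, an equally effective substitute exists. For \((1+\gamma)\)-homogeneous maps with nonnegative subharmonic components, the Poisson kernel on \(B_{1/2}\) plus homogeneity give \(\sup_{B_1}|\mathbf{w}|\le C(n,\gamma)\|\mathbf{w}\|_{L^2(\partial B_1)}\). Combined with \(\|\mathbf{w}\|_{L^2(B_1)}^2=(n+2+2\gamma)^{-1}\|\mathbf{w}\|_{L^2(\partial B_1)}^2\), this gives \(|\Omega_0\cap B_1|\ge c(n,\gamma)\). The same band removal then finishes.

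A smaller point in Step 2: the relative expansion \(\msf d_\Sigma(x)=|(x-x_0)\cdot\nu|(1+O(\omega_\Sigma))\) is false near \(\Sigma\), since the left side vanishes on \(\Sigma\) and the right side need not. What holds, and suffices, is the additive bound \(|\nabla\msf d_\Sigma(x)\cdot(\pi_\Sigma(x)-x_0)|\le 2r\,\omega_\Sigma(2r)\). Combine it with \(\int_{B_r(x_0)}\msf d_\Sigma^{2\gamma-1}\,\dx{x}\le Cr^{n+2\gamma-1}\), which is essential when \(\gamma<1/2\).
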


Based on the conclusions of Theorem \ref{thm:main}, we formally define the notion of non-degenerate contact points and degenerate contact points.

\begin{definition}[Non-degenerate and degenerate contact points]
    We say that \(x_0\in \Gamma_\Sigma(\mathbf{u})\) is a non-degenerate contact point if \(x_0\in \Gamma_{\Sigma,\rm ND}(\mathbf{u})\), a degenerate contact point if \(x_0\in \Gamma_{\Sigma,\rm D}(\mathbf{u})\). 
\end{definition}

\begin{remark}
    If \(x_0\in \Gamma_{\Sigma,{\rm ND}}(\mathbf{u})\) is a non-degenerate contact point, then every blow-up limit \(\mathbf{u}_0\) is nontrivial (\(\mathbf{u}_0 \not\equiv 0\)); by contrast, if \(x_0\in \Gamma_{\Sigma,{\rm D}}(\mathbf{u})\) is a degenerate contact point, then all blow-up limits satisfy \(\mathbf{u}_0 \equiv 0\). The main challenge in studying degenerate contact points arises precisely from the vanishing of the blow-up limits.
\end{remark}

\begin{remark}
    \eqref{eq.density-gap} is often referred to as the density-gap property for free boundary points. The gap separates contact points with vanishing weighted density from those with uniformly positive weighted density.
\end{remark}

We first focus on the non-degenerate contact set \(\Gamma_{\Sigma,\rm ND}(\mathbf{u})\) defined in \eqref{eq:GammaND-definition-density}. Let \(x_0\in\Gamma_{\Sigma,\rm ND}(\mathbf{u})\) be a non-degenerate contact point. We denote by \(\mathscr C_{\mathbf{u}}(x_0;r)\) the collection of all connected components that have \(x_0\) as a boundary point, i.e.,
\begin{equation}\label{eq:touching-components}
	\mathscr C_{\mathbf{u}}(x_0;r)
    :=
    \left\{
        \mathcal O:
        \begin{array}{l}
        \mathcal O\text{ is a connected component of }
        \Omega_\mathbf{u}\cap B_r(x_0)\\
        \text{ with }x_0\in\partial\mathcal O
        \end{array}
    \right\}.
\end{equation}
We further assume that for every \(x_0\in \Gamma_{\Sigma,\rm ND}(\mathbf{u})\), there exists \(R_{x_0}>0\) such that
\begin{equation}\label{eq:touching-branch-assumption}
    \mathscr C_{\mathbf{u}}(x_0;r)\ne\varnothing\quad\text{ for every }0<r<R_{x_0}.
\end{equation}

For any fixed \(\mathcal O\in\mathscr C_{\mathbf{u}}(x_0;r)\), we define its {\it active index} set by
\begin{equation}\label{eq:active-set-touching-component}
    I_{\mathbf{u}}(\mathcal O)
    :=
    \{i\in\{1,\dots,m\}: u_i>0 \text{ in }\mathcal O\}.
\end{equation}
Moreover, we define the {\it local active set} by the union of the active index sets of all connected components in \(\mathscr C_{\mathbf{u}}(x_0;r)\), i.e.,
\begin{equation}\label{eq:local-active-index-set}
    \mathcal I_{\mathbf{u}}(x_0,r)
    :=
    \bigcup_{\mathcal O\in \mathscr C_{\mathbf{u}}(x_0;r)} I_{\mathbf{u}}(\mathcal O).
\end{equation}
Since each component \(u_i\) is harmonic and nonnegative in every connected component of \(\Omega_\mathbf{u}\), we prove (see Appendix \ref{appendix:local-active-stabilization}) that there exist a radius \(r_{x_0}>0\) and a nonempty set \(\mathcal I_{\mathbf{u}}(x_0)\subset\{1,\dots,m\}\) such that
\[
    \mathcal I_{\mathbf{u}}(x_0,r)=\mathcal I_{\mathbf{u}}(x_0)\quad\text{ for every }0<r<r_{x_0}.
\]
A visual illustration of the notations \(\mathscr{C}_{\mathbf{u}}\), \(I\) and \(\mathcal{I}_{\mathbf{u}}\) can be found in Figure \ref{fig:four-configurations} (A). We are now able to define the single- and multi-phase points in the non-degenerate contact set \(\Gamma_{\Sigma,\rm ND}(\mathbf{u})\).
\begin{definition}[Single- and multi-phase points]\label{def:single-phase-ND}
    Let \(x_0\in\Gamma_{\Sigma,\rm ND}(\mathbf{u})\). We say that \(x_0\) is a \emph{single-phase point}  (cf. Figure \ref{fig:four-configurations} (B))  if \(\#\,\mathcal I_{\mathbf{u}}(x_0)=1\), and we denote the set of single-phase points by \(\Gamma_{\rm sp}(\mathbf{u})\). Otherwise, we call \(x_0\) a \emph{multi-phase point} and denote the set of such points by \(\Gamma_{\rm mp}(\mathbf{u})\).  
\end{definition}

\begin{remark}
    Obviously, \(\Gamma_{\Sigma,\rm ND}(\mathbf{u})=\Gamma_{\rm sp}(\mathbf{u})\cup\Gamma_{\rm mp}(\mathbf{u})\) and \(\Gamma_{\rm sp}(\mathbf{u})\cap\Gamma_{\rm mp}(\mathbf{u})=\varnothing\). 
\end{remark}

If \(\mathcal I_{\mathbf{u}}(x_0)=\{i_0\}\), then, for every \(0<r<r_{x_0}\),
\[
    \mathbf{u}=e_{i_0}u_{i_0}
    \qquad\text{on }
    \bigcup_{\mathcal O\in\mathscr C_{\mathbf{u}}(x_0;r)}\mathcal O,
\]
where \(e_{i_0}\) denotes the \(i_0\)-th standard basis vector. 
Thus each touching branch carries only the component \(u_{i_0}\). Let us emphasize that this identity does not imply a scalar reduction throughout a neighborhood containing additional, non-touching components. Scalar degenerate Bernoulli problems are studied in \cite{VW11,McC24}; here we focus on contact points with at least two active indices. For distinct \(i,j\in\mathcal I_{\mathbf{u}}(x_0)\), define the \emph{separation set} by
\[
    \triangle_{i,j}(x_0,r)
    :=
    \left(
        \partial\Omega_{u_i}
        \mathbin{\triangle}
        \partial\Omega_{u_j}
    \right)
    \cap B_r(x_0),
\]
where \(\partial\Omega_{u_i}:=\partial\{u_i>0\}\) and \(A\triangle B:=(A\setminus B)\cup(B\setminus A)\) denotes the symmetric difference of two sets \(A\) and \(B\). Note that if \(\triangle_{i,j}(x_0,r)=\varnothing\), then \(\partial\Omega_{u_i}\cap B_r(x_0)=\partial\Omega_{u_j}\cap B_r(x_0)\). On the other hand, if \(\triangle_{i,j}(x_0,r)\neq\varnothing\) for every \(0<r<r_{x_0}\), then the two component free boundaries \(\partial\Omega_{u_i}\) and \(\partial\Omega_{u_j}\) differ in every neighborhood of \(x_0\). We then define the coincidence set as the complement of the separation set relative to \((\partial\Omega_{u_i}\cup\partial\Omega_{u_j})\cap B_r(x_0)\), i.e.,
\[
    \mathcal K_{i,j}(x_0,r) := \left(\partial\Omega_{u_i}\cap \partial\Omega_{u_j}\right)\cap B_r(x_0).
\]
Thus
\[
    \left(
        \partial\Omega_{u_i}
        \cup
        \partial\Omega_{u_j}
    \right)
    \cap B_r(x_0)
    =
    \triangle_{i,j}(x_0,r)
    \,\cup\,
    \mathcal K_{i,j}(x_0,r).
\]

\begin{figure}[htbp]
	\centering
	
	\begin{minipage}[t]{0.47\textwidth}
		\centering
		
		\textbf{(A) Definitions of $ \mathscr C$, $I$  and $\mathcal I$.}
		
		\vspace{2mm}
		
		\includegraphics[
		width=\linewidth,
		height=4.2cm,
		keepaspectratio
		]{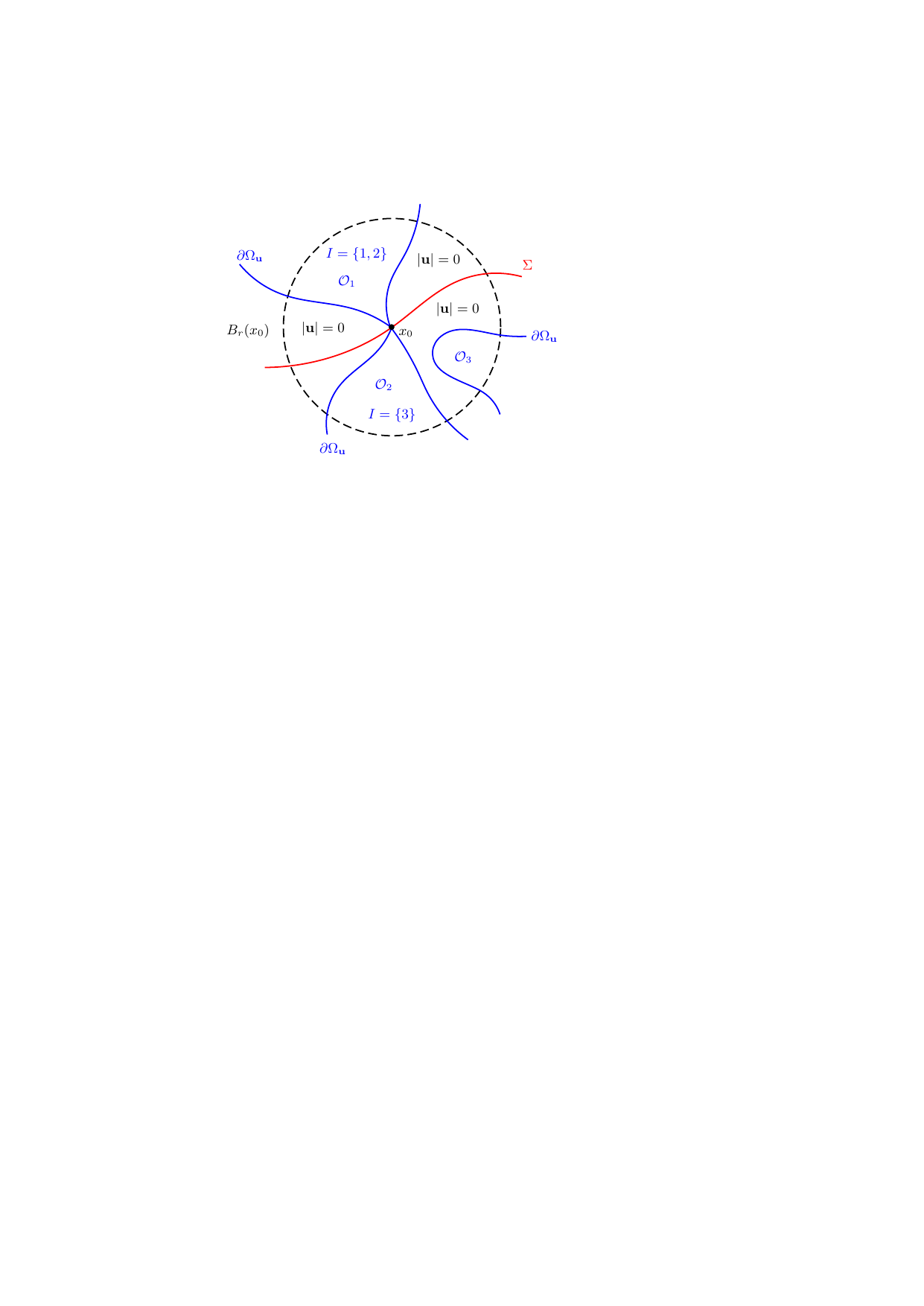}
		
		\vspace{0.5mm}
		
		\begin{flushleft}
			\small
			\[
			\begin{aligned}
				& \mathscr  C_{\mathbf u}(x_0;r)
				=\{\mathcal O_1,\mathcal O_2\},\\
			&\mathcal I_{\mathbf u}(x_0;r)
				=\{1,2\}\cup\{3\}=\{1,2,3\},\\
				&\mathcal O_3\notin \mathscr C_{\mathbf u}(x_0;r).
			\end{aligned}
			\]
		\end{flushleft}
	\end{minipage}
	\hfill
	\begin{minipage}[t]{0.47\textwidth}
		\centering
		
		\textbf{(B) Single-phase point.}
		
		\vspace{3mm}
		
		\includegraphics[
		width=\linewidth,
		height=4.0cm,
		keepaspectratio
		]{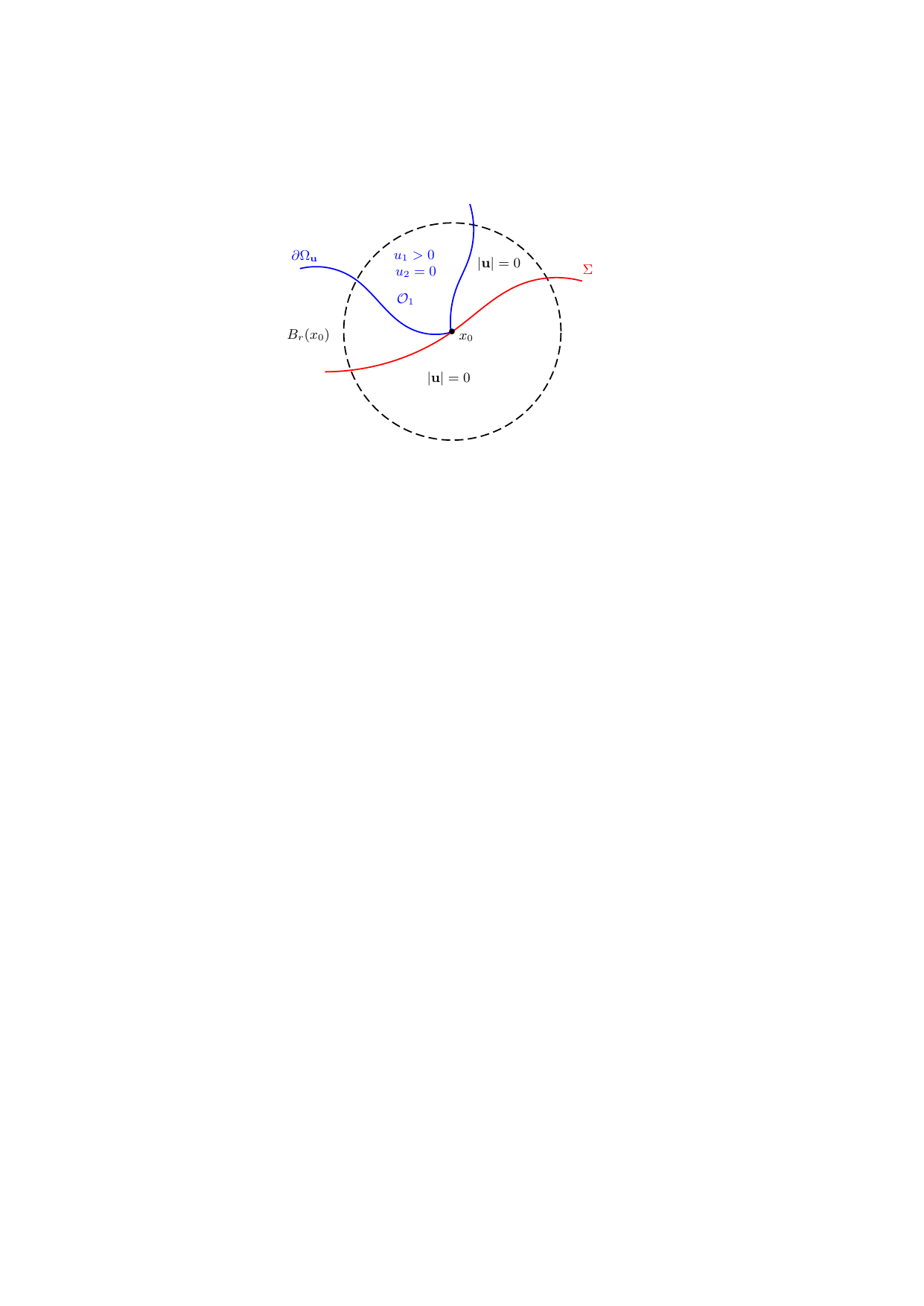}
		
		\vspace{3mm}
		
		\begin{flushleft}
			\small
			\[
			\mathcal I_{\mathbf u}(x_0;r)=\{1\}.
			\]
		\end{flushleft}
	\end{minipage}
	
	\vspace{5mm}
	
	\begin{minipage}[t]{0.47\textwidth}
		\centering
		
		\textbf{(C) Multi-phase non-branching point.}
		
		\vspace{2mm}
		
		\includegraphics[
		width=\linewidth,
		height=4cm,
		keepaspectratio
		]{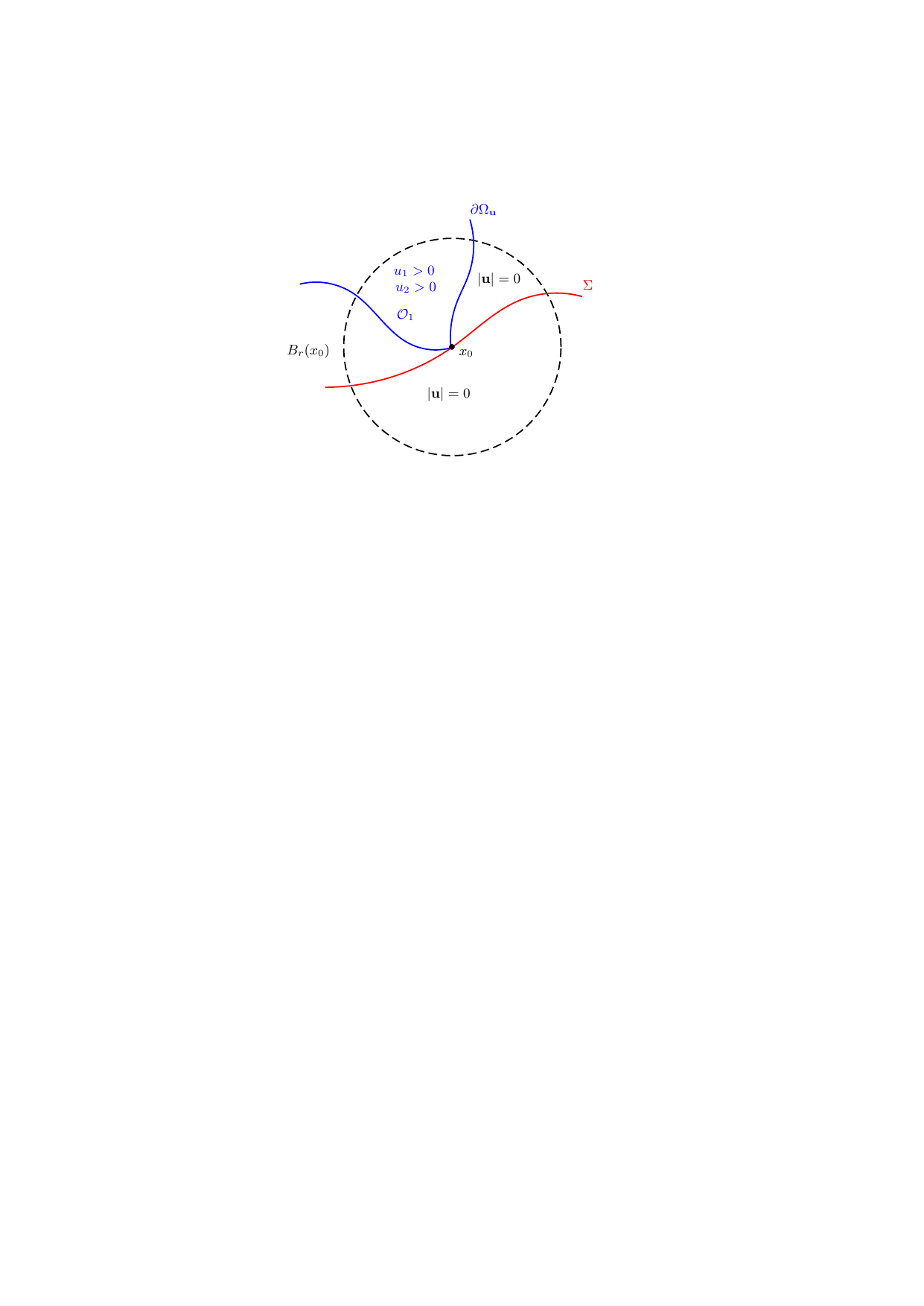}
		
		\vspace{2mm}
		
		\begin{flushleft}
			\small
			\[
			\begin{aligned}
				&\mathcal I_{\mathbf u}(x_0;r)=\{1,2\},\\
				&\Delta_{1,2}(x_0;r)=\varnothing.
			\end{aligned}
			\]
		\end{flushleft}
	\end{minipage}
	\hfill
	\begin{minipage}[t]{0.47\textwidth}
		\centering
		
		\textbf{(D) Multi-phase branching point.}
		
		\vspace{7mm}
		
		\includegraphics[
		width=\linewidth,
		height=4.0cm,
		keepaspectratio
		]{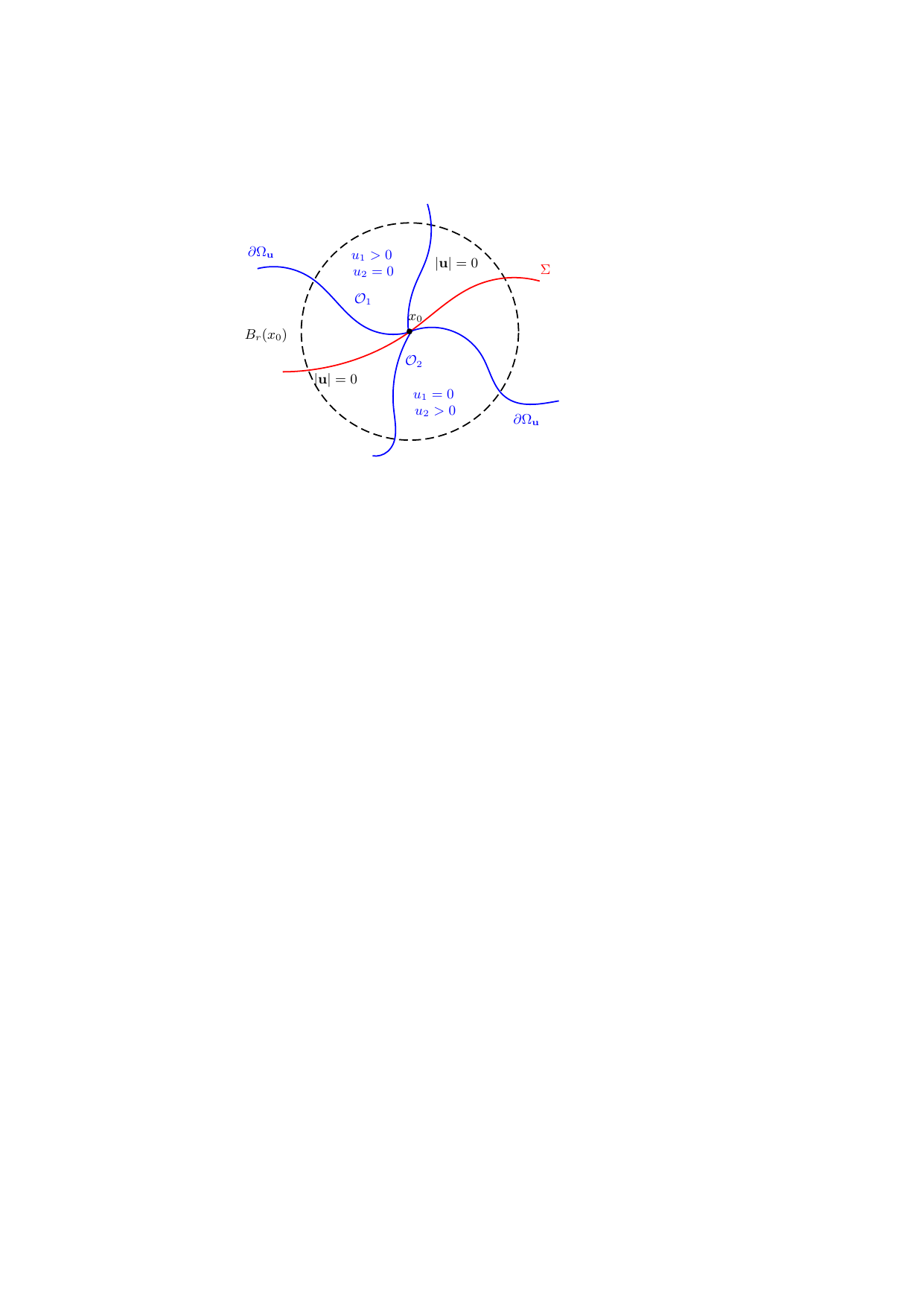}
		
		\vspace{2mm}
		
		\begin{flushleft}
			\small
			\[
			\begin{aligned}
			&	\mathcal I_{\mathbf u}(x_0;r)=\{1,2\},\\
			&	\Delta_{1,2}(x_0;r)\neq\varnothing,\\
			&\text{Phases separate topologically.}
			\end{aligned}
			\]
		\end{flushleft}
	\end{minipage}
	
	\caption{Local configurations near the point $x_0$.}
	\label{fig:four-configurations}
\end{figure}

We then classify the multi-phase points into two types: the \emph{multi-phase non-branching points} \(\Gamma_{\rm mp}^{\rm nb}(\mathbf{u})\) and the \emph{multi-phase branching points} \(\Gamma_{\rm mp}^{\rm br}(\mathbf{u})\) (cf. Figure \ref{fig:four-configurations} (C) and (D)).

\begin{maintheorem}{II\,(a)}[Structure of multi-phase nondegenerate contact points]
\label{thm:structure-ND}
    Let \(\mathbf{u}\) be an \(\varepsilon_0\)-local minimizer of \(J\) in \(\mathbb{A}\), and assume \eqref{eq:touching-branch-assumption}. Then the set of multi-phase contact points admits the disjoint decomposition
    \[
        \Gamma_{\rm mp}(\mathbf{u}) = 
        \Gamma_{\rm mp}^{\rm nb}(\mathbf{u}) 
        \cup 
        \Gamma_{\rm mp}^{\rm br}(\mathbf{u}).
    \]
    Here, the set of multi-phase non-branching points is defined by
    \begin{equation}\label{eq.multi-nb}
        \begin{aligned}
            \Gamma_{\rm mp}^{\rm nb}(\mathbf{u}) &:= \Bigl\{ x_0\in \Gamma_{\Sigma,{\rm ND}}(\mathbf{u}): \#\,(\mathcal I_{\mathbf{u}}(x_0))\ge 2 \text{ and there exists } 0<r<r_{x_0} \\
            &\qquad\qquad\text{such that } \triangle_{i,j}(x_0,r) = \varnothing \text{ for all distinct } i,j\in \mathcal I_{\mathbf{u}}(x_0) \Bigr\},
        \end{aligned}
    \end{equation}
    and the set of branching multi-phase points is defined by
    \begin{equation}\label{eq.multi-br}
        \begin{aligned}
            \Gamma_{\rm mp}^{\rm br}(\mathbf{u}) &:= \Bigl\{ x_0\in \Gamma_{\Sigma,{\rm ND}}(\mathbf{u}): \#\,(\mathcal I_{\mathbf{u}}(x_0))\ge 2 \text{ and there exist distinct } i,j\in \mathcal I_{\mathbf{u}}(x_0) \\
            &\qquad\qquad\text{such that } \triangle_{i,j}(x_0,r) \neq \varnothing \text{ for every } 0<r<r_{x_0} \Bigr\}.
        \end{aligned}
    \end{equation}
    Moreover, let \(x_0\in\Gamma_{\rm mp}^{\rm nb}(\mathbf{u})\), and suppose that there exists \(0<\bar r<r_{x_0}\) with \(B_{\bar r}(x_0)\subset\subset D\) for which the non-branching condition in \eqref{eq.multi-nb} holds. Fix any index \(i_0\in\mathcal I_{\mathbf{u}}(x_0)\), and take any point \(z\in \bigl(\partial\Omega_{u_{i_0}} \cap B_{\bar r}(x_0)\bigr) \setminus \Sigma\). Then there exists \(\rho_z>0\) such that \(B_{\rho_z}(z)\subset\subset B_{\bar r}(x_0) \cap (D\setminus\Sigma)\) and
    \[
        \partial\Omega_{\mathbf{u}} \cap B_{\rho_z}(z) = \partial\Omega_{u_i} \cap B_{\rho_z}(z) \quad\text{ for every } i\in \mathcal I_{\mathbf{u}}(x_0).
    \]
\end{maintheorem}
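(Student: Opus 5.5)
The decomposition part is essentially logical. By Appendix \ref{appendix:local-active-stabilization}, every \(x_0\in\Gamma_{\rm mp}(\mathbf{u})\) has a radius \(r_{x_0}\) and an index set \(\mathcal I_{\mathbf{u}}(x_0)\) with \(\#\,\mathcal I_{\mathbf{u}}(x_0)\ge2\). The separation sets are monotone in \(r\): \(\triangle_{i,j}(x_0,r)\subset\triangle_{i,j}(x_0,r')\) for \(r<r'\). Hence either some radius \(r\in(0,r_{x_0})\) makes every \(\triangle_{i,j}(x_0,r)\) empty, or for every radius some pair has \(\triangle_{i,j}(x_0,r)\ne\varnothing\).

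In the second case we still need a single pair that works for all \(r\). Since there are only finitely many pairs \((i,j)\), some pair occurs for a sequence \(r_k\to0\). Monotonicity then gives \(\triangle_{i,j}(x_0,r)\ne\varnothing\) for every \(r<r_{x_0}\). This is exactly \eqref{eq.multi-br}. The two alternatives are negations of each other, so the union is disjoint and exhausts \(\Gamma_{\rm mp}(\mathbf{u})\).

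For the local statement, fix \(x_0\in\Gamma^{\rm nb}_{\rm mp}(\mathbf{u})\), a radius \(\bar r\) as in the statement, and \(z\in(\partial\Omega_{u_{i_0}}\cap B_{\bar r}(x_0))\setminus\Sigma\). Since \(\Sigma\) is closed, we can pick \(\rho_z\) with \(B_{\rho_z}(z)\subset\subset B_{\bar r}(x_0)\cap(D\setminus\Sigma)\). The non-branching condition gives
\[
\partial\Omega_{u_i}\cap B_{\bar r}(x_0)=\partial\Omega_{u_{i_0}}\cap B_{\bar r}(x_0)\qquad\text{for all } i\in\mathcal I_{\mathbf{u}}(x_0),
\]
so it remains to show \(\partial\Omega_{\mathbf{u}}\cap B_{\rho_z}(z)=\partial\Omega_{u_{i_0}}\cap B_{\rho_z}(z)\) after possibly shrinking \(\rho_z\).

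The inclusion \(\partial\Omega_{u_{i_0}}\subset\partial\Omega_{\mathbf u}\) near \(z\) relies on the fact that in \(D\setminus\Sigma\) the weight satisfies \(Q\ge c>0\) on \(B_{\rho_z}(z)\). The nondegenerate theory of \cite{CSY18} then applies locally: \(\mathbf u\) is Lipschitz, it is nondegenerate, and each \(u_i\) is harmonic and positive on every connected component of \(\Omega_{\mathbf u}\) where it does not vanish identically. Suppose \(y\in\partial\Omega_{u_{i_0}}\cap B_{\rho_z}(z)\) lies in \(\Omega_{\mathbf u}\). Then \(y\) belongs to some component \(\mathcal O\) of \(\Omega_{\mathbf{u}}\). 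Because \(u_{i_0}\ge0\) is harmonic in \(\mathcal O\), the strong maximum principle forces either \(u_{i_0}>0\) in \(\mathcal O\) or \(u_{i_0}\equiv0\) in \(\mathcal O\). In both cases \(y\notin\partial\{u_{i_0}>0\}\), which is a contradiction. Hence \(\partial\Omega_{u_{i_0}}\cap B_{\rho_z}(z)\subset\partial\Omega_{\mathbf u}\).

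The converse inclusion is the main obstacle. Near \(z\), \(\partial\Omega_{\mathbf u}\) may also contain boundary points of components on which only indices outside \(\mathcal I_{\mathbf{u}}(x_0)\) are active. The natural tool is the NTA structure: the free boundary near \(z\) is locally the boundary of a single NTA domain \(G\) on which one fixed component is positive (\cite[Theorem 5]{CSY18}). By \cite[Theorem 5]{CSY18}, \(G\) satisfies \(\Omega_{\mathbf u}\cap B_\rho(z)\subset G\subset\Omega_{\mathbf u}\), so every component of \(\Omega_{\mathbf u}\cap B_\rho(z)\) lies in the connected set \(G\). Since \(u_{i_0}>0\) near \(z\) on the side of \(G\), the maximum principle on \(G\) gives \(u_{i_0}>0\) in all of \(G\), hence on all of \(\Omega_{\mathbf u}\cap B_\rho(z)\). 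It follows that \(\Omega_{\mathbf u}\cap B_\rho(z)=\Omega_{u_{i_0}}\cap B_\rho(z)\), so their boundaries coincide in \(B_\rho(z)\). We then take \(\rho_z\le\rho\).

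I expect the care to lie in justifying that \(G\) is connected and meets \(\{u_{i_0}>0\}\). The second point holds because \(z\in\partial\Omega_{u_{i_0}}\) is approached by points of \(\Omega_{u_{i_0}}\subset\Omega_{\mathbf u}\cap B_\rho(z)\subset G\).
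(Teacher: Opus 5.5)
Your proposal is correct and follows essentially the same route as the paper: the decomposition comes from monotonicity of \(\triangle_{i,j}(x_0,r)\) in \(r\) together with finiteness of the index pairs, and the local identity comes from the connected NTA set \(G\) of \cite[Theorem 5]{CSY18} combined with the strong maximum principle, which is the paper's local scalar reduction lemma away from \(\Sigma\). The only differences are cosmetic. You apply the reduction once to \(u_{i_0}\) and transfer it to the other active indices through \(\triangle_{i,j}(x_0,\bar r)=\varnothing\), whereas the paper applies it to each active index and takes the smallest radius. Your separate argument for \(\partial\Omega_{u_{i_0}}\subset\partial\Omega_{\mathbf{u}}\) is redundant, because \(\Omega_{\mathbf{u}}\cap B_\rho(z)=\Omega_{u_{i_0}}\cap B_\rho(z)\) already gives both inclusions.
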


\begin{remark}
    Let \(x_0\in\Gamma_{\rm mp}^{\rm nb}(\mathbf{u})\). Then at every point of the common active free boundary lying away from \(\Sigma\), all active component free boundaries agree locally with \(\partial\Omega_{\mathbf{u}}\). However, this does not imply the global identity
    \[
        \partial\Omega_{\mathbf{u}}
        \cap
        \bigl(B_{\bar r}(x_0)\setminus\Sigma\bigr)
        =
        \partial\Omega_{u_i}
        \cap
        \bigl(B_{\bar r}(x_0)\setminus\Sigma\bigr)
        \qquad
        \text{for every }i\in\mathcal I_{\mathbf{u}}(x_0).
    \]
    Indeed, indices \(j\notin\mathcal I_{\mathbf{u}}(x_0)\) may have detached component free boundaries inside \(B_{\bar r}(x_0)\setminus\Sigma\), and these may contribute to \(\partial\Omega_{\mathbf{u}}\).
\end{remark}

The multi-phase branching set \(\Gamma_{\rm mp}^{\rm br}(\mathbf{u})\) is a genuinely vectorial phenomenon and has no scalar analogue. We isolate this case because the free boundaries of active components may fail to coincide in every neighborhood of the contact point. 

To better state the results, we first recall some notations. Let \(x_0\in\Gamma_{\rm mp}^{\rm br}(\mathbf{u})\), and fix \(0<\bar r<r_{x_0}\) such that \(B_{\bar r}(x_0)\subset\subset D\). We call \((i,j)\) a \emph{branching pair at \(x_0\)} for distinct \(i,j\in\mathcal I_{\mathbf{u}}(x_0)\) if \(\triangle_{i,j}(x_0,r)\ne\varnothing\) for every \(0<r<r_{x_0}\). We will prove in the forthcoming Theorem \ref{thm:structure-mp-br} that the family \(\mathscr C_{\mathbf{u}}(x_0;\bar r)\) is at most countable if \(x_0\in \Gamma_{\rm mp}^{\rm br}(\mathbf{u})\). We therefore choose an index set \(\Lambda_{\mathbf{u}}(x_0;\bar r)\subset\N\) and write 
\[
    \mathscr C_{\mathbf{u}}(x_0;\bar r)
    =
    \{\Omega_{\mathbf{u}}^{(\ell)}:\ell\in\Lambda_{\mathbf{u}}(x_0;\bar r)\},\quad x_0\in\Gamma_{\rm mp}^{\rm br}(\mathbf{u}).
\]
Here, for every \(\ell\in\Lambda_{\mathbf{u}}(x_0;\bar r)\), the notation \(\Omega_{\mathbf{u}}^{(\ell)}\) represents a connected component of the localized positive set \(\Omega_{\mathbf{u}}\cap B_{\bar r}(x_0)\) which satisfies \(x_0\in\partial\Omega_{\mathbf{u}}^{(\ell)}\). The components \(\Omega_{\mathbf{u}}^{(\ell)}\) are pairwise distinct and pairwise disjoint, and the above family consists of all connected components of \(\Omega_{\mathbf{u}}\cap B_{\bar r}(x_0)\) having \(x_0\) as a boundary point. We thus call them the \emph{local branches touching \(x_0\)}. 

In general, these local branches need not cover the entire positive set, since \(\Omega_{\mathbf{u}}\cap B_{\bar r}(x_0)\) may also contain connected components whose boundaries do not contain \(x_0\). Thus one only has the inclusion
\[
    \bigcup_{\ell\in\Lambda_{\mathbf{u}}(x_0;\bar r)}\Omega_{\mathbf{u}}^{(\ell)}\subset\Omega_{\mathbf{u}}\cap B_{\bar r}(x_0),
\] 
where the union on the left is disjoint. For every \(\ell\in\Lambda_{\mathbf{u}}(x_0;\bar r)\), define
\[
    \mathcal I_{\mathbf{u}}^{(\ell)}(x_0;\bar r)
    :=
    I_{\mathbf{u}}(\Omega_{\mathbf{u}}^{(\ell)})
    =
    \{i\in\{1,\dots,m\}:u_i>0
    \text{ in }\Omega_{\mathbf{u}}^{(\ell)}\}.
\]
It follows that
\[
    \mathcal I_{\mathbf{u}}(x_0,\bar r)
    =
    \bigcup_{\ell\in\Lambda_{\mathbf{u}}(x_0;\bar r)}
    \mathcal I_{\mathbf{u}}^{(\ell)}(x_0;\bar r).
\]
Since \(\bar r\in(0,r_{x_0})\), the stabilization property gives \(\mathcal I_{\mathbf{u}}(x_0,\bar r)=\mathcal I_{\mathbf{u}}(x_0)\). This is the construction presented in \eqref{eq:active-set-touching-component}--\eqref{eq:local-active-index-set}, but restricted to a multi-phase branching point \(x_0\in\Gamma_{\rm mp}^{\rm br}(\mathbf{u})\). The precise geometric properties of the multi-phase branching points are summarized in the following Theorem.

\begin{maintheorem}{II\,(b)}[Structure of multi-phase branching contact points]\label{thm:structure-mp-br}
    Let \(\mathbf{u}\) be an \(\varepsilon_0\)-local minimizer of \(J\) in \(\mathbb{A}\), assume \eqref{eq:touching-branch-assumption}, and let \(x_0\in \Gamma_{\rm mp}^{\rm br}(\mathbf{u})\). Fix \(0<\bar r<r_{x_0}\) such that \(B_{\bar r}(x_0)\subset\subset D\).
    \begin{enumerate}[label=(\roman*)]
        \item The collection of branching components \(\mathscr C_{\mathbf{u}}(x_0;\bar r)\) is at most countable. Hence there exists an index set \(\Lambda_{\mathbf{u}}(x_0;\bar r)\subset\mathbb{N}\) such that
        \[
            \mathscr C_{\mathbf{u}}(x_0;\bar r)
            =
            \{\Omega_{\mathbf{u}}^{(\ell)}:
            \ell\in\Lambda_{\mathbf{u}}(x_0;\bar r)\},
        \]
        where \(x_0\in\partial\Omega_{\mathbf{u}}^{(\ell)}\) for every \(\ell\in\Lambda_{\mathbf{u}}(x_0;\bar r)\).
        \item Let \((i,j)\) be a branching pair at \(x_0\). Fix \(0<r<\bar r\), and take any point \(z\in\triangle_{i,j}(x_0,r)\setminus\Sigma\). After interchanging \(i\) and \(j\) if necessary, we may assume that \(z\in(\partial\Omega_{u_i}\setminus\partial\Omega_{u_j})\cap B_r(x_0)\). Then there exists \(\rho_z>0\) such that \(B_{\rho_z}(z)\subset\subset B_r(x_0)\cap(D\setminus\Sigma)\), and
        \[
            \partial\Omega_{\mathbf{u}}\cap B_{\rho_z}(z)
            =
            \partial\Omega_{u_i}\cap B_{\rho_z}(z),\qquad \partial\Omega_{u_j}\cap B_{\rho_z}(z)
            =
            \varnothing.
        \]
        Furthermore, if \(z\in\partial\Omega_{\mathbf{u}}^{(\ell)}\) for some \(\ell\in\Lambda_{\mathbf{u}}(x_0;\bar r)\), then \(i\in\mathcal I_{\mathbf{u}}^{(\ell)}(x_0;\bar r)\) while \(j\notin\mathcal I_{\mathbf{u}}^{(\ell)}(x_0;\bar r)\). This local configuration is illustrated in Figure~\ref{fig:branch-local}.    
        \begin{figure}[htbp]
        	\centering
        	\includegraphics[width=0.45\textwidth]{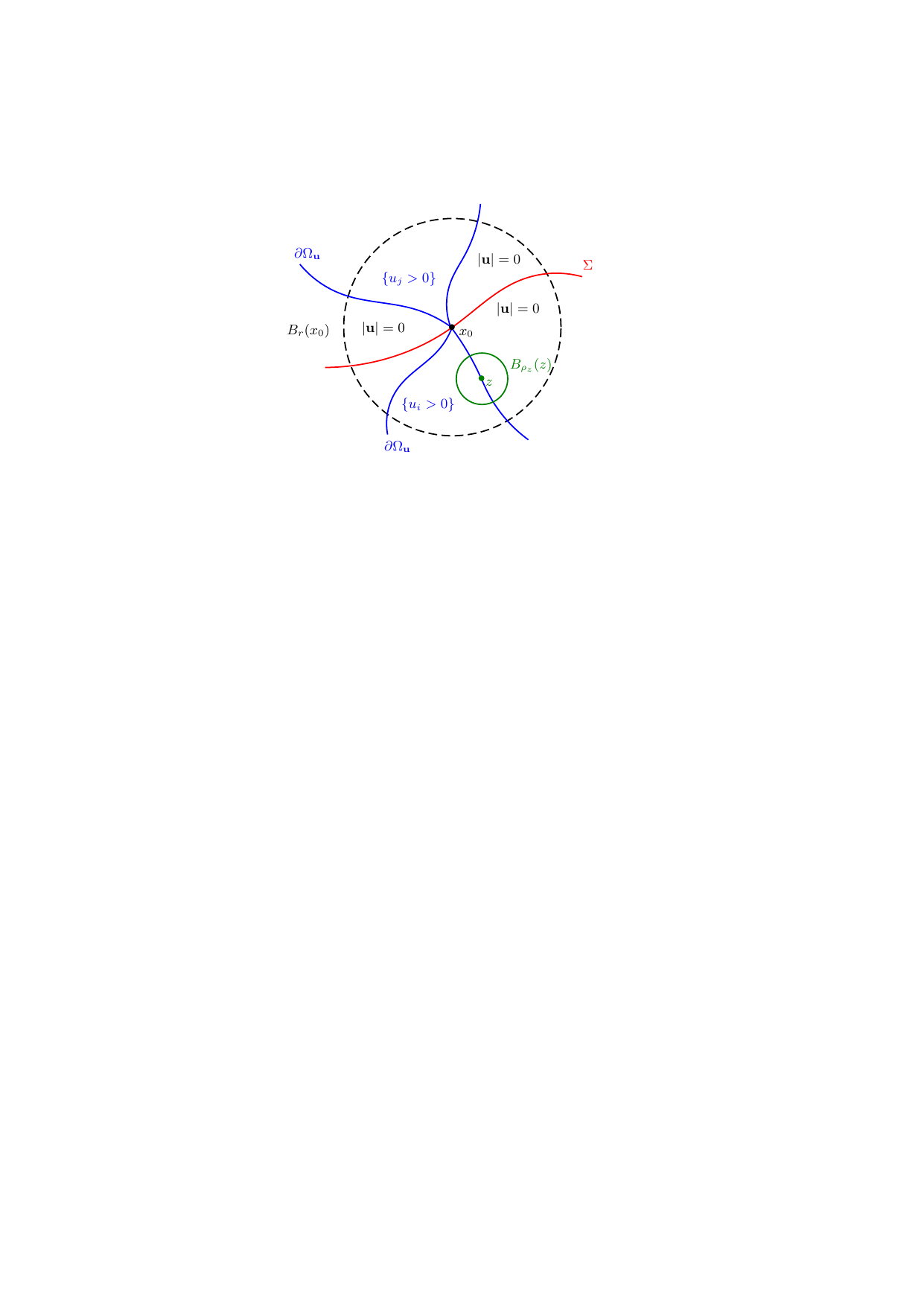}
        	\caption{Local configuration of the branching pair \((i,j)\) near \(x_0\).}
        	\label{fig:branch-local}
        \end{figure}  
    \end{enumerate}
\end{maintheorem}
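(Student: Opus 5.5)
Part (i) follows from a counting argument. The components $\Omega_{\mathbf{u}}^{(\ell)}$ are pairwise disjoint nonempty open subsets of $\R^n$. Each one contains a point with rational coordinates, and distinct components contain distinct such points. This gives an injection of $\mathscr C_{\mathbf{u}}(x_0;\bar r)$ into $\mathbb Q^n$, so the family is at most countable, and we enumerate it by some $\Lambda_{\mathbf{u}}(x_0;\bar r)\subset\N$. No minimality is needed for this step.

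For (ii), fix a branching pair $(i,j)$ and a point $z\in(\partial\Omega_{u_i}\setminus\partial\Omega_{u_j})\cap B_r(x_0)$ with $z\notin\Sigma$.

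\emph{Choice of radius.} Since $\partial\Omega_{u_j}$ and $\Sigma$ are closed and $z$ lies in neither, there is $\rho_1>0$ with $B_{\rho_1}(z)\cap(\partial\Omega_{u_j}\cup\Sigma)=\varnothing$ and $B_{\rho_1}(z)\subset\subset B_r(x_0)$. This already gives $\partial\Omega_{u_j}\cap B_{\rho_z}(z)=\varnothing$ for any $\rho_z\le\rho_1$.

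\emph{Consequence for $u_j$.} On the connected ball $B_{\rho_1}(z)$, the open set $\{u_j>0\}$ has no boundary points. Hence either $u_j>0$ throughout $B_{\rho_1}(z)$, or $u_j\equiv0$ there.

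The first alternative must be excluded, because it would put $z$ in the interior of $\Omega_{\mathbf{u}}$. But $z\in\partial\Omega_{u_i}$ only, not necessarily in $\partial\Omega_{\mathbf{u}}$, so I handle this through the following step.

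\emph{Nondegenerate theory away from $\Sigma$.} On $B_{\rho_1}(z)$ we have $Q\ge c>0$, so $\mathbf{u}$ is a local minimizer of the Caffarelli--Shahgholian--Yeressian problem there. The facts I would invoke are:
\begin{itemize}
\item each $u_k$ is harmonic and nonnegative on $\Omega_{\mathbf{u}}$;
\item by the strong maximum principle, on every connected component of $\Omega_{\mathbf{u}}$ each $u_k$ is either positive or identically zero;
\item $\partial\Omega_{\mathbf{u}}$ has the nondegeneracy/density properties of \cite{CSY18}, in particular $|\Omega_{\mathbf{u}}^c\cap B_s(y)|>0$ for $y\in\partial\Omega_{\mathbf{u}}$.
\end{itemize}

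\emph{Key claim: $z\in\partial\Omega_{\mathbf{u}}$.} Suppose instead that $z$ is an interior point of $\Omega_{\mathbf{u}}$. Then $z$ lies in a component of $\Omega_{\mathbf{u}}$, and on that component $u_i$ is either positive or identically zero. Either way $z\notin\partial\{u_i>0\}$, a contradiction. So $z\in\partial\Omega_{\mathbf{u}}$, which rules out $u_j>0$ on $B_{\rho_1}(z)$. Therefore $u_j\equiv0$ on $B_{\rho_1}(z)$.

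\emph{Identifying $\partial\Omega_{\mathbf{u}}$ with $\partial\Omega_{u_i}$.} The same component argument shows $\partial\Omega_{u_k}\cap B_{\rho_1}(z)\subset\partial\Omega_{\mathbf{u}}$ for every $k$. For the reverse inclusion I shrink to $\rho_z<\rho_1$ so that, near $z$, every component of $\Omega_{\mathbf{u}}$ adjacent to $\partial\Omega_{\mathbf{u}}\cap B_{\rho_z}(z)$ carries $u_i>0$. Then every point of $\partial\Omega_{\mathbf{u}}\cap B_{\rho_z}(z)$ is a limit of points of $\{u_i>0\}$ and lies outside $\{u_i>0\}$, hence belongs to $\partial\Omega_{u_i}$. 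This gives $\partial\Omega_{\mathbf{u}}\cap B_{\rho_z}(z)=\partial\Omega_{u_i}\cap B_{\rho_z}(z)$.

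\emph{The ``furthermore'' part.} If $z\in\partial\Omega_{\mathbf{u}}^{(\ell)}$, then $\Omega_{\mathbf{u}}^{(\ell)}$ meets $B_{\rho_z}(z)$. Since $u_j\equiv0$ there and $u_j$ is either positive or identically zero on $\Omega_{\mathbf{u}}^{(\ell)}$, we get $j\notin\mathcal I_{\mathbf{u}}^{(\ell)}$. Since $z\in\partial\Omega_{u_i}$ is approached from $\Omega_{\mathbf{u}}^{(\ell)}$, in the configuration produced above $u_i>0$ on $\Omega_{\mathbf{u}}^{(\ell)}$, so $i\in\mathcal I_{\mathbf{u}}^{(\ell)}$.

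\textbf{Main obstacle.} The hardest step is the reverse inclusion $\partial\Omega_{\mathbf{u}}\cap B_{\rho_z}(z)\subset\partial\Omega_{u_i}$. The difficulty is excluding other components $k\neq i$, carried by components of $\Omega_{\mathbf{u}}$ not containing $u_i$, that accumulate at $z$. My plan is to use the nondegenerate theory: the uniform density lower bound and Lipschitz/nondegeneracy estimates give a positive lower bound on the inradius of components near $z$, so only finitely many components meet a small ball. Components on which $u_i\equiv0$ then have closures at positive distance from $z$, unless one touches $z$ itself. That remaining case I would rule out via the blow-up at $z$, which is a one-plane (half-space) solution in the nondegenerate regime, so only one side is positive and that side carries $u_i$.
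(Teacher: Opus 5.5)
Your part (i), and your treatment of $u_j$, agree with the paper. For $u_j$ you take a ball around $z$ missing $\partial\Omega_{u_j}\cup\Sigma$, show $z\in\partial\Omega_{\mathbf u}$ via the componentwise strong maximum principle, and conclude $u_j\equiv0$ near $z$.

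The gap is the step you flag yourself: the reverse inclusion $\partial\Omega_{\mathbf u}\cap B_{\rho_z}(z)\subset\partial\Omega_{u_i}$. Equivalently, you must exclude a connected component of $\Omega_{\mathbf u}$ with $u_i\equiv0$ whose closure contains $z$.

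Your resolution, that the blow-up at $z$ is a one-plane solution, is not available. Points of $\partial\Omega_{\mathbf u}$ away from $\Sigma$ need not be regular, and at singular points the blow-up is a non-flat cone. Singular points do occur for the scalar one-phase problem in high dimension (e.g.\ the De Silva--Jerison cone in $\mathbb{R}^7$), and the nondegenerate theory does not exclude them here; the paper says so itself.

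Even at a regular point your conclusion does not follow as stated. First, a half-space limit does not by itself prevent two distinct components, each of positive density at $z$, from filling complementary parts of the same half-space at every scale. Second, ``that side carries $u_i$'' is unjustified. On its component $u_i$ may be small compared with $|\mathbf u|$, so its rescalings can converge to $0$ while some $u_k$, $k\ne i$, carries the limit.

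The property ``every component adjacent near $z$ carries $u_i>0$'' is a local connectivity statement, and density and Lipschitz bounds alone do not give it. Separately, your finiteness count needs a componentwise nondegeneracy estimate; this can be proved, but it is not among the stated estimates.

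This missing ingredient is exactly what the paper imports. On $B_{2\rho}(z)\subset\subset D\setminus\Sigma$ the weight is pinched between positive constants, so \cite[Theorem 5]{CSY18} yields a \emph{connected} open set $G$ with $\Omega_{\mathbf u}\cap B_{\varepsilon\rho}(z)\subset G\subset\Omega_{\mathbf u}\cap B_\rho(z)$. Since $z\in\partial\Omega_{u_i}$, $u_i\not\equiv0$ on $G$. The strong maximum principle then gives $u_i>0$ on all of $G$, hence $\Omega_{\mathbf u}\cap B_{\varepsilon\rho}(z)=\Omega_{u_i}\cap B_{\varepsilon\rho}(z)$.

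Both boundary identities follow immediately, with no counting of components and no blow-up at $z$. So does the ``furthermore'' assertion: the branch $\Omega_{\mathbf u}^{(\ell)}$ meets this ball, so the strong maximum principle gives $u_i>0$ and $u_j\equiv0$ on the whole branch.

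If you want a self-contained route instead, you would need to replace the half-space claim by three ingredients:
\begin{itemize}
\item componentwise nondegeneracy;
\item connectedness of the positivity set of every homogeneous minimizer with constant weight, which follows from the Alt--Caffarelli--Friedman equality case plus the harmonic-replacement argument, not from flatness;
\item a path argument showing that two components of positive density at $z$ would force the blow-up to vanish somewhere inside its connected positivity set.
\end{itemize}
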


\begin{remark}
    In what follows, if \(x_0=0\), we will simply write \(\mathscr C_{\mathbf{u}}(r)\), \(\triangle_{i,j}(r)\), \(\mathcal K_{i,j}(r)\), and \(\Lambda_{\mathbf{u}}(\bar r)\) in place of \(\mathscr C_{\mathbf{u}}(0;r)\), \(\triangle_{i,j}(0,r)\), \(\mathcal K_{i,j}(0,r)\), and \(\Lambda_{\mathbf{u}}(0;\bar r)\), respectively.
\end{remark}

We now turn to the third part of our main results, which concerns the blow-up analysis at non-degenerate contact points. This approach consists in zooming in near a contact point and adopting the natural rescaling dictated by the degeneracy of the Bernoulli weight. In this work, we use the natural scaling \(r^{1+\gamma}\) and consider the blow-up sequence defined by
\[
    \mathbf{u}_{x_0,r}(x) := \frac{\mathbf{u}(x_0 + rx)}{r^{1+\gamma}}, \qquad r>0, \qquad x_0\in \Gamma_{\Sigma,{\rm ND}}(\mathbf{u}).
\]
We prove that for every sequence \(r_j \to 0^+\), there exists a subsequence, still denoted by \(r_j\), such that \(\mathbf{u}_{x_0,r_j} \to \mathbf{u}_0\) strongly in \(W_{\rm loc}^{1,2}(\mathbb{R}^n;\mathbb{R}^m)\) and locally uniformly on \(\mathbb{R}^n\). The limit \(\mathbf{u}_0\in C_{\rm loc}^{0,1}(\mathbb{R}^n;\mathbb{R}^m)\) is nontrivial, and is called a nontrivial blow-up limit of \(\mathbf{u}\) at \(x_0\). The results concerning the above blow-up procedure are summarized as follows.

\begin{maintheorem}{III}[Blow-up limits at non-degenerate points]\label{thm:blow-up-ND}
    Let \(\mathbf{u}\) be an \(\varepsilon_0\)-local minimizer of \(J\) in \(\mathbb{A}\), let \(x_0\in \Gamma_{\Sigma,{\rm ND}}(\mathbf{u})\), and let \(\{r_j\}_{j\in\mathbb{N}}\) be a sequence of positive radii with \(r_j \to 0^+\). Then there exists a subsequence, still denoted \(\{r_j\}\), and a nontrivial blow-up limit \(\mathbf{u}_0\in C_{\rm loc}^{0,1}(\mathbb{R}^n;\mathbb{R}^m)\) such that
    \[
        \mathbf{u}_{x_0,r_j} \to \mathbf{u}_0
        \quad\text{strongly in } W_{\rm loc}^{1,2}(\mathbb{R}^n;\mathbb{R}^m)
        \quad\text{ and locally uniformly on } \mathbb{R}^n.
    \]
    The limit \(\mathbf{u}_0=(u_{0,1},u_{0,2},\dots,u_{0,m})\) is a \((1+\gamma)\)-homogeneous global minimizer of the tangent-plane functional
    \[
        J_T(\mathbf{v}; B_R) := \int_{B_R} \left( |\nabla \mathbf{v}|^2 + \mathsf{d}_0^{2\gamma} \chi_{\Omega_{\mathbf{v}}} \right) \dx{x}, \qquad R>0,
    \]
    where \(\mathsf{d}_0(x) := \operatorname{dist}(x, T_{x_0}\Sigma)\) denotes the Euclidean distance to the tangent hyperplane to \(\Sigma\) at \(x_0\).
    
    Moreover, if the positivity set \(\Omega_0 := \{|\mathbf{u}_0| > 0\}\) is connected, then setting \(v_0 := |\mathbf{u}_0|\), there exists a unique unit vector \(\mathbf{a} = (a_1,\dots,a_m)^{\mathsf{T}} \in \mathbb{R}^m\) with \(a_i \ge 0\) for all \(i\), such that
    \[
        \mathbf{u}_0(x) = \mathbf{a}  \, v_0(x) \qquad \text{for all } x\in\mathbb{R}^n.
    \]
    The scalar function \(v_0\) is a nontrivial \((1+\gamma)\)-homogeneous global minimizer of the scalar tangent functional
    \[
        \mathcal{J}_T(w; B_R) := \int_{B_R} \left( |\nabla w|^2 + \mathsf{d}_0^{2\gamma} \chi_{\Omega_w} \right) \dx{x}, \qquad R>0.
    \]
\end{maintheorem}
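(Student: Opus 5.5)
The plan follows the standard blow-up scheme: uniform estimates, compactness, minimality of the limit, homogeneity from a monotonicity formula, and finally a vectorial reduction on connected positivity sets.

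\textbf{Step 1: uniform estimates.} I would first establish, as in the scalar degenerate theory of \cite{VW11} and adapted to the system, the optimal growth
\[
|\mathbf{u}(x)|\le C\,|x-x_0|^{1+\gamma}
\]
near $x_0\in\Gamma_\Sigma(\mathbf{u})$. Together with interior Lipschitz estimates of the form $|\nabla\mathbf{u}|\le C\,\mathsf d_\Sigma^{\gamma}$ near the contact set, this makes $\mathbf{u}_{x_0,r}$ uniformly bounded in $C^{0,1}_{\rm loc}$. Arzel\`a--Ascoli and weak compactness then give a subsequence converging locally uniformly and weakly in $W^{1,2}_{\rm loc}$, and $\chi_{\Omega_{\mathbf{u}_{x_0,r_j}}}\to\chi$ weakly-$*$.

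\textbf{Step 2: the rescaled weight and the limit problem.} Write $Q^2(x_0+rx)/r^{2\gamma}=\mathsf d_{\Sigma_r}^{2\gamma}(x)$ with $\Sigma_r=(\Sigma-x_0)/r$. Since $\Sigma$ is $C^{1,\rm Dini}$, $\Sigma_r$ converges to $T_{x_0}\Sigma$ in $C^1_{\rm loc}$, so $\mathsf d_{\Sigma_r}^{2\gamma}\to\mathsf d_0^{2\gamma}$ locally uniformly. Each $\mathbf{u}_{x_0,r}$ is an almost-minimizer of the rescaled functional, because the local-minimality threshold $\varepsilon_0$ scales favorably; any competitor in $B_R$ rescales to one of norm $O(r^{n+2\gamma})$.

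To show $\mathbf{u}_0$ minimizes $J_T$, I would take a competitor $\mathbf{v}$ with $\mathbf{v}=\mathbf{u}_0$ outside $B_R$ and glue it to $\mathbf{u}_{x_0,r_j}$ in an annulus $B_{R+\delta}\setminus B_R$ by a cutoff interpolation. The uniform convergence makes the gluing error $o(1)$. A lower semicontinuity argument then yields both minimality and strong $W^{1,2}_{\rm loc}$ convergence, via $\limsup\int|\nabla\mathbf{u}_j|^2\le\int|\nabla\mathbf{u}_0|^2$, along with $\chi_{\Omega_{\mathbf{u}_j}}\to\chi_{\Omega_0}$ in $L^1_{\rm loc}$. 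Nondegeneracy of the form $\sup_{B_r(y)}|\mathbf{u}|\ge c\,r^{1+\gamma}$ will be needed here.

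\textbf{Step 3: homogeneity and nontriviality.} The key tool is a Weiss-type monotonicity formula:
\[
W(r)=r^{-n-2\gamma}\,J(\mathbf{u};B_r(x_0))-(1+\gamma)\,r^{-n-2\gamma-1}\int_{\partial B_r(x_0)}|\mathbf{u}|^2,
\]
whose derivative is bounded below by $r^{-n-2\gamma}\int_{\partial B_r}\big|\partial_\nu\mathbf{u}-\tfrac{1+\gamma}{r}\mathbf{u}\big|^2$ minus a Dini error coming from the curvature of $\Sigma$. I expect this to be the main obstacle. The domain-variation computation produces a term involving $x\cdot\nabla\mathsf d_\Sigma^{2\gamma}-2\gamma\,\mathsf d_\Sigma^{2\gamma}$, which vanishes for a flat $\Sigma$. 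For a curved $\Sigma$ it is controlled by $\omega(r)\,r^{2\gamma}$, where $\omega$ is the Dini modulus of $\nabla\Sigma$, and it must be made integrable in $dr/r$. I would first try to handle this through nearest-point projection estimates for $\mathsf d_\Sigma$ on a $C^{1,\rm Dini}$ graph.

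Monotonicity gives a limit $W(0^+)$. Scaling $W(x_0,rr_j)$ and passing to the limit with the strong convergence from Step 2 shows that the Weiss energy of $\mathbf{u}_0$ is constant in $r$, and therefore $\mathbf{u}_0$ is $(1+\gamma)$-homogeneous. For nontriviality, strong convergence gives $\int_{B_1}\mathsf d_0^{2\gamma}\chi_{\Omega_0}=\Theta_\Sigma(\mathbf{u};x_0,0^+)\ge\theta_1>0$ by Theorem \ref{thm:main}, so $\mathbf{u}_0\not\equiv0$.

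\textbf{Step 4: vectorial structure.} Each $u_{0,i}$ is nonnegative and harmonic in the connected open set $\Omega_0$, and vanishes on $\partial\Omega_0$. Both $u_{0,i}$ and $v_0=|\mathbf{u}_0|$ are $(1+\gamma)$-homogeneous. Their traces on $\mathbb S^{n-1}\cap\Omega_0$ are therefore nonnegative Dirichlet eigenfunctions with the same eigenvalue $(1+\gamma)(n-1+\gamma)$ on the connected spherical domain $\Omega_0\cap\mathbb S^{n-1}$.

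A nonnegative, not identically zero eigenfunction is positive and must be the first eigenfunction, which is simple. Hence $u_{0,i}=a_i\,\phi$ for a single $\phi$, so $\mathbf{u}_0=\mathbf{a}\,v_0$ with $|\mathbf{a}|=1$ and $a_i\ge0$, and $\mathbf{a}$ is unique because $v_0\not\equiv0$. Finally, minimality of $v_0$ for $\mathcal J_T$ follows by testing $J_T$ with $\mathbf{a}\,w$: since $|\nabla(\mathbf{a}w)|=|\nabla w|$ and $\Omega_{\mathbf{a}w}=\Omega_w$, we get $J_T(\mathbf{a}\,w)=\mathcal J_T(w)$.
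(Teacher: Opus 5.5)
Your plan is correct and is essentially the paper's proof. It uses the same ingredients:
\begin{itemize}
\item compactness from the $r^{1+\gamma}$ growth and $r^{\gamma}$ gradient bounds at $x_0$;
\item cutoff gluing plus Fatou, giving global minimality and convergence of the weighted phase;
\item the Dini-corrected Weiss formula for homogeneity;
\item $\int_{B_1}\mathsf{d}_0^{2\gamma}\chi_{\Omega_0}=\Theta_\Sigma(\mathbf{u};x_0,0^+)>0$ for nontriviality;
\item simplicity of the first Dirichlet eigenvalue on the connected spherical section, giving $\mathbf{u}_0=\mathbf{a}v_0$.
\end{itemize}
The route differences are minor and both routes work. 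The paper proves strong $W^{1,2}_{\rm loc}$ convergence before minimality, from the localized identity $\int\varphi|\nabla w|^2=-\int w\,\nabla w\cdot\nabla\varphi$ (valid for $w=u_{j,i}$ and $w=u_{0,i}$). It obtains homogeneity by passing the integrated homogeneity defect, with its Dini term, directly to the limit. You instead get strong convergence from energy comparison with $\mathbf{v}=\mathbf{u}_0$, and homogeneity from constancy of $W_0$ plus the flat Weiss identity for the $J_T$-minimizer $\mathbf{u}_0$.

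On the remainder: for $\gamma<1/2$ it is not pointwise $O(\omega(r)r^{2\gamma})$, because $\mathsf{d}_\Sigma^{2\gamma-1}$ is unbounded. What gives the integrable bound $C\omega_\Sigma(2r)/r$ for $W'$ is the projection estimate $|(\pi_\Sigma(x)-x_0)\cdot\nabla\mathsf{d}_\Sigma(x)|\le 2r\,\omega_\Sigma(2r)$ combined with $\int_{B_r}\mathsf{d}_\Sigma^{2\gamma-1}\le Cr^{n+2\gamma-1}$.

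Three auxiliary claims need correcting.

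First, the pointwise bound $|\nabla\mathbf{u}|\le C\mathsf{d}_\Sigma^{\gamma}$ is false in general: it would force $\nabla\mathbf{u}=0$ on $\Sigma\cap\Omega_{\mathbf{u}}$. What holds, and is all you use, is $\|\nabla\mathbf{u}\|_{L^\infty(B_r(x_0))}\le Cr^{\gamma}$ (Corollary~\ref{cor:gradient-estimate}). Proving it needs the growth bound $\sup_{B_\rho(z)}|\mathbf{u}|\le C\rho\,(\mathsf{d}_\Sigma(z)+\rho)^{\gamma}$ at every nearby free-boundary point $z$, not only at $x_0$. The free-boundary point nearest to a given $y\in\Omega_{\mathbf{u}}$ need not be a contact point.

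Second, a uniform nondegeneracy $\sup_{B_r(y)}|\mathbf{u}|\ge cr^{1+\gamma}$ is not available near $\Sigma$. It fails at any degenerate contact point, where every blow-up vanishes, and the valid estimate carries the factor $(\mathsf{d}_\Sigma(y)-sr)_+^{\gamma}$. It is also unnecessary for your argument. Uniform convergence gives $\chi_{\Omega_0}\le\liminf_j\chi_{\Omega_{\mathbf{u}_j}}$, and your energy comparison gives convergence of $\int_{B_R}\mathsf{d}_{\Sigma_{r_j}}^{2\gamma}\chi_{\Omega_{\mathbf{u}_j}}$. Together these yield weighted $L^1$ convergence, and the weight can be removed because $\mathsf{d}_0>0$ off a hyperplane.

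Third, $|\mathbf{u}_0|$ is only subharmonic a priori. So only the angular traces of the components $u_{0,i}$ are eigenfunctions; harmonicity of $v_0$ is a consequence of the proportionality, not an input.
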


\begin{remark}
    \(\mathbf{u}_0\) is a global minimizer of \(J_T(\mathbf{v};B_R(0))\) in the sense that for every \(R>0\) and every \(\mathbf{v}=(v_1,\dots,v_m)\in W^{1,2}(B_R(0);\R^m)\) with \(\mathbf{v}-\mathbf{u}_0\in W_0^{1,2}(B_R;\R^m)\) and \(v_i\ge 0\), we have \(J_T(\mathbf{u}_0;B_R)\le J_T(\mathbf{v};B_R)\).
\end{remark}

\begin{remark}
    Let us remark that the blow-up limit \(\mathbf{u}_0\) is nontrivial due to the definition of non-degenerate contact points \(\Gamma_{\Sigma,\rm ND}(\mathbf{u})\). 
\end{remark}

For a nontrivial blow-up limit \(\mathbf{u}_0=(u_{0,1},\dots,u_{0,m})\), let \(\Omega_0:=\{|\mathbf{u}_0|>0\}\) be its positive set and write 
\[
    \mathcal I_{\mathbf{u}_0}(0) :=\{i\in\{1,\dots,m\}: u_{0,i}\not\equiv 0\}.
\]
Every connected component of \(\Omega_0\) is a cone touching the origin, so this agrees with the touching component definition of the local active set. If \(\Omega_0\) is connected, the strong maximum principle gives
\[
    \mathcal I_{\mathbf{u}_0}(0) = \{i\in\{1,\dots,m\}: u_{0,i}>0\text{ in }\Omega_0\}.
\]

The following corollaries state the componentwise structure of nontrivial blow-up limits. Their assertions concern the active indices of the limiting map.

\begin{corollary}[Blow-up limits for multi-phase non-branching points]\label{cor:blow-up-classification}
    Let \(x_0\in\Gamma_{\rm mp}^{\rm nb}(\mathbf{u})\), and let \(\mathbf{u}_0\) be a blow-up limit of \(\mathbf{u}\) at \(x_0\). Assume that \(\Omega_0=\{|\mathbf{u}_0|>0\}\) is connected, and write \(\mathbf{u}_0=\mathbf{a}  v_0\) for \(\mathbf{a} =(a_1,\dots,a_m)^{\mathsf T}\) as in Theorem \ref{thm:blow-up-ND}. Define 
    \[
        \operatorname{spt}\mathbf{a} :=\{i\in\{1,\dots,m\}: a_i>0\}.
    \]
    Then \(\mathcal I_{\mathbf{u}_0}(0)=\operatorname{spt}\mathbf{a} \) and 
    \[
        \mathbf{u}_0(x) = \mathbf{a}  v_0(x),
    \]
    Moreover, \(\partial\Omega_{u_{0,i}}=\partial\Omega_0\) for every \(i\in\operatorname{spt}\mathbf{a} \), whereas \(u_{0,i}\equiv 0\) for every \(i\notin\operatorname{spt}\mathbf{a} \).
\end{corollary}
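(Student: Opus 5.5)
The corollary is essentially a direct consequence of the representation \(\mathbf{u}_0=\mathbf{a}\,v_0\) furnished by Theorem~\ref{thm:blow-up-ND} once \(\Omega_0\) is connected. The plan is to read off every assertion from this factorization together with positivity of \(v_0\) on \(\Omega_0\). No fine information about the non-branching structure is needed for the statements as written. The hypothesis \(x_0\in\Gamma_{\rm mp}^{\rm nb}(\mathbf{u})\) only serves to place the corollary in context, and to guarantee that the blow-up is nontrivial via \(x_0\in\Gamma_{\Sigma,\rm ND}(\mathbf{u})\).

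\textbf{Step 1.} By Theorem~\ref{thm:blow-up-ND}, \(\mathbf{u}_0\) is nontrivial and \(\mathbf{u}_0=\mathbf{a}v_0\) with \(v_0=|\mathbf{u}_0|\) and \(|\mathbf{a}|=1\), \(a_i\ge0\). Hence \(v_0>0\) exactly on \(\Omega_0\), and \(v_0=0\) on \(\R^n\setminus\Omega_0\). Since \(v_0\) is a minimizer of \(\mathcal J_T\), it is harmonic in \(\Omega_0\); this is not really needed, only strict positivity on \(\Omega_0\).

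\textbf{Step 2 (identification of active indices).} For each \(i\), we have \(u_{0,i}=a_iv_0\).
\begin{itemize}
\item If \(a_i>0\), then \(u_{0,i}>0\) on \(\Omega_0\neq\varnothing\), so \(u_{0,i}\not\equiv0\).
\item If \(a_i=0\), then \(u_{0,i}\equiv0\).
\end{itemize}
Thus \(\mathcal I_{\mathbf{u}_0}(0)=\operatorname{spt}\mathbf{a}\), and \(u_{0,i}\equiv0\) for \(i\notin\operatorname{spt}\mathbf{a}\). This is consistent with the characterization \(\mathcal I_{\mathbf{u}_0}(0)=\{i:u_{0,i}>0\text{ in }\Omega_0\}\) obtained from the strong maximum principle.

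\textbf{Step 3 (free boundaries).} For \(i\in\operatorname{spt}\mathbf{a}\), we have \(\Omega_{u_{0,i}}=\{a_iv_0>0\}=\{v_0>0\}=\Omega_0\) as sets. Taking topological boundaries then gives \(\partial\Omega_{u_{0,i}}=\partial\Omega_0\).

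The only point requiring care, and the closest thing to an obstacle, is the use of the connectedness hypothesis. It is needed precisely to invoke the single-vector factorization of Theorem~\ref{thm:blow-up-ND}. Without it, different cone components of \(\Omega_0\) could carry different directions \(\mathbf{a}\), and \(\partial\Omega_{u_{0,i}}\) could be a proper subset of \(\partial\Omega_0\). Since the corollary assumes connectedness, we simply check that the uniqueness of \(\mathbf{a}\) in Theorem~\ref{thm:blow-up-ND} makes \(\operatorname{spt}\mathbf{a}\) well defined, and the argument closes.
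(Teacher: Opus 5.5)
Your proposal is correct and follows the same route as the paper. Both read off \(\Omega_{u_{0,i}}=\Omega_0\) for \(a_i>0\) and \(\Omega_{u_{0,i}}=\varnothing\) for \(a_i=0\) directly from the factorization \(\mathbf{u}_0=\mathbf{a}v_0\) of Theorem~\ref{thm:blow-up-ND}, with nontriviality coming from \(x_0\in\Gamma_{\Sigma,\rm ND}(\mathbf{u})\). As you observe, the non-branching hypothesis is never used. The paper's proof only adds side remarks that are not part of the claim: \(\mathbf{a}=e_{i_0}\) when \(\operatorname{spt}\mathbf{a}=\{i_0\}\), and otherwise the origin is non-branching for \(\mathbf{u}_0\).
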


\begin{corollary}[Branchwise structure of nontrivial blow-up limits]\label{cor:blow-up-branching}
    Let \(\mathbf{u}_0\) be any nontrivial blow-up limit of \(\mathbf{u}\) at \(x_0\in \Gamma_{\Sigma,\rm ND}(\mathbf{u})\). Write the disjoint decomposition of \(\Omega_0:=\{|\mathbf{u}_0|>0\}\) into its connected components as
    \[
        \Omega_0
        =
        \bigcup_{\ell\in\Lambda}\Omega_0^{(\ell)}.
    \]
   Then \(\Lambda\subset\mathbb N\) is finite, and every \(\Omega_0^{(\ell)}\) is an open cone satisfying \(0\in\partial\Omega_0^{(\ell)}\). Moreover, the following statements hold.
    \begin{enumerate}[label=(\roman*)]
        \item For every \(\ell\in\Lambda\), set \(v_\ell:=|\mathbf{u}_0|\) in \(\Omega_0^{(\ell)}\). Then there exists a unique vector \(\mathbf{a} _\ell=(a_{\ell,1},\dots,a_{\ell,m})^T\in\mathbb S^{m-1}\cap\mathbb R_+^m\) such that
        \[
            \mathbf{u}_0=\mathbf{a} _\ell v_\ell
            \qquad
            \text{ in }\Omega_0^{(\ell)}.
        \]
        The function \(v_\ell\) is positive, harmonic, and \((1+\gamma)\)-homogeneous in \(\Omega_0^{(\ell)}\).
        \item Define
        \[
            \operatorname{spt}\mathbf{a} _\ell
            :=
            \{i\in\{1,\dots,m\}:a_{\ell,i}>0\}.
        \]
        If \(0\in\Gamma_{\rm mp}^{\rm br}(\mathbf{u}_0)\), then there exist distinct \(\ell_1,\ell_2\in\Lambda\) such that \(\operatorname{spt}\mathbf{a} _{\ell_1}\ne\operatorname{spt}\mathbf{a} _{\ell_2}\).
    \end{enumerate}
\end{corollary}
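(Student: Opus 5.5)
We have to prove Corollary \ref{cor:blow-up-branching}: a nontrivial blow-up has finitely many conical branches, each carrying a fixed direction \(\mathbf{a}_\ell\), and two branches with different supports exist if \(0\) is a branching point of \(\mathbf{u}_0\). The plan is to use Theorem \ref{thm:blow-up-ND} (homogeneity, global minimality, \(C^{0,1}_{\rm loc}\)) together with the harmonicity of each component in \(\Omega_0\).

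\emph{Step 1: cones and finiteness.} Since \(\mathbf{u}_0\) is \((1+\gamma)\)-homogeneous, \(\Omega_0\) is invariant under dilations \(x\mapsto tx\), \(t>0\). Hence every connected component is invariant under dilations. Indeed, \(t\Omega_0^{(\ell)}\) is connected and meets \(\Omega_0^{(\ell)}\) for \(t\) close to \(1\); a continuity argument in \(t\) then shows each component is an open cone, so \(0\in\partial\Omega_0^{(\ell)}\).

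For finiteness, we work on \(\mathbb S^{n-1}\). Minimality of \(\mathbf{u}_0\) for \(J_T\) gives nondegeneracy of the type \(\sup_{B_r(y)}|\mathbf{u}_0|\ge c\,\mathsf d_0(y)^{\gamma}r\) at free boundary points away from \(T_{0}\Sigma\), with \(\mathsf d_0=\dist(\cdot,T_0\Sigma)\). It also gives a uniform lower density bound \(|\Omega_0^{(\ell)}\cap B_1|\ge c_0>0\) for each component. I would prove this bound by comparing with the competitor that sets \(\mathbf{u}_0\equiv 0\) on a small cone. Alternatively, on a component whose spherical trace is \(\omega_\ell\), the function \(v_\ell\) is a positive \((1+\gamma)\)-homogeneous harmonic function. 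Hence its trace is the first Dirichlet eigenfunction of \(\omega_\ell\), with eigenvalue \(\lambda_1(\omega_\ell)=(1+\gamma)(n+\gamma-1)\), so the eigenvalue is fixed for every branch. The Faber--Krahn inequality on the sphere then bounds \(\mathcal H^{n-1}(\omega_\ell)\) from below uniformly. Since the \(\omega_\ell\) are disjoint subsets of the sphere, \(\#\Lambda<\infty\). I prefer this eigenvalue route since it avoids quantitative nondegeneracy near \(T_0\Sigma\). Countability and the indexing by \(\mathbb N\) are then immediate.

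\emph{Step 2: proportionality on each branch.} I apply the connected case of Theorem \ref{thm:blow-up-ND} branchwise. Each \(u_{0,i}\ge0\) is harmonic in \(\Omega_0^{(\ell)}\), vanishes on \(\partial\Omega_0^{(\ell)}\), and is \((1+\gamma)\)-homogeneous. By the strong maximum principle it is either \(\equiv0\) or positive there. In the positive case its spherical trace is a positive Dirichlet eigenfunction of \(\omega_\ell\) with the same eigenvalue \((1+\gamma)(n+\gamma-1)\). Simplicity of the first eigenvalue on the connected set \(\omega_\ell\) (the spherical section of the connected cone is connected) yields \(u_{0,i}=c_{\ell,i}\varphi_\ell\) for a fixed function \(\varphi_\ell\). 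Normalizing, we set \(\mathbf{a}_\ell=c_\ell/|c_\ell|\in\mathbb S^{m-1}\cap\R^m_+\) and \(v_\ell=|\mathbf{u}_0|\). Uniqueness holds because \(v_\ell>0\). Where the eigenvalue characterization needs regularity of \(\omega_\ell\), I would instead use the boundary Harnack principle on the NTA-type cone, as in \cite{CSY18}: the ratio \(u_{0,i}/u_{0,k}\) is \(0\)-homogeneous and harmonic-quotient, and Liouville for ratios forces it to be constant.

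\emph{Step 3: the branching statement (ii).} We argue by contraposition. Suppose all \(\operatorname{spt}\mathbf{a}_\ell\) coincide, equal to a set \(S\). Then for \(i\in S\) we have \(\{u_{0,i}>0\}=\Omega_0\), while \(u_{0,i}\equiv0\) for \(i\notin S\). So every active free boundary equals \(\partial\Omega_0\) and \(\mathcal I_{\mathbf{u}_0}(0)=S\). Hence \(\triangle_{i,j}(0,r)=\varnothing\) for all distinct \(i,j\in S\) and all \(r\), i.e.\ \(0\in\Gamma^{\rm nb}_{\rm mp}(\mathbf{u}_0)\) or \(0\) is single-phase, contradicting \(0\in\Gamma^{\rm br}_{\rm mp}(\mathbf{u}_0)\).

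The main obstacle is Step 1's finiteness, in particular justifying that each spherical trace has a well-defined first eigenvalue and admits Faber--Krahn. This requires at least that \(\omega_\ell\) is open and that \(\varphi_\ell\in W^{1,2}_0(\omega_\ell)\), which follows from Lipschitz continuity of \(\mathbf{u}_0\).
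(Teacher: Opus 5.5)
Your proposal is correct and follows essentially the paper's argument: dilation invariance makes each component a cone, the angular part of each positive component is a positive Dirichlet eigenfunction with eigenvalue \((1+\gamma)(n+\gamma-1)\) whose simplicity on the connected section yields \(\mathbf{a}_\ell\), a uniform lower bound on \(\mathcal H^{n-1}(\omega_\ell)\) gives finiteness, and (ii) follows by contraposition. The only differences are cosmetic: you invoke Faber--Krahn on the sphere where the paper uses a Sobolev--H\"older estimate; the boundary Harnack fallback is unnecessary, since simplicity holds on any connected open set once \(\varphi\in H^1_0\); and in Step 1 you should use a positive component \(u_{0,i}\) rather than \(v_\ell\), whose harmonicity is only established in Step 2.
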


In order to better state branching and non-branching, we restrict  attention to two dimensions. Note that in this case, we are able to classify all nontrivial homogeneous blow-up limits of \(\mathbf{u}\) at non-degenerate contact points. Let \(x_0\in\Gamma_{\Sigma,\rm ND}(\mathbf{u})\), and let \(\mathbf{u}_0\) be a nontrivial blow-up limit as in Theorem \ref{thm:blow-up-ND}. After a suitable rotation, assume that \(T_\Sigma:=T_{x_0}\Sigma=\{x_2=0\}\), then \(\msf d_0(x)=\dist(x,T_\Sigma)=|x_2|\). Define the four axial directions
\begin{equation}\label{eq:planar-axial-directions}
    \mathfrak A
    :=
    \left\{
        0,\frac\pi2,\pi,\frac{3\pi}{2}
    \right\}.
\end{equation}
For \(\theta,\mu\in\R\), let 
\[
    d_{\mathbb S^1}(\theta,\mu)
    :=
    \min_{k\in\mathbb Z}|\theta-\mu+2k\pi|.
\]
For every \(\mu\in\mathfrak A\), define the sector
\begin{equation}\label{eq:planar-canonical-sector}
    S_\mu
    :=
    \Bigl\{
        (r\cos\theta,r\sin\theta):
        r>0,\ d_{\mathbb S^1}(\theta,\mu)<\frac{\pi}{2(1+\gamma)}
    \Bigr\},
\end{equation}
and the function
\begin{equation}\label{eq:planar-canonical-profile}
    \Phi_\mu(r,\theta)
    :=
    \begin{cases}
        \kappa_\mu r^{1+\gamma}
        \cos\bigl((1+\gamma)\,d_{\mathbb S^1}(\theta,\mu)\bigr),
        &d_{\mathbb S^1}(\theta,\mu)<\frac{\pi}{2(1+\gamma)},\\
        0,
        &d_{\mathbb S^1}(\theta,\mu)\ge\frac{\pi}{2(1+\gamma)},
    \end{cases}
\end{equation}
where
\begin{equation}\label{eq:planar-kappa}
    \kappa_\mu
    :=
    \begin{cases}
        \frac{1}{1+\gamma}\sin^\gamma\!\frac{\pi}{2(1+\gamma)},
        &\mu\in\{0,\pi\},\\[8pt]
        \frac{1}{1+\gamma}\cos^\gamma\!\frac{\pi}{2(1+\gamma)},
        &\mu\in\{\frac\pi2,\frac{3\pi}{2}\}.
    \end{cases}
\end{equation}
Then we have

\begin{maintheorem}{IV}[Classification of planar blow-up limits]\label{thm:planar-blow-up-classification}
    \itshape 
    There exist a unique nonempty set \(K\subset\mathfrak A\) and a unique vector \(\mathbf{a} _\mu\in\mathbb S^{m-1}\cap\mathbb R_+^m\) for every \(\mu\in K\) such that
    \[
        \mathbf{u}_0
        =
        \sum_{\mu\in K}\mathbf{a} _\mu\Phi_\mu
        \qquad
        \text{in }\mathbb R^2.
    \]
    Moreover,
    \[
        \Omega_0
        =
        \bigcup_{\mu\in K}S_\mu,
    \]
    where the union is disjoint. Distinct sectors in this decomposition do not share a boundary ray. The set \(K\), and hence the possible form of \(\mathbf{u}_0\), is classified as follows.

    \begin{enumerate}[label=\textnormal{(\roman*)}]
        \item Suppose that \(0<\gamma\le1\). Then \(1\le\,\#\,K\le 2\) and the only possible
        nontrivial blow-up limits are of the following six forms: \(\mathbf{a} _0\Phi_0\), \(\mathbf{a} _\pi\Phi_\pi\), \(\mathbf{a} _0\Phi_0+\mathbf{a} _\pi\Phi_\pi\), \(\mathbf{a} _{\pi/2}\Phi_{\pi/2}\), \(\mathbf{a} _{3\pi/2}\Phi_{3\pi/2}\), and \(\mathbf{a} _{\pi/2}\Phi_{\pi/2}+\mathbf{a} _{3\pi/2}\Phi_{3\pi/2}\), where every coefficient vector occurring in a given expression belongs to \(\mathbb S^{m-1}\cap\mathbb R_+^m\) (see Figure \ref{fig:planar-cones-gamma-at-most-one}).

        \item Suppose that \(\gamma>1\). Then \(1\le \#\,K\le4\), and the possible sets \(K\subset\mathfrak A\), together with the forms of blow-up limit \(\mathbf{u}_0\), are listed in Table~\ref{tab:planar-blowups-gamma-large} (see Appendix \ref{sec:planar-blow-up-appendix}).
    \end{enumerate}
    In both cases, each connected component of \(\Omega_0\) is a sector of opening angle \(\pi/(1+\gamma)\).
\end{maintheorem}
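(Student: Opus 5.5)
The plan is to reduce everything to an ODE on the circle and then to an explicit free boundary condition on the rays bounding each sector. By Theorem~\ref{thm:blow-up-ND}, $\mathbf{u}_0$ is a nontrivial $(1+\gamma)$-homogeneous global minimizer of $J_T$ with $\msf d_0(x)=|x_2|$. By Corollary~\ref{cor:blow-up-branching}, $\Omega_0$ is a finite disjoint union of open cones $\Omega_0^{(\ell)}$, and on each of them $\mathbf{u}_0=\mathbf{a}_\ell v_\ell$, with $v_\ell>0$ harmonic and $(1+\gamma)$-homogeneous.

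\textbf{Step 1: each cone is a sector with an explicit profile.} In the plane each cone is a sector $\{\theta\in(\theta_\ell-\beta_\ell,\theta_\ell+\beta_\ell)\}$. Write $v_\ell=r^{1+\gamma}f(\theta)$; then $f''+(1+\gamma)^2f=0$, $f>0$ inside the arc, and $f=0$ at its endpoints. This forces
\[
\beta_\ell=\alpha:=\frac{\pi}{2(1+\gamma)},\qquad v_\ell=c_\ell r^{1+\gamma}\cos\bigl((1+\gamma)(\theta-\theta_\ell)\bigr).
\]
So every component has opening $\pi/(1+\gamma)$. We also need $2\alpha<\pi$, which holds since $\gamma>0$.

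\textbf{Step 2: the free boundary condition on the rays.} For $\mathbf{v}$ near $\mathbf{u}_0$ we compute the first inner (domain) variation of $J_T$ with vector fields $\xi\in C_c^\infty(\R^2\setminus\{0\};\R^2)$. Since $\mathbf{u}_0=\mathbf{a}_\ell v_\ell$ with $|\mathbf{a}_\ell|=1$, we have $|\nabla\mathbf{u}_0|=|\nabla v_\ell|$ on $\Omega_0^{(\ell)}$. Each piece is smooth up to its boundary rays, so integrating by parts gives the condition
\[
|\nabla v_\ell|^2=|x_2|^{2\gamma}\quad\text{on }\partial\Omega_0^{(\ell)}\setminus\{0\},
\]
provided the rays bounding different sectors are distinct. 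Distinctness is proved first. Suppose two components shared a ray, and take $z\neq0$ on it. Then $\Omega_0\cup$ (the ray) has full measure near $z$. Harmonic replacement of $\mathbf{u}_0$ in a small ball $B_\rho(z)$ keeps the components nonnegative and leaves the volume term unchanged. It strictly lowers the Dirichlet energy, because $\mathbf{u}_0$ is not harmonic across the ray (its normal derivative jumps, by Hopf's lemma). This contradicts minimality, so distinct sectors do not share a boundary ray.

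Next, no ray can lie on $\{x_2=0\}$. There the right-hand side of the condition vanishes, whereas Hopf's lemma gives $|\nabla v_\ell|>0$ on the ray. Hence all rays lie off $T_\Sigma$, and there the condition holds classically.

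\textbf{Step 3: determining $\theta_\ell$ and $c_\ell$.} On the ray $\theta=\theta_\ell\pm\alpha$ we have $|\nabla v_\ell|=c_\ell(1+\gamma)r^\gamma$. The condition from Step~2 therefore reads $c_\ell(1+\gamma)=|\sin(\theta_\ell\pm\alpha)|^\gamma$. Equating the two endpoints gives $|\sin(\theta_\ell+\alpha)|=|\sin(\theta_\ell-\alpha)|$. This means $\sin\theta_\ell\cos\alpha=0$ or $\cos\theta_\ell\sin\alpha=0$. Since $0<\alpha<\pi/2$, it follows that $\theta_\ell\in\mathfrak A$.

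The constant is then read off directly. For $\mu\in\{0,\pi\}$ we get $c_\ell=\frac{1}{1+\gamma}\sin^\gamma\alpha$. For $\mu\in\{\pi/2,3\pi/2\}$ we get $c_\ell=\frac{1}{1+\gamma}|\sin(\pi/2\pm\alpha)|^\gamma=\frac{1}{1+\gamma}\cos^\gamma\alpha$. Both agree with $\kappa_\mu$, so $v_\ell=\Phi_\mu$ and $\mathbf{u}_0=\sum_{\mu\in K}\mathbf{a}_\mu\Phi_\mu$. Uniqueness of $K$ and of the vectors $\mathbf{a}_\mu$ follows because the sectors $S_\mu$ are disjoint and the $\mathbf{a}_\mu$ are unit vectors.

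\textbf{Step 4: counting admissible $K$.} The sectors must be disjoint and must not share a ray. Two sectors centred at adjacent axial directions, which differ by $\pi/2$, are compatible iff $2\alpha<\pi/2$, i.e.\ iff $\gamma>1$. Opposite directions, which differ by $\pi$, are always compatible.

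For $0<\gamma\le1$ only $\{\mu\}$ and the opposite pairs $\{0,\pi\}$, $\{\pi/2,3\pi/2\}$ survive, which gives the six forms. The case $\gamma=1$ is the borderline shared-ray case and is excluded by Step~2. For $\gamma>1$ every nonempty subset of $\mathfrak A$ is geometrically admissible, with $1\le\#K\le4$. One must still check that each such $\mathbf{u}_0$ is a global minimizer, or at least that the table lists exactly the possible configurations.

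\textbf{Main obstacle.} I expect the main difficulty to be the rigorous derivation of the free boundary condition in Step~2 in the vectorial, possibly multi-sector setting, together with the exclusion of shared rays. For this I would first attempt the domain-variation computation sector by sector, using the smoothness of $\Phi_\mu$ up to the rays. Combined with the harmonic-replacement competitor, this should rule out interior rays.
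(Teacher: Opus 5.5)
Your proposal is correct and follows essentially the same route as the paper: sector profiles from the angular ODE, exclusion of shared rays by harmonic replacement, the Bernoulli law from inner variations leading to $\sin(2\mu_\ell)\sin(2\beta)=0$, and then the packing count. The only difference is that you rule out rays on $T$ using the Bernoulli law itself, where the paper applies the contact gradient bound \eqref{eq:Lipschitz-contact} to $\mathbf{u}_0$; this is valid because the inner-variation derivation also works on a ray in $T$ (the weight $|x_2|^{2\gamma}$ is continuous and in $W^{1,1}_{\rm loc}$), and your closing concern about checking minimality of each listed configuration is unnecessary, since the theorem asserts only necessary forms.
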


\begin{figure}[ht]
    \centering
    \def\gammafig{0.5}
    \pgfmathsetmacro{\betaangle}{90/(1+\gammafig)}
    \def\farR{5.2}
    \def\halfW{1.62}
    \def\halfH{1.30}

    \begin{tikzpicture}[
        x=1cm,y=1cm,
        line cap=round,
        line join=round,
        every node/.style={font=\small},
        panel/.style={draw=black, dashed,line width=.45pt},
        tangent/.style={draw=black,densely dashed,line width=.45pt},
        normal/.style={draw=black,-{Latex[length=1.55mm,width=1.05mm]},line width=.45pt},
        freebdry/.style={draw=freeviolet,line width=1.05pt},
        phase/.style={fill=phasegreen,fill opacity=.115},
        title/.style={font=\small},
        profile/.style={font=\scriptsize,align=center}
    ]
        \def\DrawSector#1{%
            \path[phase]
                (0,0)
                -- ({#1-\betaangle}:\farR)
                arc[start angle={#1-\betaangle},end angle={#1+\betaangle},radius=\farR]
                -- cycle;
            \draw[freebdry] (0,0) -- ({#1-\betaangle}:\farR);
            \draw[freebdry] (0,0) -- ({#1+\betaangle}:\farR);
        }
        \def\PanelBase{%
            \draw[tangent] (-\halfW,0) -- (\halfW,0);
            \node[anchor=north east] at ({\halfW-.05},-.04) {\(T_\Sigma\)};
            \draw[normal] (0,-\halfH) -- (0,\halfH);
            \node[anchor=north west] at (.05,{\halfH-.03}) {\(e_2\)};
            \fill[contactred] (0,0) circle (1.25pt);
            \draw[panel] (-\halfW,-\halfH) rectangle (\halfW,\halfH);
        }

        \begin{scope}[shift={(-3.75,1.75)}]
            \node[title] at (0,1.61) {\textnormal{(a)} \(K=\{0\}\)};
            \begin{scope}\clip (-\halfW,-\halfH) rectangle (\halfW,\halfH);\DrawSector{0}\end{scope}
            \PanelBase
            \node[profile] at (.90,.34) {\(\mathbf{a} _0\Phi_0\)};
        \end{scope}

        \begin{scope}[shift={(0,1.75)}]
            \node[title] at (0,1.61) {\textnormal{(b)} \(K=\{\pi\}\)};
            \begin{scope}\clip (-\halfW,-\halfH) rectangle (\halfW,\halfH);\DrawSector{180}\end{scope}
            \PanelBase
            \node[profile] at (-.92,.34) {\(\mathbf{a} _\pi\Phi_\pi\)};
        \end{scope}

        \begin{scope}[shift={(3.75,1.75)}]
            \node[title] at (0,1.61) {\textnormal{(c)} \(K=\{0,\pi\}\)};
            \begin{scope}\clip (-\halfW,-\halfH) rectangle (\halfW,\halfH);\DrawSector{0}\DrawSector{180}\end{scope}
            \PanelBase
            \node[profile] at (-.92,.34) {\(\mathbf{a} _\pi\Phi_\pi\)};
            \node[profile] at (.92,.34) {\(\mathbf{a} _0\Phi_0\)};
        \end{scope}

        \begin{scope}[shift={(-3.75,-1.75)}]
            \node[title] at (0,1.61) {\textnormal{(d)} \(K=\{\frac{\pi}{2}\}\)};
            \begin{scope}\clip (-\halfW,-\halfH) rectangle (\halfW,\halfH);\DrawSector{90}\end{scope}
            \PanelBase
            \node[profile] at (.0,.72) {\(\mathbf{a} _{\pi/2}\Phi_{\pi/2}\)};
        \end{scope}

        \begin{scope}[shift={(0,-1.75)}]
            \node[title] at (0,1.61) {\textnormal{(e)} \(K=\{\frac{3\pi}{2}\}\)};
            \begin{scope}\clip (-\halfW,-\halfH) rectangle (\halfW,\halfH);\DrawSector{270}\end{scope}
            \PanelBase
            \node[profile] at (.0,-.72) {\(\mathbf{a} _{3\pi/2}\Phi_{3\pi/2}\)};
        \end{scope}

        \begin{scope}[shift={(3.75,-1.75)}]
            \node[title] at (0,1.61)
                {\textnormal{(f)} \(K=\{\frac{\pi}{2},\frac{3\pi}{2}\}\)};
            \begin{scope}\clip (-\halfW,-\halfH) rectangle (\halfW,\halfH);\DrawSector{90}\DrawSector{270}\end{scope}
            \PanelBase
            \node[profile] at (.0,.72) {\(\mathbf{a} _{\pi/2}\Phi_{\pi/2}\)};
            \node[profile] at (.0,-.72) {\(\mathbf{a} _{3\pi/2}\Phi_{3\pi/2}\)};
        \end{scope}
    \end{tikzpicture}
    \caption{The six possible sector configurations for \(0<\gamma\le1\), illustrated at \(\gamma=1/2\).}
    \label{fig:planar-cones-gamma-at-most-one}
\end{figure}

\begin{remark}
    We will list the configurations of planar blow-up limits when \(\gamma>1\) in Appendix \ref{sec:planar-blow-up-appendix} for the reader's convenience. 
\end{remark}

\begin{remark}
    Theorem~\ref{thm:planar-blow-up-classification} gives necessary forms of nontrivial blow-up limits. It does not assert that every listed configuration, with every permitted choice of coefficient vectors, is a global minimizer or is realized as a blow-up limit.
\end{remark}

\begin{remark}
    In the slow decay case \(0<\gamma\le 1\), our result means that either \(K\) is a singleton, or \(K=\{0,\pi\}\), or \(K=\{\pi/2,3\pi/2\}\). Thus \(\Omega_0\) has at most two connected components. In the two-branch cases, either
    \[
        \Omega_0
        =
        \left\{
            |x_2|<|x_1|\tan\beta
        \right\},
        \qquad
        \partial\Omega_0
        =
        \left\{
            x_2=\pm x_1\tan\beta
        \right\},
    \]
    or
    \[
        \Omega_0
        =
        \left\{
            |x_2|>|x_1|\cot\beta
        \right\},
        \qquad
        \partial\Omega_0
        =
        \left\{
            x_2=\pm x_1\cot\beta
        \right\}.
    \]
\end{remark}

In the last part of this section, we present two measure estimates for the free boundary \(\partial\Omega_{\mathbf{u}}\) near the contact set \(\Sigma\) and the non-degenerate contact set \(\Gamma_{\Sigma,\rm ND}(\mathbf{u})\). The first estimate is a logarithmic bound on the \((n-1)\)-dimensional Hausdorff measure of \(\partial\Omega_{\mathbf{u}}\) in a neighborhood of \(\Sigma\). The second estimate is a bound on the \(n\)-dimensional Lebesgue measure of a tubular neighborhood of \(\Gamma_{\Sigma,\rm ND}(\mathbf{u})\), which implies that \(\Gamma_{\Sigma,\rm ND}(\mathbf{u})\) is countably \((n-2)\)-rectifiable. We now state these two results. Recall that for a set \(E\subset\mathbb R^n\) and \(\rho>0\), we write the \(\rho\)-neighborhood of the set \(E\subset\R^n\) as 
\[
    N_\rho(E):=\{x\in\mathbb R^n:\operatorname{dist}(x,E)<\rho\}.
\]

\begin{maintheorem}{V}[Logarithmic measure estimate near \(\Sigma\)]\label{thm:measure-GammaSigma}
  Let \(\mathbf{u}\) be an \(\varepsilon_0\)-local minimizer of \(J\) in \(\mathbb{A}\). Then for every \(K\subset\subset D\), there exist constants \(r_K>0\) and \(C_K<\infty\) such that for every \(x_0\in K\cap\Sigma\) and every \(0<\rho<R<r_K\), 
    \[
        \cH^{n-1}\bigl(
            \partial\Omega_{\mathbf{u}}\cap (B_R(x_0)\setminus \cN_\rho(\Sigma))
        \bigr)\le C_K R^{n-1}\left( 1+\log\frac{R}{\rho} \right).
    \]
\end{maintheorem}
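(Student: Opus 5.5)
We prove Theorem~\ref{thm:measure-GammaSigma} by a dyadic decomposition of the annular region around \(\Sigma\). In each layer the weight \(Q\) is comparable to a positive constant, so there we can use the \(\cH^{n-1}\)-density estimates for minimizers with nondegenerate weight from \cite{CSY18}.

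First, fix \(K\subset\subset D\). Choose \(r_K\) smaller than \(\tfrac12\dist(K,\partial D)\) and smaller than the scale below which \(\Sigma\) is a \(C^{1,\mathrm{Dini}}\) graph over its tangent planes, uniformly along \(\Sigma\cap N_{r_K}(K)\). For \(x_0\in K\cap\Sigma\) and \(0<\rho<R<r_K\), write
\[
    B_R(x_0)\setminus\cN_\rho(\Sigma)\subset \bigcup_{k=0}^{k_*} A_k,\qquad A_k:=\bigl\{x\in B_R(x_0): 2^{-k-1}R\le \msf d_\Sigma(x)< 2^{-k}R\bigr\},
\]
where \(k_*\approx\log_2(R/\rho)\). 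The number of layers is therefore at most \(C(1+\log(R/\rho))\).

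Second, on \(A_k\) set \(t=2^{-k}R\), so that \(t^\gamma/2^\gamma\le Q\le t^\gamma\). Cover \(A_k\) by balls \(B_{t/4}(y)\) with \(y\in A_k\). Since \(\Sigma\) is a graph with small oscillation at scale \(R\), the slab \(A_k\) has volume \(\lesssim tR^{n-1}\). A Vitali/packing argument then bounds the number of such balls by \(C(R/t)^{n-1}\). Each enlarged ball \(B_{t/2}(y)\) stays at distance \(\ge t/4\) from \(\Sigma\), so \(Q\in[c t^\gamma,Ct^\gamma]\) on it. Rescale by \(\mathbf{v}(z)=\mathbf{u}(y+tz)/t^{1+\gamma}\). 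Then \(\mathbf{v}\) is a local minimizer on \(B_{1/2}\) of a functional with Hölder weight \(\tilde Q(z)=Q(y+tz)/t^\gamma\). This weight lies between two constants depending only on \(\gamma\), with Hölder seminorm uniformly bounded because \(\msf d_\Sigma\) is Lipschitz and bounded below there. The standard perimeter bound of \cite{CSY18} (scalar reduction plus Alt--Caffarelli density estimates) gives \(\cH^{n-1}(\partial\Omega_{\mathbf{v}}\cap B_{1/4})\le C\). Scaling back, \(\cH^{n-1}(\partial\Omega_{\mathbf{u}}\cap B_{t/4}(y))\le Ct^{n-1}\). Summing over the balls,
\[
    \cH^{n-1}(\partial\Omega_{\mathbf{u}}\cap A_k)\le C(R/t)^{n-1}t^{n-1}=CR^{n-1}.
\]
Summing over \(k\le k_*\) then yields \(C_KR^{n-1}(1+\log(R/\rho))\).

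The main obstacle is making the constant in the rescaled perimeter estimate uniform. The estimate of \cite{CSY18} depends on \(Q_{\min}\), \(\|Q\|_{C^{0,\alpha}}\), the local minimality threshold, and a Lipschitz bound on \(\mathbf{v}\) in \(B_{1/2}\).

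The weight bounds are handled by the rescaling above.

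For the Lipschitz bound, I would first establish the optimal growth \(|\mathbf{u}(x)|\le C\msf d_\Sigma(x)^{1+\gamma}+C\dist(x,\partial\Omega_{\mathbf{u}})\cdot(\ldots)\) near \(\Sigma\), namely the \(r^{1+\gamma}\) growth and Lipschitz estimate underlying Theorem~\ref{thm:blow-up-ND}. Concretely, \(\sup_{B_r(x)}|\mathbf{u}|\le Cr\,\max(r,\msf d_\Sigma(x))^\gamma\) for \(x\in\partial\Omega_{\mathbf{u}}\). This implies that \(\mathbf{v}\) is bounded in Lipschitz norm independently of \(k\) and \(y\).

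For the threshold, the \(\varepsilon_0\)-local minimality must survive rescaling. Perturbations supported in \(B_{t/2}(y)\) change energy by at most \(Ct^n\), so they are admissible once \(r_K\) is chosen small depending on \(\varepsilon_0\).

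Finally, the density estimates only need to be applied at free boundary points \(y'\in\partial\Omega_{\mathbf{u}}\cap B_{t/4}(y)\); this is where the growth bound is used. The \(\cH^{n-1}\) bound then follows from the upper and lower density estimates together with the representation \(\Delta|\mathbf{u}|\ge c\,\tilde Q\,\cH^{n-1}\mres\partial\Omega\) in the scalar-reduced setting.
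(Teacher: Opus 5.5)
Your proposal is correct and follows essentially the paper's route. It uses dyadic distance layers around \(\Sigma\), a packing count of \(C(R/t)^{n-1}\) balls per layer from the tubular-neighborhood volume bound, and the rescaled nondegenerate-weight perimeter estimate of \cite{CSY18} in each ball (with the growth/Lipschitz bounds and standard-scale comparison you describe), summed over \(\lesssim 1+\log(R/\rho)\) layers.

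One small slip: for \(y\in A_k\) you only know \(\msf d_\Sigma(y)\ge t/2\), so \(B_{t/2}(y)\) can come arbitrarily close to \(\Sigma\) and \(Q\) is not bounded below by \(ct^\gamma\) there. Shrinking the radii fixes this, e.g.\ cover balls \(B_{t/8}(y)\) with enlargements \(B_{t/4}(y)\) (the paper uses \(\delta_j/32\) and \(\delta_j/16\)), which gives \(\msf d_\Sigma\ge t/4\) on the enlarged balls.
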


\begin{maintheorem}{VI}[Rectifiability of the non-degenerate contact set]\label{thm:measure-GammaND}
    Let \(\mathbf{u}\) be an \(\varepsilon_0\)-local minimizer of \(J\) in \(\mathbb{A}\). Then, for every \(K\subset\subset D\), there are constants \(C_K<\infty\) and \(\rho_K>0\) such that
    \[
        \cL^n(\cN_\rho(\Gamma_{\Sigma,\rm ND}(\mathbf{u})\cap K))
        \le C_K \rho^2
        \qquad\text{for every }0<\rho<\rho_K.
    \]
    Consequently,
    \[
        \mathcal H^{n-2}(\Gamma_{\Sigma,\rm ND}(\mathbf{u})\cap K)\le C_K,
    \]
    and \(\Gamma_{\Sigma,\rm ND}(\mathbf{u})\cap K\) is countably \((n-2)\)-rectifiable. In particular, if \(n=2\), then \(\Gamma_{\Sigma,\rm ND}(\mathbf{u})\cap K\) is finite.
\end{maintheorem}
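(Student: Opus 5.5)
The bound \(\cL^n(\cN_\rho(\Gamma_{\Sigma,\rm ND}(\mathbf{u})\cap K))\le C_K\rho^2\) will come from a Naber--Valtorta/Edelen--Engelstein type covering argument driven by quantitative symmetry of blow-ups, combined with the density gap of Theorem \ref{thm:main}. I would first set up a monotone quantity. The natural candidate is a Weiss-type energy
\[
W(\mathbf{u};x_0,r)=r^{-n-2\gamma}\int_{B_r(x_0)}\bigl(|\nabla\mathbf{u}|^2+\msf d_\Sigma^{2\gamma}\chi_{\Omega_{\mathbf{u}}}\bigr)-(1+\gamma)r^{-n-1-2\gamma}\int_{\partial B_r(x_0)}|\mathbf{u}|^2,
\]
which I take, as in the proof of Theorem \ref{thm:main}, to be almost monotone at contact points, with an error controlled by the \(C^{1,\rm Dini}\) modulus of \(\Sigma\). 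Its limit at points of \(\Gamma_{\Sigma,\rm ND}\) is bounded below by \(\theta_1(n,\gamma)>0\), using \eqref{eq.density-gap} and the fact that \(W(\mathbf{u}_0;0,1)\) equals the weighted density of the homogeneous blow-up. Almost-rigidity then holds in the following form. If \(W(x,r)-W(x,\delta r)<\eta\), then \(\mathbf{u}_{x,r}\) is \(\varepsilon\)-close in \(L^2(B_1)\) to a \((1+\gamma)\)-homogeneous minimizer of \(J_T\) (Theorem \ref{thm:blow-up-ND}). I would prove this by compactness, using Lipschitz bounds and strong \(W^{1,2}\) convergence of rescalings.

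The key dimension-reduction step is to show that no nontrivial homogeneous minimizer of \(J_T\) is invariant under an \((n-1)\)-dimensional subspace. The weight \(\msf d_0^{2\gamma}=|x\cdot\nu|^{2\gamma}\) breaks translation invariance in the \(\nu\) direction. Hence any translation-invariant direction \(L\) of a homogeneous \(\mathbf{u}_0\) must lie in \(T_{x_0}\Sigma\) whenever \(0\) is a contact point of \(\mathbf{u}_0\). Suppose \(\dim L=n-1\). Then \(L=T_{x_0}\Sigma\) and \(\mathbf{u}_0\) depends only on \(x\cdot\nu\). It is then a \((1+\gamma)\)-homogeneous function of one variable that is harmonic on its positivity set, hence linear there. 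That contradicts the homogeneity degree \(1+\gamma\neq1\) unless \(\mathbf{u}_0\equiv0\), which is excluded at nondegenerate points. So the maximal symmetry is \((n-2)\)-dimensional; in the plane this matches Theorem \ref{thm:planar-blow-up-classification}. I would then establish quantitative cone-splitting. If \(\mathbf{u}_{x,r}\) is almost homogeneous about \(n-1\) effectively spanning nondegenerate points \(y_0,\dots,y_{n-2}\) in \(B_r(x)\cap\Gamma_{\Sigma,\rm ND}\), then it is almost \((n-2)\)-symmetric. Moreover, every point of \(\Gamma_{\Sigma,\rm ND}\cap B_{r/2}(x)\) lies within \(\varepsilon r\) of the affine \((n-2)\)-plane through the \(y_i\). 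The argument is by contradiction and compactness: a further point off the plane would give a limit with \((n-1)\)-dimensional symmetry, and the upper semicontinuity of the density and the gap keep limit points nondegenerate.

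With these ingredients, I would run the Edelen--Engelstein/Naber--Valtorta covering. Since \(W\) is uniformly bounded on \(K\) (Lipschitz bounds) and almost monotone, each point has only boundedly many scales \(2^{-k}\) at which \(W\) drops by more than \(\eta\); this is where the Dini error summability is used. At the remaining "good" scales the set is \(\varepsilon r\)-close to an \((n-2)\)-plane. A discrete Reifenberg theorem, applied with Jones-\(\beta\) numbers controlled by the Weiss drop through an \(L^2\)-best-approximation estimate, then yields the packing estimate \(\sum r_i^{n-2}\le C_K\) for a Vitali cover at scale \(\rho\). That packing estimate gives \(\cL^n(\cN_\rho)\le C_K\rho^2\). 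The bound on \(\cH^{n-2}\) follows, rectifiability follows from the rectifiable-Reifenberg theorem, and finiteness for \(n=2\) follows from the \(\cH^0\) bound.

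The main obstacle will be the \(L^2\)-best-approximation estimate that bounds \(\beta\)-numbers by Weiss energy drops. It requires a quantitative form of homogeneity, namely \(\int|x\cdot\nabla\mathbf{u}-(1+\gamma)\mathbf{u}|^2\) controlled by the drop, uniformly across both the vectorial structure and the curved weight. My first attempt would be to derive it directly from the monotonicity formula identity for \(W\), treating the \(\Sigma\)-curvature terms as Dini-summable errors.
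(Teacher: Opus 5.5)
Your outline matches the paper's proof: Dini-corrected Weiss almost-monotonicity, the density gap, compactness and rigidity at moving contact centers, exclusion of \((n-1)\)-symmetric \((1+\gamma)\)-homogeneous minimizers of \(J_T\) by the one-variable harmonic argument, a vectorial Jones \(\beta\)-estimate from the homogeneity defect \(\int|x\cdot\nabla\mathbf{u}-(1+\gamma)\mathbf{u}|^2\) bounded by the corrected Weiss drop, then the Edelen--Engelstein/Naber--Valtorta discrete and rectifiable Reifenberg covering and a finite cover of \(K\). The differences are cosmetic. Your claim that the invariant hyperplane must equal \(T_{x_0}\Sigma\) is unjustified as stated but also unnecessary, since the one-variable argument rules out an \((n-1)\)-symmetric profile for any invariant hyperplane, as in Lemma~\ref{lem:no-nminusone-symmetric-blowup}. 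The paper also measures symmetry with boundary-normalized rescalings \(T_{x,r}\mathbf{u}\), supported by a height non-collapse lemma, where you use natural rescalings and the Weiss lower bound to keep limits nonzero.
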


\subsection{Remarks on branching points}

The terminology ``branching point'' is borrowed from the scalar two-phase Bernoulli problem. In that setting, one minimizes
\[
    J_{\rm tp}(u;D)
    :=
    \int_D \left( |\nabla u|^2+\lambda_+^2\chi_{\{u>0\}} +\lambda_-^2\chi_{\{u<0\}}   \right) 
    \,\dx{x},
\]
where \(\lambda_\pm>0\) are positive constants. A point \(x_0\in\partial\{u>0\}\cap\partial\{u<0\}\) is called a branching point if  $\cL^n\left( B_r(x_0)\cap\{u=0\}\right)>0$  for every \(r>0\), see \cite[Section~1.2]{DSV21}. Thus, for the two-phase Bernoulli problem, branching describes the transition between a region in which the positive and negative phases meet directly and a region in which they are separated by a cusp-like zero phase.

In the breakthrough work \cite{DSV21}, De Philippis, Spolaor, and Velichkov established a complete regularity theory near every two-phase point, including the branching points \cite[Theorem~1.1]{DSV21}. Moreover, every two-phase blow-up is a one-homogeneous two-plane solution
\[
    H_{\alpha,e}(x)
    =
    \alpha(x\cdot e)^+
    -\beta(x\cdot e)^-,
    \quad
    \alpha^2-\beta^2=\lambda_+^2-\lambda_-^2,
    \quad
    \alpha\ge\lambda_+,\quad \beta\ge\lambda_-,
\]
and the blow-up is unique \cite[Lemma~2.2 and Lemma~4.1]{DSV21}. Consequently, although the zero phase exists at every finite scale around a branching point, it disappears in the first-order blow-up in the sense that the two phases occupy complementary half-spaces.

\begin{remark}[The multi-phase result of Siclari--Velichkov]
    During the preparation of this paper, the authors noticed a recent interesting result on possible branching in vectorial problems studied by Siclari and Velichkov \cite{SV26b}. They divide the components of a vector-valued map \(\mathbf{v}\) into prescribed groups \(v_1,\ldots,v_m\), require the supports of distinct groups to be disjoint, and minimize
    \[
        \mathcal J_{\boldsymbol\Lambda}(\mathbf{v};D)
        =
        \int_D|\nabla \mathbf{v}|^2\,\dx{x}
        +\sum_{\ell=1}^m
            \Lambda_\ell|\Omega_{\mathbf{v}_\ell}|,
        \qquad \Lambda_\ell>0.
    \]
    Components belonging to the same group may have overlapping supports. A common boundary point of two different groups is called a two-phase point if the vacuum set \(\{|\mathbf{v}|=0\}\) is absent from some neighborhood, and a branching point if every neighborhood meets the nodal sets, see \cite[Definition~1.1]{SV26b}. This result is highly insightful, as it provides an alternative approach to studying variants of the cooperative system introduced in \cite{CSY18}.
\end{remark}

\subsection{Plan of the paper}

The paper is organized as follows. In Section \ref{sec:C1Dini-geometry}, we introduce the \(C^{1,\rm Dini}\) geometry of the contact set \(\Sigma\) and prove some basic geometric properties. In Section \ref{sec:behavior-near-contact-points}, we study the behavior of local minimizers near contact points as preliminary results for the blow-up analysis. In Section \ref{sec:measure-estimate}, using the growth estimates and non-degeneracy away from the contact set, we prove the logarithmic measure estimate of the free boundary near \(\Sigma\), namely Theorem \ref{thm:measure-GammaSigma}. In Section \ref{sec:contact-free-boundary-structure}, we establish a Weiss-type monotonicity formula in Lemma \ref{lem:Weiss-monotonicity}, prove the compactness and convergence of blow-up sequence in Proposition \ref{prop:blow-up-convergence}. In this section, we complete the proof of Theorem \ref{thm:main}, Theorem \ref{thm:structure-ND}, Theorem \ref{thm:structure-mp-br}, Theorem \ref{thm:blow-up-ND}, Corollary \ref{cor:blow-up-classification}, Corollary \ref{cor:blow-up-branching}, and Theorem \ref{thm:planar-blow-up-classification}. Finally in the section \ref{sec:quantitative-stratification}, we prove the rectifiability of the non-degenerate contact set \(\Gamma_{\Sigma,\rm ND}(\mathbf{u})\), namely Theorem \ref{thm:measure-GammaND}, by using the quantitative stratification method developed in \cite{NV17,EE19}.

\section{Preliminaries}\label{sec:C1Dini-geometry}

In this section, we gather basic geometric facts about the \(C^{1,\mathrm{Dini}}\) hypersurface \(\Sigma\). These results are independent of the minimization problem and its local minimizers, and will be used in our subsequent analysis. The material in this section is largely based on \cite{McC24}. All definitions and properties below are purely geometric, depending only on the Dini geometry of \(\Sigma\) and the Bernoulli weight function \(Q(x)=\msf d_\Sigma^\gamma(x)\). We begin by recalling the notion of a Dini modulus, the foundational regularity concept that underpins all subsequent definitions.

\begin{definition}[Dini modulus]\label{def.Dini-modulus}
    A non-decreasing function \(\omega:[0,r_0]\to[0,\infty)\) is a Dini modulus if \(\omega(0)=0\) and 
    \[
        \int_0^{r_0} \frac{\omega(t)}{t}\,\dx{t}<\infty.
    \]
\end{definition}

With this in place, we next define closed embedded \(C^{1,\rm Dini}\) hypersurfaces, the core geometric class of our problem. Recall that a subset \(\Sigma\subset\R^n\) is a closed embedded hypersurface if it is an embedded \((n-1)\)-dimensional submanifold of \(\R^n\) that is closed in the standard Euclidean topology.

\begin{definition}[\(C^{1,\mathrm{Dini}}\) hypersurface]\label{def.C1Dini-hypersurface}
    A nonempty closed embedded \(C^1\) hypersurface \(\Sigma\subset\R^n\) is of class \(C^{1,\rm Dini}\) if, at every \(x\in \Sigma\), there exists an orthonormal coordinate system centered at \(x\) with \(T_x\Sigma=\R^{n-1}\times\{0\}\) such that \(\Sigma\) admits the local graph representation
    \[
        \Sigma\cap\left( B_{r_x}'(0)\times(-r_x,r_x) \right)
        =
        \bigl\{
            (y,f_x(y)):y\in B_{r_x}'(0)
        \bigr\},
    \]
    for some \(r_x>0\) and \(f_x\in C^1(B_{r_x}'(0))\) satisfying \(f_x(0)=\nabla f_x(0)=0\), and 
     \[ 
        |\nabla f_x(y)-\nabla f_x(z)| \le \omega_x(|y-z|) \quad\text{ for every }y,z\in B_{r_x}'(0),
    \] 
    where \(\omega_x\) is a Dini modulus on \([0,2r_x]\). If the same radius \(r_\Sigma\) and modulus \(\omega_\Sigma\) can be used at every \(x\in \Sigma\), we call \(\Sigma\) a \((r_\Sigma,\omega_\Sigma)\)-\(C^{1,\rm Dini}\) hypersurface.
\end{definition}

\begin{remark}
    All applications of the quantitative geometric estimates below are local. On each fixed compact portion of \(\Sigma\), a finite graph covering provides a common radius \(r_\Sigma>0\) and a common Dini modulus \(\omega_\Sigma\), after reducing the radius. The constants and admissible radii in these local applications are understood to depend on this choice.
\end{remark}

\begin{remark}
    In Definition \ref{def.C1Dini-hypersurface}, \(B_r'(0)\) denotes the open ball of radius \(r\) in \(\R^{n-1}\). We also write \(B_r^{T_x\Sigma}(0):=\{y\in T_x\Sigma:|y|<r\}\) for the ball in the tangent space, and define the corresponding cylinder in the original coordinates by
    \[
        \operatorname{Cyl}_x(r) := \bigl\{ x + y + t\nu_x : y\in B_r^{T_x\Sigma}(0),\ |t| < r \bigr\}.
    \]
    Under the orthonormal coordinate system chosen in Definition \ref{def.C1Dini-hypersurface}, \(\operatorname{Cyl}_x(r_x)\) coincides exactly with \(B'_{r_x}(0)\times(-r_x,r_x)\).
\end{remark}

We then introduce the projection mapping onto \(\Sigma\), a standard geometric tool used repeatedly in our later analysis. 

\begin{definition}[Projection]
    For a nonempty closed hypersurface \(\Sigma\subset\R^n\), define the nearest-point projection
    \[
        \Pi_\Sigma(x) := \bigl\{ p\in\Sigma : |x-p| = \msf d_\Sigma(x) \bigr\}, \qquad x\in\R^n.
    \]
    This set is always nonempty. In the singleton case, the unique point is denoted \(\pi_\Sigma(x)\).
\end{definition}

\begin{remark}
    At any point \(x\notin\Sigma\) at which \(\msf d_\Sigma(x)\) is differentiable, the nearest-point projection is uniquely defined, and we have 
    \[
        \nabla \msf d_\Sigma(x)
        =
        \frac{x-\pi_\Sigma(x)}{|x-\pi_\Sigma(x)|}.
    \]
    As \(\msf d_\Sigma\) is a Lipschitz function with Lipschitz constant \(1\), this identity holds \(\cL^n\)-a.e. on \(\R^n\setminus\Sigma\).
\end{remark}

We conclude this section by collecting several basic geometric properties of \(C^{1,\rm Dini}\) hypersurfaces.

\begin{proposition}[Basic properties of a \(C^{1,\mathrm{Dini}}\) hypersurface]\label{prop:C1Dini-properties}
    Let \(\Sigma\subset\R^n\) be a \((r_\Sigma,\omega_\Sigma)\)-\(C^{1,\rm Dini}\) hypersurface, and set \(Q (x)= \msf d_\Sigma^\gamma\) for some \(\gamma>0\). Write \(L := \omega_\Sigma(r_\Sigma)\). Then the following properties hold.
    \begin{enumerate}[label=(\roman*)]
        \item \label{item:projection-estimate} For every \(x_0\in\Sigma\), every \(0<r\le r_\Sigma/2\), all \(x\in B_r(x_0)\setminus\Sigma\), and any \(p\in\Pi_\Sigma(x)\), 
        \begin{equation}\label{eq.projection-estimate}
            \left|
                (p-x_0)\cdot\frac{x-p}{|x-p|}
            \right|
            \le
            2r\,\omega_\Sigma(2r).
        \end{equation}
        \item \label{item:neighborhood-measure} For every \(x_0\in\Sigma\) and \(0<\rho\le R\le r_\Sigma/2\),
        \begin{equation}\label{eq.neighborhood-measure}
            \cL^n\bigl(N_\rho(\Sigma)\cap B_R(x_0)\bigr)
            \le
            C(n,L)\,\rho\,R^{n-1},
        \end{equation}
         where \(N_\rho(\Sigma)=\{x\in\R^n:\msf d_\Sigma(x)<\rho\}\) is the \(\rho\)-neighborhood of \(\Sigma\).
        \item \label{item:Q2-regularity} The Bernoulli weight function satisfies \(Q^2(x) \in C_{\rm loc}^{0,\min\{2\gamma,1\}}(\R^n)\). Moreover, at every point \(x\notin\Sigma\) where \(\msf d_\Sigma\) is differentiable,
        \[
            \nabla (Q^2(x))
            =
            2\gamma \msf d_\Sigma^{2\gamma-1}
            \,\frac{x-\pi_\Sigma(x)}{|x-\pi_\Sigma(x)|}.
        \]
        \item \label{item:gradient-integrability} For all \(x_0\in\Sigma\) and \(0<R\le r_\Sigma/2\),
        \begin{equation}\label{eq.gradient-integrability}
            \int_{B_R(x_0)} \msf d_\Sigma^{2\gamma-1}\,\dx{x} \le C(n,\gamma,L) R^{n+2\gamma-1}.
        \end{equation}
    \end{enumerate}
\end{proposition}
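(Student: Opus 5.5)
I will prove the four items in order, working throughout in the local graph chart of Definition \ref{def.C1Dini-hypersurface} at \(x_0\). There \(T_{x_0}\Sigma=\R^{n-1}\times\{0\}\) and \(\Sigma=\{(y,f(y))\}\), with \(f(0)=\nabla f(0)=0\), \(|\nabla f(y)-\nabla f(z)|\le\omega_\Sigma(|y-z|)\), and hence \(|\nabla f|\le L\) on \(B'_{r_\Sigma}\).

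\textbf{Item \ref{item:projection-estimate}.} Take \(x\in B_r(x_0)\setminus\Sigma\) and \(p\in\Pi_\Sigma(x)\). Then \(|p-x|\le|x-x_0|<r\), so \(|p-x_0|<2r\le r_\Sigma\), and \(p\) lies in the graph chart. First-order minimality of \(|x-\cdot|^2\) on \(\Sigma\) gives that \(\nu:=(x-p)/|x-p|\) is normal to \(\Sigma\) at \(p\), i.e. \(\nu\perp T_p\Sigma\).

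Now write \(p=(y_p,f(y_p))\) and \(x_0=(0,f(0))\). The vector \(x_0-p\) differs from the tangent vector \((-y_p,-\nabla f(y_p)\cdot y_p)\in T_p\Sigma\) only in its last coordinate, by
\[
f(0)-f(y_p)+\nabla f(y_p)\cdot y_p=\int_0^1\bigl(\nabla f(y_p)-\nabla f(ty_p)\bigr)\cdot y_p\,\dx{t}.
\]
This quantity has absolute value at most \(|y_p|\,\omega_\Sigma(|y_p|)\le 2r\,\omega_\Sigma(2r)\). Since \(|\nu\cdot e_n|\le1\) and \(\nu\) annihilates the tangent part, \eqref{eq.projection-estimate} follows.

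\textbf{Item \ref{item:neighborhood-measure}.} For \(x\in N_\rho(\Sigma)\cap B_R(x_0)\) pick \(p\in\Pi_\Sigma(x)\), so that \(p\in\Sigma\cap B_{2R}(x_0)\) lies in the chart. Since \(|\nabla f|\le L\), the vertical distance from \(x\) to the graph is at most \(\sqrt{1+L^2}\,\rho\). Hence the set is contained in
\[
\{(y,t):|y|<R,\ |t-f(y)|<\sqrt{1+L^2}\,\rho\},
\]
whose measure is at most \(2\sqrt{1+L^2}\,\omega_{n-1}R^{n-1}\rho\) by Fubini.

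\textbf{Item \ref{item:Q2-regularity}.} The function \(\msf d_\Sigma\) is \(1\)-Lipschitz, and \(t\mapsto t^{2\gamma}\) is \(C^{0,\min\{2\gamma,1\}}\) on bounded sets: for \(2\gamma\le1\) use subadditivity \(|a^{2\gamma}-b^{2\gamma}|\le|a-b|^{2\gamma}\); for \(2\gamma>1\) it is locally Lipschitz. Composing gives \(Q^2\in C^{0,\min\{2\gamma,1\}}_{\rm loc}\). The gradient formula is the chain rule applied at points of differentiability of \(\msf d_\Sigma\) off \(\Sigma\), combined with the preceding remark on \(\nabla\msf d_\Sigma\).

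\textbf{Item \ref{item:gradient-integrability}.} If \(2\gamma\ge1\), the integrand is bounded by \((2R)^{2\gamma-1}\) on \(B_R(x_0)\), and the bound is trivial. If \(2\gamma<1\), use the layer-cake formula together with item \ref{item:neighborhood-measure}. The range \(\msf d_\Sigma\le R\) on \(B_R(x_0)\), since \(x_0\in\Sigma\). Then
\[
\int_{B_R}\msf d_\Sigma^{2\gamma-1}=\int_0^\infty\cL^n\bigl(\{\msf d_\Sigma^{2\gamma-1}>s\}\cap B_R\bigr)\,\dx{s}.
\]
Substituting \(s=\rho^{2\gamma-1}\) reduces this to a bound by
\[
R^{2\gamma-1}|B_R|+C\int_0^R\rho R^{n-1}(1-2\gamma)\rho^{2\gamma-2}\,\dx{\rho}\le C R^{n+2\gamma-1},
\]
and the integral converges precisely because \(2\gamma>0\).

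The only genuinely delicate step is item \ref{item:projection-estimate}: one must ensure that \(p\) stays inside the chart and extract the Dini-modulus bound for the normal component. The other items are routine.
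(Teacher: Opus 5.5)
Your proposal is correct and follows essentially the same route as the paper: you work in the graph chart at \(x_0\), use the FTC/Dini-modulus bound on \(f(y)-\nabla f(y)\cdot y\) for (i), bound the vertical slab via Fubini for (ii) (with the slightly sharper constant \(\sqrt{1+L^2}\) in place of the paper's \(1+L\)), compose Lipschitz maps for (iii), and combine the layer-cake formula with (ii) for (iv).
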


\begin{proof}
    By translation and rotation, we may assume without loss of generality that \(x_0 = 0\) and \(T_0\Sigma = \R^{n-1}\times\{0\}\). Then \(\Sigma\) admits the local graph representation
    \begin{equation}\label{eq.local-graph-representation}
        \Sigma\cap \operatorname{Cyl}_0(r_\Sigma) = \{(y,f(y)):y\in B_{r_\Sigma}'(0)\},
    \end{equation}
    with \(f(0)=\nabla f(0)=0\), and 
    \begin{equation}\label{eq.Dini-gradient-modulus}
        \|\nabla f\|_{L^\infty(B_{r_\Sigma}'(0))}\le L,\quad 
        |\nabla f(y)-\nabla f(z)|\le \omega_\Sigma(|y-z|).
    \end{equation}
    If \(x\in B_{r_\Sigma/2}(0)\) and \(p\in\Pi_\Sigma(x)\), then \(|p| \le |x| + \msf d_\Sigma(x) \le 2|x| < r_\Sigma\), so \(p\) lies entirely within the graph region above.

    To prove part \ref{item:projection-estimate}, write \(p = (y, f(y))\) and set \(\nu = (x-p)/|x-p|\). Since \(p\) is a nearest point, \(\nu\) is a unit normal to \(\Sigma\) at \(p\). By the fundamental theorem of calculus, 
    \[
        \begin{aligned}
            |p\cdot\nu|&\le |f(y)-\nabla f(y)\cdot y|\\ 
            &=\left|
                \int_0^1\left( \nabla f(ty)-\nabla f(y) \right)\cdot y\,\dx{t}
            \right|\\ 
            &\le |y|\omega_\Sigma(|y|)\le 2r\omega_\Sigma(2r),
        \end{aligned}
    \]
    where the last step uses \eqref{eq.Dini-gradient-modulus} and \(|y| \le |p| \le 2r\).
    
    For part \ref{item:neighborhood-measure}, let \(x = (x', x_n) \in N_\rho(\Sigma) \cap B_R(0)\) and pick \(p = (y, f(y)) \in \Pi_\Sigma(x)\). Then 
    \[
        |x_n-f(x')|\le |x_n-f(y)|+L|y-x'|\le (1+L)|x-p|<(1+L)\rho.
    \]
    This implies 
    \[
        N_\rho(\Sigma)\cap B_R(0)
        \subset
        \left\{
            (x',x_n):
            |x'|<R,\ |x_n-f(x')|<(1+L)\rho
        \right\}.
    \]
    Thus each vertical section of \(N_\rho(\Sigma)\cap B_R(0)\) has length at most \(2(1+L)\rho\). Integration over \(B_R'(0)\) yields estimate \eqref{eq.neighborhood-measure}, completing the proof of part \ref{item:neighborhood-measure}.

    Part \ref{item:Q2-regularity} follows from the Lipschitz continuity of \(\msf d_\Sigma\), and the regularity of \(t\mapsto t^{2\gamma}\) on \([0,\infty)\), so we omit the details.

    Finally, note that \(\msf d_\Sigma(x) \le R\) for all \(x\in B_R(0)\). When \(\gamma \ge 1/2\), we have \(2\gamma - 1 \ge 0\), so \(\msf d_\Sigma^{2\gamma-1} \le R^{2\gamma-1}\) on \(B_R(0)\), and estimate \eqref{eq.gradient-integrability} is immediate. When \(0 < \gamma < 1/2\), set \(\beta := 1 - 2\gamma \in (0,1)\). By Tonelli's theorem and estimate \eqref{eq.neighborhood-measure},
    \[
        \begin{aligned}
            \int_{B_R(0)}\msf d_\Sigma^{-\beta}\,\dx{x}={}&R^{-\beta}\cL^n(B_R(0))+\beta\int_0^R t^{-\beta-1}\cL^n\left( N_t(\Sigma)\cap B_R(0) \right)\,\dx{t}\\ 
            \le{}&C(n)R^{n-\beta} + C(n,L)\beta R^{n-1}\int_0^R t^{-\beta}\,\dx{t}\\ 
            \le{}&C(n,\gamma,L)R^{n-\beta}=C(n,\gamma,L)R^{n+2\gamma-1}.
        \end{aligned}
    \]
    This proves \eqref{eq.gradient-integrability}, and the proof of Proposition \ref{prop:C1Dini-properties} is complete.
\end{proof}

\section{Behavior of the local minimizer near contact points}\label{sec:behavior-near-contact-points}

In this section, we turn to the quantitative behavior of the local minimizer \(\mathbf{u}\) around contact points \(x_0\in \Gamma_\Sigma(\mathbf{u})\). Specifically, we derive growth, non-degeneracy, and gradient estimates near the contact set. We first recall the variational identities for local minimizers, and fix a uniform scale to validate our comparison arguments on balls.

\begin{lemma}\label{lem:EL}
    Let \(\mathbf{u}=(u_1,\dots,u_m)\) be an \(\varepsilon_0\)-local minimizer of \(J\) in \(\mathbb A\). Then:
    \begin{enumerate}[label=(\roman*)]
        \item Each component \(u_i\) is weakly subharmonic in \(D\):
        \[
            \int_D\nabla u_i\cdot\nabla \eta\,\dx{x}\le 0,
        \]
        for every nonnegative \(\eta\in C_0^\infty(D)\).
        \item \label{item:innerv}
        For every \(\Phi\in C_0^\infty(D;\R^n)\), 
        \begin{equation}\label{eq:inner-dist}
            \begin{aligned}
                \int_D
                \Bigg[
                &\left(|\nabla \mathbf{u}|^2+\msf d_\Sigma^{2\gamma}(x)\chi_{\Omega_{\mathbf{u}}}\right)\operatorname*{div}\Phi
                -2\sum_{i=1}^m \langle D\Phi\,\nabla u_i,\nabla u_i\rangle\\ 
                &+
                2\gamma \msf d_\Sigma^{2\gamma-1}(x)\chi_{\Omega_{\mathbf{u}}}
                \frac{x-\pi_\Sigma(x)}{|x-\pi_\Sigma(x)|}\cdot\Phi
                \Bigg]\,\dx{x}
                =0.
            \end{aligned}
        \end{equation}
        The projection formula in the last term is understood a.e. outside the hypersurface \(\Sigma\).
    \end{enumerate}
\end{lemma}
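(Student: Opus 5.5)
For part (i), I would compare \(\mathbf{u}\) with the one-sided competitors \(\mathbf{v}_t=(u_1,\dots,u_i-t\eta,\dots,u_m)\), where \(\eta\in C_0^\infty(D)\) is nonnegative and \(t>0\) is small. To keep \(\mathbf{v}_t\) admissible I would in fact use \((u_i-t\eta)_+\). This still has \(v_{t,i}\ge0\), and its positivity set satisfies \(\Omega_{\mathbf{v}_t}\subset\Omega_{\mathbf{u}}\), so the measure term does not increase. For \(t\) small, the distance in \eqref{eq.local-min} is \(O(t^2)+\|\chi_{\Omega_{\mathbf{u}}}-\chi_{\Omega_{\mathbf{v}_t}}\|_{L^1}\). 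The second term needs separate control. I would bound it by \(|\{0<u_i\le t\eta\}|\), which tends to \(0\) as \(t\to0^+\). So \(\mathbf{v}_t\) is an admissible competitor within distance \(\varepsilon_0\). Then
\[
0\le J(\mathbf{v}_t)-J(\mathbf{u})\le\int_D|\nabla(u_i-t\eta)_+|^2-|\nabla u_i|^2\le -2t\int_D\nabla u_i\cdot\nabla\eta+t^2\int_D|\nabla\eta|^2,
\]
using \(|\nabla(w)_+|^2\le|\nabla w|^2\). Dividing by \(t\) and letting \(t\to0^+\) gives \(\int\nabla u_i\cdot\nabla\eta\le0\).

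For part (ii), I would use domain variations. Set \(\Phi_t(x)=x+t\Phi(x)\), a diffeomorphism of \(D\) onto itself fixing a neighbourhood of \(\partial D\) for \(|t|\) small, and define \(\mathbf{u}_t=\mathbf{u}\circ\Phi_t^{-1}\). Then \(\mathbf{u}_t\in\mathbb A\), and \(\Omega_{\mathbf{u}_t}=\Phi_t(\Omega_{\mathbf{u}})\). Both \(\|\nabla(\mathbf{u}_t-\mathbf{u})\|_{L^2}\to0\) and \(|\Phi_t(\Omega_{\mathbf{u}})\triangle\Omega_{\mathbf{u}}|\to0\) as \(t\to0\), the latter by continuity of translations in \(L^1\) and the change of variables. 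Hence \(t\mapsto J(\mathbf{u}_t)\) has a local minimum at \(t=0\) and \(\frac{d}{dt}J(\mathbf{u}_t)|_{t=0}=0\), provided the derivative exists.

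To compute the derivative I would change variables \(y=\Phi_t(x)\):
\[
J(\mathbf{u}_t)=\int_D\Bigl(\sum_i|(D\Phi_t)^{-T}\nabla u_i|^2+\msf d_\Sigma^{2\gamma}(\Phi_t(x))\chi_{\Omega_{\mathbf{u}}}\Bigr)\det D\Phi_t\,\dx{x}.
\]
The Dirichlet part differentiates in the standard way, using \(\det D\Phi_t=1+t\operatorname{div}\Phi+O(t^2)\) and \((D\Phi_t)^{-1}=I-tD\Phi+O(t^2)\). This gives \(\int|\nabla\mathbf{u}|^2\operatorname{div}\Phi-2\sum_i\langle D\Phi\nabla u_i,\nabla u_i\rangle\).

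The weight part gives \(\int\msf d_\Sigma^{2\gamma}\chi_{\Omega_{\mathbf{u}}}\operatorname{div}\Phi\) plus \(\int\chi_{\Omega_{\mathbf{u}}}\frac{d}{dt}\msf d_\Sigma^{2\gamma}(x+t\Phi)|_{t=0}\). This last term is the main obstacle, because \(Q^2\) is only \(C^{0,\min\{2\gamma,1\}}\) by Proposition \ref{prop:C1Dini-properties}\ref{item:Q2-regularity}.

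I would handle it by dominated convergence on the difference quotients \(t^{-1}(\msf d_\Sigma^{2\gamma}(x+t\Phi)-\msf d_\Sigma^{2\gamma}(x))\). At a.e. \(x\notin\Sigma\) these converge to \(2\gamma\msf d_\Sigma^{2\gamma-1}\nabla\msf d_\Sigma\cdot\Phi\), with \(\nabla\msf d_\Sigma\) given by the projection formula, and \(\Sigma\) is Lebesgue-null.

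For \(\gamma\ge1/2\), the function \(\msf d_\Sigma^{2\gamma}\) is locally Lipschitz, so domination is immediate. For \(0<\gamma<1/2\), the mean value inequality along the segment bounds the quotient by \(C\|\Phi\|_\infty\sup_{s\in[0,t]}\msf d_\Sigma^{2\gamma-1}(x+s\Phi)\). This supremum is not obviously uniformly integrable. I would instead write the quotient as \(\int_0^1 2\gamma\msf d_\Sigma^{2\gamma-1}\nabla\msf d_\Sigma\cdot\Phi\,(x+st\Phi)\,\dx{s}\), which is valid since \(\msf d_\Sigma^{2\gamma}\) is absolutely continuous along a.e. line. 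Then I would use Fubini together with the change of variables \(x\mapsto x+st\Phi\), whose Jacobian is close to \(1\). Combined with \eqref{eq.gradient-integrability}, this gives convergence of the integrated difference quotients via continuity of translation-type operators in \(L^1\) applied to \(\msf d_\Sigma^{2\gamma-1}\in L^1_{\rm loc}\).

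Collecting the terms yields \eqref{eq:inner-dist}.
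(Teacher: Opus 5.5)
Your proposal is correct and follows the paper's own route: truncated competitors \((u_i-t\eta)_+\) for (i), and inner variations \(\mathbf{u}\circ(\mathrm{Id}+t\Phi)^{-1}\) for (ii), where the weight term is differentiated using \(\msf d_\Sigma^{2\gamma-1}\in L^1_{\rm loc}\), which is exactly what the paper's remark on \(Q^2\in W^{1,1}_{\rm loc}(D)\) provides. There are two small slips, neither affecting the conclusion. In (i), the gradient part of the distance is \(\|\nabla\min\{u_i,t\eta\}\|_{L^2}^2=O(t^2)+\int_{\{0<u_i\le t\eta\}}|\nabla u_i|^2\), which is \(o(1)\) rather than \(O(t^2)\) but still suffices. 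In (ii), the segments \(s\mapsto x+st\Phi(x)\) do not form a parallel family, so the integral formula for the difference quotient (with \(\Phi\) evaluated at \(x\), not at \(x+st\Phi\)) should be justified by approximating with the Lipschitz functions \((\msf d_\Sigma+\varepsilon)^{2\gamma}\) and using Fubini, rather than by absolute continuity on lines.
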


\begin{proof}
    The first assertion follows from one-sided perturbations \(v_i = (u_i - t\eta)_+\) (with all other components unchanged), which do not increase the phase term, upon dividing by \(t>0\) and letting \(t\to 0^+\). The second is obtained by differentiating the energy of \(\mathbf{u}\circ(\operatorname{Id}+t\Phi)^{-1}\) at \(t=0\). For small \(|t|\) these competitors satisfy the requirement in \eqref{eq.local-min}, and differentiability of the weight is justified by the next remark. 
\end{proof}

\begin{remark}
    By part \ref{item:gradient-integrability} of Proposition \ref{prop:C1Dini-properties}, we have \(\msf d_\Sigma^{2\gamma-1} \in L^1_{\text{loc}}(D)\). Applying the chain rule to the regularized functions \((\msf d_\Sigma + \varepsilon)^{2\gamma}\) and passing to the limit as \(\varepsilon \to 0^+\) shows that \(Q^2(x) \in W^{1,1}_{\text{loc}}(D)\). In particular, the weight term in \eqref{eq:inner-dist} is locally integrable.
\end{remark}

The following lemma establishes a common scale that validates both harmonic replacement and truncation arguments in small balls.

\begin{lemma}[Comparison at small scales]\label{lem:standard-scale}
    Let \(\mathbf{u}\) be an \(\varepsilon_0\)-local minimizer which satisfies \eqref{eq.local-min}, and set \(Q_D := \sup_D \msf d_\Sigma^\gamma < \infty\). There exists \(\bar r > 0\) such that for every ball \(B_\rho(x) \subset\subset D\) with \(0 < \rho \le \bar r\), we have
    \begin{equation}\label{eq:ball-minimality}
        J(\mathbf{u};B_\rho(x))\le J(\mathbf{v};B_\rho(x))
    \end{equation}
    for all \(\mathbf{v}\in\mathbb{A}\) with \(\mathbf{v}=\mathbf{u}\) a.e. in \(D\setminus B_\rho(x)\). In particular, harmonic replacement is valid on such balls and one may take any \(\bar r> 0\) satisfying 
    \begin{equation}\label{eq.r_0}
        \sup_{\substack{B_\rho(x)\subset\subset D\\ 0<\rho\le\bar r}}\left[ 
            4\int_{B_\rho(x)}|\nabla\mathbf{u}|^2\,\dx{y} + (2Q_D^2+1)\cL^n(B_\rho(x))
        \right]<\varepsilon_0.
    \end{equation}
\end{lemma}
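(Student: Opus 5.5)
The plan is to show that any competitor \(\mathbf{v}\in\mathbb A\) that agrees with \(\mathbf{u}\) outside a small ball \(B_\rho(x)\) is automatically admissible in the local-minimality condition \eqref{eq.local-min}. Once this holds, \(J(\mathbf{u};D)\le J(\mathbf{v};D)\). Since the integrands coincide on \(D\setminus B_\rho(x)\), subtracting the common part gives \eqref{eq:ball-minimality}. The only danger is that \(\mathbf{v}\) has a large Dirichlet energy in \(B_\rho(x)\). We handle this by a reduction: it suffices to test competitors whose energy in the ball is at most that of \(\mathbf{u}\), because any other competitor trivially satisfies \eqref{eq:ball-minimality}.

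First, fix \(\bar r\) so that \eqref{eq.r_0} holds. This is possible because \(|\nabla\mathbf{u}|^2\in L^1(D)\) and is therefore uniformly absolutely continuous: \(\sup_{|B_\rho(x)|\le|B_{\bar r}|}\int_{B_\rho(x)}|\nabla\mathbf{u}|^2\to0\) as \(\bar r\to0\). Also \(Q_D<\infty\), since \(D\) is bounded and \(\msf d_\Sigma\) is continuous.

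Second, take \(\mathbf{v}\in\mathbb A\) with \(\mathbf{v}=\mathbf{u}\) off \(B_\rho(x)\), \(\rho\le\bar r\). If \(J(\mathbf{v};B_\rho(x))>J(\mathbf{u};B_\rho(x))\), there is nothing to prove. Otherwise
\[
\int_{B_\rho(x)}|\nabla\mathbf{v}|^2\le J(\mathbf{v};B_\rho(x))\le J(\mathbf{u};B_\rho(x))\le\int_{B_\rho(x)}|\nabla\mathbf{u}|^2+Q_D^2\cL^n(B_\rho(x)).
\]
Then
\[
\|\nabla(\mathbf{u}-\mathbf{v})\|_{L^2(D)}^2\le2\int_{B_\rho}|\nabla\mathbf{u}|^2+2\int_{B_\rho}|\nabla\mathbf{v}|^2\le4\int_{B_\rho}|\nabla\mathbf{u}|^2+2Q_D^2\cL^n(B_\rho),
\]
and \(\|\chi_{\Omega_{\mathbf{u}}}-\chi_{\Omega_{\mathbf{v}}}\|_{L^1(D)}\le\cL^n(B_\rho)\). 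Summing, the left side of \eqref{eq.local-min} is bounded by the bracket in \eqref{eq.r_0}, which is \(<\varepsilon_0\). Local minimality then gives \(J(\mathbf{u};D)\le J(\mathbf{v};D)\), and cancelling the identical contributions on \(D\setminus B_\rho(x)\) yields \eqref{eq:ball-minimality}.

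Third, harmonic replacement is a special case. Take \(\mathbf{h}\) with each \(h_i\) harmonic in \(B_\rho(x)\), \(h_i=u_i\) off the ball. Then \(h_i\ge0\) by the maximum principle, so \(\mathbf{h}\in\mathbb A\), and truncations \((u_i-t\eta)_+\) are likewise admissible. There is no real obstacle here; the only subtle point is the case split above, which avoids needing any a priori bound on \(\nabla\mathbf{v}\).
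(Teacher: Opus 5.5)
Your proposal is correct and follows the paper's proof. You reduce to competitors with \(J(\mathbf v;B_\rho(x))\le J(\mathbf u;B_\rho(x))\), bound \(\int_{B_\rho(x)}|\nabla\mathbf v|^2\) by \(\int_{B_\rho(x)}|\nabla\mathbf u|^2+Q_D^2\cL^n(B_\rho(x))\), and use \eqref{eq.r_0} to show the distance in \eqref{eq.local-min} is below \(\varepsilon_0\).
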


\begin{proof}
    Such a radius exists by the absolute continuity of \(\int|\nabla\mathbf{u}|^2\,\dx{x}\). For brevity, write \(B = B_\rho(x)\). It suffices to consider only competitors satisfying \(J(\mathbf{v}; B) \le J(\mathbf{u}; B)\). For any such competitor,
    \[
        \int_B|\nabla\mathbf{v}|^2\,\dx{y}\le \int_B |\nabla\mathbf{u}|^2\,\dx{y}+Q_D^2\cL^n(B),
    \]
    and hence 
    \[
        \begin{aligned}
            &\|\nabla(\mathbf{u}-\mathbf{v})\|_{L^2(D)}^2+\|\chi_{\Omega_{\mathbf{u}}}-\chi_{\Omega_{\mathbf{v}}}\|_{L^1(D)}\\ 
            \le{}& 4\int_B|\nabla\mathbf{u}|^2\,\dx{y}+(2Q_D^2+1)\cL^n(B)<\varepsilon_0.
        \end{aligned}
    \]
    Thus \eqref{eq:ball-minimality} follows from local minimality of \(\mathbf{u}\).
\end{proof}

\begin{remark}\label{rem:continuity}
    Let \(B:=B_\rho(x)\) be such a ball, and let \(\mathbf{h}\) be the componentwise harmonic replacement of \(\mathbf{u}\) in \(B\), extended by \(\mathbf{u}\) outside \(B\). Comparison and Dirichlet orthogonality yield
    \begin{equation}\label{eq.esm1}
        \begin{aligned}
            \int_D|\nabla(\mathbf{u}-\mathbf{h})|^2\,\dx{y}
            &\le
            \int_{B\setminus\Omega_{\mathbf{u}}}
                \msf d_\Sigma(y)^{2\gamma}\,\dx{y}\\
            &\le
            \bigl(\msf d_\Sigma(x)+\rho\bigr)^{2\gamma}
            \cL^n(B\setminus\Omega_{\mathbf{u}}).
        \end{aligned}
    \end{equation}
    The harmonic replacement argument of \cite[Lemma 2]{CSY18}, using only an \emph{upper bound} for the Bernoulli weight function \(Q(x)\), gives \(\mathbf{u}\in C_{\rm loc}^\beta(D;\R^m)\) for all \(0<\beta<1\). We henceforth take this continuous representative, so \(\Omega_{\mathbf{u}}\) is open.
\end{remark}

We now present the main results of this section. When the free boundary makes contact with the prescribed hypersurface \(\Sigma\), the growth behavior of the minimizer \(\mathbf{u}\) is constrained by the geometry of \(\Sigma\), a scenario not treated in \cite{CSY18}. Our main findings are summarized in the following proposition, which establishes both growth estimates and non-degeneracy properties for \(\mathbf{u}\) near the contact set.

\begin{proposition}[Growth and non-degeneracy near contact points]\label{prop:optimal-growth-nondegeneracy}
    Let \(\mathbf{u}\) be an \(\varepsilon_0\)-local minimizer of \(J\) in the class \(\mathbb A\). Fix \(0<r_0\le\bar r\), where \(\bar r\) is given in Lemma \ref{lem:standard-scale}. Then the following hold.
    \begin{enumerate}[label=(\roman*)]
        \item \label{item:growth} For every \(x_0\in\partial\Omega_{\mathbf{u}}\cap D\) and every \(0<r\le r_0/2\) satisfying \(2r<\dist(x_0,\partial D)\),
        \begin{equation}\label{eq:boundary-growth}
            \sup_{B_r(x_0)}|\mathbf{u}|\le Cr\left( \msf d_\Sigma(x_0) + 2r \right)^\gamma,
        \end{equation}
        for some constant \(C=C(n,m,\gamma)>0\). In particular, at a contact point \(x_0\in\Gamma_\Sigma(\mathbf{u})\), this reduces to
        \begin{equation}\label{eq:optimal-growth}
            \sup_{B_r(x_0)}|\mathbf{u}|\le Cr^{1+\gamma}.
        \end{equation}
        \item \label{item:nondegeneracy} For every \(s\in(0,1)\), there exists a constant \(c=c(n,m,s)>0\) with the following property: if \(B_r(x)\subset\subset D\) with \(0<r\le r_0\) and \(B_{sr}(x)\cap\Omega_{\mathbf{u}}\neq\varnothing\), then
        \begin{equation}\label{eq:boundary-nondegeneracy}
            \sup_{B_r(x)}|\mathbf{u}|\ge cr\left( \msf d_\Sigma(x)-sr \right)_+^\gamma.
        \end{equation}
        Furthermore, fix \(0<\sigma<s<1/2\), let \(x_0\in\Gamma_\Sigma(\mathbf{u})\), and take \(0<r\le \frac{1}{2}\min\{r_0,\dist(x_0,\partial D)\}\). If 
        \[
            B_{r/2}(x_0)\cap \Omega_{\mathbf{u}}\cap\{\msf d_\Sigma\ge sr\}\ne\varnothing,
        \]
        then 
        \begin{equation}\label{eq:boundary-nondegeneracy1}
            \sup_{B_r(x_0)\cap \{ \msf d_\Sigma\ge \sigma r \}}|\mathbf{u}|\ge cr^{1+\gamma},
        \end{equation}
        where \(c=c(n,m,\gamma,s,\sigma)>0\).
    \end{enumerate}
\end{proposition}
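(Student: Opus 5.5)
For the growth estimate \ref{item:growth} I would follow the scalar Alt--Caffarelli argument, applied to the subharmonic function \(|\mathbf u|\) and using only an upper bound on the weight in a ball. Fix \(x_0\in\partial\Omega_{\mathbf u}\cap D\) and set \(\Lambda_r:=(\msf d_\Sigma(x_0)+2r)^\gamma\). Then \(Q\le\Lambda_r\) on \(B_{2r}(x_0)\), since \(\msf d_\Sigma\) is \(1\)-Lipschitz.

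The first step is a Lipschitz-type bound at scale \(r\). I would take a ball \(B_\rho(y)\subset B_{2r}(x_0)\) and the componentwise harmonic replacement \(\mathbf h\) of \(\mathbf u\) on it. By Lemma \ref{lem:standard-scale} this is an admissible competitor. Estimate \eqref{eq.esm1} then gives
\[
\int_{B_\rho(y)}|\nabla(\mathbf u-\mathbf h)|^2\le\Lambda_r^2\,\cL^n(B_\rho(y)\setminus\Omega_{\mathbf u}).
\]
Next I would use the standard Alt--Caffarelli lower bound on the left-hand side. For each component, the inequality
\[
\int_{B_\rho}|\nabla(u_i-h_i)|^2\ge c(n)\,\frac{\cL^n(\{u_i=0\}\cap B_\rho)}{\rho^{2}}\Big(\barint_{\partial B_\rho}u_i\Big)^2
\]
holds, and \(\{|\mathbf u|=0\}\subset\{u_i=0\}\). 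Summing over \(i\) yields: if \(\cL^n(B_\rho(y)\setminus\Omega_{\mathbf u})>0\), then \(\barint_{\partial B_\rho(y)}|\mathbf u|\le C(n,m)\rho\Lambda_r\). The main obstacle is that the free boundary point \(x_0\) may have \(\cL^n(\{|\mathbf u|=0\}\cap B_\rho)=0\). I would handle this exactly as in \cite[Lemma 3]{CSY18} and \cite{AC81}: argue by contradiction on \(M:=\sup_{B_r(x_0)}|\mathbf u|/(r\Lambda_r)\) being large. Since \(|\mathbf u|\) is subharmonic, the average on \(\partial B_\rho(x_0)\) is comparable to \(M\rho\Lambda_r\) on a slightly larger scale, by Harnack applied to \(\mathbf h\) where \(|\mathbf h|\) is large. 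If the average is large, the competitor \(\mathbf h\) would be strictly positive near \(x_0\) and beat \(\mathbf u\) unless \(u\) is positive there. That contradicts \(x_0\in\partial\Omega_{\mathbf u}\), because then \(|\mathbf u|=|\mathbf h|>0\) on \(B_{\rho/2}\). Subharmonicity of \(|\mathbf u|\) then transfers the average bound into a sup bound on \(B_r(x_0)\), with the constant depending on \(n,m\). Finally I would check that \(\gamma\) enters only through \(\Lambda_r\), so \(C=C(n,m,\gamma)\) as stated. At contact points \(\msf d_\Sigma(x_0)=0\), and \eqref{eq:optimal-growth} follows immediately.

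For the nondegeneracy \ref{item:nondegeneracy} I would use the standard cut-off competitor. Here the weight is bounded below by \(\lambda:=(\msf d_\Sigma(x)-sr)_+^\gamma\) on \(B_{sr}(x)\), again because \(\msf d_\Sigma\) is \(1\)-Lipschitz. Assume \(\lambda>0\) and \(\sup_{B_r(x)}|\mathbf u|\le\varepsilon r\lambda\). I would compare \(\mathbf u\) with \(\mathbf v:=\mathbf u\min\{1,\phi/\varepsilon r\lambda\}\)-type truncation, as in \cite[Lemma 4]{CSY18} and Alt--Caffarelli Lemma 3.4, where \(\phi\) vanishes on \(B_{sr}\) and is comparable to \(\varepsilon r\lambda\) near \(\partial B_{r}\). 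This forces \(\Omega_{\mathbf u}\cap B_{sr}(x)\) to be empty for small \(\varepsilon(n,m,s)\), contradicting the hypothesis. The componentwise truncation keeps \(v_i\ge0\), so \(\mathbf v\) is admissible by Lemma \ref{lem:standard-scale}. The energy comparison is the usual one, with the phase term giving at least \(\lambda^2\cL^n(\Omega_{\mathbf u}\cap B_{sr})\).

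For \eqref{eq:boundary-nondegeneracy1} I would take \(y\in B_{r/2}(x_0)\cap\Omega_{\mathbf u}\) with \(\msf d_\Sigma(y)\ge sr\). I then apply \eqref{eq:boundary-nondegeneracy} centered at \(y\), with radius \(r'=\tau r\) and parameter \(s'\), where \(\tau=(s-\sigma)/2\) and \(s'=1/2\). The hypotheses are satisfied because \(y\in\Omega_{\mathbf u}\), and \(B_{\tau r}(y)\subset B_r(x_0)\cap\{\msf d_\Sigma\ge\sigma r\}\). Moreover \(\msf d_\Sigma(y)-s'\tau r\ge sr-\tau r/2\gtrsim r\). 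This gives \(\sup|\mathbf u|\ge c\,\tau r\,(cr)^\gamma\), which is the claim with \(c=c(n,m,\gamma,s,\sigma)\).
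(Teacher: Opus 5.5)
Your argument for part (i) and for \eqref{eq:boundary-nondegeneracy1} follows the paper's route. In (i) that is harmonic replacement, the capacity lower bound summed over components, cancellation of \(\cL^n(B_\rho(x_0)\setminus\Omega_{\mathbf u})\), then subharmonicity. For \eqref{eq:boundary-nondegeneracy1} it is nondegeneracy on a ball of radius \(\tfrac{s-\sigma}{2}r\) around a point of \(\Omega_{\mathbf u}\) with \(\msf d_\Sigma\ge sr\); your radius and parameter choices check out.

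The ``main obstacle'' you identify in (i) is not one. Suppose \(\cL^n(B_\rho(x_0)\setminus\Omega_{\mathbf u})=0\). Then \eqref{eq.esm1} forces \(\mathbf u=\mathbf h\) in \(B_\rho(x_0)\). The components are nonnegative, harmonic and not all zero, so the strong maximum principle gives \(|\mathbf u|>0\) in \(B_\rho(x_0)\), contradicting \(x_0\in\partial\Omega_{\mathbf u}\). That line suffices, and the contradiction argument on \(M\) with Harnack for \(\mathbf h\) can be dropped. Note also that subharmonicity only bounds the supremum on a smaller ball by the average on a larger sphere, not conversely.

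The genuine gap is the competitor in part (ii). As written, \(\mathbf v=\eta\mathbf u\) with \(\eta=\min\{1,\phi/(\varepsilon r\lambda)\}\) a fixed cutoff, \(\eta=0\) on \(B_{sr}(x)\) and \(\eta=1\) near \(\partial B_r(x)\). For a multiplicative cutoff the excess Dirichlet energy on \(A:=B_r(x)\setminus\overline{B_{sr}(x)}\) does not localize near \(B_{sr}(x)\). For instance, for harmonic \(\eta\) one computes
\[
\int_A\bigl(|\nabla(\eta\mathbf u)|^2-|\nabla\mathbf u|^2\bigr)\,\dx{y}=\int_A(\eta^2-1)|\nabla\mathbf u|^2\,\dx{y}+\int_{\partial B_r(x)}|\mathbf u|^2\,\partial_r\eta\,\dH{n-1}.
\]
The last term is positive and of order \(M^2r^n\), where \(M:=r^{-1}\sup_{B_r(x)}|\mathbf u|\). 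It lives on the outer sphere and has nothing to do with \(\mathbf u\) inside \(B_{sr}(x)\). The comparison therefore yields only \(\lambda^2\cL^n(\Omega_{\mathbf u}\cap B_{sr}(x))\lesssim\varepsilon^2\lambda^2r^n\). So the positive phase is small in \(B_{sr}(x)\), not empty. No contradiction follows without a lower density bound, and that bound is normally a consequence of nondegeneracy.

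What makes the Alt--Caffarelli argument work is the genuine truncation \(v_i=\min\{u_i,w\}\), with \(w=rM\psi_s((\cdot-x)/r)\) and \(\psi_s\) harmonic in \(B_1\setminus\overline{B_s}\), vanishing on \(\partial B_s\) and equal to \(1\) on \(\partial B_1\).
\begin{itemize}
\item On \(\{u_i>w\}\) one has \(|\nabla w|^2-|\nabla u_i|^2\le-2\nabla w\cdot\nabla(u_i-w)\).
\item Since \(w\) is harmonic in \(A\) and \((u_i-w)_+=0\) on \(\partial B_r(x)\), integration by parts moves the entire excess onto the \emph{inner} sphere. With \(E:=\int_{B_{sr}(x)}(|\nabla\mathbf u|^2+\msf d_\Sigma^{2\gamma}\chi_{\Omega_{\mathbf u}})\), this gives \(E\le C(n,m,s)M\int_{\partial B_{sr}(x)}|\mathbf u|\).
\item Apply the trace inequality in \(B_{sr}(x)\), using \(|\mathbf u|\le rM\), \(\mathbf u=0\) off \(\Omega_{\mathbf u}\), \(\cL^n(\Omega_{\mathbf u}\cap B_{sr}(x))\le E/q^2\) with \(q:=\inf_{B_{sr}(x)}\msf d_\Sigma^\gamma\ge\lambda\), and Cauchy--Schwarz. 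This gives \(E\le C\frac Mq\bigl(\frac Mq+1\bigr)E\).
\item Since \(E>0\), it follows that \(M\ge c(n,m,s)q\).
\end{itemize}
This is exactly the paper's proof. With this competitor, which is componentwise so \(v_i\ge0\) is automatic, your part (ii) goes through.
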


\begin{proof}
    The proof follows the arguments in \cite[Theorems 2 and 3]{CSY18}, with key adjustments for the degeneracy of the Bernoulli weight at contact points. 

    For part \ref{item:growth}, let \(\mathbf{h}\) be the componentwise harmonic replacement of \(\mathbf{u}\) in \(B_\rho(x_0)\) with \(0<\rho\le 2r\). It follows from \eqref{eq.esm1} and the capacity estimates from \cite[Lemmas 3 and 5]{CSY18} that
    \begin{equation}\label{eq.esm2}
        \begin{aligned}
            &\cL^n(B_\rho(x_0)\setminus\Omega_{\mathbf{u}})\left( \frac 1\rho\barint_{\partial B_\rho(x_0)}|\mathbf{u}|\,\dH{n-1} \right)^2\\ 
            \le{}& C\int_{B_\rho(x_0)}|\nabla(\mathbf{u}-\mathbf{h})|^2\,\dx{y}\\ 
            \le{}& C\left( \msf d_\Sigma(x_0) + \rho \right)^{2\gamma}\cL^n(B_\rho(x_0)\setminus\Omega_{\mathbf{u}}).
        \end{aligned}
    \end{equation}
    Note that \(\cL^n(B_\rho(x_0)\setminus\Omega_{\mathbf{u}}) > 0\): otherwise \eqref{eq.esm1} would force \(\mathbf{u}=\mathbf{h}\) everywhere in \(B_\rho(x_0)\), contradicting \(x_0\in\partial\Omega_{\mathbf{u}}\) by the strong maximum principle. Cancelling this factor from both sides yields
    \begin{equation}\label{eq.esm3}
        \barint_{\partial B_\rho(x_0)}|\mathbf{u}|\,\dH{n-1}\le C\rho\left( \msf d_\Sigma(x_0)+\rho \right)^\gamma.
    \end{equation}
    The subharmonicity of the nonnegative components of \(\mathbf{u}\), combined with the Poisson estimate on \(B_{2r}(x_0)\), then gives
    \[
        \sup_{B_r(x_0)}|\mathbf{u}|\le C\barint_{\partial B_{2r}(x_0)}|\mathbf{u}|\,\dH{n-1}\le Cr\left( \msf d_\Sigma(x_0) + 2r \right)^\gamma.
    \]
    At a contact point, \(\msf d_\Sigma(x_0)=0\), which reduces the estimate to \eqref{eq:optimal-growth}.

    For part \ref{item:nondegeneracy}, define
    \[
        M:=\frac 1r\sup_{B_r(x)}|\mathbf{u}|,\qquad q:=\inf_{B_{sr}(x)}\msf d_\Sigma^\gamma.
    \]
    Since \(\msf d_\Sigma\) is 1-Lipschitz, for every \(y\in B_{sr}(x)\) we have 
    \[
        \msf d_\Sigma(y)\ge \msf d_\Sigma(x)-|y-x|\ge\msf d_\Sigma(x)-sr.
    \]
    Taking the infimum over \(B_{sr}(x)\) yields
    \begin{equation}\label{eq.inf_distance-lower-bound}
        q\ge \left( \msf d_\Sigma(x) - sr \right)_+^\gamma.
    \end{equation}
    If \(q = 0\), the desired estimate holds trivially. We may therefore assume \(q > 0\). Let \(\psi_s\) be the harmonic function in \(B_1\setminus\overline{B_s}\) that vanishes on \(\partial B_s\) and equals 1 on \(\partial B_1\). Extend \(\psi_s\) by zero to \(B_s\). Define for \(y\in B_r(x)\),
    \[
        w(y):=rM\psi_s\left( \frac{y-x}{r} \right),\qquad v_i:=\min\{u_i,w\}\text{ in }B_r(x),
    \]
    and extend \(\mathbf{v}\) to equal \(\mathbf{u}\) outside \(B_r(x)\). This competitor is admissible for the minimality condition \eqref{eq:ball-minimality}. Set \(A:=B_r(x)\setminus\overline{B_{sr}(x)}\) and 
    \begin{equation}\label{eq:energy-left}
        E:=\int_{B_{sr}(x)}\left( |\nabla\mathbf{u}|^2 + \msf d_\Sigma^{2\gamma}\chi_{\Omega_{\mathbf{u}}} \right)\,\dx{y}>0.
    \end{equation}
    Positivity follows from \(q > 0\) and the assumption \(B_{sr}(x)\cap\Omega_{\mathbf{u}} \neq \varnothing\). Since \(\mathbf{v} = 0\) in \(B_{sr}(x)\) and \(\chi_{\Omega_{\mathbf{v}}}=\chi_{\Omega_{\mathbf{u}}}\) a.e. in \(A\), the minimality of \(\mathbf{u}\) and integration by parts give 
    \begin{equation}\label{eq:E-bound-boundary}
        \begin{aligned}
            E&\le\sum_{i=1}^m\int_A\left( |\nabla v_i|^2-|\nabla u_i|^2 \right)\,\dx{y}\\ 
            &\le-2\sum_{i=1}^m\int_A\nabla w\cdot\nabla(u_i-w)_+\,\dx{y}\\ 
            &=2\sum_{i=1}^m\int_{\partial B_{sr}(x)}u_i\nabla w\cdot\nu\,\dH{n-1}\\ 
            &\le C(n,m,s)M\int_{\partial B_{sr}(x)}|\mathbf{u}|\,\dH{n-1},
        \end{aligned}
    \end{equation}
    where \(\nu\) is the outward normal to \(B_{sr}(x)\). 
    
    Combining the trace inequality, the pointwise bound \(|\mathbf{u}| \le rM\), the lower bound \(q^2 \cL^n(B_{sr}(x)\cap\Omega_{\mathbf{u}}) \le E\), and the Cauchy–Schwarz inequality, we arrive at
    \begin{equation}\label{eq:trace-inequality}
        E\le CM\int_{B_{sr}(x)\cap\Omega_{\mathbf{u}}}(M+|\nabla\mathbf{u}|)\,\dx{y}\le C\frac Mq\left( \frac Mq+1 \right)E.
    \end{equation}
    Since \(E>0\), we may divide both sides by \(E\) and rearrange to obtain
    \begin{equation}\label{eq:M-lower-bound-inf-distance}
        M\ge c(n,m,s)q.
    \end{equation}
    Combined with \eqref{eq.inf_distance-lower-bound}, this establishes estimate \eqref{eq:boundary-nondegeneracy}.

    Finally, choose \(p\in B_{r/2}(x_0)\cap\Omega_{\mathbf{u}}\) with \(\msf d_\Sigma(p)\ge sr\), and set \(\eta:=\tfrac 14\min\{1,s-\sigma\}\), \(\rho:=\eta r\). Then 
    \[
        B_\rho(p)\subset B_r(x_0)\cap\{\msf d_\Sigma\ge \sigma r\}.
    \]
    Applying \eqref{eq:boundary-nondegeneracy} in \(B_\rho(p)\) with \(s=\tfrac 12\) gives
    \[
        \sup_{B_\rho(p)}|\mathbf{u}|\ge c\rho\left( \msf d_\Sigma(p) - \frac\rho 2 \right)^\gamma,
    \]
    proving \eqref{eq:boundary-nondegeneracy1}.
\end{proof}

We now introduce a concept that will be used repeatedly throughout the subsequent sections.

\begin{definition}[Standard scale]\label{def:standard-scale}
    A radius \(r_0 > 0\) is called a \emph{standard scale} for \(\mathbf{u}\) if for every ball \(B_\rho(x) \subset\subset D\) with \(0 < \rho \le r_0\), the inequality
    \[
        J(\mathbf{u};B_\rho(x))\le J(\mathbf{v};B_{\rho}(x))
    \]
    holds for all \(\mathbf{v} \in \mathbb{A}\) such that \(\mathbf{v} = \mathbf{u}\) a.e. in \(D \setminus B_\rho(x)\). 
\end{definition}

\begin{remark}
    The existence of such radii follows from Lemma \ref{lem:standard-scale}. In particular, condition \eqref{eq.r_0} is sufficient. Moreover, any positive radius smaller than a standard scale is also a standard scale.
\end{remark}

The growth estimate, combined with interior harmonic estimates, yields uniform gradient control.

\begin{corollary}[Lipschitz regularity for local minimizers]\label{cor:gradient-estimate}
    Let \(\mathbf{u}\) be an \(\varepsilon_0\)-local minimizer, and let \(r_0\) be a standard scale. Then 
    \begin{enumerate}[label=(\roman*)]
        \item \label{item:harmonic} The set \(\Omega_{\mathbf{u}}\) is open, and each component \(u_i\) is harmonic in \(\Omega_{\mathbf{u}}\).
        \item \label{item:gradient} For every \(x_0\in\partial \Omega_{\mathbf{u}}\cap D\) and every \(0<r\le\tfrac 14\min\{r_0,\dist(x_0,\partial D)\}\),
        \begin{equation}\label{eq:Lipschitz}
            \|\nabla\mathbf{u}\|_{L^\infty(B_r(x_0))}\le C(n,m,\gamma)\left( \msf d_\Sigma(x_0) + r \right)^\gamma.
        \end{equation}
    \end{enumerate}
\end{corollary}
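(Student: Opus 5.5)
Part \ref{item:harmonic} is essentially a consequence of Remark \ref{rem:continuity}. There $\mathbf{u}$ was shown to be continuous (indeed $C^\beta_{\rm loc}$), so $\Omega_{\mathbf{u}}=\{|\mathbf{u}|>0\}$ is open. To see harmonicity, fix $x\in\Omega_{\mathbf{u}}$ and a ball $B=B_\rho(x)\subset\subset\Omega_{\mathbf{u}}\cap D$ with $\rho\le r_0$. For $\eta\in C_0^\infty(B)$ and $|t|$ small, the competitor $\mathbf{u}+t\eta e_i$ keeps every component nonnegative on $\operatorname{spt}\eta$: this is automatic for $u_j$ with $j\ne i$, and for $u_i$ we use the componentwise alternative below. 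Since $\Omega_{\mathbf{u}}$ contains $B$ and $|\mathbf{u}+t\eta e_i|>0$ there for $|t|$ small, the phase term is unchanged.

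Care is needed because $|\mathbf{u}|>0$ on $B$ does not force $u_i>0$ on $B$. I would therefore argue componentwise. If $u_i(x)>0$, shrink $B$ so that $u_i>0$ on $\overline B$; then two-sided variations of $u_i$ are admissible, and the standard-scale minimality \eqref{eq:ball-minimality} gives $\Delta u_i=0$ in $B$. Otherwise $u_i(x)=0$. Here the one-sided variations $u_i+t\eta$ with $t>0$, $\eta\ge0$ are admissible and leave the phase term unchanged in $B$, so minimality gives $\int\nabla u_i\cdot\nabla\eta\ge0$. Combined with the subharmonicity of Lemma \ref{lem:EL}(i), this yields $\int\nabla u_i\cdot\nabla\eta=0$ for all $\eta\ge0$, hence for all $\eta$, so $u_i$ is harmonic in $B$. (A harmonic nonnegative function vanishing at an interior point is then $\equiv0$, which is consistent.)

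For part \ref{item:gradient}, I would combine the growth estimate \eqref{eq:boundary-growth} with interior gradient estimates for harmonic functions, via the usual two-case argument. Fix $x_0\in\partial\Omega_{\mathbf{u}}\cap D$ and $r$ as in the statement, and let $y\in B_r(x_0)\cap\Omega_{\mathbf{u}}$; points outside $\Omega_{\mathbf{u}}$ have $\nabla\mathbf{u}=0$ a.e. Set $\delta:=\dist(y,\partial\Omega_{\mathbf{u}})\le|y-x_0|<r$ and pick $z\in\partial\Omega_{\mathbf{u}}$ with $|y-z|=\delta$, so that $|z-x_0|<2r$. By part \ref{item:harmonic}, $\mathbf{u}$ is harmonic in $B_\delta(y)$, and the interior estimate gives
\[
|\nabla\mathbf{u}(y)|\le \frac{C(n)}{\delta}\sup_{B_\delta(y)}|\mathbf{u}|\le \frac{C}{\delta}\sup_{B_{2\delta}(z)}|\mathbf{u}|.
\]
Apply \eqref{eq:boundary-growth} at the free boundary point $z$ with radius $2\delta$. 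This requires $4\delta\le r_0$ and $4\delta<\dist(z,\partial D)$, which hold because $\delta<r\le\tfrac14\min\{r_0,\dist(x_0,\partial D)\}$, up to adjusting constants: $\dist(z,\partial D)\ge \dist(x_0,\partial D)-2r\ge 2r$, so one may need to use radius $\delta$ in place of $2\delta$, or shrink the constant $\tfrac14$ in the bookkeeping. This yields
\[
|\nabla\mathbf{u}(y)|\le C\bigl(\msf d_\Sigma(z)+4\delta\bigr)^\gamma.
\]
Finally, since $\msf d_\Sigma$ is $1$-Lipschitz, $\msf d_\Sigma(z)\le \msf d_\Sigma(x_0)+2r$, so
\[
|\nabla\mathbf{u}(y)|\le C(\msf d_\Sigma(x_0)+6r)^\gamma\le C'(\msf d_\Sigma(x_0)+r)^\gamma,
\]
with $C'=C\,6^\gamma$ depending only on $n,m,\gamma$.

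The main obstacle is purely the bookkeeping of radii: the balls used for the growth estimate at the auxiliary boundary point $z$ must remain admissible, meaning of radius at most $r_0/2$ and compactly inside $D$. This is exactly what the factor $\tfrac14$ in the hypothesis is for. The harmonicity step in part \ref{item:harmonic} is routine once one handles the case where an individual component vanishes inside $\Omega_{\mathbf{u}}$, as above.
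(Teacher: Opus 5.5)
Your part (i) is correct but argued differently from the paper. The paper observes that for a small ball $B\subset\subset\Omega_{\mathbf{u}}$ the right-hand side of the harmonic-replacement estimate \eqref{eq.esm1} vanishes, so $\mathbf{u}=\mathbf{h}$ in $B$; this handles all components at once. Your first-variation route also works, and your case split is unnecessary. For every $i$, the competitor $\mathbf{u}+t\eta e_i$ with $t>0$, $\eta\ge0$ is admissible and leaves the phase unchanged in $B$. It therefore gives $\int\nabla u_i\cdot\nabla\eta\ge0$, which together with Lemma~\ref{lem:EL}(i) yields harmonicity of every component regardless of the value of $u_i(x)$.

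Part (ii) has a gap exactly where you flag it, and neither of your suggested patches closes it. At $z$ you need $2\rho<\dist(z,\partial D)$ for the growth radius $\rho$. The only a priori information is $|z-x_0|\le\delta+|y-x_0|<\delta+r$, hence $\dist(z,\partial D)>3r-\delta$, which is close to $2r$ when $\delta$ is close to $r$; the constraint $\delta\le|y-x_0|<r$ permits this. Estimating $\nabla\mathbf{u}(y)$ through a ball $B_{\varepsilon\delta}(y)\subset B_\rho(z)$ forces $\rho\ge(1+\varepsilon)\delta$. The admissibility condition $2(1+\varepsilon)\delta<3r-\delta$ then cannot be guaranteed for $\delta$ near $r$, for any $\varepsilon>0$. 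Taking ``radius $\delta$ at $z$'' does not work either, since $B_\delta(y)\not\subset B_\delta(z)$, and shrinking the factor $\tfrac14$ changes the statement.

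What is missing is the paper's case distinction on $\delta$. If $\delta\ge r/4$, do not pass to $z$ at all. Then $B_{r/4}(y)\subset\Omega_{\mathbf{u}}\cap B_{3r/2}(x_0)$, and the growth estimate at $x_0$ itself with radius $3r/2$ is admissible, since $3r\le r_0$ and $3r<\dist(x_0,\partial D)$. This gives $|\nabla\mathbf{u}(y)|\le \frac Cr\sup_{B_{3r/2}(x_0)}|\mathbf{u}|\le C(\msf d_\Sigma(x_0)+r)^\gamma$. If $\delta<r/4$, then $|z-x_0|<5r/4$, so $\dist(z,\partial D)>11r/4>4\delta$ and $2\delta<r_0/2$, and your argument at $z$ goes through verbatim.
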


\begin{proof}
    The openness of \(\Omega_{\mathbf{u}}\) follows from the continuity of \(\mathbf{u}\). In any sufficiently small ball compactly contained in \(\Omega_{\mathbf{u}}\), estimate \eqref{eq.esm1} from Remark \ref{rem:continuity} forces \(\mathbf{u}=\mathbf{h}\), which proves harmonicity of each component.

    Now take any \(y\in B_r(x_0)\cap\Omega_{\mathbf{u}}\) and set \(\delta:=\dist(y, \partial\Omega_{\mathbf{u}})\). If \(\delta\ge r/4\), then \(B_{r/4}(y)\subset\Omega_{\mathbf{u}}\cap B_{3r/2}(x_0)\). Since \(3r<\dist(x_0,\partial D)\) and \(3r/2\le r_0/2\), interior harmonic estimates and \eqref{eq:boundary-growth} give
    \[
        \begin{aligned}
            |\nabla\mathbf{u}(y)|
            &\le\frac Cr\sup_{B_{r/4}(y)}|\mathbf{u}|
            \le\frac Cr\sup_{B_{3r/2}(x_0)}|\mathbf{u}|\\
            &\le C\bigl(\msf d_\Sigma(x_0)+r\bigr)^\gamma.
        \end{aligned}
    \]
    If \(\delta < r/4\), choose \(z\in\partial\Omega_{\mathbf{u}}\) with \(|z-y| = \delta\). Then 
    \[
        |\nabla\mathbf{u}(y)|\le\frac C\delta\sup_{B_{\delta/2}(y)}|\mathbf{u}|\le\frac C\delta\sup_{B_{2\delta}(z)}|\mathbf{u}|.
    \]
    Since \(|z-x_0| \le 5r/4\), the radius \(2\delta\) satisfies the hypotheses of \eqref{eq:boundary-growth} at \(z\). Thus
    \begin{equation}\label{esm.grad1}
        \begin{aligned}
            |\nabla\mathbf{u}(y)|&\le \frac C\delta\sup_{B_{2\delta}(z)}|\mathbf{u}|\le C\left( \msf d_\Sigma(z) + 4\delta \right)^\gamma\\ 
            &\le C\left( \msf d_\Sigma(x_0)+r \right)^\gamma,
        \end{aligned}
    \end{equation}
    where in the last inequality, we apply the Lipschitz continuity of \(\msf d_\Sigma\) and absorb \((9/4)^\gamma\) into the constant \(C\):
    \[
        \begin{aligned}
            \msf d_\Sigma(z) + 4\delta &\le \msf d_\Sigma(x_0) + |z-x_0| +4\delta\\ 
            &\le \msf d_\Sigma(x_0) + \frac 94 r\\ 
            &\le \frac 94 \left( \msf d_\Sigma(x_0) + r \right).
        \end{aligned}
    \]
    Finally, \(\nabla\mathbf{u} = 0\) almost everywhere on \(\{\mathbf{u} = 0\}\), so estimate \eqref{eq:Lipschitz} holds on the entire ball.
\end{proof}

\begin{remark}
    At a contact point \(x_0\in \Gamma_\Sigma(\mathbf{u})\), estimate \eqref{eq:Lipschitz} reduces to
    \begin{equation}\label{eq:Lipschitz-contact}
        \|\nabla\mathbf{u}\|_{L^\infty(B_r(x_0))}\le C_0r^\gamma,
    \end{equation}
    where \(C_0=C_0(n,m,\gamma)\).
\end{remark}

Combining the gradient bound with the non-degeneracy property in Proposition \ref{prop:optimal-growth-nondegeneracy} yields interior balls of radius comparable to \(r\) where \(\mathbf{u}\) remains uniformly positive and maintains a prescribed distance from \(\Sigma\).

\begin{corollary}[Interior balls in a non-degenerate strip]\label{cor:interior-balls-strip}
    Fix \(0<\sigma<s<1/2\), let \(x_0\in \Gamma_\Sigma(\mathbf{u})\), and assume \(0<r\le\frac14\min\{r_0,\dist(x_0,\partial D)\}\). If 
    \[
        B_{r/2}(x_0)\cap \Omega_{\mathbf{u}}\cap\{\msf d_\Sigma\ge sr\}\ne\varnothing,
    \]
    then there exist constants \(\theta,c>0\) depending only on \(n,m,\gamma,s,\sigma\), and a point \(y\in \Omega_{\mathbf{u}}\), such that
    \[
        B_{\theta r}(y)\subset B_r(x_0)\cap\Omega_{\mathbf{u}}\cap\{\msf d_\Sigma\ge \sigma r\},
    \]
    and 
    \[
        |\mathbf{u}|\ge cr^{1+\gamma}\quad\text{ in }B_{\theta r}(y).
    \]
\end{corollary}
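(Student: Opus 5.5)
The argument combines the strip non-degeneracy \eqref{eq:boundary-nondegeneracy1} with the Lipschitz bound \eqref{eq:Lipschitz-contact}. From the standing hypothesis and the radius restriction $r\le\frac14\min\{r_0,\dist(x_0,\partial D)\}$, Proposition \ref{prop:optimal-growth-nondegeneracy}\ref{item:nondegeneracy} applies; I will invoke it with $\sigma$ replaced by the intermediate value $\sigma':=(\sigma+s)/2$. This produces a point $z\in B_r(x_0)\cap\{\msf d_\Sigma\ge\sigma' r\}$ with $|\mathbf{u}(z)|\ge c_1 r^{1+\gamma}$, where $c_1=c_1(n,m,\gamma,s,\sigma)$.

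A mild adjustment is needed to keep $z$ well inside $B_r(x_0)$. Rather than using the full ball, I take the supremum in the proof of \eqref{eq:boundary-nondegeneracy1} over the small ball $B_\rho(p)$ constructed there, with $\rho=\eta r$ and $p\in B_{r/2}(x_0)$. This localizes $z$ to $B_{r/2+\eta r}(x_0)\subset B_{3r/4}(x_0)$. By the construction of $\eta$, it also gives $\msf d_\Sigma(z)\ge \msf d_\Sigma(p)-\eta r\ge (s-\eta)r$, which is at least $\sigma r+\eta r\cdot 3$. Thus $z$ has room of order $r$ both to $\partial B_r(x_0)$ and to $\{\msf d_\Sigma<\sigma r\}$.

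Next I propagate positivity using the gradient bound. Since $x_0$ is a contact point, Corollary \ref{cor:gradient-estimate} gives $\|\nabla\mathbf{u}\|_{L^\infty(B_r(x_0))}\le C_0r^\gamma$, and I need $r\le\frac14\min\{r_0,\dist(x_0,\partial D)\}$ for this, which is exactly the hypothesis. For $y\in B_{\theta r}(z)$ we then have
\[
|\mathbf{u}(y)|\ge |\mathbf{u}(z)|-C_0 r^\gamma\,\theta r\ge \tfrac12 c_1 r^{1+\gamma}
\]
provided $\theta\le c_1/(2C_0)$. I also require $\theta\le\min\{\eta,1/4\}$. With this choice, $B_{\theta r}(z)\subset B_r(x_0)$, and by the $1$-Lipschitz property of $\msf d_\Sigma$, $B_{\theta r}(z)\subset\{\msf d_\Sigma\ge\sigma r\}$. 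Positivity of $|\mathbf{u}|$ on this ball gives $B_{\theta r}(z)\subset\Omega_{\mathbf{u}}$. Setting $y:=z$ and $c:=c_1/2$ then finishes the argument.

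The main obstacle is purely bookkeeping: the point given by \eqref{eq:boundary-nondegeneracy1} must lie at distance $\gtrsim r$ from the boundary of the region $B_r(x_0)\cap\{\msf d_\Sigma\ge\sigma r\}$. The statement of \eqref{eq:boundary-nondegeneracy1} alone does not guarantee this, which is why I go back to its proof and use the explicit ball $B_{\eta r}(p)$. A secondary point is that the gradient estimate must be applied on $B_r(x_0)$ centered at the contact point rather than at $z$. This is legitimate because $B_{\theta r}(z)\subset B_r(x_0)$, and it yields the uniform constant $C_0r^\gamma$.
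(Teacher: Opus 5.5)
Your argument is correct and is essentially the paper's proof. Both pick \(p\in B_{r/2}(x_0)\cap\Omega_{\mathbf{u}}\) with \(\msf d_\Sigma(p)\ge sr\), apply \eqref{eq:boundary-nondegeneracy} on a ball of radius comparable to \(\eta r\) around \(p\), and propagate the lower bound with the contact Lipschitz estimate \eqref{eq:Lipschitz-contact} for \(\theta\lesssim\min\{\eta,c_1/C_0\}\). The only difference is bookkeeping: the paper works on \(B_{\rho/2}(p)\) so that \(B_{\theta r}(y)\subset B_\rho(p)\) holds automatically, whereas you use \(B_\rho(p)\) and check the margins to \(\partial B_r(x_0)\) and to \(\{\msf d_\Sigma<\sigma r\}\) by hand; the detour through \(\sigma'\) is unnecessary.
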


\begin{proof}
    Pick a point \(p\in B_{r/2}(x_0)\cap \Omega_{\mathbf{u}}\) with \(\msf d_\Sigma(p) \ge sr\). Set \(\eta := \frac{1}{4}\min\{1, s-\sigma\}\) and \(\rho := \eta r\). By the Lipschitz continuity of \(\msf d_\Sigma\), we have \(B_\rho(p) \subset B_r(x_0) \cap \{\msf d_\Sigma \ge \sigma r\}\). 
    
    Applying the non-degeneracy estimate \eqref{eq:boundary-nondegeneracy} to \(B_{\rho/2}(p)\) with scale parameter \(s=\frac{1}{2}\), we find a point \(y\in B_{\rho/2}(p)\) such that
    \[
        |\mathbf{u}(y)|\ge c_1r^{1+\gamma},
    \]
    where \(c_1 = c_1(n,m,\gamma,s,\sigma) > 0\). Setting \(\theta := \min\left\{\eta/4, c_1/(2C_0)\right\}\) and using the gradient bound \eqref{eq:Lipschitz-contact}, we obtain
    \[
        |\mathbf{u}(z)|\ge |\mathbf{u}(y)|-C_0r^\gamma|z-y|\ge \frac{c_1}2r^{1+\gamma} 
    \]
    for \(z\in B_{\theta r}(y)\). Since \(B_{\theta r}(y)\subset B_\rho(p)\), the proof is complete.
\end{proof}

\section{Measure estimates for the free boundary near contact points}\label{sec:measure-estimate}

In this section, we develop measure properties of the positivity set \(\Omega_{\mathbf{u}}\) and prove quantitative measure estimates for the free boundary near contact points with \(\Sigma\). Fix a standard scale \(r_0\) as in Definition \ref{def:standard-scale}. We first recall the measure estimate for balls where \(Q(x)\) admits a uniform lower bound, and then establish uniform Hausdorff measure bounds for the free boundary on dyadic distance layers near the hypersurface \(\Sigma\).

\begin{lemma}[Quantitative estimate in non-degenerate balls]\label{lem:measure-nondegenerate-ball}
    Let \(z\in\partial\Omega_{\mathbf{u}}\cap D\), suppose \(B_{2\ell}(z)\subset\subset D\setminus\Sigma\) and \(0<2\ell\le r_0\). Assume that there are positive constants \(Q_1, Q_2 > 0\) such that
    \[
        0<Q_1\le \msf d_\Sigma^\gamma \le Q_2\quad\text{ in }B_{2\ell}(z),
    \]
    and set \(\Lambda:=Q_2/Q_1\). Then
    \[
        c\ell^{n-1}\le \cH^{n-1}(\partial \Omega_{\mathbf{u}}\cap B_\ell(z))\le C\ell^{n-1},
    \]
    where \(c,C>0\) depend only on \(n,m,\Lambda\). Moreover, the singular part of the free boundary satisfies
    \[
        \cH^{n-1}\left( (\partial\Omega_{\mathbf{u}}\setminus\partial^*\Omega_{\mathbf{u}}) \cap B_\ell(z) \right)=0.
    \]
\end{lemma}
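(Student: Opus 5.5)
The plan is to reduce the lemma to the nondegenerate theory of \cite{CSY18}, keeping track of how the constants depend only on \(n\), \(m\) and the ratio \(\Lambda=Q_2/Q_1\). The first step is a rescaling. Set
\[
    \mathbf{w}(x):=\frac{\mathbf{u}(z+\ell x)}{\ell Q_1},\qquad x\in B_2 .
\]
Since \(2\ell\le r_0\) is a standard scale, \(\mathbf{w}\) minimizes \(\int_{B_2}\bigl(|\nabla\mathbf{w}|^2+\tilde Q^2\chi_{\Omega_{\mathbf{w}}}\bigr)\) among competitors that agree with \(\mathbf{w}\) outside subballs of \(B_2\). The rescaled weight is \(\tilde Q(x):=Q_1^{-1}\msf d_\Sigma^\gamma(z+\ell x)\), and it satisfies \(1\le\tilde Q\le\Lambda\) on \(B_2\). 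We also have \(0\in\partial\Omega_{\mathbf{w}}\). Both estimates are scale invariant, so it suffices to prove \(c\le\cH^{n-1}(\partial\Omega_{\mathbf{w}}\cap B_1)\le C\).

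The second step gathers the uniform two-sided bounds from the arguments in Proposition \ref{prop:optimal-growth-nondegeneracy} and Corollary \ref{cor:gradient-estimate}, which use only upper and lower bounds on the weight and not its regularity. From them we get three facts for every \(y\in\partial\Omega_{\mathbf{w}}\cap B_{3/2}\) and \(\rho\le1/4\):
\[
    \sup_{B_\rho(y)}|\mathbf{w}|\le C\Lambda\rho,\qquad \sup_{B_\rho(y)}|\mathbf{w}|\ge c\rho,\qquad |\nabla\mathbf{w}|\le C\Lambda .
\]
The lower bound comes from \eqref{eq:M-lower-bound-inf-distance} with \(q\ge1\). Next, following the usual Alt--Caffarelli route as in \cite[Theorem 4 and its corollaries]{CSY18}, we derive uniform positive density of both \(\Omega_{\mathbf{w}}\) and its complement in \(B_\rho(y)\). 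For the positivity set this follows from nondegeneracy combined with the Lipschitz bound. For the complement we use \eqref{eq.esm1} and the capacity argument in \eqref{eq.esm2}, this time with the lower bound \(\tilde Q\ge1\).

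The third step handles the Radon measure \(\mu_i:=\Delta w_i\ge0\), which is supported on \(\partial\Omega_{\mathbf{w}}\), and \(\mu:=\sum_i\mu_i\). For the \emph{upper bound}, integrating by parts against a cutoff and using the Lipschitz bound gives \(\mu(B_\rho(y))\le C\Lambda\rho^{n-1}\). For the \emph{lower bound}, we compare with the harmonic replacement, use nondegeneracy, and apply the complement density, obtaining \(\mu(B_\rho(y))\ge c\rho^{n-1}\). This is exactly \cite[Lemma 9]{CSY18}, whose constants depend only on the weight bounds. These two bounds show that \(\mu\) is comparable to \(\cH^{n-1}\mres\partial\Omega_{\mathbf{w}}\) through standard density theorems. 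Covering \(\partial\Omega_{\mathbf{w}}\cap B_1\) by balls of radius \(\rho\) and using the upper density bound together with \(\mu(B_{3/2})\le C\) gives \(\cH^{n-1}(\partial\Omega_{\mathbf{w}}\cap B_1)\le C\). The lower bound follows from \(\mu(B_1)\ge c\) at the point \(0\). The two-sided density estimates also show that \(\Omega_{\mathbf{w}}\) has locally finite perimeter in \(B_1\).

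The final step is the statement about the singular set, which I expect to be the main obstacle. By the previous step, at \(\cH^{n-1}\)-a.e.\ point of \(\partial\Omega_{\mathbf{w}}\) the Lebesgue densities of \(\Omega_{\mathbf{w}}\) and its complement are bounded away from \(0\) and \(1\). We then invoke Federer's theorem: a set of finite perimeter whose topological boundary carries only points of positive upper density for both the set and its complement satisfies \(\cH^{n-1}(\partial\Omega\setminus\partial^*\Omega)=0\), as in \cite[Theorem 4.8]{AC81}-type arguments reproduced in \cite{CSY18}. The delicate point is checking that the vectorial setting does not spoil the complement density estimate. The plan is to use the fact that \(|\mathbf{w}|\) is subharmonic and to apply the scalar-style capacity argument to \(|\mathbf{w}|\), following \cite[Lemma 5]{CSY18}.
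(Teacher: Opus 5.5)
Your proposal is correct and follows essentially the same route as the paper. You normalize by \(\mathbf{w}(x)=\mathbf{u}(z+\ell x)/(\ell Q_1)\) so that \(1\le\widetilde Q\le\Lambda\) on \(B_2\), invoke \cite[Lemma 9]{CSY18} for the two-sided \(\cH^{n-1}\) bound, and your Federer-type argument for the singular set is exactly what the paper imports by citing \cite[Lemma 10]{CSY18}. One small correction to your sketch: the density of \(\{|\mathbf{w}|=0\}\) does not come from the capacity inequality \eqref{eq.esm2}, where \(\cL^n(B_\rho\setminus\Omega_{\mathbf{w}})\) cancels; it comes from \eqref{eq.esm1} combined with nondegeneracy, the Lipschitz bound and a Poincar\'e lower bound on \(\int_{B_\rho}|\nabla(\mathbf{w}-\mathbf{h})|^2\), as in Alt--Caffarelli.
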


\begin{proof}
    By the standard scale property, \(\mathbf{u}\) is an absolute minimizer in \(B_{2\ell}(z)\). We consider the following rescaling and normalization:
    \[
        \widetilde{\mathbf{u}}(y):=\frac{\mathbf{u}(z+\ell y)}{\ell Q_1},\qquad \widetilde Q(y):=\frac{\msf d_\Sigma^\gamma(z+\ell y)}{Q_1}.
    \]
    Under this transformation, \(\widetilde{\mathbf{u}}\) is an absolute minimizer in \(B_2(0)\) of the normalized energy functional 
    \[
        \tilde J(\mathbf{v}):=\int_{B_2(0)}\left(|\nabla \mathbf{v}|^2+\widetilde Q^2(x)\chi_{\Omega_{\mathbf{v}}}\right)\,\dx{y}.
    \]
    Moreover, \(\widetilde Q(x)\) satisfies \(1 \le \widetilde Q(x) \le \Lambda\) in \(B_2(0)\) and \(\Omega_{\widetilde{\mathbf{u}}}=((\Omega_{\mathbf{u}}-z)/\ell)\cap B_2(0)\). Therefore, we can apply \cite[Lemma 9]{CSY18}, which gives 
    \[
        c\le \cH^{n-1}\left( \partial\Omega_{\widetilde{\mathbf{u}}}\cap B_1(0) \right)\le C,
    \]
    for some positive constants \(c\) and \(C\). Rescaling back to the original variables, and using \cite[Lemma 10]{CSY18}, we obtain the desired result.
\end{proof}

On each dyadic distance layer, the weight function has a uniformly bounded ratio between its upper and lower values. The tubular covering estimate from Section \ref{sec:C1Dini-geometry} (cf. Part \ref{item:neighborhood-measure} in Proposition \ref{prop:C1Dini-properties}) provides uniform control of the number of balls needed to cover any such layer, as the following Lemma shows.

\begin{lemma}[Dyadic layer measure bound near contact points]\label{lem:dyadic-layer-measure}
    Let \(\Sigma\) be a \((r_\Sigma,\omega_\Sigma)\)-\(C^{1,\mathrm{Dini}}\) hypersurface, let \(\mathbf{u}\) be an \(\varepsilon_0\)-local minimizer, and fix \(x_0\in \Sigma\cap D\). Suppose that \(0<R\le \frac{1}{4}\min\{r_0, r_\Sigma\}\) and \(B_{4R}(x_0)\subset\subset D\). For each integer \(j\ge 0\), define the dyadic distance layer
    \[
        A_j:=B_R(x_0)\cap\{2^{-j-1}R<\msf d_\Sigma\le 2^{-j}R\}.
    \]
    Then 
    \[
        \cH^{n-1}(\partial\Omega_{\mathbf{u}}\cap A_j)\le CR^{n-1},
    \]
    where \(C\) depends only on \(n,m,\gamma,\omega_\Sigma(r_\Sigma)\).
\end{lemma}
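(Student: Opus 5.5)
Set \(t_j:=2^{-j}R\), so that on \(A_j\) we have \(t_j/2<\msf d_\Sigma\le t_j\). The idea is to cover \(A_j\) by about \((R/t_j)^{n-1}\) balls of radius comparable to \(t_j\) and apply Lemma \ref{lem:measure-nondegenerate-ball} on each ball. Since each ball carries free boundary measure of order \(t_j^{n-1}\), the total is of order \((R/t_j)^{n-1}t_j^{n-1}=R^{n-1}\). The point is that the layer thickness \(t_j\) and the admissible ball radius are of the same order. This makes the covering count exactly compensate the ball size, independently of \(j\).

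First, fix \(\ell:=t_j/8\). Take a maximal \(\ell\)-separated family \(\{z_k\}\) in \(\partial\Omega_{\mathbf{u}}\cap A_j\); if this set is empty there is nothing to prove. The balls \(B_\ell(z_k)\) then cover \(\partial\Omega_{\mathbf{u}}\cap A_j\), and the balls \(B_{\ell/2}(z_k)\) are pairwise disjoint.

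Next, check the hypotheses of Lemma \ref{lem:measure-nondegenerate-ball} on \(B_{2\ell}(z_k)\). Since \(\msf d_\Sigma\) is \(1\)-Lipschitz, for \(y\in B_{2\ell}(z_k)\) we have
\[
    \tfrac14 t_j=\tfrac12t_j-\tfrac14t_j<\msf d_\Sigma(y)<t_j+\tfrac14t_j=\tfrac54 t_j .
\]
Hence \(B_{2\ell}(z_k)\subset\subset D\setminus\Sigma\), and
\[
    Q_1=(t_j/4)^\gamma\le\msf d_\Sigma^\gamma\le(5t_j/4)^\gamma=Q_2,
\]
so \(\Lambda=5^\gamma\) is independent of \(j\). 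Moreover \(2\ell\le R/4\le r_0\), and \(B_{2\ell}(z_k)\subset B_{2R}(x_0)\subset\subset D\). The lemma therefore gives
\[
    \cH^{n-1}\bigl(\partial\Omega_{\mathbf{u}}\cap B_\ell(z_k)\bigr)\le C(n,m,\gamma)\,t_j^{n-1}.
\]

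Then count the balls. The disjoint balls \(B_{\ell/2}(z_k)\) lie in \(N_{2t_j}(\Sigma)\cap B_{2R}(x_0)\). Using Proposition \ref{prop:C1Dini-properties}\ref{item:neighborhood-measure} with \(\rho=2t_j\le 2R\le r_\Sigma/2\), we get
\[
    \#\{k\}\,c_n(t_j/16)^n\le C(n,L)\,t_j R^{n-1},
\]
that is, \(\#\{k\}\le C(n,L)(R/t_j)^{n-1}\). Summing over \(k\) yields \(\cH^{n-1}(\partial\Omega_{\mathbf{u}}\cap A_j)\le CR^{n-1}\), with \(C\) depending on \(n,m,\gamma\) and \(L=\omega_\Sigma(r_\Sigma)\).

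The step needing most care is this counting: the neighborhood estimate must be applied at scale \(\rho\sim t_j\), not \(R\), and one must verify that the enlarged radius \(2R\) stays below \(r_\Sigma/2\). This is where \(R\le r_\Sigma/4\) is used. Everything else is bookkeeping of constants, and it matters that \(\Lambda\) is independent of \(j\).
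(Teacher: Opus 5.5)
Your proposal is correct and follows essentially the same argument as the paper. Both take a maximal separated family of centers on \(\partial\Omega_{\mathbf{u}}\cap A_j\) at scale comparable to \(2^{-j}R\), count the balls with Proposition~\ref{prop:C1Dini-properties}\ref{item:neighborhood-measure} at scale \(\rho\sim 2^{-j}R\), and apply Lemma~\ref{lem:measure-nondegenerate-ball} on each ball with a \(j\)-independent ratio \(\Lambda\). The only differences are immaterial constants: you use radius \(t_j/8\) and \(\Lambda=5^\gamma\), where the paper uses \(\delta_j/32\) and \(\Lambda=(17/7)^\gamma\).
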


\begin{proof}
    Define \(\delta_j:=2^{-j}R\) and \(F_j:=\partial\Omega_{\mathbf{u}}\cap A_j\). Consider a maximal disjoint family \(\{B_{\delta_j/64}(z_\alpha)\}_{\alpha\in I_j}\) of balls centered at points of \(F_j\). By maximality,
    \[
        F_j\subset\bigcup_{\alpha\in I_j}B_{\delta_j/32}(z_\alpha).
    \]
    Each of these disjoint balls is contained in \(B_{2R}(x_0) \cap N_{2\delta_j}(\Sigma)\), where \(N_{2\delta_j}(\Sigma):=\{x\in\R^n:\msf d_\Sigma(x)<2\delta_j\}\) denotes the \(2\delta_j\)-neighborhood of \(\Sigma\). Applying the tubular neighborhood volume estimate \eqref{eq.neighborhood-measure} yields
    \[
        \#\,(I_j)\cdot \delta_j^n\le CR^{n-1}\delta_j,
    \]
    and hence
    \begin{equation}\label{eq:cardinality-estimate}
        \#\,(I_j)\le C(R/\delta_j)^{n-1}.
    \end{equation}

    On each ball \(B_{\delta_j/16}(z_\alpha)\), the distance function satisfies
    \[
        \frac 7{16}\delta_j\le \msf d_\Sigma \le\frac{17}{16}\delta_j.
    \]
    Applying Lemma \ref{lem:measure-nondegenerate-ball} with \(\ell = \delta_j/32\) and \(\Lambda = (17/7)^\gamma\), we obtain
    \[
        \cH^{n-1}\left( \partial\Omega_{\mathbf{u}}\cap B_{\delta_j/32}(z_\alpha) \right)\le C\delta_j^{n-1}.
    \]
    Summing over the covering balls and using \eqref{eq:cardinality-estimate}, we obtain 
    \[
        \begin{aligned}
            \cH^{n-1}(\partial\Omega_{\mathbf{u}}\cap A_j)&=\cH^{n-1}(F_j)\\ 
            &\le \sum_{\alpha\in I_j}\cH^{n-1}(\partial\Omega_{\mathbf{u}}\cap B_{\delta_j/32}(z_\alpha))\\ 
            &\le C\#\,(I_j)\cdot\delta_j^{n-1}\\ 
            &\le CR^{n-1},
        \end{aligned}
    \]
    which completes the proof.
\end{proof}

Summing the dyadic layer estimates established in Lemma \ref{lem:dyadic-layer-measure} immediately yields the following logarithmic measure bound for the free boundary.

\begin{corollary}\label{cor:free-boundary-measure-estimate}
    Under the assumptions of Lemma \ref{lem:dyadic-layer-measure}, for every \(0<\rho<R\), 
    \[
        \cH^{n-1}\left( \partial\Omega_{\mathbf{u}}\cap \left( B_R(x_0)\setminus N_\rho(\Sigma) \right)\right)\le CR^{n-1}\left( 1+\log\frac R\rho \right).
    \]
\end{corollary}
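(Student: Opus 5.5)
The plan is to decompose the set \(B_R(x_0)\setminus N_\rho(\Sigma)\) into the dyadic distance layers \(A_j\) of Lemma \ref{lem:dyadic-layer-measure}, together with one "outer" piece, and then sum the layer bounds. Since \(N_\rho(\Sigma)=\{\msf d_\Sigma<\rho\}\), every point \(x\in B_R(x_0)\setminus N_\rho(\Sigma)\) satisfies \(\rho\le\msf d_\Sigma(x)\). Because \(x_0\in\Sigma\), we also have \(\msf d_\Sigma(x)\le|x-x_0|<R\). Hence
\[
    B_R(x_0)\setminus N_\rho(\Sigma)\subset\bigcup_{j=0}^{J}A_j,
    \qquad
    J:=\min\{j\ge0:\ 2^{-j-1}R<\rho\}.
\]
Indeed, for each such \(x\) there is a unique \(j\ge0\) with \(2^{-j-1}R<\msf d_\Sigma(x)\le2^{-j}R\). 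The constraint \(\msf d_\Sigma(x)\ge\rho\) forces \(2^{-j}R\ge\rho\), so \(j\le\log_2(R/\rho)\). This gives \(J\le\log_2(R/\rho)+1\).

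Next I apply Lemma \ref{lem:dyadic-layer-measure} on each layer, which gives \(\cH^{n-1}(\partial\Omega_{\mathbf{u}}\cap A_j)\le CR^{n-1}\) with \(C=C(n,m,\gamma,\omega_\Sigma(r_\Sigma))\), uniformly in \(j\). By countable subadditivity of \(\cH^{n-1}\),
\[
    \cH^{n-1}\bigl(\partial\Omega_{\mathbf{u}}\cap(B_R(x_0)\setminus N_\rho(\Sigma))\bigr)
    \le\sum_{j=0}^{J}CR^{n-1}
    \le CR^{n-1}\Bigl(2+\tfrac{1}{\log2}\log\tfrac R\rho\Bigr).
\]
After enlarging \(C\), the right-hand side is at most \(CR^{n-1}(1+\log(R/\rho))\).

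There is no real obstacle here, since the work is done in Lemma \ref{lem:dyadic-layer-measure}. The only points to check are bookkeeping:
\begin{itemize}
    \item The layers must cover the region exactly. The half-open convention \(2^{-j-1}R<\msf d_\Sigma\le2^{-j}R\) ensures no point with \(\rho\le\msf d_\Sigma<R\) is missed.
    \item The number of layers must grow only logarithmically in \(R/\rho\), which the bound on \(J\) above confirms.
    \item The constant must not depend on \(j\), \(\rho\) or \(x_0\). This holds because Lemma \ref{lem:dyadic-layer-measure} is uniform under its standing assumptions \(R\le\frac14\min\{r_0,r_\Sigma\}\) and \(B_{4R}(x_0)\subset\subset D\).
\end{itemize}
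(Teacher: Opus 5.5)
Your proof is correct and is the same as the paper's. The paper also covers \(B_R(x_0)\setminus N_\rho(\Sigma)\) by the layers \(A_0,\dots,A_k\) with \(2^{-k-1}R<\rho\le 2^{-k}R\); your \(J\) is exactly this \(k\). It then sums the uniform bound of Lemma \ref{lem:dyadic-layer-measure} over these \(k+1\le\log_2(R/\rho)+1\) layers. The ``outer'' piece you mention at the start is never needed: as you observe yourself, \(\msf d_\Sigma<R\) on \(B_R(x_0)\), so every point already lies in some \(A_j\) with \(j\ge0\).
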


\begin{proof}
    Choose an integer \(k\ge 0\) such that \(2^{-k-1}R<\rho\le 2^{-k}R\). Since \(\msf d_\Sigma(x)<R\) for all \(x\in B_R(x_0)\), we have the inclusion
    \[
        B_R(x_0)\setminus N_\rho(\Sigma)\subset \bigcup_{j=0}^kA_j,
    \]
   where \(A_j\) are the dyadic distance layers defined in Lemma \ref{lem:dyadic-layer-measure}. By the preceding lemma, each layer satisfies \(\cH^{n-1}(\partial\Omega_{\mathbf{u}}\cap A_j)\le CR^{n-1}\). Summing over \(j=0,\dots,k\) and noting that \(k\le\log_2(R/\rho)\), we obtain
    \[
        \begin{aligned}
            \cH^{n-1}\left( \partial\Omega_{\mathbf{u}}\cap \left( B_R(x_0)\setminus N_\rho(\Sigma) \right) \right)&\le\sum_{j=0}^k\cH^{n-1}(\partial\Omega_{\mathbf{u}}\cap A_j)\\ 
            &\le C(k+1)R^{n-1}\\ 
            &\le CR^{n-1}\left( 1+\log_2\frac R\rho \right)\\ 
            &\le CR^{n-1}\left( 1+\log\frac R\rho \right).
        \end{aligned}
    \]
    This concludes the proof.
\end{proof}

\begin{remark}
    The estimate applies at every contact point \(x_0\in \Gamma_\Sigma(\mathbf{u})\), including degenerate contact points. Its logarithmic dependence on \(\rho\) supplies no uniform perimeter bound as the excluded neighborhood of \(\Sigma\) shrinks.
\end{remark}

\section{Structure of the contact free boundary points}\label{sec:contact-free-boundary-structure}

In this section, we investigate the fine structure of the free boundary at its contact points with the prescribed hypersurface \(\Sigma\). We begin by establishing an almost-monotonicity formula and compactness of rescalings. This gives rise to homogeneous blow-up limits and a density gap property, from which the main structural results for the contact set follow.

\subsection{A Weiss-type monotonicity formula at contact points}

Fix a contact point \(x_0\in \Gamma_\Sigma(\mathbf{u})\). Decreasing the standard scale \(r_0\) if necessary, we may assume \(4r_0 \le r_\Sigma\) and \(B_{4r_0}(x_0)\subset\subset D\). For \(0<r<r_0\), define the total interior energy, the boundary trace integral, and the Weiss-type functional respectively by

\begin{equation}\label{eq.Dr}
    D_{x_0,\mathbf{u}}(r)=D(r)=\int_{B_r(x_0)}\left( 
        |\nabla\mathbf{u}|^2 + \msf d_\Sigma^{2\gamma}\chi_{\Omega_{\mathbf{u}}}
    \right)\,\dx{x},
\end{equation}
\begin{equation}\label{eq.Hr}
    H_{x_0,\mathbf{u}}(r)=H(r):=\int_{\partial B_r(x_0)}|\mathbf{u}|^2\,\dH{n-1},
\end{equation}
and 
\begin{equation}\label{eq.Weiss}
    W_{x_0,\mathbf{u}}(r)=W(r):=r^{-n-2\gamma}D(r)-(\gamma+1)r^{-n-2\gamma-1}H(r).
\end{equation}

\begin{lemma}[Almost monotonicity for Weiss's energy]\label{lem:Weiss-monotonicity}
    The function \(r\mapsto W(r)\) is locally absolutely continuous in \((0,r_0)\), and for a.e. \(r\in(0,r_0)\),
    \begin{equation}\label{eq.Wr'}
        \begin{aligned}
            W'(r)={}&\frac 2{r^{n+2(\gamma+1)}}\int_{\partial B_r(x_0)}\left|
                \nabla\mathbf{u}\cdot(x-x_0)-(\gamma+1)\mathbf{u}
            \right|^2\,\dH{n-1}\\ 
            &+\frac{2\gamma}{r^{n+2\gamma+1}}\int_{B_r(x_0)}\msf d_\Sigma^{2\gamma-1}\chi_{\Omega_{\mathbf{u}}}\frac{x-\pi_\Sigma(x)}{|x-\pi_\Sigma(x)|}\cdot(\pi_\Sigma(x)-x_0)\,\dx{x}.
        \end{aligned}
    \end{equation}
\end{lemma}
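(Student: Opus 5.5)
The plan is the standard Weiss computation: differentiate \(D\) and \(H\), and evaluate \(D'(r)\) through the inner variation identity \eqref{eq:inner-dist}. Translate so that \(x_0=0\).

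\textbf{Step 1 (absolute continuity and the derivatives).} By Corollary \ref{cor:gradient-estimate}, \(\mathbf{u}\) is Lipschitz on \(B_{r_0}\), and the integrand of \(D\) is bounded. Hence \(D\) is absolutely continuous, with
\[
D'(r)=\int_{\partial B_r}\bigl(|\nabla\mathbf{u}|^2+\msf d_\Sigma^{2\gamma}\chi_{\Omega_{\mathbf{u}}}\bigr)\,\dH{n-1}
\]
for a.e. \(r\). Writing \(H(r)=r^{n-1}\int_{\partial B_1}|\mathbf{u}(ry)|^2\,\dH{n-1}(y)\), we also get that \(H\) is absolutely continuous, with
\[
H'(r)=\frac{n-1}{r}H(r)+2\int_{\partial B_r}\mathbf{u}\cdot\partial_\nu\mathbf{u}\,\dH{n-1}.
\]

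\textbf{Step 2 (two identities).} Since each \(u_i\) is harmonic in \(\Omega_{\mathbf{u}}\), Lipschitz, and vanishes off \(\Omega_{\mathbf{u}}\), testing with \(u_i\) gives
\[
\int_{B_r}|\nabla\mathbf{u}|^2=\int_{\partial B_r}\mathbf{u}\cdot\partial_\nu\mathbf{u}.
\]
This is justified by approximating with \((u_i-\varepsilon)_+\), or by using the fact that \(\Delta u_i\) is a nonnegative measure supported on \(\partial\Omega_{\mathbf{u}}\) where \(u_i=0\).

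Next, apply \eqref{eq:inner-dist} with \(\Phi_k(x)=\phi_k(|x|)x\), where \(\phi_k\) are cutoffs increasing to \(\chi_{[0,r)}\). We have
\[
\operatorname{div}\Phi_k=n\phi_k+|x|\phi_k',\qquad D\Phi_k=\phi_k I+\phi_k'\frac{x\otimes x}{|x|}.
\]
Letting \(k\to\infty\), at a.e. \(r\) (Lebesgue points) this yields
\[
\begin{aligned}
&n D(r)-r D'(r)-2\int_{B_r}|\nabla\mathbf{u}|^2+2r\int_{\partial B_r}|\partial_\nu\mathbf{u}|^2\\
&\quad+2\gamma\int_{B_r}\msf d_\Sigma^{2\gamma-1}\chi_{\Omega_{\mathbf{u}}}\,\nabla\msf d_\Sigma\cdot x=0.
\end{aligned}
\]
The last integral converges by Proposition \ref{prop:C1Dini-properties}\ref{item:gradient-integrability}.

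\textbf{Step 3 (rewriting the weight term).} Use \(x=(x-\pi_\Sigma(x))+(\pi_\Sigma(x)-x_0)\) and \(\nabla\msf d_\Sigma\cdot(x-\pi_\Sigma(x))=\msf d_\Sigma\), valid a.e. The weight term then becomes
\[
2\gamma\int_{B_r}\msf d_\Sigma^{2\gamma}\chi_{\Omega_{\mathbf{u}}}+2\gamma E(r),
\]
where
\[
E(r):=\int_{B_r}\msf d_\Sigma^{2\gamma-1}\chi_{\Omega_{\mathbf{u}}}\frac{x-\pi_\Sigma(x)}{|x-\pi_\Sigma(x)|}\cdot(\pi_\Sigma(x)-x_0)\,\dx{x}.
\]
Combining with the first identity of Step 2 gives
\[
rD'(r)=(n+2\gamma)D(r)-(2+2\gamma)\int_{\partial B_r}\mathbf{u}\cdot\partial_\nu\mathbf{u}+2r\int_{\partial B_r}|\partial_\nu\mathbf{u}|^2+2\gamma E(r).
\]

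\textbf{Step 4 (assembling \(W'\)).} Compute
\[
W'=r^{-n-2\gamma}D'-(n+2\gamma)r^{-n-2\gamma-1}D-(\gamma+1)\bigl[r^{-n-2\gamma-1}H'-(n+2\gamma+1)r^{-n-2\gamma-2}H\bigr].
\]
Substitute Step 3 and the formula for \(H'\). The \(D\) terms cancel, and the \(H\) terms combine as
\[
(\gamma+1)\bigl(n+2\gamma+1-(n-1)\bigr)r^{-n-2\gamma-2}H=2(\gamma+1)^2r^{-n-2\gamma-2}H.
\]
The cross terms give \(-4(\gamma+1)r^{-n-2\gamma-1}\int_{\partial B_r}\mathbf{u}\cdot\partial_\nu\mathbf{u}\), and the remaining term is \(2r^{-n-2\gamma}\int_{\partial B_r}|\partial_\nu\mathbf{u}|^2\). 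Using \(\nabla\mathbf{u}\cdot x=r\partial_\nu\mathbf{u}\) on \(\partial B_r\), these three pieces complete the square
\[
\frac{2}{r^{n+2\gamma+2}}\int_{\partial B_r}\bigl|\nabla\mathbf{u}\cdot x-(\gamma+1)\mathbf{u}\bigr|^2\,\dH{n-1},
\]
and the leftover is \(2\gamma r^{-n-2\gamma-1}E(r)\). This is exactly \eqref{eq.Wr'}.

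\textbf{Main obstacle.} The delicate point is the rigorous limit in the inner variation when \(\gamma<1/2\), where the singular weight \(\msf d_\Sigma^{2\gamma-1}\) appears. Dominated convergence handles it via \eqref{eq.gradient-integrability}. One must also make sure the a.e. definition of \(\pi_\Sigma\) is legitimate, i.e. that \(\msf d_\Sigma\) is differentiable a.e. (Rademacher), and that the boundary terms are taken at Lebesgue points of \(r\mapsto\int_{\partial B_r}(\cdot)\).
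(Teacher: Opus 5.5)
Your proposal is correct and follows the paper's proof essentially step for step. The shared steps are the radial formula for \(H'\), the identity \(\int_{B_r}|\nabla\mathbf{u}|^2=\int_{\partial B_r}\mathbf{u}\cdot\partial_\nu\mathbf{u}\) via truncation, the inner variation \eqref{eq:inner-dist} tested with \(x\phi_k(|x|)\), the splitting \(x=(x-\pi_\Sigma(x))+\pi_\Sigma(x)\) to obtain \(rD'=(n+2\gamma)D-2(\gamma+1)G+\cdots\), and completing the square; your explicit appeal to \eqref{eq.gradient-integrability} for dominated convergence when \(\gamma<1/2\) spells out a limit the paper passes over quickly.
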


\begin{proof}
    By translating coordinates so that \(x_0=0\), we define 
    \[
        G(r):=\int_{B_r(0)}|\nabla\mathbf{u}|^2\,\dx{x}.
    \]
    Using radial differentiation, we obtain for a.e. \(r\in(0,r_0)\),
    \begin{equation}\label{eq.H'}
        H'(r)=\frac{n-1}rH(r)+2\int_{\partial B_r(0)}\mathbf{u}\cdot\nabla\mathbf{u}\cdot\nu\,\dH{n-1}.
    \end{equation}
    The harmonicity and truncation argument used in \cite[Lemma 20]{CSY18} gives, for a.e. \(r\in(0,r_0)\),
    \[
        G(r) = \int_{\partial B_r(0)}\mathbf{u}\cdot\nabla \mathbf{u}\cdot\nu\,\dH{n-1}.
    \]
    Following the argument of \cite[Lemma 21]{CSY18}, choose a smooth radial cutoff \(\phi_\varepsilon\) equal to one on \(B_r(0)\) and zero outside \(B_{r+\varepsilon}(0)\), with \(|\nabla\phi_\varepsilon|\le C/\varepsilon\). Testing \eqref{eq:inner-dist} with \(\Phi(x)=x\phi_\varepsilon(x)\) and letting \(\varepsilon\to 0^+\) yields, for a.e. \(r\),
    \begin{equation}\label{eq,pohoid1}
        \begin{aligned}
            rD'(r) ={}&(n-2)G(r)+n\int_{B_r(0)}\msf d_\Sigma^{2\gamma}\chi_{\Omega_{\mathbf{u}}}\,\dx{x}+2r\int_{\partial B_r(0)}|\nabla\mathbf{u}\cdot\nu|^2\,\dH{n-1}\\ 
            &+2\gamma\int_{B_r(0)}\msf d_\Sigma^{2\gamma-1}(x)\chi_{\Omega_{\mathbf{u}}}\frac{(x-\pi_\Sigma(x))}{|x-\pi_\Sigma(x)|}\cdot x\,\dx{x},
        \end{aligned}
    \end{equation}
    Let us write \(x=(x-\pi_\Sigma(x))+\pi_\Sigma(x)\). Then it follows that 
    \[
        \frac{x-\pi_\Sigma(x)}{|x-\pi_\Sigma(x)|}\cdot (x-\pi_\Sigma(x))=|x-\pi_\Sigma(x)|= \msf d_\Sigma(x).
    \]
    Thus we may rewrite the last term in equation \eqref{eq,pohoid1} as
    \[
        \begin{aligned}
            &\int_{B_r(0)}\msf d_\Sigma^{2\gamma-1}(x)\chi_{\Omega_{\mathbf{u}}}\frac{(x-\pi_\Sigma(x))}{|x-\pi_\Sigma(x)|}\cdot x\,\dx{x}\\ 
            ={}&\int_{B_r(0)}\msf d_\Sigma^{2\gamma}(x)\chi_{\Omega_{\mathbf{u}}}\,\dx{x}+
            \int_{B_r(0)}\msf d_\Sigma^{2\gamma-1}(x)\chi_{\Omega_{\mathbf{u}}}\frac{(x-\pi_\Sigma(x))}{|x-\pi_\Sigma(x)|}\cdot \pi_\Sigma(x)\,\dx{x}\\ 
            ={}&\left( D(r) - G(r) \right) +
            \int_{B_r(0)}\msf d_\Sigma^{2\gamma-1}(x)\chi_{\Omega_{\mathbf{u}}}\frac{(x-\pi_\Sigma(x))}{|x-\pi_\Sigma(x)|}\cdot \pi_\Sigma(x)\,\dx{x}.
        \end{aligned}
    \]
    Therefore, the equation \eqref{eq,pohoid1} becomes
    \begin{equation}\label{eq.pohoid_rewritten}
        \begin{aligned}
            rD'(r)={}&(n+2\gamma)D(r)-2(\gamma+1)G(r)+2r\int_{\partial B_r(0)}\left|\nabla\mathbf{u}\cdot\nu \right|^2\,\dH{n-1}\\ 
            &+2\gamma \int_{B_r(0)}\msf d_\Sigma^{2\gamma-1}(x)\chi_{\Omega_{\mathbf{u}}}\frac{(x-\pi_\Sigma(x))}{|x-\pi_\Sigma(x)|}\cdot \pi_\Sigma(x)\,\dx{x}.
        \end{aligned}
    \end{equation}
    Substituting these identities into the derivative of the Weiss functional \eqref{eq.Weiss} gives
    \[
        \begin{aligned}
            W'(r)
            ={}&2r^{-n-2\gamma}
            \int_{\partial B_r(0)}
            \left|
                \nabla\mathbf{u}\cdot\nu
                -\frac{1+\gamma}{r}\mathbf{u}
            \right|^2\,\dH{n-1}\\
            &+2\gamma r^{-n-2\gamma-1}
            \int_{B_r(0)}
            \msf d_\Sigma^{2\gamma-1}
            \chi_{\Omega_{\mathbf{u}}}
            \frac{x-\pi_\Sigma(x)}{|x-\pi_\Sigma(x)|}
            \cdot\pi_\Sigma(x)\,\dx{x}.
        \end{aligned}
    \]
    which proves \eqref{eq.Wr'} and concludes the proof.
\end{proof}

\begin{remark}
    If \(\Sigma\) is a flat hyperplane, then \((x-\pi_\Sigma(x))\cdot(\pi_\Sigma(x)-x_0)\equiv 0\) and 
    \[
        W'(r)
        =2r^{-n-2\gamma}
        \int_{\partial B_r(x_0)}
        \left|
            \nabla\mathbf{u}\cdot\nu
            -\frac{1+\gamma}{r}\mathbf{u}
        \right|^2\,\dH{n-1}
        \ge0,
    \]
    where \(\nu\) is the outer unit normal to \(B_r(x_0)\). For a general \(C^{1,\rm Dini}\) hypersurface, the geometric remainder measures the deviation from the tangent plane. The Dini condition makes this remainder integrable in the radius and yields almost monotonicity.
\end{remark}

The next estimate shows that the geometric remainder in \eqref{eq.Wr'} is integrable in the radius. This will give existence of the Weiss limit and homogeneity of blow-up limits.

\begin{lemma}\label{lem:esmaddi}
    There exists a constant \(C = C(n,\gamma,\omega_\Sigma(r_\Sigma)) > 0\) such that for all \(0<r<r_0\),
    \[
        r^{-n-2\gamma-1}
            \left|
                \int_{B_r(x_0)}
                \msf d_\Sigma^{2\gamma-1} \chi_{\Omega_{\mathbf{u}}}
                \frac{x-\pi_\Sigma(x)}{|x-\pi_\Sigma(x)|}
                \cdot \bigl(\pi_\Sigma(x) - x_0\bigr)
                \,\dx{x}
            \right| \le C \frac{\omega_\Sigma(2r)}{r}.
    \]
\end{lemma}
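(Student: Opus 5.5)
The estimate follows from a pointwise bound on the integrand plus the integrability estimate of Proposition \ref{prop:C1Dini-properties}. Translate so that \(x_0=0\). Since \(r<r_0\le r_\Sigma/4\), we have \(r\le r_\Sigma/2\), so part \ref{item:projection-estimate} of Proposition \ref{prop:C1Dini-properties} applies on \(B_r(0)\).

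First I would dispose of null sets. The integrand is defined only where \(\pi_\Sigma(x)\) is a singleton. By the remark following the definition of the projection, this holds \(\cL^n\)-a.e. on \(\R^n\setminus\Sigma\). Moreover \(\Sigma\) itself is \(\cL^n\)-null, since it is a \(C^1\) hypersurface. So the integral only involves points \(x\in B_r(0)\setminus\Sigma\) with a unique nearest point \(p=\pi_\Sigma(x)\).

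Next comes the pointwise bound. For such \(x\), estimate \eqref{eq.projection-estimate} with \(x_0=0\) gives
\[
    \left|\frac{x-\pi_\Sigma(x)}{|x-\pi_\Sigma(x)|}\cdot \pi_\Sigma(x)\right|\le 2r\,\omega_\Sigma(2r).
\]
Using this and \(0\le\chi_{\Omega_{\mathbf{u}}}\le1\), the absolute value of the integral is at most
\[
    2r\,\omega_\Sigma(2r)\int_{B_r(0)}\msf d_\Sigma^{2\gamma-1}\,\dx{x}.
\]

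Then I would apply part \ref{item:gradient-integrability} of Proposition \ref{prop:C1Dini-properties} with \(R=r\le r_\Sigma/2\). It bounds the last integral by \(C(n,\gamma,L)\,r^{n+2\gamma-1}\), where \(L=\omega_\Sigma(r_\Sigma)\). Multiplying by the prefactor \(r^{-n-2\gamma-1}\) gives
\[
    r^{-n-2\gamma-1}\cdot 2r\,\omega_\Sigma(2r)\cdot C\,r^{n+2\gamma-1}=2C\,\frac{\omega_\Sigma(2r)}{r}.
\]
This is the claimed bound, with constant depending only on \(n\), \(\gamma\) and \(\omega_\Sigma(r_\Sigma)\).

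I expect no real obstacle. The only points needing care are the admissibility of the radius (\(2r\le r_\Sigma\), so that \(\omega_\Sigma(2r)\) is defined and the nearest points lie in the graph chart) and the singular case \(\gamma<1/2\). In that case \(\msf d_\Sigma^{2\gamma-1}\) blows up at \(\Sigma\), but part \ref{item:gradient-integrability} already handles it. As a consequence, the Dini condition makes \(\omega_\Sigma(2r)/r\) integrable near \(0\), which is what the almost monotonicity of \(W\) requires.
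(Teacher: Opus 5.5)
Your proposal is correct and follows essentially the same route as the paper's proof. Both arguments bound the projection factor pointwise by \(2r\,\omega_\Sigma(2r)\) using \eqref{eq.projection-estimate}, control \(\int_{B_r(x_0)}\msf d_\Sigma^{2\gamma-1}\,\dx{x}\) by \(C r^{n+2\gamma-1}\) via \eqref{eq.gradient-integrability}, and then divide by \(r^{n+2\gamma+1}\). Your extra remarks on null sets and on the admissible radius (\(4r_0\le r_\Sigma\)) are sound details that the paper leaves implicit.
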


\begin{proof}
    By the projection estimate \eqref{eq.projection-estimate} and the gradient integrability bound \eqref{eq.gradient-integrability} in Proposition \ref{prop:C1Dini-properties}, we have 
    \[  
        \begin{aligned}
            &\left|
                \int_{B_r(x_0)}
                \msf d_\Sigma^{2\gamma-1} \chi_{\Omega_{\mathbf{u}}}
                \frac{x-\pi_\Sigma(x)}{|x-\pi_\Sigma(x)|}
                \cdot \bigl(\pi_\Sigma(x) - x_0\bigr)
                \,\dx{x}
            \right|\\ 
            \le{}& 2r\omega_\Sigma(2r)\int_{B_r(x_0)}\msf d_\Sigma^{2\gamma-1}\,\dx{x}\le C\omega_\Sigma(2r)r^{n+2\gamma},
        \end{aligned}
    \]
    and the result follows after dividing by \(r^{n+2\gamma+1}\).
\end{proof}

We are now in a position to establish the existence of the limit of the Weiss functional \(W(r)\) as \(r\to 0^+\).

\begin{proposition}[Existence of the Weiss limit at contact points]
\label{prop:Weiss-limit}
    For every contact point \(x_0\in \Gamma_\Sigma(\mathbf{u})\), the limit \(W(0^+) := \lim_{r\to 0^+} W(r)\) exists and is finite.
\end{proposition}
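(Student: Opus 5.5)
The plan is to combine the almost-monotonicity formula of Lemma \ref{lem:Weiss-monotonicity} with the remainder estimate of Lemma \ref{lem:esmaddi} and the growth bounds from Section \ref{sec:behavior-near-contact-points}.

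First, split the derivative in \eqref{eq.Wr'} as \(W'(r)=A(r)+E(r)\). The term \(A(r)\ge0\) is the boundary integral of \(|\nabla\mathbf{u}\cdot(x-x_0)-(\gamma+1)\mathbf{u}|^2\). The term \(E(r)\) is the geometric remainder. Lemma \ref{lem:esmaddi} gives
\[
    |E(r)|\le 2\gamma C\,\frac{\omega_\Sigma(2r)}{r}.
\]
Since \(\omega_\Sigma\) is a Dini modulus, \(\int_0^{r_0}\omega_\Sigma(2r)\,r^{-1}\,\dx{r}<\infty\). Hence the function
\[
    F(r):=\int_0^r \frac{\omega_\Sigma(2t)}{t}\,\dx{t}
\]
is finite, nondecreasing, and satisfies \(F(0^+)=0\).

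Next, set \(\widetilde W(r):=W(r)+2\gamma C\,F(r)\). Because \(W\) is locally absolutely continuous on \((0,r_0)\), so is \(\widetilde W\). For a.e. \(r\) we have \(\widetilde W'(r)=A(r)+E(r)+2\gamma C\,\omega_\Sigma(2r)/r\ge0\). Therefore \(\widetilde W\) is nondecreasing on \((0,r_0)\), and \(\lim_{r\to0^+}\widetilde W(r)\) exists in \([-\infty,\infty)\). Since \(F(r)\to0\), the limit \(W(0^+)\) exists in the same extended sense and equals \(\widetilde W(0^+)\).

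It remains to show the limit is finite, which is the only real point to check. The upper bound is immediate from monotonicity: \(W(0^+)\le\widetilde W(r_0/2)<\infty\). For the lower bound, use the growth estimate \eqref{eq:optimal-growth} at the contact point \(x_0\). It gives \(|\mathbf{u}|\le Cr^{1+\gamma}\) on \(\partial B_r(x_0)\), so
\[
    H(r)\le C r^{n-1}r^{2+2\gamma}.
\]
Consequently \((\gamma+1)r^{-n-2\gamma-1}H(r)\le C(n,m,\gamma)\). Since \(D(r)\ge0\), this yields \(W(r)\ge -C\) uniformly for small \(r\), and hence \(W(0^+)\ge -C>-\infty\). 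The same Lipschitz bound \eqref{eq:Lipschitz-contact} also gives \(|W(r)|\le C\) directly, so \(W\) is in fact bounded near \(0\).

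The only mild care needed is in justifying the integration of \(\widetilde W'\). This follows from the absolute continuity stated in Lemma \ref{lem:Weiss-monotonicity}, applied on compact subintervals \([s,t]\subset(0,r_0)\).
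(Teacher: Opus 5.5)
Your proof is correct and follows essentially the same route as the paper: Lemma~\ref{lem:esmaddi} and the Dini condition make \(W(r)+C\int_0^r\omega_\Sigma(2t)\,t^{-1}\,\dx{t}\) nondecreasing, and the growth and Lipschitz bounds at \(x_0\) keep \(W\) bounded near \(0\). You also observe that only the lower bound needs the growth estimate, since the upper bound comes from monotonicity; this is a small sharpening of the paper's argument, which simply bounds \(|W(r)|\) uniformly.
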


\begin{proof}
    Combining the derivative formula \eqref{eq.Wr'} with the geometric remainder estimate from Lemma \ref{lem:esmaddi}, we deduce that there exists a constant \(C>0\) such that the function 
    \[
        r\longmapsto W(r)+C\int_0^r\frac{\omega_\Sigma(2t)}t\,\dx{t}
    \]
    is non-decreasing for \(0<r<r_0\). Since \(\omega_\Sigma\) satisfies the Dini condition, the correction integral tends to zero as \(r\to 0^+\). Moreover, the optimal growth estimate \eqref{eq:optimal-growth} and the Lipschitz bound \eqref{eq:Lipschitz-contact} imply \(D(r) \le C r^{n+2\gamma}\) and \(H(r) \le C r^{n+1+2\gamma}\), so \(W(r)\) remains uniformly bounded as \(r\to 0^+\). By the monotone convergence principle, the limit \(W(0^+)\) exists and is finite.
\end{proof}

\subsection{Compactness and convergence of blow-up sequences at contact points}

The growth estimates at contact points yield compactness of blow-up sequences under the natural scaling \(r^{1+\gamma}\). We now show that the rescaled energies converge to a limit functional whose weight is given by the distance to the tangent plane of \(\Sigma\) at \(x_0\).

For a contact point \(x_0\in \Gamma_\Sigma(\mathbf{u})\) and \(r>0\), define the rescaling
\begin{equation}\label{eq.blow-up-sequence}
    \mathbf{u}_{x_0,r}(x):= \frac{\mathbf{u}(x_0+rx)}{r^{1+\gamma}}, \qquad x\in D_{x_0,r} := \frac{D-x_0}{r} = \left\lbrace \frac{x-x_0}{r}:x\in D\right\rbrace.
\end{equation}
The corresponding positivity set, rescaled hypersurface, and distance function satisfy
\[
    \Omega_{\mathbf{u}_{x_0,r}}=\frac{\Omega_{\mathbf{u}}-x_0}r,\quad \Sigma_{x_0,r}:=\frac{\Sigma-x_0}r,\quad\msf d_{\Sigma_{x_0,r}}(x)=\frac{\msf d_\Sigma(x_0+rx)}r.
\]

\begin{proposition}[Compactness and convergence of blow-up sequences at contact points]
\label{prop:blow-up-convergence}
    Let \(\mathbf{u}\) be an \(\varepsilon_0\)-local minimizer of \(J\) in \(\mathbb{A}\), let \(x_0\in\Gamma_\Sigma(\mathbf{u})\), and let \(r_k \to 0^+\). Then up to a subsequence, there exists a nonnegative locally Lipschitz function \(\mathbf{u}_0\in W_{\rm loc}^{1,2}(\R^n;\R^m)\) such that
    \[
        \mathbf{u}_{x_0,r_k}\to \mathbf{u}_0\quad\text{ locally uniformly and strongly in }W_{\rm loc}^{1,2}(\R^n;\R^m).
    \]
    Let \(T_\Sigma := T_{x_0}\Sigma\). Then 
    \[
        \msf d_{\Sigma_{x_0,r_k}}^{2\gamma} \chi_{\Omega_{\mathbf{u}_{x_0,r_k}}} \to \msf d_{T_\Sigma}^{2\gamma} \chi_{\Omega_{\mathbf{u}_0}}\quad\text{ strongly in } L^1_{\rm loc}(\R^n),
    \]
    and 
    \[
        \chi_{\Omega_{\mathbf{u}_{x_0,r_k}}} \to \chi_{\Omega_{\mathbf{u}_0}}
        \quad\text{ strongly in } L^1_{\rm loc}(\R^n).
    \]
    Finally, \(\mathbf{u}_0\) is a global minimizer of
    \[
        J_T(\mathbf{v};B_R(0)):=\int_{B_R(0)}\left( 
            |\nabla\mathbf{v}|^2 + \msf d_{T_\Sigma}^{2\gamma}\chi_{\Omega_{\mathbf{v}}}
        \right)\,\dx{x}.
    \]
    That is, for every \(R>0\) and every \(\mathbf{v}\in W^{1,2}(B_R;\R_+^m)\) with \(\mathbf{v}-\mathbf{u}_0\in W_0^{1,2}(B_R;\R^m)\), there holds 
    \[
        J_T(\mathbf{u}_0;B_R(0))\le J_T(\mathbf{v};B_R(0)).
    \]
\end{proposition}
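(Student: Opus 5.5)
The proof has four steps: compactness, convergence of the weighted phase term, convergence of the characteristic functions, and minimality of the limit, which is the main obstacle. Compactness follows from the growth and gradient bounds at contact points. By \eqref{eq:optimal-growth} and \eqref{eq:Lipschitz-contact}, for every $R>0$ and all $k$ large (so that $Rr_k\le r_0/4$) we have
\[
\sup_{B_R}|\mathbf{u}_{x_0,r_k}|\le CR^{1+\gamma},\qquad \|\nabla\mathbf{u}_{x_0,r_k}\|_{L^\infty(B_R)}\le C_0R^\gamma .
\]
Arzelà--Ascoli and a diagonal argument then give a subsequence converging locally uniformly and weakly-$*$ in $W^{1,\infty}_{\rm loc}$ to a nonnegative locally Lipschitz $\mathbf{u}_0$. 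The rescalings $\mathbf{u}_k:=\mathbf{u}_{x_0,r_k}$ are local minimizers in $B_R$ of
\[
J_k(\mathbf{v};B_R)=\int_{B_R}\bigl(|\nabla\mathbf{v}|^2+\msf d_{\Sigma_k}^{2\gamma}\chi_{\Omega_{\mathbf{v}}}\bigr)\,dx,\qquad \Sigma_k:=\Sigma_{x_0,r_k}.
\]
Indeed, the $r^{1+\gamma}$ scaling makes the Dirichlet and weighted volume terms scale identically, and the standard scale property from Lemma \ref{lem:standard-scale} passes to $B_{Rr_k}(x_0)$. Since $\Sigma$ is $C^1$, the graph representation shows that $\Sigma_k\to T_\Sigma$ locally in Hausdorff distance, so $\msf d_{\Sigma_k}\to\msf d_{T_\Sigma}$ locally uniformly. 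Hence $\msf d_{\Sigma_k}^{2\gamma}\to\msf d_{T_\Sigma}^{2\gamma}$ locally uniformly.

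For the characteristic functions, I would first show that $\chi_{\Omega_{\mathbf{u}_k}}\to\chi_{\Omega_{\mathbf{u}_0}}$ a.e. Uniform convergence gives $\liminf\chi_{\Omega_{\mathbf{u}_k}}\ge\chi_{\Omega_{\mathbf{u}_0}}$ pointwise. For the reverse, take $x\notin\overline{\Omega_{\mathbf{u}_0}}$ with $\msf d_{T_\Sigma}(x)>0$. If $x\in\Omega_{\mathbf{u}_k}$ along a subsequence, then the nondegeneracy \eqref{eq:boundary-nondegeneracy} applied to $\mathbf{u}_k$ on a small ball $B_\delta(x)$ (after rescaling, with weight bounded below near $x$) gives $\sup_{B_\delta(x)}|\mathbf{u}_k|\ge c\delta(\msf d_{T_\Sigma}(x)/2)^\gamma$. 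This contradicts uniform convergence to $0$ there. It remains to see that the sets $\partial\Omega_{\mathbf{u}_0}$ and $T_\Sigma$ are Lebesgue-null. For $T_\Sigma$ this is trivial. For $\partial\Omega_{\mathbf{u}_0}\setminus T_\Sigma$ I would use the uniform density bounds away from $\Sigma$: the interior balls of Corollary \ref{cor:interior-balls-strip} and the measure estimates of Lemma \ref{lem:measure-nondegenerate-ball} pass to the limit and give positive density of the zero set and of the positivity set. Dominated convergence then yields both $L^1_{\rm loc}$ convergences, the weighted one because the weights converge uniformly.

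Strong $W^{1,2}_{\rm loc}$ convergence and minimality are the main obstacle, and I would obtain both together from the standard competitor construction. Fix $R$ and a competitor $\mathbf{v}$ with $\mathbf{v}-\mathbf{u}_0\in W^{1,2}_0(B_R)$, $v_i\ge0$. For $\rho<R$ close to $R$, take a cutoff $\eta$ equal to $1$ on $B_\rho$ and supported in $B_R$, and set
\[
\mathbf{v}_k:=\eta\mathbf{v}+(1-\eta)\mathbf{u}_k ,
\]
which is admissible and nonnegative. By minimality, $J_k(\mathbf{u}_k;B_R)\le J_k(\mathbf{v}_k;B_R)$. The interpolation error in the Dirichlet term is controlled by $\|\nabla\eta\|_\infty\|\mathbf{u}_k-\mathbf{u}_0\|_{L^2(B_R)}\to0$ plus energies on $B_R\setminus B_\rho$, which are $O(R^n-\rho^n)$ by the uniform Lipschitz bound. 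The phase term on the annulus is likewise $O(R^n-\rho^n)$.

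Taking $\limsup_k$ and then $\rho\to R$, using weak lower semicontinuity of the Dirichlet term and the weighted $L^1$ convergence of the phase term, gives
\[
\limsup_k\int_{B_R}|\nabla\mathbf{u}_k|^2+\int_{B_R}\msf d_{T_\Sigma}^{2\gamma}\chi_{\Omega_{\mathbf{u}_0}}\le J_T(\mathbf{v};B_R).
\]
Choosing $\mathbf{v}=\mathbf{u}_0$ yields $\limsup_k\|\nabla\mathbf{u}_k\|_{L^2(B_R)}\le\|\nabla\mathbf{u}_0\|_{L^2(B_R)}$. Combined with weak convergence, this gives strong convergence in $W^{1,2}(B_R)$. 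Plugging back in for a general $\mathbf{v}$ then gives $J_T(\mathbf{u}_0;B_R)\le J_T(\mathbf{v};B_R)$.

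The delicate point is the null measure of $\partial\Omega_{\mathbf{u}_0}$, needed for the characteristic function convergence that this argument uses. I would handle it through the nondegeneracy and positive density estimates, which are uniform in $k$ on compact sets away from $T_\Sigma$.
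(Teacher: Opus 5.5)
Your proposal is correct in substance, but it runs in the opposite order from the paper's proof. The paper never uses nondegeneracy here. It first upgrades weak to strong $W^{1,2}_{\rm loc}$ convergence through the identity $\int\varphi|\nabla\mathbf u_j|^2\,dx=-\sum_i\int u_{j,i}\nabla u_{j,i}\cdot\nabla\varphi\,dx$, which holds for any nonnegative map harmonic on its positivity set. It then proves minimality of $\mathbf u_0$ using only the trivial inequality $\chi_{\Omega_0}\le\liminf_k\chi_{\Omega_k}$ and Fatou. Only afterwards, from the resulting energy convergence $J_k(\mathbf u_k;B_R)\to J_T(\mathbf u_0;B_R)$, does it extract $L^1_{\rm loc}$ convergence of $\msf d_{\Sigma_k}^{2\gamma}\chi_{\Omega_k}$. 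This is a Scheff\'e-type argument based on $(\msf d_{T_\Sigma}^{2\gamma}\chi_{\Omega_0}-\msf d_{\Sigma_k}^{2\gamma}\chi_{\Omega_k})_+\to0$ pointwise, and the weight near $T_\Sigma$ is removed at the end by interpolation.

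You follow the classical Alt--Caffarelli order instead. You get a.e.\ convergence of $\chi_{\Omega_k}$ from the uniform rescaled nondegeneracy \eqref{eq:boundary-nondegeneracy} off $T_\Sigma$ together with $|\partial\Omega_0|=0$, and then obtain strong convergence and minimality simultaneously from the competitor comparison. Both routes work. The paper's is softer, since only minimality and the Lipschitz bound enter; that is why it can be reused for moving centers in Lemma \ref{lem:sequential-compactness-moving-centers}. Yours costs nondegeneracy and a null-boundary argument, but it yields the more geometric fact that $\Omega_k$ cannot persist where $\mathbf u_0$ vanishes away from $T_\Sigma$.

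Two details need repair.

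\emph{The null-boundary step.} For $|\partial\Omega_0\setminus T_\Sigma|=0$, Corollary \ref{cor:interior-balls-strip} (centered at contact points) and Lemma \ref{lem:measure-nondegenerate-ball} (an $\cH^{n-1}$ bound) are not the right tools. Instead, take $z\in\partial\Omega_0$ with $\msf d_{T_\Sigma}(z)>0$. Apply the rescaled nondegeneracy to $\mathbf u_k$ at a point of $\Omega_0$ near $z$, pass to the limit, and use the Lipschitz bound on $\mathbf u_0$ to find a ball of radius at least $c(z)r$ inside $B_r(z)\cap\Omega_0$. Positive lower density of $\Omega_0$ alone then gives the null measure, by the Lebesgue density theorem.

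\emph{The cutoff for a general competitor.} Your cutoff lives inside $B_R$, where $\mathbf v\neq\mathbf u_0$. On $B_R\setminus B_\rho$, the terms $|\nabla\mathbf v|^2$ and $|\nabla\eta|^2|\mathbf v-\mathbf u_0|^2$ are not controlled by any Lipschitz bound. You have two options:
\begin{itemize}
\item invoke the boundary Poincar\'e inequality $\int_{B_R\setminus B_\rho}|\mathbf v-\mathbf u_0|^2\le(R-\rho)^2\int_{B_R\setminus B_\rho}|\nabla(\mathbf v-\mathbf u_0)|^2$, valid since $\mathbf v-\mathbf u_0\in W^{1,2}_0(B_R)$;
\item or do as the paper does: extend $\mathbf v$ by $\mathbf u_0$, take $\eta=1$ on $B_R$ with support in $B_{R+\delta}$, and compare on $B_{R+\delta}$. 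Then the transition layer only interpolates between the uniformly Lipschitz maps $\mathbf u_0$ and $\mathbf u_k$, and your annulus estimate is valid.
\end{itemize}
For the choice $\mathbf v=\mathbf u_0$, which is what gives strong convergence, your estimate is already fine.
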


\begin{proof}
    By translating and rotating coordinates, we may assume without loss of generality that \(x_0=0\) and \(T:=T_0\Sigma =\R^{n-1}\times\{0\}\). We adopt the following abbreviated notation:
    \begin{equation}\label{eq.abbre_notations}
        \mathbf{u}_k:=\mathbf{u}_{0,r_k},\qquad \Sigma_k:=r_k^{-1}\Sigma,\qquad \msf d_k:=\msf d_{\Sigma_k},\qquad \msf d_0:=\msf d_T.
    \end{equation}
    We further write
    \begin{equation}\label{eq.abbre_notations2}
        \Omega_k:=\Omega_{\mathbf{u}_k},\qquad \Omega_0:=\Omega_{\mathbf{u}_0}.
    \end{equation}

    By the graph representation in Definition \ref{def.C1Dini-hypersurface}, the rescaled graph functions \(f_k(y):=r_k^{-1}f(r_ky)\) converge locally to zero in \(C^1\). Consequently,
    \begin{equation}\label{eq.claim}
        \msf d_k\to \msf d_0,\qquad \msf d_k^{2\gamma}\to \msf d_0^{2\gamma}
        \quad\text{locally uniformly in }\R^n.
    \end{equation}
    Combined with the optimal growth estimate \eqref{eq:optimal-growth} and the Lipschitz bound \eqref{eq:Lipschitz-contact}, the Arzelà--Ascoli theorem and Rellich compactness yield a subsequence, still denoted \(\{\mathbf{u}_k\}\), that converges locally uniformly and weakly in \(W_{\rm loc}^{1,2}(\R^n;\R^m)\) to a nonnegative locally Lipschitz function \(\mathbf{u}_0\). Each component \(u_{0,i}\) is harmonic in \(\{u_{0,i}>0\}\), since \(u_{k,i}\) is harmonic on every compact subset of this set for all sufficiently large \(k\).

    To obtain strong convergence, let \(w\) denote either \(u_{k,i}\) or \(u_{0,i}\). In both cases, \(\Delta w=0\) weakly in \(\{w>0\}\). For \(0\le\varphi\in C_0^\infty(\R^n)\) and \(\varepsilon>0\), the function \(\varphi(w-\varepsilon)_+\) is compactly supported in \(\{w>0\}\) and is an admissible Sobolev test. For \(u_{k,i}\), we take \(k\) sufficiently large that this support lies in the rescaled domain. Thus
    \[
        \int\varphi\chi_{\{w>\varepsilon\}}|\nabla w|^2\,\dx{x}
        =-\int(w-\varepsilon)_+\nabla w\cdot\nabla\varphi\,\dx{x}.
    \]
    Letting \(\varepsilon\to 0^+\), using \(\nabla w=0\) a.e. on \(\{w=0\}\), and summing over the components, we obtain, for \(j=k,0\),
    \[
        \int\varphi|\nabla\mathbf{u}_j|^2\,\dx{x}
        =-\sum_{i=1}^m
            \int u_{j,i}\nabla u_{j,i}\cdot\nabla\varphi\,\dx{x}.
    \]
    The right-hand side converges as \(k\to\infty\) by strong local \(L^2\) convergence of the functions and weak local \(L^2\) convergence of their gradients. The corresponding weighted gradient norms therefore converge. Choosing \(\varphi=1\) on any prescribed compact ball proves strong local \(W^{1,2}\) convergence.

    We now establish both the minimality of \(\mathbf{u}_0\) and the convergence of the characteristic functions simultaneously. Define the rescaled energy functional
    \[
        J_k(\mathbf{w};B):=\int_B\bigl(|\nabla \mathbf{w}|^2 + \msf d_k^{2\gamma}\chi_{\Omega_{\mathbf{w}}}\bigr)\,\dx{x}.
    \]
    Fix \(R>0\) and \(\delta>0\), and let \(\mathbf{v}\) be an admissible competitor for \(\mathbf{u}_0\) in \(B_R(0)\), extended by \(\mathbf{u}_0\) outside \(B_R(0)\). Choose a cutoff function \(\eta\in C_0^\infty(B_{R+\delta}(0))\) with \(0\le\eta\le 1\) and \(\eta=1\) on \(B_R(0)\), and set \(\mathbf{v}_k := \eta\mathbf{v} + (1-\eta)\mathbf{u}_k\). By the standard scale property (cf. Definition \ref{def:standard-scale}), for sufficiently large \(k\), \(\mathbf{u}_k\) minimizes \(J_k\) in \(B_{R+\delta}(0)\). On the annulus \(A_\delta:=B_{R+\delta}(0) \setminus \overline{B_R(0)}\), both \(\mathbf{v}_k\) and \(\mathbf{u}_k\) converge strongly to \(\mathbf{u}_0\) in \(W^{1,2}\). The comparison with \(\mathbf{v}_k\) therefore yields
    \[
        J_k(\mathbf{u}_k;B_R(0))
        \le J_k(\mathbf{v};B_R(0))+\int_{A_\delta}\msf d_k^{2\gamma}\,\dx{x}
        +o_k(1).
    \]
    Here \(o_k(1)\to0\) as \(k\to\infty\) with \(R\), \(\delta\), \(\mathbf{v}\), and \(\eta\) fixed. More precisely, one may take
    \[
        o_k(1)
        :=\int_{A_\delta}
            \bigl(
                |\nabla\mathbf{v}_k|^2
                -|\nabla\mathbf{u}_k|^2
            \bigr)\,\dx{x},
    \]
    which tends to zero because both functions converge strongly
    to \(\mathbf{u}_0\) in \(W^{1,2}(A_\delta)\).

    Since \(\chi_{\Omega_0}\le\liminf_k\chi_{\Omega_k}\) a.e., it follows from Fatou's lemma that
    \[
        J_T(\mathbf{u}_0;B_R(0))\le\liminf_kJ_k(\mathbf{u}_k;B_R(0)).
    \]
    Combining this with the previous inequality, sending \(k\to\infty\) first and then \(\delta\to 0^+\), we obtain
    \[
        \begin{aligned}
            J_T(\mathbf{u}_0;B_R(0))
            &\le \liminf_{k\to\infty} J_k(\mathbf{u}_k;B_R(0))\\
            &\le\limsup_kJ_k(\mathbf{u}_k;B_R(0))
            \le J_T(\mathbf{v};B_R(0)),
        \end{aligned}
    \]
    which proves the global minimality of \(\mathbf{u}_0\). Taking \(\mathbf{v}=\mathbf{u}_0\) in the preceding comparison gives
    \[
        J_k(\mathbf{u}_k;B_R(0))
        \longrightarrow J_T(\mathbf{u}_0;B_R(0)).
    \]
    Subtracting the Dirichlet energies, which converge by the strong local \(W^{1,2}\) convergence already proved, yields
    \[
        \int_{B_R(0)}
            \msf d_k^{2\gamma}\chi_{\Omega_k}\,\dx{x}
        \longrightarrow
        \int_{B_R(0)}
            \msf d_0^{2\gamma}\chi_{\Omega_0}\,\dx{x}.
    \]
    Set
    \[
        q_k:=\msf d_k^{2\gamma},\qquad
        q_0:=\msf d_0^{2\gamma},\qquad
        a_k:=q_k\chi_{\Omega_k},\qquad
        a_0:=q_0\chi_{\Omega_0}.
    \]
    The weight convergence in \eqref{eq.claim} means that
    \[
        \|q_k-q_0\|_{L^\infty(B_R(0))}\longrightarrow0,
        \qquad
        q_k(x)=r_k^{-2\gamma}
            \msf d_\Sigma(r_kx)^{2\gamma}.
    \]
    At every point of \(\Omega_0\), local uniform convergence gives \(\chi_{\Omega_k}=1\) for all sufficiently large \(k\). Consequently, \((a_0-a_k)_+\to0\) pointwise, with \(0\le(a_0-a_k)_+\le a_0\). The convergence of the integrals established above therefore implies
    \[
        \begin{aligned}
            \|a_k-a_0\|_{L^1(B_R(0))}
            &=
            \int_{B_R(0)}(a_k-a_0)\,\dx{x}
            +2\int_{B_R(0)}(a_0-a_k)_+\,\dx{x}\\
            &\longrightarrow0.
        \end{aligned}
    \]
    Moreover,
    \[
        \begin{aligned}
            &\int_{B_R(0)}
                q_0|\chi_{\Omega_k}-\chi_{\Omega_0}|\,\dx{x}\\
            &\qquad\le
            \|a_k-a_0\|_{L^1(B_R(0))}
            +|B_R(0)|\|q_k-q_0\|_{L^\infty(B_R(0))}
            \longrightarrow0.
        \end{aligned}
    \]
    This gives the convergence of the weighted phase. Finally, for any \(0<\delta<R\), we have the interpolation estimate
    \[
        \|\chi_{\Omega_k}-\chi_{\Omega_0}\|_{L^1(B_R(0))}
        \le\delta^{-2\gamma}\int_{B_R(0)}\msf d_0^{2\gamma}
        |\chi_{\Omega_k}-\chi_{\Omega_0}|\,\dx{x}
        +C(n)R^{n-1}\delta.
    \]
    The integral on the right tends to zero by the convergence just established. Letting first \(k\to\infty\) and then \(\delta\to 0^+\) removes the weight and completes the proof.
\end{proof}

We are now ready to pass the Weiss monotonicity formula to the blow-up limit. With the notation introduced in \eqref{eq.abbre_notations}--\eqref{eq.abbre_notations2}, define the rescaled Weiss functional
\begin{equation}\label{eq.scaled_Weiss}
    \begin{aligned}
        W_k(\rho):={}&\rho^{-n-2\gamma} \int_{B_\rho(0)} \left( |\nabla \mathbf{u}_k|^2 + \msf d_k^{2\gamma} \chi_{\Omega_k} \right)\,\dx{x}\\ 
        &-(1+\gamma)\rho^{-n-1-2\gamma} \int_{\partial B_\rho(0)} |\mathbf{u}_k|^2\,\dH{n-1},
    \end{aligned}
\end{equation}
and let \(W_0\) denote the analogous expression for \(\mathbf{u}_0\) and \(\msf d_0\) derived in Proposition \ref{prop:blow-up-convergence}. By the scaling property of the Weiss functional and the convergence results in Proposition \ref{prop:blow-up-convergence}, for every \(\rho>0\) we have
\begin{equation}\label{eq.scaled_Weiss_identity}
    W_k(\rho) = W(r_k \rho) \longrightarrow W_0(\rho) = W(0^+).
\end{equation}
Moreover, combining the monotonicity formula (cf. Lemma \ref{lem:Weiss-monotonicity}) with the geometric remainder estimate (cf. Lemma \ref{lem:esmaddi}), we obtain for \(0<\varrho<\sigma\) and all sufficiently large \(k\),
\begin{equation}\label{eq:scaled-homogeneity-defect-estimate}
    \begin{aligned}
        &2\int_\varrho^\sigma r^{-n-2(1+\gamma)}
        \int_{\partial B_r(0)}
        |\nabla \mathbf{u}_k\cdot x-(1+\gamma)\mathbf{u}_k|^2
        \,\dH{n-1}\dx{r}\\
        &\qquad\le W_k(\sigma)-W_k(\varrho)+C\int_\varrho^\sigma\frac{\omega_\Sigma(2r_kr)}r\,\dx{r}.
    \end{aligned}
\end{equation}
As \(k\to\infty\), the right-hand side of \eqref{eq:scaled-homogeneity-defect-estimate} converges to zero, which yields the following homogeneity result for blow-up limits.

\begin{corollary}[Homogeneity of blow-up limits]\label{cor:homogeneity-blow-up-limits}
    Every blow-up limit \(\mathbf{u}_0\) from Proposition \ref{prop:blow-up-convergence} is homogeneous of degree \((1+\gamma)\). Furthermore, for every \(\rho>0\),
    \begin{equation}\label{eq:W0rho}
        W_0(\rho)=\rho^{-n-2\gamma}\int_{B_\rho(0)}\msf d_0^{2\gamma}\chi_{\Omega_0}\,\dx{x}.
    \end{equation}
\end{corollary}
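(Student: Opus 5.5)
The plan is to pass the scaled homogeneity-defect estimate \eqref{eq:scaled-homogeneity-defect-estimate} to the limit, and then compute $W_0$ using homogeneity together with an integration by parts on $\Omega_0$.

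\emph{Homogeneity.} Fix $0<\varrho<\sigma$. By \eqref{eq.scaled_Weiss_identity}, $W_k(\sigma)-W_k(\varrho)=W(r_k\sigma)-W(r_k\varrho)\to W(0^+)-W(0^+)=0$; Proposition \ref{prop:Weiss-limit} guarantees that this limit exists. The Dini correction satisfies
\[
\int_\varrho^\sigma\frac{\omega_\Sigma(2r_kr)}{r}\,\dx{r}=\int_{r_k\varrho}^{r_k\sigma}\frac{\omega_\Sigma(2t)}{t}\,\dx{t}\to0 .
\]
Hence the left-hand side of \eqref{eq:scaled-homogeneity-defect-estimate} tends to zero. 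Proposition \ref{prop:blow-up-convergence} gives $\mathbf{u}_k\to\mathbf{u}_0$ strongly in $W^{1,2}(B_\sigma)$, so $\nabla\mathbf{u}_k\cdot x-(1+\gamma)\mathbf{u}_k\to\nabla\mathbf{u}_0\cdot x-(1+\gamma)\mathbf{u}_0$ in $L^2(B_\sigma\setminus B_\varrho)$. The weight $r^{-n-2(1+\gamma)}$ is bounded on $[\varrho,\sigma]$. By the coarea formula, the double integral is therefore a continuous functional of this $L^2$ quantity, and it passes to the limit. Consequently $\nabla\mathbf{u}_0\cdot x=(1+\gamma)\mathbf{u}_0$ a.e. in $B_\sigma\setminus B_\varrho$. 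Since $\varrho$ and $\sigma$ are arbitrary, this holds a.e. in $\R^n$. Because $\mathbf{u}_0$ is locally Lipschitz, integrating along rays $t\mapsto \mathbf{u}_0(tx)$ gives $\frac{d}{dt}\bigl(t^{-1-\gamma}\mathbf{u}_0(tx)\bigr)=0$ for a.e. ray, and continuity then yields $(1+\gamma)$-homogeneity everywhere.

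\emph{Formula \eqref{eq:W0rho}.} It suffices to show
\[
\int_{B_\rho}|\nabla\mathbf{u}_0|^2\,\dx{x}=\frac{1+\gamma}{\rho}\int_{\partial B_\rho}|\mathbf{u}_0|^2\,\dH{n-1},
\]
since the remaining terms of $W_0(\rho)$ give exactly the right-hand side of \eqref{eq:W0rho}. Each component $u_{0,i}$ is harmonic in $\{u_{0,i}>0\}$, by the proof of Proposition \ref{prop:blow-up-convergence}. The identity established there, with $j=0$, reads
\[
\int\varphi|\nabla\mathbf{u}_0|^2=-\sum_i\int u_{0,i}\nabla u_{0,i}\cdot\nabla\varphi .
\]
Take $\varphi=\varphi_\varepsilon$, a radial cutoff equal to $1$ on $B_\rho$ and decreasing linearly to $0$ on $B_{\rho+\varepsilon}$. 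Letting $\varepsilon\to0$ gives, for a.e. $\rho$,
\[
\int_{B_\rho}|\nabla\mathbf{u}_0|^2=\int_{\partial B_\rho}\mathbf{u}_0\cdot\partial_\nu\mathbf{u}_0\,\dH{n-1}.
\]
By homogeneity, $\partial_\nu\mathbf{u}_0=\frac{1+\gamma}{\rho}\mathbf{u}_0$ on $\partial B_\rho$, which proves the identity for a.e. $\rho$. Both sides are continuous in $\rho$: the left side by absolute continuity of the integral, the right side by homogeneity and the continuity of $\mathbf{u}_0$. The identity therefore holds for every $\rho>0$, and substituting it into the definition of $W_0$ gives \eqref{eq:W0rho}.

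\emph{Main obstacle.} The delicate step is the limit passage in the homogeneity defect, namely justifying that the spherical double integral equals an $L^2$ norm on an annulus, so that strong $W^{1,2}$ convergence suffices. This is handled by the coarea formula. A second point to check is that the weight term $\int_{B_\rho}\msf d_k^{2\gamma}\chi_{\Omega_k}$ converges, which is supplied by the $L^1_{\rm loc}$ convergence in Proposition \ref{prop:blow-up-convergence}. Together these underlie \eqref{eq.scaled_Weiss_identity}, which I take as given.
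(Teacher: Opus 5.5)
Your proposal is correct and follows essentially the same route as the paper: you pass the scaled homogeneity-defect estimate to the limit using the existence of $W(0^+)$, the Dini condition, and strong $W^{1,2}_{\rm loc}$ convergence, and then use an integration-by-parts identity together with homogeneity to cancel the Dirichlet and boundary terms in $W_0(\rho)$. Your use of the truncated identity from the proof of Proposition~\ref{prop:blow-up-convergence}, tested with a radial cutoff, just spells out the step the paper compresses into ``by harmonicity and homogeneity.''
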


\begin{proof}
    Fix \(0<\varrho<\sigma\). By \eqref{eq.scaled_Weiss_identity} and the Dini condition (cf. Definition \ref{def.Dini-modulus}), the right-hand side of \eqref{eq:scaled-homogeneity-defect-estimate} tends to zero as \(k\to\infty\). Strong local \(W^{1,2}\) convergence of \(\{\mathbf{u}_k\}\) then allows us to pass to the limit in the left-hand side, giving
    \begin{equation}\label{eq.homogeneity-defect-vanishing}
        \int_{B_\sigma(0)\setminus B_\varrho(0)}|x|^{-n-2(\gamma+1)} \left|
            \nabla \mathbf{u}_0 \cdot x - (\gamma+1)\mathbf{u}_0 
        \right|^2\,\dx{x} = 0.
    \end{equation}
    Since \(\varrho\) and \(\sigma\) are arbitrary, the radial derivative of \(r^{-1-\gamma}\mathbf{u}_0(r\theta)\) vanishes a.e., which proves the homogeneity of \(\mathbf{u}_0\). By harmonicity and homogeneity,
    \[
        \begin{aligned}
            \int_{B_\rho(0)} |\nabla \mathbf{u}_0|^2 \,\dx{x} &= \int_{\partial B_\rho(0)} \mathbf{u}_0 \cdot \nabla\mathbf{u}_0\cdot\nu \,\dH{n-1} \\ 
            &= \frac{\gamma+1}{\rho} \int_{\partial B_\rho(0)} |\mathbf{u}_0|^2 \,\dH{n-1}.
        \end{aligned}
    \]
    Substituting this into the definition of \(W_0(\rho)\) yields \eqref{eq:W0rho}.
\end{proof}

We collect here the variational identities satisfied by the flat blow-up limits, which will play a key role in the classification analysis that follows.

\begin{lemma}[Variational properties of flat blow-up limits]\label{lem:basic_property_U0}
    Let \(\mathbf{u}_0=(u_{0,1},\dots,u_{0,m})\) be a blow-up limit as in Proposition \ref{prop:blow-up-convergence}, with tangent plane \(T = T_{x_0}\Sigma\), distance function \(\msf d_0=\msf d_T\), and positivity set \(\Omega_0 = \{|\mathbf{u}_0| > 0\}\). Then
    \begin{enumerate}[label=(\roman*)]
    \item \label{item:harmonic_U0}
    Each component of \(\mathbf{u}_0\) is harmonic in \(\Omega_0\).
    \item \label{item:inner_variation_U0}
    For every vector field \(\Phi\in C_0^\infty(\R^n;\R^n)\), there holds
    \[
            \begin{aligned}
                \int_{\R^n}\Bigg[
                &\bigl(|\nabla \mathbf{u}_0|^2+\msf d_0^{2\gamma}\chi_{\Omega_0}\bigr)
                  \operatorname{div}\Phi
                -2\sum_{i=1}^m
                  \langle D\Phi\,\nabla u_{0,i},\nabla u_{0,i}\rangle\\
                &+2\gamma \msf d_0^{2\gamma-1}\chi_{\Omega_0}
                  \frac{x-\pi_T(x)}{|x-\pi_T(x)|}\cdot\Phi
                \Bigg]\,\dx{x}=0.
            \end{aligned}
        \]
    \item \label{item:Weiss_monotonicity_U0}
    The flat Weiss energy \(W_0\) satisfies the monotonicity formula
    \begin{equation}\label{eq.flat-Weiss-U0}
        W_0'(\rho)
        =\frac2{\rho^{n+2(\gamma+1)}}
            \int_{\partial B_\rho(0)}
            \left|\nabla \mathbf{u}_0\cdot x-(\gamma+1)\mathbf{u}_0\right|^2
            \,\dH{n-1}\ge 0
    \end{equation}
    for a.e. \(\rho>0\).
    \end{enumerate}
\end{lemma}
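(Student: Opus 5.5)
The plan is to derive all three items from Proposition~\ref{prop:blow-up-convergence}, with the flat weight \(\msf d_0=\msf d_T\) taking the place of \(\msf d_\Sigma\).

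For item \ref{item:harmonic_U0}, fix \(x\in\Omega_0\) and a ball \(B_\rho(x)\subset\subset\Omega_0\). By local uniform convergence, \(|\mathbf{u}_k|\ge c>0\) on \(B_\rho(x)\) for all large \(k\), so \(B_\rho(x)\subset\Omega_k\). Corollary~\ref{cor:gradient-estimate}\ref{item:harmonic} shows that each \(u_{k,i}\) is harmonic there, and harmonicity passes to the limit under uniform convergence. This is stronger than harmonicity in \(\{u_{0,i}>0\}\): it holds on all of \(\Omega_0\), including any region where a particular component \(u_{0,i}\) vanishes, since \(u_{0,i}\) is then a nonnegative harmonic function that is identically zero nearby.

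For item \ref{item:inner_variation_U0}, I would pass to the limit in the inner variation. The first route is to rescale \eqref{eq:inner-dist}. The function \(\mathbf{u}_k\) satisfies the same identity on the rescaled domain, with \(\msf d_k\) in place of \(\msf d_\Sigma\) and with the projection onto \(\Sigma_k\). This uses that \(\msf d_\Sigma^{2\gamma}\) is \(2\gamma\)-homogeneous under the \(r^{1+\gamma}\) scaling. Taking \(k\) large enough that \(\operatorname{supp}\Phi\) lies in the rescaled domain, the terms pass to the limit as follows.
\begin{itemize}
    \item The Dirichlet terms \(|\nabla\mathbf{u}_k|^2\operatorname{div}\Phi\) and \(\langle D\Phi\nabla u_{k,i},\nabla u_{k,i}\rangle\) converge by strong \(W^{1,2}_{\rm loc}\) convergence.
    \item The term \(\msf d_k^{2\gamma}\chi_{\Omega_k}\operatorname{div}\Phi\) converges by the \(L^1_{\rm loc}\) convergence of weighted phases.
\end{itemize}

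The remaining term, \(2\gamma\msf d_k^{2\gamma-1}\chi_{\Omega_k}\nu_k\cdot\Phi\) with \(\nu_k=\nabla\msf d_k\), is the main obstacle. There are two difficulties: \(\msf d_k^{2\gamma-1}\) is singular near \(\Sigma_k\) when \(\gamma<1/2\), and \(\nu_k\) is discontinuous across \(\Sigma_k\). The cleanest fix is to avoid this term altogether. Since \(\msf d_k^{2\gamma}\in W^{1,1}_{\rm loc}\), integrating by parts gives
\[
    \int\msf d_k^{2\gamma}\chi_{\Omega_k}\operatorname{div}\Phi+\int\nabla(\msf d_k^{2\gamma})\chi_{\Omega_k}\cdot\Phi ,
\]
but the characteristic function obstructs this. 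I would therefore instead prove the convergence directly. Fix \(\delta>0\) and split \(\operatorname{supp}\Phi\) into \(N_\delta(T)\) and its complement.
\begin{itemize}
    \item Away from \(N_\delta(T)\), the \(C^1\) convergence \(f_k\to0\) gives \(\msf d_k^{2\gamma-1}\nu_k\to\msf d_0^{2\gamma-1}\nu_0\) uniformly, because the nearest-point projection onto a \(C^1\)-close graph is close to the flat projection at distances \(\ge\delta/2\). Combined with \(\chi_{\Omega_k}\to\chi_{\Omega_0}\) in \(L^1_{\rm loc}\), this handles the main part.
    \item Inside \(N_\delta(T)\), both terms are bounded by \(C\int_{B_R\cap N_{2\delta}}\msf d^{2\gamma-1}\lesssim\delta^{2\gamma}R^{n-1}\). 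This follows from the layer-cake computation of Proposition~\ref{prop:C1Dini-properties}\ref{item:gradient-integrability}, applied to \(\Sigma_k\), whose Dini constants are uniform (in fact they improve) under rescaling. For \(k\) large, \(N_\delta(\Sigma_k)\cap B_R\subset N_{2\delta}(T)\).
\end{itemize}
Letting \(k\to\infty\) and then \(\delta\to0\) gives the identity.

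For item \ref{item:Weiss_monotonicity_U0}, I would repeat the proof of Lemma~\ref{lem:Weiss-monotonicity} for \(\mathbf{u}_0\). The identity \(G(\rho)=\int_{\partial B_\rho}\mathbf{u}_0\cdot\partial_\nu\mathbf{u}_0\) follows from the test-function argument already used in the strong convergence proof. Testing item \ref{item:inner_variation_U0} with \(\Phi=x\phi_\varepsilon\) gives the Pohozaev identity. The geometric remainder vanishes identically because \((x-\pi_T(x))\cdot\pi_T(x)=0\) for the hyperplane \(T\) through \(0\), as in the flat remark. This yields \eqref{eq.flat-Weiss-U0}. As an alternative route, \(W_0\) is constant by Corollary~\ref{cor:homogeneity-blow-up-limits} and the right-hand side vanishes by homogeneity, so both sides are zero, which is consistent with the formula.
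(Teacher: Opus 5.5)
Your proof is correct. For items (i) and (ii), however, it runs in the opposite direction from the paper's.

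The paper obtains both directly from the global minimality of \(\mathbf{u}_0\) for \(J_T\) established in Proposition~\ref{prop:blow-up-convergence}. Harmonic replacement in balls \(B\subset\subset\Omega_0\) gives (i). The domain variations \(\mathbf{u}_0\circ(\mathrm{Id}+t\Phi)^{-1}\) give (ii), exactly as in Lemma~\ref{lem:EL} but with the explicit flat weight \(\msf d_T^{2\gamma}\). Item (iii) is then the computation of Lemma~\ref{lem:Weiss-monotonicity} with vanishing remainder, as in your last paragraph.

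You instead pass harmonicity through uniform convergence and pass the stationarity identity from the rescaled maps \(\mathbf{u}_k\) to the limit. The cost is extra work on the singular term \(2\gamma\msf d_k^{2\gamma-1}\chi_{\Omega_k}\nu_k\cdot\Phi\), and you handle it correctly. Off \(N_\delta(T)\) you use uniform convergence of \(\msf d_k^{2\gamma-1}\nu_k\). Near \(T\) you use a layer-cake bound \(O(\delta^{2\gamma}R^{n-1})\), which is uniform in \(k\) because \(L=\omega_\Sigma(r_\Sigma)\) is invariant under the rescaling.

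There is one small slip. To dominate \(\int_{B_R\cap N_\delta(T)}\msf d_k^{2\gamma-1}\) you need \(B_R\cap N_\delta(T)\subset N_{2\delta}(\Sigma_k)\), which is the converse of the inclusion you wrote. Both inclusions hold for large \(k\), since \(\msf d_k\to\msf d_0\) uniformly on \(B_R\).

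Comparing the two routes:
\begin{itemize}
\item Yours uses only strong \(W^{1,2}_{\rm loc}\) convergence and \(L^1_{\rm loc}\) convergence of the phases, so in principle it would carry stationarity to limits of merely stationary maps. In the present setting, though, that phase convergence was itself proved through the minimality comparison, so nothing is actually saved.
\item The paper's route is shorter and applies verbatim to any global minimizer of \(J_T\). That matters because \eqref{eq.flat-Weiss-U0} is later applied to non-homogeneous flat limits at moving centers in the quantitative-stratification appendix.
\end{itemize}

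Your closing remark is also correct: for a blow-up limit both sides of \eqref{eq.flat-Weiss-U0} vanish by Corollary~\ref{cor:homogeneity-blow-up-limits}.
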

\begin{proof}
    Harmonicity and the inner variation identity follow directly from the global minimality of \(\mathbf{u}_0\). The derivation of the Weiss monotonicity formula in Lemma \ref{lem:Weiss-monotonicity} carries over verbatim to the case \(\Sigma = T\); the geometric remainder term vanishes identically since \(\nabla \msf d_T(x) \cdot \pi_T(x) = 0\) for almost every \(x \notin T\).
\end{proof}

\begin{remark}
    For the homogeneous blow-up limits studied above, the Weiss functional \(W_0\) is constant and equal to the contact-point Weiss limit \(W(0^+)\).
\end{remark}

\subsection{Analysis of the weighted density at contact points}

Our compactness and homogeneity results allow us to identify the weighted density at contact points \(x_0\in \Gamma_\Sigma(\mathbf{u})\) with the limiting Weiss energy defined in \eqref{eq.Weiss}. We now establish that this density satisfies a universal gap property: its positive values are uniformly bounded away from zero.

After translating the contact point to the origin, we abbreviate the weighted density in \eqref{eq.weighted-density} as
\[
    \Theta_\Sigma(\mathbf{u};r):=r^{-n-2\gamma}\int_{B_r(0)}\msf d_\Sigma^{2\gamma}\chi_{\Omega_{\mathbf{u}}}\,\dx{x}.
\]
We continue to use the blow-up notation introduced in \eqref{eq.abbre_notations} and \eqref{eq.abbre_notations2}.
\begin{proposition}[Weighted density gap at contact points]\label{prop:weighted-density-contact}
    Let \(\mathbf{u}\) be an \(\varepsilon_0\)-local minimizer of \(J\) in \(\mathbb{A}\), and let \(x_0\in \Gamma_\Sigma(\mathbf{u})\). Then the limit \(\Theta_\Sigma(\mathbf{u}; x_0, 0^+)\) exists and satisfies
    \begin{equation}\label{eq.weighted-density1}
        \Theta_\Sigma(\mathbf{u};x_0,0^+) = W_0(1)
    \end{equation}
    for every blow-up limit \(\mathbf{u}_0\) at \(x_0\). Moreover,
    \begin{equation}\label{eq.weighted-density2}
        W_0(1) \in \{0\} \cup [\theta_1(n,\gamma), \theta_2(n,\gamma)],
    \end{equation}
    where \(0 < \theta_1(n,\gamma) \le \theta_2(n,\gamma) < \infty\) are universal constants depending only on \(n\) and \(\gamma\).
\end{proposition}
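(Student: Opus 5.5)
The plan has two parts. First identify the density limit with the Weiss limit. Then prove the gap by a nondegeneracy argument applied to homogeneous blow-ups.

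\emph{Existence and identification.} Fix any sequence \(r_k\to0^+\). By Proposition \ref{prop:blow-up-convergence}, a subsequence gives \(\msf d_k^{2\gamma}\chi_{\Omega_k}\to\msf d_0^{2\gamma}\chi_{\Omega_0}\) in \(L^1(B_1)\). Since \(\Theta_\Sigma(\mathbf{u};r_k)=\int_{B_1}\msf d_k^{2\gamma}\chi_{\Omega_k}\,\dx{x}\) by change of variables, we get \(\Theta_\Sigma(\mathbf{u};r_k)\to\int_{B_1}\msf d_0^{2\gamma}\chi_{\Omega_0}\,\dx{x}\). By Corollary \ref{cor:homogeneity-blow-up-limits} and \eqref{eq.scaled_Weiss_identity}, this equals \(W_0(1)=W(0^+)\). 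The right-hand side \(W(0^+)\) does not depend on the sequence, by Proposition \ref{prop:Weiss-limit}. Every sequence therefore has a subsequence along which \(\Theta_\Sigma\) converges to the same number \(W(0^+)\), so the full limit exists and \eqref{eq.weighted-density1} holds for every blow-up \(\mathbf{u}_0\).

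\emph{Upper bound.} This is immediate: \(W_0(1)\le\int_{B_1}|x_n|^{2\gamma}\,\dx{x}=:\theta_2(n,\gamma)\), after rotating so that \(T=\{x_n=0\}\).

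\emph{Lower bound.} Suppose \(W_0(1)>0\). Then \(\Omega_0\) has positive measure. Since \(\Omega_0\) is an open cone by homogeneity, it contains a point \(p\in\partial B_{1/2}\cap\Omega_0\) with \(|p_n|\ge s\) for some \(s>0\), because a nonempty open cone cannot lie inside \(T\). The difficulty is that \(s\) depends on \(\mathbf{u}_0\), while we need a universal \(s\). I would handle this with a dichotomy.

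\begin{itemize}
\item If \(\Omega_0\cap B_{1/2}\cap\{|x_n|\ge 1/8\}\neq\varnothing\), apply the nondegeneracy and interior-ball arguments of Proposition \ref{prop:optimal-growth-nondegeneracy} and Corollary \ref{cor:interior-balls-strip} directly to \(\mathbf{u}_0\). This is legitimate because \(\mathbf{u}_0\) is a global minimizer with weight \(\msf d_0^{2\gamma}\); the proofs only use minimality, the growth bound \eqref{eq:optimal-growth} and the gradient bound \eqref{eq:Lipschitz-contact}, and both bounds pass to the limit with universal constants. This yields a ball \(B_{\theta}(y)\subset\Omega_0\cap B_1\cap\{|x_n|\ge1/16\}\) with \(\theta=\theta(n,m,\gamma)\). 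Hence \(W_0(1)\ge \cL^n(B_\theta)16^{-2\gamma}\).
\item Otherwise \(\Omega_0\cap B_{1/2}\subset\{|x_n|<1/8\}\). By conic structure, \(\Omega_0\) then lies in the double cone \(\{|x_n|<\tfrac14|x|\}\). In this case I would use the cone structure plus the positive homogeneity degree \(1+\gamma\): a harmonic function vanishing on the boundary of a thin cone around \(T\) has homogeneity exceeding \(1+\gamma\) once the cone is thin enough, because the first Dirichlet eigenvalue on a thin spherical band is large. A band of fixed width \(1/4\) is not obviously thin, however. So in place of \(1/8\) I would choose a threshold \(\tau(n,\gamma)\) small enough that the first Dirichlet eigenvalue of \(\mathbb S^{n-1}\cap\{|x_n|<\tau\}\) exceeds \((1+\gamma)(n-1+\gamma)\). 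This contradicts the existence of a nonzero \((1+\gamma)\)-homogeneous harmonic component vanishing on \(\partial\Omega_0\). Consequently \(\Omega_0\) must meet \(\{|x_n|\ge\tau|x|\}\), and the first case applies with \(s=\tau/2\), which makes all constants universal.
\end{itemize}

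The main obstacle is exactly this uniformity step: excluding blow-ups concentrated near \(T\), which would have arbitrarily small weighted density. The eigenvalue comparison is what I expect to require care. It needs monotonicity of Dirichlet eigenvalues under domain inclusion and a computation that the eigenvalue of a thin band blows up as \(\tau\to0\). The dependence on \(m\) disappears because \(|\mathbf{u}_0|\) is handled componentwise.
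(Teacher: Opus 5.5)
Your identification of the density limit with $W(0^+)$ and your upper bound are exactly the paper's. For the lower bound you take a different route: a thin-band eigenvalue exclusion, followed by the interior-ball nondegeneracy of Corollary \ref{cor:interior-balls-strip} applied to $\mathbf u_0$. This does give a density gap, but not with the constant the statement claims.

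The interior-ball alternative runs through Proposition \ref{prop:optimal-growth-nondegeneracy}, the Lipschitz bound \eqref{eq:Lipschitz-contact} and Corollary \ref{cor:interior-balls-strip}. As stated in the paper, their constants $c(n,m,s)$, $C_0(n,m,\gamma)$ and $\theta(n,m,\gamma,s,\sigma)$ all depend on $m$; you record $\theta=\theta(n,m,\gamma)$ yourself. Your closing sentence, that this dependence ``disappears because $|\mathbf u_0|$ is handled componentwise'', is not an argument. The interior-ball step uses vectorial minimality through the competitors $\min\{u_i,w\}$. Moreover, $|\mathbf u_0|$ is not known to minimize a scalar functional when different branches carry different directions $\mathbf a_\ell$. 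As written, you obtain \eqref{eq.weighted-density2} only with $\theta_1=\theta_1(n,m,\gamma)$. A minor point: with $r=1$, the point you feed into Corollary \ref{cor:interior-balls-strip} must lie in the open ball $B_{1/2}$, so take $s=\tau/4$ rather than $\tau/2$.

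The paper needs neither the dichotomy nor minimality. On a component cone $\mathcal C$ it uses a single positive component. Its angular part lies in $H^1_0(\mathcal S)$ and is a positive Dirichlet eigenfunction with eigenvalue $\lambda=(1+\gamma)(n+\gamma-1)$. Sobolev on $\mathbb S^{n-1}$ and H\"older then give the Faber--Krahn-type bound $\mathcal H^{n-1}(\mathcal S)\ge\mu_0(n,\gamma)$. Choose $\delta_0(n,\gamma)$ so that the band $\{|\theta\cdot\nu|<\delta_0\}$ has measure at most $\mu_0/2$. Polar coordinates then give $W_0(1)\ge\delta_0^{2\gamma}\mu_0/(2(n+2\gamma))$, with no $m$ anywhere.

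Your thin-band alternative is the qualitative shadow of this bound. Once it is upgraded to a quantitative lower bound on $\mathcal H^{n-1}(\mathcal S)$, the interior-ball alternative becomes superfluous: a spherical set of measure $\ge\mu_0$ keeps measure $\ge\mu_0/2$ off the band. The paper's bound is also per component, and it is reused to bound the number of branches in Corollary \ref{cor:blow-up-branching}. Your single interior ball does not give this. What your route buys instead is a ball of definite size inside $\Omega_0\cap B_1$ at definite distance from $T$. The price is the full regularity theory for $\mathbf u_0$ and its $m$-dependent constants.
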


\begin{proof}
    Assume without loss of generality that \(x_0 = 0\). Given any sequence \(r_k \to 0^+\), Proposition \ref{prop:blow-up-convergence} yields a subsequence along which \(\mathbf{u}_k \to \mathbf{u}_0\) with the stated energy and phase convergence. By \eqref{eq:W0rho} and \eqref{eq.scaled_Weiss_identity},
    \[
        \Theta_\Sigma(\mathbf{u}; r_k)
        = \int_{B_1(0)} \msf d_k^{2\gamma} \chi_{\Omega_k} \,\dx{x}
        \longrightarrow
        \int_{B_1(0)} \msf d_0^{2\gamma} \chi_{\Omega_0} \,\dx{x}
        = W_0(1) = W(0^+),
    \]
    where the last equality follows by passing to the limit in \(W_k(1) = W(r_k)\). Since \(W(0^+)\) exists by Proposition \ref{prop:Weiss-limit}, the full limit exists and satisfies \eqref{eq.weighted-density1}. Now suppose \(\mathbf{u}_0 \not\equiv 0\), and let \(\cC\) be any connected component of \(\Omega_0\). By homogeneity, \(\cC\) is a cone with vertex at the origin, so its intersection with the unit sphere \(\cS:= \cC \cap \mathbb{S}^{n-1}\) is a connected open subset of \(\mathbb{S}^{n-1}\). By harmonicity and the strong maximum principle, at least one component \(u_{0,i}\) is strictly positive in \(\cC\). Write
    \[
        u_{0,i}(r\theta)
        =r^{1+\gamma}\varphi_i(\theta),
        \qquad r>0,\quad\theta\in\cS.
    \]
    The function \(\varphi_i\) is Lipschitz and vanishes on \(\partial\cS\). Its truncations \((\varphi_i-\varepsilon)_+\) have compact support in \(\cS\) and converge to \(\varphi_i\) in \(H^1(\cS)\). Hence \(\varphi_i\in H_0^1(\cS)\) and satisfies the eigenvalue problem
    \begin{equation}\label{eq.eigenvalue_problem}
        -\Delta_{\mathbb{S}^{n-1}} \varphi_i
        = (1+\gamma)(n+\gamma-1) \varphi_i
        \quad\text{in } \cS,
        \qquad \varphi_i > 0
        \quad\text{in } \cS.
    \end{equation}
    Therefore, the first Dirichlet eigenvalue of \(-\Delta_{\mathbb{S}^{n-1}}\) on \(\cS\) satisfies
    \begin{equation}\label{eq.first_eigenvalue_esm}
        \lambda_1(\cS)
        \le \frac{\int_{\cS} |\nabla_{\mathbb{S}^{n-1}} \varphi_i|^2\,\mathrm d\mathcal H^{n-1}}
        {\int_{\cS} \varphi_i^2\,\mathrm d\mathcal H^{n-1}}
        = (1+\gamma)(n+\gamma-1).
    \end{equation}
    Set \(\lambda:=(1+\gamma)(n+\gamma-1)\) and \(\psi:=\frac{\varphi_i}{\|\varphi_i\|_{L^2(\cS)}}\). Extend \(\psi\) by zero to \(\mathbb S^{n-1}\). Then
    \[
        \|\psi\|_{L^2(\mathbb S^{n-1})}^2=1,
        \qquad
        \|\nabla_{\mathbb S^{n-1}}\psi\|_{L^2(\mathbb S^{n-1})}^2
        =\lambda.
    \]
    For \(q=2n/(n-1)>2\), the Sobolev inequality on \(\mathbb S^{n-1}\) and H\"older's inequality yield
    \[
        1\le\bigl[\mathcal H^{n-1}(\cS)\bigr]^{1/n}\|\psi\|_{L^q(\mathbb S^{n-1})}^2\le C(n)(1+\lambda)\bigl[\mathcal H^{n-1}(\cS)\bigr]^{1/n}.
    \]
    Therefore,
    \[
        \mathcal H^{n-1}(\cS)
        \ge [C(n)(1+\lambda)]^{-n}
        =:\mu_0(n,\gamma)>0.
    \]
    
    Let \(\nu\) be a unit normal vector to the tangent plane \(T\). Choose \(\delta_0 = \delta_0(n,\gamma) \in (0,1)\) such that
    \[
        \cH^{n-1} \bigl( \{\theta\in\mathbb{S}^{n-1} : |\theta\cdot\nu| < \delta_0\} \bigr) \le \mu_0 / 2.
    \]
    Since \(\msf d_0(\theta) = |\theta\cdot\nu|\) on \(\mathbb{S}^{n-1}\), passing to polar coordinates yields
    \[
        W_0(1)
        \ge
        \frac{1}{n+2\gamma}
        \int_{\cS}|\theta\cdot\nu|^{2\gamma}
            \,\mathrm d\mathcal H^{n-1}\ge
        \frac{\delta_0^{2\gamma}\mu_0}{2(n+2\gamma)}
        =:\theta_1(n,\gamma)>0.
    \]
    The universal upper bound follows trivially by extending the integral to the full unit ball:
    \[
        W_0(1) \le \int_{B_1} \msf d_0^{2\gamma} \,\mathrm{d}x
        = \int_{B_1} |x_n|^{2\gamma} \,\mathrm{d}x
        =: \theta_2(n,\gamma) < \infty.
    \]
    If \(\mathbf{u}_0 \equiv 0\), then \(W_0(1) = 0\). This completes the proof.
\end{proof}

We are now in a position to establish the density classification theorem and to determine the vectorial structure of connected blow-up limits at nondegenerate contact points.

\begin{proof}[Proof of Theorem \ref{thm:main}]
    Proposition \ref{prop:weighted-density-contact} establishes both the existence of the weighted density \(\Theta_\Sigma(\mathbf{u}; x_0, 0^+)\) and the universal density gap at every contact point. The dichotomy between zero and positive values of this density yields precisely the disjoint decomposition
    \[
        \Gamma_\Sigma(\mathbf{u}) = \Gamma_{\Sigma,\rm D}(\mathbf{u}) \cup \Gamma_{\Sigma,\rm ND}(\mathbf{u}),
    \]
    where \(\Gamma_{\Sigma,\rm D}(\mathbf{u})\) and \(\Gamma_{\Sigma,\rm ND}(\mathbf{u})\) are defined in \eqref{eq:GammaD-definition-density} and \eqref{eq:GammaND-definition-density}, respectively.
\end{proof}

\begin{proof}[Proof of Theorem \ref{thm:blow-up-ND}]
    Compactness of blow-up sequences, global minimality of the limits, and their \((1+\gamma)\)-homogeneity follow directly from Proposition \ref{prop:blow-up-convergence} and Corollary \ref{cor:homogeneity-blow-up-limits}. Since \(x_0 \in \Gamma_{\Sigma,\rm ND}(\mathbf{u})\), the identity
    \[
        0 < \Theta_\Sigma(\mathbf{u}; x_0, 0^+)
        = \int_{B_1(0)} \msf d_0^{2\gamma} \chi_{\Omega_0} \,\dx{x}
    \]
    guarantees that every blow-up limit at \(x_0\) is nontrivial.
    
    Assume now that \(\Omega_0\) is connected. Every nonzero component of \(\mathbf{u}_0\) is strictly positive in \(\Omega_0\), and its angular part is a positive Dirichlet eigenfunction on the connected spherical domain \(\mathcal{S} := \Omega_0 \cap \mathbb{S}^{n-1}\), as shown in \eqref{eq.eigenvalue_problem}. All such angular eigenfunctions belong to the first eigenspace, which is one-dimensional on a connected domain (cf. \cite[Lemma 25]{CSY18}). Consequently, the nonzero components of \(\mathbf{u}_0\) are mutually proportional, and after normalization we obtain
    \[
        \mathbf{u}_0 = \mathbf{a} v_0, \qquad v_0 := |\mathbf{u}_0|, \qquad \mathbf{a} \in \mathbb{S}^{m-1} \cap \mathbb{R}_+^m.
    \]
    The direction vector \(\mathbf{a}\) is unique since \(v_0 > 0\) throughout \(\Omega_0\).
    Finally, for every nonnegative scalar competitor \(w\) with \(w- v_0 \in H_0^1(B_R)\), the vector-valued function \(\mathbf{a} w\) is an admissible competitor for \(\mathbf{u}_0\). Since \(|\mathbf{a}| = 1\),
    \[
        \cJ_T(v_0; B_R)
        = J_T(\mathbf{u}_0; B_R)
        \le J_T(\mathbf{a} w; B_R)
        = \cJ_T(w; B_R).
    \]
    Therefore \(v_0\) is a scalar global minimizer, with the same positivity set and homogeneity as \(\mathbf{u}_0\).
\end{proof}

\subsection{Classification of non-degenerate contact points}

In this section, we establish the local structural assertions in Theorems \ref{thm:structure-ND} and \ref{thm:structure-mp-br}. The basic observation is that each component \(u_i\) is either positive or identically zero on a connected component of \(\Omega_{\mathbf{u}}\).

\begin{lemma}[Structure of the free boundary]\label{lem:structure-free-boundary}
    Let \(\mathbf{u}\) be an \(\varepsilon_0\)-local minimizer of \(J\) in \(\mathbb A\). Then
    \[
        \partial\Omega_{\mathbf{u}}
        =\bigcup_{i=1}^m\partial\Omega_{u_i}
        \quad\text{in }D.
    \]
    In particular, \(\partial\Omega_{u_i}\cap\Omega_{\mathbf{u}}=\varnothing\) for every \(i\in\{1,\dots,m\}\).
\end{lemma}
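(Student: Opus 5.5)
We prove the two inclusions separately. Throughout we use the continuous representative from Remark~\ref{rem:continuity}, so that \(\Omega_{\mathbf{u}}\) and each \(\Omega_{u_i}=\{u_i>0\}\) are open. We also use that every \(u_i\) is harmonic in \(\Omega_{\mathbf{u}}\) (Corollary~\ref{cor:gradient-estimate}\ref{item:harmonic}).

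\emph{Step 1: the second assertion.} We first show \(\partial\Omega_{u_i}\cap\Omega_{\mathbf{u}}=\varnothing\). Let \(z\in\Omega_{\mathbf{u}}\) and let \(\mathcal O\) be the connected component of \(\Omega_{\mathbf{u}}\) containing \(z\). The function \(u_i\) is nonnegative and harmonic on the connected open set \(\mathcal O\). By the strong maximum principle, either \(u_i>0\) throughout \(\mathcal O\) or \(u_i\equiv0\) on \(\mathcal O\).
\begin{itemize}
    \item If \(u_i>0\) on \(\mathcal O\), then \(\mathcal O\subset\Omega_{u_i}\), so \(z\) is an interior point of \(\Omega_{u_i}\).
    \item If \(u_i\equiv0\) on \(\mathcal O\), then \(\mathcal O\) is an open neighbourhood of \(z\) disjoint from \(\Omega_{u_i}\).
\end{itemize}
In neither case is \(z\in\partial\Omega_{u_i}\). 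This gives the ``in particular'' statement.

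\emph{Step 2: \(\bigcup_i\partial\Omega_{u_i}\subset\partial\Omega_{\mathbf{u}}\).} Take \(z\in\partial\Omega_{u_i}\cap D\). Since \(\Omega_{u_i}\subset\Omega_{\mathbf{u}}\), we have \(z\in\overline{\Omega_{u_i}}\subset\overline{\Omega_{\mathbf{u}}}\). By Step~1, \(z\notin\Omega_{\mathbf{u}}\). Hence \(z\in\overline{\Omega_{\mathbf{u}}}\setminus\Omega_{\mathbf{u}}=\partial\Omega_{\mathbf{u}}\).

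\emph{Step 3: \(\partial\Omega_{\mathbf{u}}\subset\bigcup_i\partial\Omega_{u_i}\).} Take \(z\in\partial\Omega_{\mathbf{u}}\cap D\). Since \(\Omega_{\mathbf{u}}\) is open, \(z\notin\Omega_{\mathbf{u}}\), so \(u_j(z)=0\) for every \(j\), and hence \(z\notin\Omega_{u_j}\) for every \(j\). Choose \(z_k\in\Omega_{\mathbf{u}}\) with \(z_k\to z\). Because \(\Omega_{\mathbf{u}}=\bigcup_j\Omega_{u_j}\), each \(z_k\) lies in some \(\Omega_{u_{j_k}}\). There are only \(m\) indices, so some fixed index \(i\) occurs infinitely often. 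Along that subsequence \(z\in\overline{\Omega_{u_i}}\), and since \(z\notin\Omega_{u_i}\), we conclude \(z\in\partial\Omega_{u_i}\).

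The only substantive ingredient is Step~1, namely the harmonicity of each \(u_i\) on all of \(\Omega_{\mathbf{u}}\) rather than merely on \(\Omega_{u_i}\). This is supplied by Corollary~\ref{cor:gradient-estimate}\ref{item:harmonic}, which comes from harmonic replacement. The remaining steps are elementary point-set topology using finiteness of \(m\).
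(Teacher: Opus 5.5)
Your proof is correct and follows the same route as the paper. The strong maximum principle on each connected component of \(\Omega_{\mathbf{u}}\), which rests on harmonicity of every \(u_i\) on all of \(\Omega_{\mathbf{u}}\), gives \(\partial\Omega_{u_i}\cap\Omega_{\mathbf{u}}=\varnothing\) and hence \(\partial\Omega_{u_i}\subset\partial\Omega_{\mathbf{u}}\); the reverse inclusion comes from the finite union \(\Omega_{\mathbf{u}}=\bigcup_i\Omega_{u_i}\). You simply spell out the point-set details that the paper leaves implicit (your pigeonhole step is just the fact that the closure of a finite union is the union of the closures).
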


\begin{proof}
    By harmonicity and the strong maximum principle, each \(u_i\) is either positive or identically zero on every connected component of \(\Omega_{\mathbf{u}}\). Thus \(\partial\Omega_{u_i}\) cannot meet \(\Omega_{\mathbf{u}}\), and \(\partial\Omega_{u_i}\cap D\subset\partial\Omega_{\mathbf{u}}\cap D\). The reverse inclusion follows from the finite union \(\Omega_{\mathbf{u}}=\bigcup_{i=1}^m\Omega_{u_i}\).
\end{proof}

\begin{remark}
    This identity holds throughout \(D\); it does not require
    non-degeneracy or proximity to \(\Sigma\).
\end{remark}

Translate the contact point to the origin and recall the active index set \(\mathcal I_{\mathbf{u}}(0)\) from \eqref{eq:local-active-index-set}. Once this set is nonempty, the trichotomy follows from its finiteness and the nesting of the separation sets.

\begin{lemma}[Trichotomy at non-degenerate contact points]
    Let \(\mathbf{u}\) be an \(\varepsilon_0\)-local minimizer of \(J\) in \(\mathbb A\), and let \(0\in\Gamma_{\Sigma,\rm ND}(\mathbf{u})\). Assume that \(\mathcal I_{\mathbf{u}}(0)\ne\varnothing\). Then exactly one of
    \[
        0\in\Gamma_{\rm sp}(\mathbf{u}),\qquad
        0\in\Gamma_{\rm mp}^{\rm nb}(\mathbf{u}),\qquad
        0\in\Gamma_{\rm mp}^{\rm br}(\mathbf{u})
    \]
    holds.
\end{lemma}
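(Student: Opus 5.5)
The trichotomy is essentially a matter of logic once we use two facts: $\mathcal I_{\mathbf{u}}(0)$ is finite and nonempty, and the separation sets are nested in the radius. The plan is first to split by the cardinality of $\mathcal I_{\mathbf{u}}(0)$, and then, in the multi-phase case, to split by whether the finitely many separation sets vanish at some common small radius.

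\emph{Cardinality split.} By the stabilization property (Appendix~\ref{appendix:local-active-stabilization}), $\mathcal I_{\mathbf{u}}(0,r)=\mathcal I_{\mathbf{u}}(0)$ for all $0<r<r_0$, where $r_0:=r_{x_0}$ at $x_0=0$. By hypothesis this set is nonempty.
\begin{itemize}
\item If $\#\,\mathcal I_{\mathbf{u}}(0)=1$, then $0\in\Gamma_{\rm sp}(\mathbf{u})$ by Definition~\ref{def:single-phase-ND}.
\item If $\#\,\mathcal I_{\mathbf{u}}(0)\ge2$, then by definition $0\in\Gamma_{\rm mp}(\mathbf{u})$ and $0\notin\Gamma_{\rm sp}(\mathbf{u})$.
\end{itemize}
Both $\Gamma_{\rm mp}^{\rm nb}$ and $\Gamma_{\rm mp}^{\rm br}$ require $\#\,\mathcal I_{\mathbf{u}}(0)\ge2$. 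Hence single-phase points belong to neither, and it remains to split $\Gamma_{\rm mp}(\mathbf{u})$ into exactly one of the two subclasses.

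\emph{Monotonicity of the separation sets.} For distinct $i,j\in\mathcal I_{\mathbf{u}}(0)$, the set
\[
\triangle_{i,j}(r)=(\partial\Omega_{u_i}\mathbin{\triangle}\partial\Omega_{u_j})\cap B_r
\]
is nondecreasing in $r$, since it is a fixed set intersected with nested balls. Consequently, if $\triangle_{i,j}(r_1)=\varnothing$ for some $r_1$, then $\triangle_{i,j}(r)=\varnothing$ for every $r\le r_1$.

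\emph{Case A: for every pair, the separation set vanishes at some radius.} Suppose that for every distinct pair $(i,j)$ there is $r_{ij}\in(0,r_0)$ with $\triangle_{i,j}(r_{ij})=\varnothing$. The number of pairs is finite, at most $m(m-1)/2$, so $r:=\min_{i\ne j}r_{ij}$ is positive. By the monotonicity above, $\triangle_{i,j}(r)=\varnothing$ for all pairs simultaneously. This is exactly condition \eqref{eq.multi-nb}, so $0\in\Gamma_{\rm mp}^{\rm nb}(\mathbf{u})$.

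\emph{Case B: some pair never separates out.} Otherwise there is a pair $(i,j)$ with $\triangle_{i,j}(r)\ne\varnothing$ for every $0<r<r_0$. This is exactly \eqref{eq.multi-br}, so $0\in\Gamma_{\rm mp}^{\rm br}(\mathbf{u})$.

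\emph{Disjointness.} Suppose both held. Condition \eqref{eq.multi-nb} gives one radius $r$ with $\triangle_{i,j}(r)=\varnothing$ for all pairs. This includes the pair from \eqref{eq.multi-br}, for which $\triangle_{i,j}(r)\ne\varnothing$ at every radius, a contradiction.

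The only non-formal step is the finiteness argument in Case A. It converts a statement of the form "for every pair there exists a radius" into "there exists one radius that works for every pair". This requires $\#\,\mathcal I_{\mathbf{u}}(0)\le m<\infty$ together with the nestedness of $\triangle_{i,j}(r)$ in $r$. I expect no genuine obstacle beyond stating these two facts carefully.
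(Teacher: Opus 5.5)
Your proof is correct and follows the paper's argument: you split by \(\#\,\mathcal I_{\mathbf{u}}(0)\), then use that \(\triangle_{i,j}(r)\) is nondecreasing in \(r\) and that there are finitely many pairs to obtain a common radius, and you read mutual exclusivity off the definitions \eqref{eq.multi-nb}--\eqref{eq.multi-br}. Your Case B is the literal negation of Case A over \((0,r_{x_0})\), which is slightly cleaner than the paper's ``every sufficiently small radius''; however, do not rename \(r_{x_0}\) as \(r_0\), since the paper reserves \(r_0\) for the standard scale.
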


\begin{proof}
    If \(\#\,\mathcal I_{\mathbf{u}}(0)=1\), the point is single-phase. Otherwise, either some pair \(i\ne j\in\mathcal I_{\mathbf{u}}(0)\) has \(\triangle_{i,j}(r)\ne\varnothing\) at every sufficiently small radius, or each pair has an empty separation set at some radius. In the latter case, take the minimum of these finitely many radii. Since \(\triangle_{i,j}(r)\) increases with \(r\), every separation set is empty at this common radius. The alternatives are precisely \eqref{eq.multi-br} and \eqref{eq.multi-nb}, and are mutually exclusive.
\end{proof}

Away from \(\Sigma\), the local connectivity theorem identifies \(\Omega_{\mathbf{u}}\) with the positivity set of any component whose free boundary passes through the point.

\begin{lemma}[Local scalar reduction away from \(\Sigma\)]\label{lem:local-scalar-reduction-away-from-Sigma}
    Let \(\mathbf{u}\) be an \(\varepsilon_0\)-local minimizer of \(J\) in \(\mathbb A\), and let \(z\in\partial\Omega_{u_i}\cap(D\setminus\Sigma)\). There exists \(\rho_z>0\) such that
    \[
        \Omega_{\mathbf{u}}\cap B_{\rho_z}(z)
        =\Omega_{u_i}\cap B_{\rho_z}(z).
    \]
    Consequently, \(\partial\Omega_{\mathbf{u}}\cap B_{\rho_z}(z)=\partial\Omega_{u_i}\cap B_{\rho_z}(z)\).
\end{lemma}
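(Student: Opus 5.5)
Since $z\notin\Sigma$, we have $\msf d_\Sigma(z)>0$. Hence there is a ball $B_{2\rho}(z)\subset\subset D\setminus\Sigma$ with $2\rho\le r_0$ on which
\[
    0<\tfrac12\msf d_\Sigma(z)\le \msf d_\Sigma\le \tfrac32 \msf d_\Sigma(z).
\]
On this ball the Bernoulli weight $Q=\msf d_\Sigma^\gamma$ is H\"older continuous and bounded between two positive constants. By the standard scale property (Lemma \ref{lem:standard-scale}), $\mathbf u$ is an absolute minimizer in $B_{2\rho}(z)$, so the nondegenerate theory of \cite{CSY18} applies there, exactly as in the rescaling of Lemma \ref{lem:measure-nondegenerate-ball}. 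The plan is to use the local NTA construction recalled in the introduction (\cite[Theorem 5]{CSY18}). At the free boundary point $z\in\partial\Omega_{\mathbf u}$ (recall $\partial\Omega_{u_i}\subset\partial\Omega_{\mathbf u}$ by Lemma \ref{lem:structure-free-boundary}), it gives radii $0<\rho_1<R_1\le\rho$ and a connected NTA domain $G$ with
\[
    \Omega_{\mathbf u}\cap B_{\rho_1}(z)\subset G\subset \Omega_{\mathbf u}\cap B_{R_1}(z).
\]

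Next, use connectedness. Since $G$ is connected and contained in $\Omega_{\mathbf u}$, it lies in a single connected component $\mathcal O$ of $\Omega_{\mathbf u}$. By the strong maximum principle (as in the proof of Lemma \ref{lem:structure-free-boundary}), each $u_k$ is either strictly positive on $\mathcal O$ or identically zero there. Since $z\in\partial\Omega_{u_i}$, every neighborhood of $z$ meets $\{u_i>0\}\subset\Omega_{\mathbf u}$. Within $B_{\rho_1}(z)$ these points lie in $\Omega_{\mathbf u}\cap B_{\rho_1}(z)\subset G\subset\mathcal O$, so $u_i\not\equiv 0$ on $\mathcal O$ and therefore $u_i>0$ on all of $\mathcal O\supset G$. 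Consequently
\[
    \Omega_{\mathbf u}\cap B_{\rho_1}(z)\subset G\subset \Omega_{u_i}.
\]
Combined with the trivial inclusion $\Omega_{u_i}\subset\Omega_{\mathbf u}$, this gives $\Omega_{\mathbf u}\cap B_{\rho_z}(z)=\Omega_{u_i}\cap B_{\rho_z}(z)$ with $\rho_z:=\rho_1$.

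The boundary identity then follows because the two open sets agree in the open ball $B_{\rho_z}(z)$: relative to an open set, the boundary of an intersection is local, so $\partial\Omega_{\mathbf u}\cap B_{\rho_z}(z)=\partial\Omega_{u_i}\cap B_{\rho_z}(z)$.

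The main obstacle is justifying that \cite[Theorem 5]{CSY18} applies verbatim, since it is stated for minimizers with a H\"older weight bounded below on the whole domain. I would handle this by the normalization of Lemma \ref{lem:measure-nondegenerate-ball}: set $\widetilde{\mathbf u}(y)=\mathbf u(z+\rho y)/(\rho Q_1)$, which is an absolute minimizer in $B_2$ with weight between $1$ and $\Lambda$. Apply the theorem there and scale back. The only inputs the NTA construction needs are Lipschitz bounds, nondegeneracy and density estimates. These hold here by Corollary \ref{cor:gradient-estimate}, estimate \eqref{eq:boundary-nondegeneracy} (whose right-hand side is positive since $\msf d_\Sigma$ is bounded below on the ball), and Lemma \ref{lem:measure-nondegenerate-ball}. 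If one prefers to avoid the NTA theorem entirely, an alternative is to argue by contradiction. One would use the uniform density estimates and nondegeneracy to show that only finitely many components of $\Omega_{\mathbf u}$ can touch $z$. One would then show that two distinct touching components would produce a two-plane blow-up, which is excluded for the cooperative (single-sign) problem with positive weight.
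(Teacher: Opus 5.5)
Your proposal is correct and is essentially the paper's proof: work in a ball \(B_{2\rho}(z)\subset\subset D\setminus\Sigma\) below the standard scale, where the weight is bounded above and below, invoke the local NTA construction of \cite[Theorem 5]{CSY18} to obtain a connected \(G\) with \(\Omega_{\mathbf{u}}\cap B_{\rho_1}(z)\subset G\subset\Omega_{\mathbf{u}}\), and apply the strong maximum principle to get \(u_i>0\) throughout \(G\). Your additional details only make explicit what the paper leaves implicit: why \(u_i\not\equiv 0\) on \(G\), the rescaling that justifies applying the theorem, and the locality of boundaries. The alternative contradiction route you sketch is not needed.
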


\begin{proof}
    Choose \(B_{2\rho}(z)\subset\subset D\setminus\Sigma\) below the standard scale. Here \(Q(x):=\msf d_\Sigma^\gamma(x)\) is bounded above and below by positive constants, so \cite[Theorem 5]{CSY18} gives a connected open set \(G\) with
    \[
        \Omega_{\mathbf{u}}\cap B_{\varepsilon\rho}(z)
        \subset G\subset\Omega_{\mathbf{u}}\cap B_\rho(z)
    \]
    for some \(\varepsilon>0\). Since \(z\in\partial\Omega_{u_i}\), the function \(u_i\) is nonzero in \(G\). The strong maximum principle gives \(u_i>0\) throughout \(G\), proving the assertion with \(\rho_z=\varepsilon\rho\).
\end{proof}

At a multi-phase non-branching point, the active component free boundaries agree. Applying the preceding lemma to this common boundary proves the local assertion of Theorem \ref{thm:structure-ND}.

\begin{lemma}[Local structure at a non-branching point]
    Let \(0\in\Gamma_{\rm mp}^{\rm nb}(\mathbf{u})\), and choose \(\bar r>0\) below the active-set stabilization radius such that \(B_{\bar r}(0)\subset\subset D\) and \(\triangle_{i,j}(\bar r)=\varnothing\) for all distinct \(i,j\in\mathcal I_{\mathbf{u}}(0)\). If \(i_0\in\mathcal I_{\mathbf{u}}(0)\) and
    \[
        z\in
        \bigl(\partial\Omega_{u_{i_0}}\cap B_{\bar r}(0)\bigr)
        \setminus\Sigma,
    \]
    then there exists \(\rho_z>0\), with
    \(B_{\rho_z}(z)\subset\subset B_{\bar r}(0)\cap(D\setminus\Sigma)\),
    such that
    \[
        \Omega_{\mathbf{u}}\cap B_{\rho_z}(z)
        =\Omega_{u_i}\cap B_{\rho_z}(z)
        \quad\text{for every }i\in\mathcal I_{\mathbf{u}}(0).
    \]
    The corresponding free boundaries therefore agree in this ball.
\end{lemma}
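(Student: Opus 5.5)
The plan is to combine the non-branching condition with Lemma \ref{lem:local-scalar-reduction-away-from-Sigma}.

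First, since \(\triangle_{i,j}(\bar r)=\varnothing\) for all distinct \(i,j\in\mathcal I_{\mathbf{u}}(0)\), we have
\[
    \partial\Omega_{u_i}\cap B_{\bar r}(0)=\partial\Omega_{u_{i_0}}\cap B_{\bar r}(0)
    \qquad\text{for every } i\in\mathcal I_{\mathbf{u}}(0).
\]
In particular, the given point \(z\in(\partial\Omega_{u_{i_0}}\cap B_{\bar r}(0))\setminus\Sigma\) lies in \(\partial\Omega_{u_i}\setminus\Sigma\) for every active index \(i\). Since \(\Sigma\) is closed and \(B_{\bar r}(0)\) is open, the quantity \(\min\{\dist(z,\Sigma),\,\bar r-|z|\}\) is positive. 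Hence some ball around \(z\) is compactly contained in \(B_{\bar r}(0)\cap(D\setminus\Sigma)\).

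Next, apply Lemma \ref{lem:local-scalar-reduction-away-from-Sigma} separately to each \(i\in\mathcal I_{\mathbf{u}}(0)\). Each application gives a radius \(\rho_z^{(i)}>0\) with
\[
    \Omega_{\mathbf{u}}\cap B_{\rho_z^{(i)}}(z)=\Omega_{u_i}\cap B_{\rho_z^{(i)}}(z).
\]
Set \(\rho_z\) to be the minimum of the finitely many radii \(\rho_z^{(i)}\) together with half the positive distance found above. Since the identity \(\Omega_{\mathbf{u}}\cap B=\Omega_{u_i}\cap B\) persists when the ball \(B\) is shrunk, it holds on \(B_{\rho_z}(z)\) simultaneously for every active index. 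Taking boundaries relative to the open ball \(B_{\rho_z}(z)\) then yields the equality of the free boundaries there. This is exactly the last assertion of Theorem \ref{thm:structure-ND}.

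The only delicate point is that Lemma \ref{lem:local-scalar-reduction-away-from-Sigma} needs \(z\in\partial\Omega_{u_i}\) for every \(i\), not just for \(i_0\). This is supplied by the non-branching condition, since \(z\in B_{\bar r}(0)\) and the empty separation set makes all active component boundaries coincide there. Apart from this, the argument only requires that the CSY connectivity radius from Lemma \ref{lem:local-scalar-reduction-away-from-Sigma} can be taken inside the prescribed region, which we ensure by taking the minimum of finitely many radii.
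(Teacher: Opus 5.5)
Your proposal is correct and follows the paper's proof. The paper argues the same way: the empty separation sets put \(z\) in \(\partial\Omega_{u_i}\) for every active index, then Lemma~\ref{lem:local-scalar-reduction-away-from-Sigma} is applied to each of the finitely many indices, and the minimum radius is taken. Your explicit use of \(\dist(z,\Sigma)\) and \(\bar r-|z|\), and your remark that boundaries relative to an open ball depend only on the set's trace in it, just spell out steps the paper leaves implicit.
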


\begin{proof}
    At a multi-phase non-branching point, the separation sets are empty, so \(z \in \partial\Omega_{u_i}\) for every active index \(i \in \mathcal I_{\mathbf{u}}(0)\). Since there are only finitely many such components, we may apply Lemma \ref{lem:local-scalar-reduction-away-from-Sigma} to each \(u_i\) and select a common smaller radius that satisfies the required containment property for all indices simultaneously.
\end{proof}

For multi-phase branching points, we first establish countability of the local branches and then describe what happens where two component free boundaries separate.

\begin{lemma}[Countability of the local branches touching \(x_0\)]
\label{lem:countability-local-branches}
    Let \(0\in\Gamma_{\rm mp}^{\rm br}(\mathbf{u})\), and fix \(0<\bar r<r_0\) with \(B_{\bar r}(0)\subset\subset D\). The family \(\mathscr C_{\mathbf{u}}(\bar r)\) is at most countable. Thus, for some \(\Lambda_{\mathbf{u}}(\bar r)\subset\mathbb N\),
    \[
        \mathscr C_{\mathbf{u}}(\bar r)
        =\{\Omega_{\mathbf{u}}^{(\ell)}:\ell\in\Lambda_{\mathbf{u}}(\bar r)\},
        \qquad 0\in\partial\Omega_{\mathbf{u}}^{(\ell)}.
    \]
\end{lemma}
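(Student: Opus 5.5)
The countability claim follows from a purely topological fact. The members of \(\mathscr C_{\mathbf{u}}(\bar r)\) are connected components of the open set \(\Omega_{\mathbf{u}}\cap B_{\bar r}(0)\subset\R^n\). Components of an open set are open and pairwise disjoint. Each nonempty open set contains a point with rational coordinates, so we may pick, for every \(\mathcal O\in\mathscr C_{\mathbf{u}}(\bar r)\), a point \(q_{\mathcal O}\in\mathcal O\cap\mathbb Q^n\).

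Openness comes from Corollary~\ref{cor:gradient-estimate}\ref{item:harmonic}, which gives that \(\Omega_{\mathbf{u}}\) is open. Since the components are disjoint, the map \(\mathcal O\mapsto q_{\mathcal O}\) is injective into the countable set \(\mathbb Q^n\). Hence the family of all components of \(\Omega_{\mathbf{u}}\cap B_{\bar r}(0)\) is at most countable. The subfamily \(\mathscr C_{\mathbf{u}}(\bar r)\), consisting of those components with \(0\in\partial\mathcal O\), is therefore also at most countable.

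It remains to produce the index set. By assumption \eqref{eq:touching-branch-assumption} (recall \(\bar r<r_{x_0}\le R_{x_0}\) after shrinking), the family \(\mathscr C_{\mathbf{u}}(\bar r)\) is nonempty. An injection into \(\mathbb Q^n\), composed with a fixed bijection \(\mathbb Q^n\to\mathbb N\), gives an injection \(\mathscr C_{\mathbf{u}}(\bar r)\to\mathbb N\). Let \(\Lambda_{\mathbf{u}}(\bar r)\) be its image, and write \(\Omega_{\mathbf{u}}^{(\ell)}\) for the preimage of \(\ell\). The property \(0\in\partial\Omega_{\mathbf{u}}^{(\ell)}\) holds by the definition \eqref{eq:touching-components}.

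There is no real obstacle here. The only point to check is that the components really are open, which relies on the continuity of \(\mathbf{u}\) from Remark~\ref{rem:continuity}. The branching hypothesis \(0\in\Gamma_{\rm mp}^{\rm br}(\mathbf{u})\) plays no role, apart from fixing the context in which the enumeration is later used.
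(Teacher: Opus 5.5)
Your argument is correct and is the paper's proof: the members of \(\mathscr C_{\mathbf{u}}(\bar r)\) are pairwise disjoint open components of the open set \(\Omega_{\mathbf{u}}\cap B_{\bar r}(0)\), each containing a rational point, so there are at most countably many. One small imprecision is the nonemptiness remark, since the lemma assumes \(\bar r<r_0\) rather than \(\bar r<r_{x_0}\); but nonemptiness is not needed for the statement, and it holds anyway, because a touching component at a smaller radius lies inside a component at radius \(\bar r\) that still has \(0\) on its boundary.
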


\begin{proof}
    The connected components of an open subset of \(\R^n\) are disjoint open sets, each containing a rational point. Hence there are at most countably many.
\end{proof}

\begin{remark}
    This family lists only the components touching the origin; its disjoint union may be a proper subset of \(\Omega_{\mathbf{u}}\cap B_{\bar r}(0)\).
\end{remark}

\begin{lemma}[Local structure at a separating point]
\label{lem:local-reduction-separating-point}
    Let \(\mathbf{u}\) be an \(\varepsilon_0\)-local minimizer of \(J\) in \(\mathbb A\), let \(x_0\in\Gamma_{\rm mp}^{\rm br}(\mathbf{u})\), and let \(B_r(x_0)\subset\subset D\). Suppose \(i\ne j\) and
    \[
        z\in
        \bigl((\partial\Omega_{u_i}\setminus\partial\Omega_{u_j})
        \cap B_r(x_0)\bigr)\setminus\Sigma.
    \]
    Then there exists \(\rho_z>0\), with \(B_{\rho_z}(z)\subset\subset B_r(x_0)\cap(D\setminus\Sigma)\), such that
    \begin{equation}\label{eq:local-reduction-separating-point}
        \partial\Omega_{\mathbf{u}}\cap B_{\rho_z}(z)
        =\partial\Omega_{u_i}\cap B_{\rho_z}(z),
    \end{equation}
    whereas
    \begin{equation}\label{eq:local-vanishing-separated-component}
        \partial\Omega_{u_j}\cap B_{\rho_z}(z)=\varnothing.
    \end{equation}
\end{lemma}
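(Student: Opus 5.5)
The plan is to combine the local scalar reduction of Lemma \ref{lem:local-scalar-reduction-away-from-Sigma} with the fact that \(z\notin\partial\Omega_{u_j}\), i.e.\ \(z\) is at positive distance from the closed set \(\partial\Omega_{u_j}\).

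First, since \(z\in B_r(x_0)\setminus\Sigma\) and both \(\Sigma\) and \(\partial\Omega_{u_j}\) are closed, we choose \(\rho_1>0\) with
\[
  B_{\rho_1}(z)\subset\subset B_r(x_0)\cap(D\setminus\Sigma),
  \qquad
  B_{\rho_1}(z)\cap\partial\Omega_{u_j}=\varnothing .
\]
Second, we apply Lemma \ref{lem:local-scalar-reduction-away-from-Sigma} to the index \(i\) at \(z\in\partial\Omega_{u_i}\cap(D\setminus\Sigma)\). This gives \(\rho_2>0\) such that
\[
  \Omega_{\mathbf{u}}\cap B_{\rho_2}(z)=\Omega_{u_i}\cap B_{\rho_2}(z),
\]
and hence equality of the boundaries inside this ball. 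Setting \(\rho_z:=\min\{\rho_1,\rho_2\}\) yields \eqref{eq:local-reduction-separating-point} on \(B_{\rho_z}(z)\). Here we use that an identity of sets inside a ball passes to their relative boundaries inside any smaller concentric open ball. Property \eqref{eq:local-vanishing-separated-component} holds by the choice of \(\rho_1\).

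The only point needing care is that \(\rho_1\) must be positive. Since \(\partial\Omega_{u_j}\) is closed in \(\R^n\) and \(z\notin\partial\Omega_{u_j}\), we have \(\dist(z,\partial\Omega_{u_j})>0\), so this is immediate. The compact containment is arranged similarly, using \(\dist(z,\Sigma)>0\) together with the openness of \(B_r(x_0)\) and \(D\).

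For the supplementary claim in Theorem \ref{thm:structure-mp-br}(ii), suppose \(z\in\partial\Omega_{\mathbf{u}}^{(\ell)}\). Then \(\Omega_{\mathbf{u}}^{(\ell)}\) meets \(B_{\rho_z}(z)\). On this intersection \(u_i>0\) by the scalar reduction, so \(i\in\mathcal I^{(\ell)}_{\mathbf{u}}\) by the strong maximum principle on the connected set \(\Omega_{\mathbf{u}}^{(\ell)}\).

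For \(j\), argue by contradiction and assume \(u_j>0\) on \(\Omega_{\mathbf{u}}^{(\ell)}\). Then \(z\in\overline{\Omega_{u_j}}\). Since \(\partial\Omega_{u_j}\cap B_{\rho_z}(z)=\varnothing\), the ball \(B_{\rho_z}(z)\) lies entirely in \(\Omega_{u_j}\), using connectedness of the ball. This forces \(B_{\rho_z}(z)\subset\Omega_{\mathbf{u}}\), contradicting \(z\in\partial\Omega_{\mathbf{u}}\). Hence \(j\notin\mathcal I^{(\ell)}_{\mathbf{u}}\).

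I expect no real obstacle; the main input is the NTA-based connectivity already packaged in Lemma \ref{lem:local-scalar-reduction-away-from-Sigma}.
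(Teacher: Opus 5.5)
Your proposal is correct and follows essentially the same route as the paper: use closedness to find a small ball around \(z\) avoiding \(\partial\Omega_{u_j}\) and \(\Sigma\), apply Lemma~\ref{lem:local-scalar-reduction-away-from-Sigma} to \(u_i\), and take the smaller radius. The only difference is that the paper first observes \(z\notin\overline{\Omega_{u_j}}\), because \(z\in\partial\Omega_{\mathbf u}\) forces \(z\notin\Omega_{u_j}\). This yields the slightly stronger statement \(\overline{\Omega_{u_j}}\cap B_{\rho_z}(z)=\varnothing\), i.e.\ \(u_j\equiv0\) near \(z\), which makes your connectedness-based contradiction argument for \(j\notin\mathcal I^{(\ell)}_{\mathbf u}\) immediate, though your argument is also valid.
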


\begin{proof}
    By Lemma \ref{lem:structure-free-boundary}, we have \(z\in\partial\Omega_{\mathbf{u}}\), so in particular \(z\notin\Omega_{u_j}\). Since by assumption \(z\notin\partial\Omega_{u_j}\) as well, we conclude that \(z\notin\overline{\Omega_{u_j}}\). By openness of the complement of \(\overline{\Omega_{u_j}}\), there exists a neighborhood of \(z\) entirely disjoint from \(\overline{\Omega_{u_j}}\).
    
    We now apply Lemma \ref{lem:local-scalar-reduction-away-from-Sigma} to the component \(u_i\), shrinking the radius appropriately so that
    \begin{equation}\label{eq:local-vanishing-separated-component-closure}
        \Omega_{\mathbf{u}}\cap B_{\rho_z}(z)
        = \Omega_{u_i}\cap B_{\rho_z}(z),
        \qquad
        \overline{\Omega_{u_j}}\cap B_{\rho_z}(z) = \varnothing.
    \end{equation}
    The asserted boundary identities follow directly.
\end{proof}

\begin{proof}[Proof of Theorem \ref{thm:structure-mp-br}]
    Part \textnormal{(i)} is proved by Lemma \ref{lem:countability-local-branches}. For part \textnormal{(ii)}, interchange \(i,j\) if necessary and apply Lemma \ref{lem:local-reduction-separating-point} at the point \(z\in\triangle_{i,j}(x_0,r)\setminus\Sigma\). Moreover, if \(z\in\partial\Omega_{\mathbf{u}}^{(\ell)}\), this branch meets \(B_{\rho_z}(z)\), where \(u_i>0\) on \(\Omega_{\mathbf{u}}\) and \(u_j=0\). The strong maximum principle on the connected branch gives \(u_i>0\) and \(u_j\equiv0\) throughout it. Thus \(i\in\mathcal I_{\mathbf{u}}^{(\ell)}(x_0;\bar r)\) and \(j\notin\mathcal I_{\mathbf{u}}^{(\ell)}(x_0;\bar r)\), as required.
\end{proof}

For homogeneous blow-up limits, the geometry is more rigid: each branch is a cone, and the target-space direction is constant on it.

\begin{proof}[Proof of Corollary \ref{cor:blow-up-classification}]
    Under the connectedness assumption, Theorem \ref{thm:blow-up-ND} gives \(\mathbf{u}_0=Av_0\), with \(v_0=|\mathbf{u}_0|\). Therefore
    \[
        \Omega_{u_{0,i}}=
        \begin{cases}
            \Omega_0,&i\in\operatorname{spt}\mathbf{a},\\[3pt]
            \varnothing,&i\notin\operatorname{spt}\mathbf{a}.
        \end{cases}
    \]
    Hence \(\mathcal I_{\mathbf{u}_0}(0)=\operatorname{spt}\mathbf{a}\). If this set is \(\{i_0\}\), then \(|\mathbf{a}|=1\) gives \(\mathbf{a}=e_{i_0}\). Otherwise all active component free boundaries coincide, so the origin is multi-phase non-branching.
\end{proof}

\begin{proof}[Proof of Corollary \ref{cor:blow-up-branching}]
    Decompose the open cone \(\Omega_0\) into its at most countably many connected components \(\Omega_0^{(\ell)}\). For \(x\in\Omega_0^{(\ell)}\), homogeneity places the connected ray \(\{tx:t>0\}\) in the same component. Thus every component is a cone touching the origin, and \(\mathcal S_\ell:=\Omega_0^{(\ell)}\cap\mathbb S^{n-1}\) is connected.

    Each nonzero angular component of \(\mathbf{u}_0\) on \(\mathcal S_\ell\) is a positive first Dirichlet eigenfunction with eigenvalue \((1+\gamma)(n+\gamma-1)\). Simplicity of this eigenvalue, exactly as in the proof of Theorem \ref{thm:blow-up-ND}, yields
    \[
        \mathbf{u}_0=\mathbf{a}_\ell v_\ell
        \quad\text{in }\Omega_0^{(\ell)},
        \qquad
        \mathbf{a}_\ell\in\mathbb S^{m-1}\cap\mathbb R_+^m,
        \qquad v_\ell=|\mathbf{u}_0|.
    \]
    The vector \(\mathbf{a}_\ell\) is unique, and \(v_\ell\) is positive, harmonic, and \((1+\gamma)\)-homogeneous in this component. The spherical measure estimate in the proof of Proposition \ref{prop:weighted-density-contact} applies to each \(\mathcal S_\ell\), giving
    \[
        \mathcal H^{n-1}(\mathcal S_\ell)\ge\mu_0(n,\gamma)>0,
        \qquad
        \#\,\Lambda\le
        \frac{\mathcal H^{n-1}(\mathbb S^{n-1})}{\mu_0(n,\gamma)}.
    \]
    This proves part \textnormal{(i)}, including finiteness.

    For part \textnormal{(ii)}, the representation gives
    \begin{equation}\label{eq:component-positivity-decomposition-blow-up}
        \Omega_{u_{0,i}}
        =\bigcup_{\substack{\ell\in\Lambda\\
            i\in\operatorname{spt}\mathbf{a}_\ell}}\Omega_0^{(\ell)},
        \qquad
        \mathcal I_{\mathbf{u}_0}(0)
        =\bigcup_{\ell\in\Lambda}\operatorname{spt}\mathbf{a}_\ell.
    \end{equation}
    If all \(\operatorname{spt}\mathbf{a}_\ell\) were equal, every active component would have positivity set \(\Omega_0\); no branching pair could then exist. Thus branching requires two distinct branchwise supports.
\end{proof}

\subsection{The planar structure of non-degenerate contact points}

Let \(n=2\), and let \(\mathbf{u}_0\) be a nontrivial blow-up limit at \(x_0\in\Gamma_{\Sigma,\rm ND}(\mathbf{u})\). After a rotation, write \(T:=T_{x_0}\Sigma=\{x_2=0\}\) and \(\msf d_0(x)=|x_2|\). Set
\begin{equation}\label{eq:planar-p-beta}
    p:=1+\gamma,
    \qquad
    \beta:=\frac{\pi}{2p}
    =\frac{\pi}{2(1+\gamma)}.
\end{equation}
Thus \(2\beta=\pi/(1+\gamma)\). Recall the axial directions \(\mathfrak A\), sectors \(S_\mu\), profiles \(\Phi_\mu\), and amplitudes \(\kappa_\mu\) defined in equations \eqref{eq:planar-axial-directions}--\eqref{eq:planar-kappa}.

The angular eigenvalue equation first determines the opening of each branch; the Bernoulli condition then determines its direction and amplitude.

\begin{lemma}[Planar sector profile]\label{lem:planar-sector-profile}
    For every connected component \(\Omega_0^{(\ell)}\) of \(\Omega_0\), there exists \(\alpha_\ell\in\mathbb R\) such that
    \begin{equation}\label{eq:planar-sector-component}
        \Omega_0^{(\ell)}=\{r(\cos\theta,\sin\theta):r>0,\ \alpha_\ell<\theta<\alpha_\ell+2\beta\}.
    \end{equation}
    Moreover, there are a unique
    \(\mathbf{a}_\ell\in\mathbb S^{m-1}\cap\mathbb R_+^m\) and \(c_\ell>0\) such that
    \begin{equation}\label{eq:planar-preliminary-profile}
        \mathbf{u}_0(r,\theta)
        =\mathbf{a}_\ell c_\ell r^p
            \sin\bigl(p(\theta-\alpha_\ell)\bigr)
        \quad\text{ in }\Omega_0^{(\ell)}.
    \end{equation}
    Thus every branch has opening \(2\beta\), and there are only finitely many branches.
\end{lemma}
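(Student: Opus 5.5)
The plan is to reduce everything to one-dimensional spectral theory on the circle, using the conical structure of the components established in the proof of Corollary~\ref{cor:blow-up-branching}.

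\emph{Arc structure.} Each component \(\Omega_0^{(\ell)}\) is an open cone with vertex at the origin, and its trace \(\mathcal S_\ell:=\Omega_0^{(\ell)}\cap\mathbb S^1\) is a connected open subset of \(\mathbb S^1\). Such a set is either all of \(\mathbb S^1\) or an open arc \(\{\alpha_\ell<\theta<\alpha_\ell+L_\ell\}\) with \(0<L_\ell<2\pi\). By part~(i) of Corollary~\ref{cor:blow-up-branching}, \(\mathbf{u}_0=\mathbf{a}_\ell v_\ell\) on \(\Omega_0^{(\ell)}\), where \(v_\ell=r^p\varphi(\theta)\) and \(\varphi\) is a positive solution of \(-\varphi''=p^2\varphi\) on \(\mathcal S_\ell\). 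The eigenvalue is \(p^2=(1+\gamma)(n+\gamma-1)\) with \(n=2\).

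\emph{Excluding the full circle.} Suppose \(\mathcal S_\ell=\mathbb S^1\). Then \(\varphi\) is a positive \(2\pi\)-periodic solution, so \(p\) must be an integer and \(\varphi=A\cos(p\theta)+B\sin(p\theta)\). For \(p\ge1\) with \(p\neq0\), such a function changes sign, which contradicts positivity. The case \(p=1\) cannot occur because \(\gamma>0\), and the same sign-change argument covers every \(p>1\) anyway.

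\emph{The arc case.} On the arc, \(\varphi\) is positive and \(\varphi\in H^1_0\) of the arc by the argument in Proposition~\ref{prop:weighted-density-contact}. Continuity then gives \(\varphi(\alpha_\ell)=\varphi(\alpha_\ell+L_\ell)=0\). The general solution vanishing at \(\alpha_\ell\) is \(c\sin(p(\theta-\alpha_\ell))\). Positivity on the open arc together with vanishing at the other endpoint forces \(pL_\ell=\pi\): a larger multiple \(kp L_\ell=k\pi\) with \(k\ge2\) would make the sine change sign inside the arc. Hence \(L_\ell=\pi/p=2\beta\) and \(c_\ell>0\), which gives \eqref{eq:planar-sector-component} and \eqref{eq:planar-preliminary-profile}.

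\emph{Uniqueness of the data.} The vector \(\mathbf{a}_\ell\) is unique because \(|\mathbf{u}_0|=c_\ell r^p\sin(\cdot)>0\) on the sector. The coefficient \(c_\ell\) is unique because it equals \(|\mathbf{u}_0|\) divided by the explicit profile.

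\emph{Finiteness.} The components are disjoint and each has angular measure \(2\beta\), so there are at most \(2\pi/(2\beta)=2(1+\gamma)\) of them. This refines the bound \(\#\Lambda<\infty\) from Corollary~\ref{cor:blow-up-branching}.

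The main obstacle is the regularity at the endpoints: we need \(\varphi\) to vanish on the boundary of the arc in the classical sense, and we need \(\mathcal S_\ell\) to be a genuine arc rather than some more exotic connected set. Both follow directly. Connected open subsets of \(\mathbb S^1\) are arcs or the whole circle. Since \(\mathbf{u}_0\) is Lipschitz and vanishes on \(\partial\Omega_0^{(\ell)}\subset\partial\Omega_0\), the ODE solution extends continuously to the endpoints with value zero. No Bernoulli condition is needed at this stage; the direction \(\alpha_\ell\) and the amplitude \(c_\ell\) are determined later.
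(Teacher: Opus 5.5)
Your proposal is correct and follows essentially the same route as the paper: you reduce to a positive solution of \(-\varphi''=p^2\varphi\) on the connected angular trace, and positivity then forces the endpoints to be consecutive zeros, so the arc has length \(\pi/p=2\beta\) and the profile is \(c_\ell\sin(p(\theta-\alpha_\ell))\). You are somewhat more explicit than the paper in two places: you exclude the full circle, and you get finiteness directly from the angular packing bound \(\#\Lambda\le 2(1+\gamma)\) instead of from Corollary~\ref{cor:blow-up-branching}. Two small corrections: you should also allow the case \(L_\ell=2\pi\), i.e.\ the circle minus a point, which your argument excludes anyway because it forces \(L_\ell=\pi/p<\pi\); and \(kpL_\ell=k\pi\) should read \(pL_\ell=k\pi\).
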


\begin{proof}
    By Corollary \ref{cor:blow-up-branching}, the angular section of each branch is a connected proper subset of \(\mathbb S^1\), and its positive scalar profile solves the ODE \(-\varphi''=p^2\varphi\) with zero endpoint values. Positivity forces the endpoints to be consecutive zeros, giving an interval of length \(\pi/p=2\beta\) and the stated sine profile. Finiteness and uniqueness of \(\mathbf{a}_\ell\) also follow from Corollary \ref{cor:blow-up-branching}.
\end{proof}

Before applying the Bernoulli law on these rays, we exclude rays on the degeneracy line and rays shared by two branches. This 

\begin{lemma}[Separation of planar free-boundary rays]
\label{lem:planar-ray-separation}
    Every boundary ray \(R\) of a connected component of \(\Omega_0\) satisfies \(R\cap T=\{0\}\). Distinct components cannot share a boundary ray.
\end{lemma}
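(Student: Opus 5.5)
The plan is to treat the two assertions separately: a one-sided free boundary condition from the inner variation handles the first, and a harmonic-replacement comparison handles the second. Both arguments are local near a point of the ray away from the origin. Throughout, I use the explicit profile \eqref{eq:planar-preliminary-profile} of Lemma \ref{lem:planar-sector-profile}. On each branch \(\Omega_0^{(\ell)}\), \(\mathbf{u}_0=\mathbf{a}_\ell c_\ell r^p\sin(p(\theta-\alpha_\ell))\) with \(c_\ell>0\). This is smooth up to each boundary ray away from \(0\), and along the ray \(\theta=\alpha_\ell\) it gives
\[
|\nabla\mathbf{u}_0|=c_\ell p\,r^{p-1}>0 .
\]

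\emph{Shared rays.} Suppose two distinct components \(\Omega_0^{(\ell_1)}\), \(\Omega_0^{(\ell_2)}\) share a boundary ray \(R\). Fix \(x_1\in R\setminus\{0\}\) and a small ball \(B=B_\rho(x_1)\) that avoids the origin and meets only these two sectors. Then \(\mathcal L^2(B\setminus\Omega_0)=0\). Let \(\mathbf{h}\) be the componentwise harmonic replacement of \(\mathbf{u}_0\) in \(B\). Its components are nonnegative, so \(\mathbf{h}\) is admissible, and \(J_T\) minimality (Proposition \ref{prop:blow-up-convergence}) gives \(\int_B|\nabla\mathbf{u}_0|^2\le\int_B|\nabla\mathbf{h}|^2\). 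Indeed, the phase term of \(\mathbf{u}_0\) in \(B\) is already the full \(\int_B\msf d_0^{2\gamma}\), so it cannot decrease. By Dirichlet's principle, every \(u_{0,i}\) is therefore harmonic in \(B\). Now pick an index \(i\) with \(a_{\ell_1,i}>0\). Then \(u_{0,i}\ge0\) is harmonic in \(B\), positive on \(B\cap\Omega_0^{(\ell_1)}\), and vanishes on the segment \(R\cap B\). This contradicts the strong minimum principle, since \(u_{0,i}\) attains its minimum \(0\) at an interior point. So no ray is shared.

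\emph{Rays on \(T\).} Let \(R\) be a boundary ray of \(\Omega_0^{(\ell)}\). By the previous step, near a point \(x_1\in R\setminus\{0\}\) the set \(\Omega_0\) lies on one side of \(R\) and \(\mathbf{u}_0\equiv0\) on the other side. Test the inner variation identity of Lemma \ref{lem:basic_property_U0}\ref{item:inner_variation_U0} with \(\Phi\in C_0^\infty(B_\rho(x_1);\R^2)\). Because \(\mathbf{u}_0\) is smooth up to \(R\) from the positive side and harmonic there, the standard computation (divergence theorem on the sector side) converts the identity into
\[
\int_{R}\bigl(\msf d_0^{2\gamma}-|\nabla\mathbf{u}_0|^2\bigr)\,\Phi\cdot\nu\,\dH{1}=0 .
\]
The term \(2\gamma\msf d_0^{2\gamma-1}\chi_{\Omega_0}\nabla\msf d_0\cdot\Phi\) is absorbed here as the derivative of \(\msf d_0^{2\gamma}\), since \(\msf d_0^{2\gamma}\in W^{1,1}_{\rm loc}\). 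This yields the Bernoulli law \(|\nabla\mathbf{u}_0|=\msf d_0^\gamma\) on \(R\setminus\{0\}\). If \(R\subset T\), then \(\msf d_0\equiv0\) on \(R\), forcing \(c_\ell p r^{p-1}=0\), which contradicts \(c_\ell>0\). Hence \(R\cap T=\{0\}\).

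The main obstacle is justifying the integration by parts in the inner variation when \(R\subset T\) and \(\gamma<1/2\), since then \(\msf d_0^{2\gamma-1}\) blows up along \(R\) itself. To handle this, I would first integrate by parts on the region \(\{\msf d_0>\varepsilon\}\cap\Omega_0\) and then let \(\varepsilon\to0^+\). The boundary contribution on \(\{\msf d_0=\varepsilon\}\) is \(O(\varepsilon^{2\gamma})\to0\), while the Dirichlet boundary terms converge because \(\mathbf{u}_0\) is explicit and smooth up to \(R\). Alternatively, in that case one can bypass the inner variation entirely. Use a one-sided domain-variation competitor that pushes \(R\) slightly into the zero phase: it costs phase energy \(\int\msf d_0^{2\gamma}\) of order \(o(t)\) near \(T\), but it gains Dirichlet energy of order \(t\,c_\ell^2p^2\int r^{2p-2}\), and this contradicts the minimality of \(\mathbf{u}_0\).
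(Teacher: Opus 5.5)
Your proof is correct, but for the first assertion you take a different route from the paper.

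\textbf{The paper's route.} The paper excludes \(R\subset T\) directly from the degenerate gradient bound \eqref{eq:Lipschitz-contact}. Any \(z\in R\setminus\{0\}\) would be a free-boundary point of the flat minimizer lying on \(T\), so \(\|\nabla\mathbf{u}_0\|_{L^\infty(B_{\rho/2}(z))}\le C\rho^\gamma\to0\). This is incompatible with the one-sided gradient magnitude \(c_\ell p|z|^\gamma>0\) of the explicit profile. That argument needs neither the shared-ray exclusion nor any integration by parts at the interface, and it works uniformly in \(\gamma>0\).

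\textbf{Your route.} You first rule out shared rays by the same harmonic-replacement comparison as the paper. Your remark that the phase term of \(\mathbf{u}_0\) in \(B\) is already maximal neatly replaces the paper's appeal to \(|\mathbf{h}|>0\). You then use this to make the interface one-sided near each nonzero point of \(R\). From there you derive the Bernoulli law \(|\nabla\mathbf{u}_0|^2=\msf d_0^{2\gamma}\) from the inner variation, including on rays contained in \(T\).

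This forces you to prove the two assertions in that order. Without one-sidedness, the inner variation would only give equality of the two one-sided gradients. It also forces you to handle the singularity of \(\msf d_0^{2\gamma-1}\) when \(\gamma<1/2\). Your \(\varepsilon\)-truncation does this correctly: \(\msf d_0^{2\gamma}\in W^{1,1}_{\rm loc}\) and it vanishes on \(R\), so the phase contribution drops out. The leftover identity then forces \(|\nabla\mathbf{u}_0|=0\) on \(R\).

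\textbf{Comparison.} Your route establishes the free-boundary condition \eqref{eq:planar-Bernoulli-rays} on every boundary ray at once. The paper instead derives that condition separately in Proposition \ref{prop:explicit-planar-blow-up}, and only after separation from \(T\) is known. The paper's route is shorter and uses only the growth and Lipschitz estimates already proved. The domain-variation alternative you sketch at the end is not needed.
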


\begin{proof}
    If \(R\subset T\), choose \(z\in R\setminus\{0\}\). We apply the contact gradient estimate \eqref{eq:Lipschitz-contact} to the flat minimizer \(\mathbf{u}_0\) at \(z\), which yields
    \begin{equation}\label{eq:planar-ray-separation-gradient-vanishing}
        \|\nabla \mathbf{u}_0\|_{L^\infty(B_{\rho/2}(z))}
        \le C\rho^\gamma\longrightarrow0
        \quad\text{ as }\rho\to 0^+.
    \end{equation}
    This contradicts \eqref{eq:planar-preliminary-profile}, whose gradient has one-sided magnitude \(c_\ell p|z|^\gamma>0\) on \(R\).

    If two sectors share a ray, choose a ball centered at a nonzero point of it that meets no other ray. Then \(\chi_{\Omega_0}=1\) almost everywhere in the ball. The componentwise harmonic replacement \(\mathbf{h}\) is nonnegative and has \(|\mathbf{h}|>0\) there by the strong maximum principle. Hence the phase terms agree, while harmonic replacement decreases the Dirichlet energy. Minimality forces \(\mathbf{u}_0=\mathbf{h}\), contradicting the vanishing of \(\mathbf{u}_0\) on the interior ray.
\end{proof}

We now conclude the following explicit structure for planar blow-up limits.

\begin{proposition}[Blow-up limits in \(\mathbb R^2\)]
\label{prop:explicit-planar-blow-up}
    Let \(\mathbf{u}_0\) be a nontrivial planar blow-up limit. There are a nonempty set \(K\subset\mathfrak A\) and unique vectors \(\mathbf{a}_\mu\in\mathbb S^{m-1}\cap\mathbb R_+^m\), \(\mu\in K\), such that the sectors \(S_\mu\) are pairwise disjoint, no two share a boundary ray, and
    \[
        \Omega_0=\bigcup_{\mu\in K}S_\mu,
        \qquad
        \mathbf{u}_0=\sum_{\mu\in K}\mathbf{a}_\mu\Phi_\mu
        \quad\text{ in }\mathbb R^2.
    \]
    Moreover, the union is disjoint.
\end{proposition}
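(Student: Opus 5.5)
We start from Lemma \ref{lem:planar-sector-profile}: each component of \(\Omega_0\) is a sector \(\{\alpha_\ell<\theta<\alpha_\ell+2\beta\}\). On it,
\[
\mathbf{u}_0=\mathbf{a}_\ell c_\ell r^p\sin\bigl(p(\theta-\alpha_\ell)\bigr).
\]
There are finitely many such components. By Lemma \ref{lem:planar-ray-separation}, no boundary ray lies on \(T\) and no two sectors share a ray. So the closed sectors meet only at the origin, and each boundary ray is a one-sided free boundary ray at positive distance from \(T\). It remains to pin down \(\alpha_\ell\) and \(c_\ell\) using the Bernoulli free boundary condition on the two rays of each sector.

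\textbf{Step 1: the Bernoulli law on a ray.} Fix a boundary ray \(R=\{t(\cos\alpha,\sin\alpha):t>0\}\) of \(\Omega_0^{(\ell)}\), with \(\sin\alpha\neq0\). Near a point \(z\in R\setminus\{0\}\), the ball meets only this one sector. In that ball the weight \(\msf d_0^{2\gamma}=|x_2|^{2\gamma}\) is smooth and bounded below. We test the inner variation identity of Lemma \ref{lem:basic_property_U0}\ref{item:inner_variation_U0} with \(\Phi\) supported in the ball. Since \(\mathbf{u}_0\) is smooth up to the flat boundary ray from inside the sector, integrating by parts gives
\[
|\nabla\mathbf{u}_0|^2=\msf d_0^{2\gamma}\quad\text{on }R .
\]
The key inputs are harmonicity of each component in the interior and the divergence identity \(\operatorname{div}(|\nabla u|^2\Phi)-2\operatorname{div}((\Phi\cdot\nabla u)\nabla u)=\ldots\). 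The weight-gradient term \(2\gamma\msf d_0^{2\gamma-1}\nu_T\cdot\Phi\) combines with \(\msf d_0^{2\gamma}\operatorname{div}\Phi\) to give \(\operatorname{div}(\msf d_0^{2\gamma}\Phi)\). The result is a boundary integral of \((\msf d_0^{2\gamma}-|\nabla\mathbf{u}_0|^2)\,\Phi\cdot\nu\) over \(R\), which must vanish for all \(\Phi\).

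On \(R\), because \(|\mathbf{a}_\ell|=1\), we have \(|\nabla\mathbf{u}_0|=c_\ell p\,t^{\gamma}\), while \(\msf d_0=t|\sin\alpha|\). Hence
\[
c_\ell p=|\sin\alpha|^{\gamma}
\]
for both boundary angles \(\alpha=\alpha_\ell\) and \(\alpha=\alpha_\ell+2\beta\).

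\textbf{Step 2: locate the sector.} Step 1 forces \(|\sin\alpha_\ell|=|\sin(\alpha_\ell+2\beta)|\). This holds exactly when one of the following is true:
\begin{itemize}
\item \(\alpha_\ell+2\beta\equiv-\alpha_\ell\pmod\pi\), i.e. the sector is symmetric about the \(x_1\)-axis direction \(0\) or \(\pi\);
\item \(\alpha_\ell+2\beta\equiv\pi-\alpha_\ell \pmod{\pi}\), i.e. the sector is symmetric about \(\pi/2\) or \(3\pi/2\);
\item the degenerate possibility \(2\beta\equiv0\pmod\pi\), i.e. \(\pi/(1+\gamma)\in\pi\mathbb Z\), which is impossible since \(0<2\beta<\pi\).
\end{itemize}
So the bisector \(\mu_\ell:=\alpha_\ell+\beta\) lies in \(\mathfrak A\). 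Writing \(\theta-\alpha_\ell=\beta+(\theta-\mu_\ell)\), we get
\[
\sin\bigl(p(\theta-\alpha_\ell)\bigr)=\cos\bigl(p\,d_{\mathbb S^1}(\theta,\mu_\ell)\bigr),
\]
so \(\Omega_0^{(\ell)}=S_{\mu_\ell}\). For \(\mu\in\{0,\pi\}\) the boundary angles are \(\pm\beta\), so \(c_\ell=p^{-1}\sin^\gamma\beta\). For \(\mu\in\{\pi/2,3\pi/2\}\) the boundary angles are \(\pi/2\pm\beta\), so \(c_\ell=p^{-1}\cos^\gamma\beta\). In both cases \(c_\ell=\kappa_{\mu_\ell}\).

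\textbf{Step 3: assemble.} Distinct components are disjoint sectors, so the map \(\ell\mapsto\mu_\ell\) is injective. Set \(K:=\{\mu_\ell\}\) and \(\mathbf{a}_{\mu_\ell}:=\mathbf{a}_\ell\). Then \(K\) is nonempty because \(\mathbf{u}_0\not\equiv0\), and uniqueness of \(\mathbf{a}_\mu\) follows from Lemma \ref{lem:planar-sector-profile}. The \(\Phi_\mu\) have disjoint supports and \(\mathbf{u}_0=0\) off \(\Omega_0\), so \(\mathbf{u}_0=\sum_{\mu\in K}\mathbf{a}_\mu\Phi_\mu\). The pairwise disjointness and the no-shared-ray property are inherited from Lemma \ref{lem:planar-ray-separation}.

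\textbf{Main obstacle.} The delicate step is Step 1: deriving the pointwise Bernoulli law from the distributional inner-variation identity. It needs smoothness of \(\mathbf{u}_0\) up to the ray from the sector side, which we have from the explicit profile, and the absence of other phases in the test ball. That absence is exactly what Lemma \ref{lem:planar-ray-separation} provides, together with the fact that rays avoid \(T\), which keeps the weight smooth there.
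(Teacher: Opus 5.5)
Your proposal is correct and follows essentially the same route as the paper: use Lemma \ref{lem:planar-ray-separation} to isolate each boundary ray from \(T\) and from the other branches, derive the Bernoulli law \(|\nabla\mathbf{u}_0|^2=\msf d_0^{2\gamma}\) by inner variations, and equate \(c_\ell p=|\sin\alpha_\ell|^\gamma=|\sin(\alpha_\ell+2\beta)|^\gamma\) to get \(\sin(2\mu_\ell)\sin(2\beta)=0\). This gives \(\mu_\ell\in\mathfrak A\) and \(c_\ell=\kappa_{\mu_\ell}\), after which you sum the branch profiles; your Step 1 just writes out the integration by parts that the paper leaves implicit. One cosmetic slip: the first two cases in your Step 2 should be congruences modulo \(2\pi\), since modulo \(\pi\) they coincide and each already gives \(2\mu_\ell\equiv0\pmod{\pi}\), but the conclusion \(\mu_\ell\in\mathfrak A\) is unaffected.
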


\begin{proof}
    Fix a branch, and set \(\mu_\ell=\alpha_\ell+\beta\). Since \(p\beta=\pi/2\), its profile is
    \[
        \mathbf{u}_0(r,\theta)
        =\mathbf{a}_\ell c_\ell r^p\cos\bigl(p(\theta-\mu_\ell)\bigr).
    \]
    By Lemma \ref{lem:planar-ray-separation}, every nonzero point of either boundary ray is separated from \(T\) and from the other branches. Inner variations of the flat minimizing functional at this smooth interface give the Bernoulli law
    \begin{equation}\label{eq:planar-Bernoulli-rays}
        |\nabla \mathbf{u}_0|^2=\msf d_0^{2\gamma}
        \quad\text{on each boundary ray away from the origin}.
    \end{equation}
    Evaluating it on the free boundary \(\theta=\mu_\ell\pm\beta\) yields
    \[
        c_\ell p
        =|\sin(\mu_\ell-\beta)|^\gamma
        =|\sin(\mu_\ell+\beta)|^\gamma.
    \]
    Therefore we have 
    \[
        \sin(2\mu_\ell)\sin(2\beta)=0.
    \]
    Since \(0<2\beta<\pi\), this gives \(\mu_\ell\in\mathfrak A\) modulo \(2\pi\), and the same identity determines \(c_\ell=\kappa_{\mu_\ell}\). Summing the branchwise profiles proves the assertion.
\end{proof}

The remaining restriction is geometric: adjacent axial directions are separated by \(\pi/2\), whereas each sector has opening angle \(2\beta\).

\begin{corollary}[Number of planar branches]
\label{cor:number-planar-branches}
    Let \(N:=\#\,K\). Then \(1\le N\le4\), with the following refinements.
    \begin{enumerate}[label=(\roman*)]
        \item If \(0<\gamma\le1\), then \(K\) is a singleton or
        \[
            K=\{0,\pi\},
            \qquad\text{or}\qquad
            K=\left\{\frac\pi2,\frac{3\pi}{2}\right\}.
        \]
        Thus \(N\le2\), and a two-branch blow-up consists of opposite tangential sectors or opposite normal sectors. In the tangential case,
        \[
            \Omega_0=\{|x_2|<|x_1|\tan\beta\},
            \qquad
            \partial\Omega_0=\{x_2=\pm x_1\tan\beta\};
        \]
        in the normal case,
        \[
            \Omega_0=\{|x_2|>|x_1|\cot\beta\},
            \qquad
            \partial\Omega_0=\{x_2=\pm x_1\cot\beta\}.
        \]
        \item If \(\gamma>1\), then \(N\le4\). The four axial sectors have positive angular gaps, so every nonempty subset of \(\mathfrak A\) satisfies the geometric packing condition.
    \end{enumerate}
\end{corollary}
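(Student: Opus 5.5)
The count follows from packing: by Lemma \ref{lem:planar-sector-profile} and Proposition \ref{prop:explicit-planar-blow-up}, $\Omega_0$ is a disjoint union of sectors $S_\mu$, $\mu\in K\subset\mathfrak A$. Each sector has opening $2\beta=\pi/(1+\gamma)$ and is centred at an axial direction. Since $\#\mathfrak A=4$ and $K\neq\varnothing$ (the blow-up is nontrivial), we get $1\le N\le 4$ at once.

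For the refinements, compare the opening with the gap between axial directions. Two sectors $S_\mu,S_{\mu'}$ with $d_{\mathbb S^1}(\mu,\mu')=\pi/2$ are disjoint and share no boundary ray exactly when the sum of their half-openings is strictly less than the distance between their centres, i.e.
\[
2\beta<\frac{\pi}{2}\iff \frac{\pi}{1+\gamma}<\frac{\pi}{2}\iff \gamma>1 .
\]
Proposition \ref{prop:explicit-planar-blow-up} requires the sectors to be pairwise disjoint with no shared ray.

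\emph{Case $0<\gamma\le1$.} Here $2\beta\ge\pi/2$. Two adjacent axial sectors would overlap if $\gamma<1$, and would share a boundary ray if $\gamma=1$. Both are excluded. Opposite directions are at distance $\pi$, and $2\beta<\pi$, so opposite sectors are separated by a positive gap. Hence $K$ has no adjacent pair. The nonempty subsets of $\{0,\pi/2,\pi,3\pi/2\}$ without adjacent elements are the singletons together with $\{0,\pi\}$ and $\{\pi/2,3\pi/2\}$.

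The explicit cones then follow from the definition of $S_\mu$. For $K=\{0,\pi\}$, $\Omega_0$ consists of the points with $|\theta|<\beta$ or $|\theta-\pi|<\beta$, which is $\{|x_2|<|x_1|\tan\beta\}$. For $K=\{\pi/2,3\pi/2\}$, the angular condition $|\theta\mp\pi/2|<\beta$ translates to $|x_1|<|x_2|\tan\beta$, i.e. $\{|x_2|>|x_1|\cot\beta\}$. In each case the boundary rays are the stated lines.

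\emph{Case $\gamma>1$.} Here $2\beta<\pi/2$, so every pair of axial sectors, whether adjacent or opposite, has a strictly positive angular gap. Thus every nonempty subset of $\mathfrak A$ satisfies the packing condition, and only the bound $N\le4$ remains, which holds as noted above.

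The only delicate point is the borderline $\gamma=1$. There adjacent sectors are disjoint but share a boundary ray, so they must be excluded by the "no shared ray" part of Lemma \ref{lem:planar-ray-separation} rather than by disjointness.
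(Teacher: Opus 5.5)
Your proposal is correct and follows the paper's argument: compare each sector's opening $2\beta=\pi/(1+\gamma)$ with the separation $\pi/2$ of adjacent axial directions. Adjacent pairs are excluded by overlap when $\gamma<1$ and by the no-shared-ray part of Lemma~\ref{lem:planar-ray-separation} when $\gamma=1$, while for $\gamma>1$ all four sectors are separated by positive gaps; your checks that opposite sectors are separated ($2\beta<\pi$) and of the explicit two-branch cones just fill in details the paper leaves implicit.
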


\begin{proof}
    If \(0<\gamma<1\), then \(2\beta>\pi/2\), so adjacent sectors overlap. If \(\gamma=1\), they share a boundary ray, contrary to Lemma \ref{lem:planar-ray-separation}. Hence only opposite pairs are possible when \(0<\gamma\le1\). For \(\gamma>1\), one has \(2\beta<\pi/2\), leaving positive gaps between all four axial sectors.
\end{proof}

\begin{proof}[Proof of Theorem \ref{thm:planar-blow-up-classification}]
    Combine Proposition \ref{prop:explicit-planar-blow-up} and Corollary \ref{cor:number-planar-branches}.
\end{proof}

\section{Quantitative stratification of the non-degenerate contact set}
\label{sec:quantitative-stratification}
We now apply quantitative stratification to obtain the measure and rectifiability conclusions of Theorem \ref{thm:measure-GammaND}. The argument follows Naber--Valtorta \cite{NV17}, in the free-boundary form developed by Edelen--Engelstein \cite[Theorems~1.11--1.12]{EE19}, and McCurdy's treatment of a scalar degenerate weight \cite[Theorem~1.3, Lemma~1.4, and Corollary~1.5]{McC24}. Edelen--Engelstein already include the vector-valued one-phase functional in their extension \cite[Section~1.4 and Theorem~1.15]{EE19}. Thus the geometric method, including the Jones estimate and the covering construction, is supplied by these works. Our task is to verify its analytic hypotheses for the vectorial minimizers and \(C^{1,\rm Dini}\) hypersurfaces considered here.

The mechanism is quantitative dimension reduction. A small change in the Weiss energy makes a rescaling nearly homogeneous. Homogeneity about several independent centers forces approximate translation invariance, so the centers where an additional symmetry fails must concentrate near a lower-dimensional plane. The Jones estimate measures this concentration by the energy drop. The Reifenberg and covering arguments then turn the sum of these drops into packing and rectifiability. Finally, the absence of nonzero one-dimensional homogeneous models places the non-degenerate contact set \(\Gamma_{\Sigma,\rm ND}(\mathbf{u})\) in the \((n-2)\)-stratum.

We keep the normalization and the analytic estimates in the main text. Appendix \ref{app:quantitative-stratification-inputs} records the precise rigidity, density-propagation, and covering statements being transferred from \cite{EE19,McC24}, together with the verification of the Dini errors in our geometric setting.

\subsection{Normalization and compactness}\label{subsec:QS-normalization-compactness}

Uniform estimates require a common interior scale and common control of the \(C^{1,\rm Dini}\) geometry. The following normalization is obtained locally by the contact-point rescaling; it does not impose a separate normalization on the amplitude of \(\mathbf{u}\).

\begin{definition}[Normalized class]\label{def:normalized-QS-class}
    Fix \(n\ge2\), \(m\ge1\), \(\gamma>0\), a graph radius \(r_\Sigma>0\), and a Dini modulus \(\omega_\Sigma\) on \([0,2r_\Sigma]\). Let \(\mathscr A\) be the class of pairs \((\mathbf{u},\Sigma)\) satisfying the following conditions:
    \begin{enumerate}[label=\textnormal{(\roman*)}]
        \item \(\mathbf{u}\in W^{1,2}(B_{10}(0);\R_+^m)\cap C(B_{10}(0);\R^m)\) is an \(\varepsilon_0\)-local minimizer of \(J\) in \(B_{10}(0)\) for some \(\varepsilon_0>0\), with \(0\in\Gamma_\Sigma(\mathbf{u})\), and \(10\) is a standard scale in the sense of Definition~\ref{def:standard-scale}.
        \item \(\Sigma\subset\R^n\) is a nonempty closed embedded \(C^{1,\rm Dini}\) hypersurface. At every \(q\in\Sigma\cap B_4(0)\), it has the graph representation of Definition~\ref{def.C1Dini-hypersurface}, with graph radius \(r_\Sigma\) and gradient modulus bounded by the same function \(\omega_\Sigma\).
    \end{enumerate}
\end{definition}

Throughout this section, write \(p:=1+\gamma\), consistently with \eqref{eq:planar-p-beta}. Unless otherwise stated, constants are uniform over \(\mathscr A\) and may depend on \(n,m,\gamma,r_\Sigma,\omega_\Sigma\). The following remarks collect the consequences of the normalization that will be used repeatedly.

\begin{remark}[Absolute comparisons and natural rescaling]
\label{rem:QS-uniform-minimality}
    Since \(10\) is a standard scale, every ball compactly contained in \(B_{10}(0)\) admits absolute energy comparisons with nonnegative competitors having the same boundary trace. Thus the comparison arguments below require no common value of \(\varepsilon_0\). The proofs of the growth and non-degeneracy estimates use their small-scale restriction only through these absolute comparisons; hence those arguments apply here with standard scale \(10\). By Lemma~\ref{lem:EL} and Corollary~\ref{cor:gradient-estimate}, each component is subharmonic in \(B_{10}(0)\) and harmonic in \(\Omega_{\mathbf{u}}\).

    For a contact point \(x\), set
    \[
        \mathbf{u}_{x,r}(y):=r^{-p}\mathbf{u}(x+ry),
        \qquad
        \Sigma_{x,r}:=\frac{\Sigma-x}{r}.
    \]
    The identity
    \[
        \msf d_{\Sigma_{x,r}}(y)
        =r^{-1}\msf d_\Sigma(x+ry)
    \]
    shows that this rescaling preserves the form of the functional: both bulk terms acquire the same factor \(r^{n+2\gamma}\). Consequently, absolute comparisons in \(B_{Rr}(x)\) become absolute comparisons for \(\mathbf{u}_{x,r}\) in \(B_R(0)\), with the Bernoulli weight function \(\msf d_{\Sigma_{x,r}}^{2\gamma}(x)\).

    For an original minimizer with standard scale \(r_0\), choosing \(s>0\) so that \(10s\le r_0\) and \(B_{10s}(x)\subset\subset D\) gives the normalized comparison scale. Under this change of variables, a common graph radius is divided by \(s\), and a common modulus \(\omega\) becomes \(t\mapsto\omega(st)\). This explains how the normalized class arises from the local problem.
\end{remark}

\begin{remark}[Uniform contact estimates]
\label{rem:QS-uniform-contact-estimates}
    If \(x\in\Gamma_\Sigma(\mathbf{u})\cap B_2(0)\), then we have \(\dist(x,\partial B_{10}(0))>8\). Hence every \(0<s\le1\) satisfies the radius restrictions in \eqref{eq:optimal-growth} and \eqref{eq:Lipschitz-contact}. Since \(\msf d_\Sigma(x)=0\), those estimates give
    \begin{equation}\label{eq:QS-uniform-contact-estimates}
        \|\mathbf{u}\|_{L^\infty(B_s(x))}\le Cs^p,
        \qquad
        \|\nabla\mathbf{u}\|_{L^\infty(B_s(x))}\le Cs^{p-1},
    \end{equation}
    where \(C=C(n,m,\gamma)\), independently of the geometric data. Therefore, whenever \(Rr\le1\),
    \begin{equation}\label{eq:QS-rescaled-contact-estimates}
        \|\mathbf{u}_{x,r}\|_{L^\infty(B_R)}\le CR^p,
        \qquad
        \|\nabla\mathbf{u}_{x,r}\|_{L^\infty(B_R)}\le CR^{p-1}.
    \end{equation}
    These estimates also imply
    \[
        s^{-n-2\gamma}D_{x,\mathbf{u}}(s)
        +s^{-n-1-2\gamma}H_{x,\mathbf{u}}(s)\le C,
        \qquad |W_{x,\mathbf{u}}(s)|\le C.
    \]
    Here the phase term is bounded using \(\msf d_\Sigma(z)\le|z-x|\). The estimates hold at both degenerate and non-degenerate contact points. In addition, the non-degeneracy estimates of Proposition~\ref{prop:optimal-growth-nondegeneracy} remain available on balls satisfying their stated hypotheses, with standard scale \(10\).
\end{remark}

\begin{remark}[Uniform geometry and a common radius]
\label{rem:QS-uniform-geometry}
    Fix
    \begin{equation}\label{eq:QS-common-radius}
        r_*:=\min\{1/4,r_\Sigma/8\}.
    \end{equation}
    For \(x\in\Sigma\cap B_2(0)\), \(0<s\le r_*\), and \(0<t\le s\), Proposition~\ref{prop:C1Dini-properties} gives, with common constants,
    \[
        \cL^n\bigl(B_s(x)\cap\{\msf d_\Sigma<t\}\bigr)
        \le Cs^{n-1}t,
        \qquad
        \int_{B_s(x)}\msf d_\Sigma^{2\gamma-1}\,\dx{z}
        \le Cs^{n+2\gamma-1}.
    \]
    The projection estimate is uniform on the same balls. Indeed, a nearest point to \(z\in B_s(x)\) lies in \(B_{2s}(x)\subset B_4(0)\), and the common graph radius covers this neighborhood. These facts justify the inner-variation formula and the uniform remainder bound in Lemma~\ref{lem:esmaddi}. In particular, the same correction constant can be used at every contact center in \(B_2(0)\).
\end{remark}

We now make that correction explicit. Define
\begin{equation}\label{eq.Dini-modulus-integral}
    \mathscr D_\Sigma(a,b)
    :=\int_a^b\frac{\omega_\Sigma(2t)}t\,\dx{t},
    \qquad 0\le a<b\le r_\Sigma/2,
\end{equation}
and write \(\mathscr D_\Sigma(b):=\mathscr D_\Sigma(0,b)\). Choose a common constant \(C_W\) so that the absolute value of the full geometric remainder in \eqref{eq.Wr'} is bounded by \(C_W\omega_\Sigma(2r)/r\). Thus \(C_W\) includes the factor \(2\gamma\) in that formula. For \(x\in\Gamma_\Sigma(\mathbf{u})\cap B_2(0)\) and \(0<r\le r_*\), set
\[
    \widetilde W_{x,\mathbf{u}}(r)
    :=W_{x,\mathbf{u}}(r)+C_W\mathscr D_\Sigma(r),
\]
and 
\[
    \vartheta_{\mathbf{u}}(x)
    :=\widetilde W_{x,\mathbf{u}}(0^+)=W_{x,\mathbf{u}}(0^+).
\]
The two limits agree because \(\mathscr D_\Sigma(r)\to0\) as \(r\to 0^+\).

\begin{remark}[Corrected monotonicity and the homogeneity defect]
\label{rem:QS-corrected-monotonicity}
    The function \(r\mapsto\widetilde W_{x,\mathbf{u}}(r)\) is nondecreasing on \((0,r_*]\). Combining Lemmas~\ref{lem:Weiss-monotonicity} and~\ref{lem:esmaddi}, integrating in the radius, and using polar coordinates gives, for \(0<a<b\le r_*\),
    \begin{equation}\label{eq:QS-homogeneity-defect}
        \begin{aligned}
            &\int_{B_b(x)\setminus B_a(x)}
            \frac{|(z-x)\cdot\nabla\mathbf{u}(z)-p\mathbf{u}(z)|^2}
                 {|z-x|^{n+2p}}\,\dx{z}\\
            &\qquad\le\frac12\bigl[
                \widetilde W_{x,\mathbf{u}}(b)
                -\widetilde W_{x,\mathbf{u}}(a)
            \bigr].
        \end{aligned}
    \end{equation}
    Thus a small corrected energy drop forces a small homogeneity defect on the corresponding annulus. Moreover, Proposition~\ref{prop:weighted-density-contact} gives
    \[
        \vartheta_{\mathbf{u}}(x)
        =\Theta_\Sigma(\mathbf{u};x,0^+)
        \in\{0\}\cup[\theta_1(n,\gamma),\theta_2(n,\gamma)].
    \]
    The density-gap constants are independent of the pair in \(\mathscr A\).
\end{remark}

For later use with varying hypersurfaces, we write the weight explicitly as a superscript:
\begin{equation}\label{eq:QS-Weiss-weight-notation}
    \begin{aligned}
        W^S_{a,\mathbf{w}}(R)
        :={}&R^{-n-2\gamma}
        \int_{B_R(a)}\bigl(
            |\nabla\mathbf{w}|^2
            +\msf d_S^{2\gamma}\chi_{\Omega_{\mathbf{w}}}
        \bigr)\,\dx{y}\\
        &-pR^{-n-1-2\gamma}
        \int_{\partial B_R(a)}|\mathbf{w}|^2\,\dH{n-1}.
    \end{aligned}
\end{equation}
The superscript \(S\) identifies the hypersurface defining the distance weight; the subscript \(a,\mathbf{w}\) identifies the center and the map. We suppress the superscript when \(S=\Sigma\) and when working with the original pair. A change of variables gives
\begin{equation}\label{eq:QS-Weiss-scaling}
    W^{\Sigma_{x,r}}_{0,\mathbf{u}_{x,r}}(R)
    =W_{x,\mathbf{u}}(rR).
\end{equation}
Define the scale-invariant boundary height by
\[
    \mathsf H_{x,\mathbf{u}}(r)
    :=\int_{\partial B_1(0)}|\mathbf{u}_{x,r}|^2\,\dH{n-1}
    =r^{-n-1-2\gamma}H_{x,\mathbf{u}}(r).
\]
To compare shapes, we will divide the natural rescaling by the square root of this height. The next lemma ensures that this operation does not amplify a vanishing profile. It is the counterpart of \cite[Lemma~6.4 and Corollary~6.5]{McC24}. The proof below obtains the lower bound directly from positive Weiss energy and a cutoff comparison.

\begin{lemma}[Non-collapse of the boundary height]
\label{lem:T-and-homogeneous-rescaling-comparable}
    There exist radius \(r_T>0\) that depends on \(n,m,\gamma,r_\Sigma,\omega_\Sigma\) and constants \(0<c_H\le C_H<\infty\), depending only on \(n,m,\gamma\), such that, for every \((\mathbf{u},\Sigma)\in\mathscr A\),
    \[
        c_H\le\mathsf H_{x,\mathbf{u}}(r)\le C_H
    \]
    whenever \(x\in\Gamma_{\Sigma,\rm ND}(\mathbf{u})\cap B_2(0)\) and \(0<r\le r_T\). We may choose \(r_T\le\min\{1/64,r_\Sigma/256\}\).
\end{lemma}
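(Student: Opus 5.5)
The upper bound is immediate from Remark~\ref{rem:QS-uniform-contact-estimates}: for \(x\in\Gamma_\Sigma(\mathbf{u})\cap B_2(0)\) and \(r\le1\), the rescaled estimate \eqref{eq:QS-rescaled-contact-estimates} with \(R=1\) gives \(|\mathbf{u}_{x,r}|\le C\) on \(\partial B_1\). Hence \(\mathsf H_{x,\mathbf{u}}(r)\le C\,\cH^{n-1}(\mathbb S^{n-1})=:C_H\), and \(C_H\) depends only on \(n,m,\gamma\). The whole work is therefore in the lower bound.

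For the lower bound I argue by energy comparison. Since \(x\in\Gamma_{\Sigma,\rm ND}(\mathbf{u})\), Remark~\ref{rem:QS-corrected-monotonicity} gives \(\vartheta_{\mathbf{u}}(x)\ge\theta_1\). Corrected monotonicity then yields \(W_{x,\mathbf{u}}(r)\ge\theta_1-C_W\mathscr D_\Sigma(r)\), and I choose \(r_T\) so that \(C_W\mathscr D_\Sigma(r_T)\le\theta_1/2\), together with \(r_T\le\min\{1/64,r_\Sigma/256\}\). By the scaling identity \eqref{eq:QS-Weiss-scaling}, for \(\mathbf w:=\mathbf{u}_{x,r}\) and \(S:=\Sigma_{x,r}\) we get
\[
    \int_{B_1}\bigl(|\nabla\mathbf w|^2+\msf d_S^{2\gamma}\chi_{\Omega_{\mathbf w}}\bigr)\,\dx{y}
    \ge\frac{\theta_1}{2}+p\,\mathsf H_{x,\mathbf{u}}(r)\ge\frac{\theta_1}{2}.
\]
I compare this with a competitor built from \(\mathbf w\) that has small energy when \(\mathsf H\) is small. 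Fix \(\tau\in(0,1)\). Take \(\mathbf v:=\mathbf w\) outside \(B_1\), \(\mathbf v:=0\) in \(B_{1-\tau}\), and on the annulus let \(\mathbf v(\rho\theta)\) interpolate radially via a cutoff \(\eta(\rho)\mathbf w(\theta)\), using the trace on \(\partial B_1\). A cleaner alternative is \(\mathbf v=(\mathbf w-\mathbf h_\tau)_+\)-type truncation, but I choose the cutoff \(\mathbf v=\eta(|y|)\mathbf w(y/|y|)\). It is nonnegative and admissible by Remark~\ref{rem:QS-uniform-minimality}. Its energy is at most
\[
    C\tau^{-1}\!\int_{\partial B_1}|\mathbf w|^2+C\tau\!\int_{\partial B_1}|\nabla_\theta\mathbf w|^2+C\tau .
\]
The last term uses \(\msf d_S\le|y|\le1\). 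The tangential gradient is bounded by the Lipschitz estimate \eqref{eq:QS-rescaled-contact-estimates}, so the middle term is at most \(C\tau\).

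Minimality gives \(\theta_1/2\le C\tau^{-1}\mathsf H+C\tau\). Choosing \(\tau=\theta_1/(4C)\) yields \(\mathsf H\ge c_H(n,m,\gamma)>0\).

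The main obstacle is that the tangential gradient of the trace cannot be controlled by \(\mathsf H\) itself. For this reason I pay \(C\tau\) rather than something proportional to \(\mathsf H\), and that is why the uniform Lipschitz bound is essential. A second point to check is that the comparison is a genuine absolute comparison on \(B_1\) for the rescaled pair. This needs \(r\le r_T\le1/64\) so that \(B_r(x)\subset\subset B_{10}(0)\) lies within the standard scale. Taking \(\tau\) fixed by \(\theta_1\) keeps all constants independent of the geometry; only \(r_T\) depends on \(r_\Sigma\) and \(\omega_\Sigma\), through \(\mathscr D_\Sigma\) and \(r_*\).
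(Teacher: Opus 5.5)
Your argument is correct and follows the same strategy as the paper. The density gap plus Dini-corrected monotonicity give $W_{x,\mathbf u}(r)\ge\theta_1/2$, and this is played against an absolute comparison on $B_1$ with a competitor vanishing in $B_{1-\tau}$, whose cost is controlled by the uniform Lipschitz bound and by $\msf d_{\Sigma_{x,r}}\le|y|\le1$. The only difference is the competitor. The paper uses $\eta_\delta\mathbf v$, so its error term involves $\sup_{A_\delta}|\mathbf v|$; it must therefore pass through $M=\max_{\partial B_1}|\mathbf v|$, subharmonicity, and a spherical-cap argument to reach $\mathsf H_{x,\mathbf u}(r)$. Your radial extension $\eta(|y|)\mathbf w(y/|y|)$ of the trace instead produces the term $C\tau^{-1}\mathsf H_{x,\mathbf u}(r)$ directly and skips those steps; just take $\tau\le1/2$ so that the factor $\rho^{-2}$ on the annulus stays bounded.
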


\begin{proof}
    Choose \(0<r_T\le\min\{1/64,r_\Sigma/256\}\) such that \(C_W\mathscr D_\Sigma(r_T)\le\theta_1/2\), where \(\theta_1=\theta_1(n,\gamma)>0\) is the density gap from Proposition~\ref{prop:weighted-density-contact}. Fix \(x\in\Gamma_{\Sigma,\rm ND}(\mathbf{u})\cap B_2(0)\) and \(0<r\le r_T\), and set
    \[
        \mathbf{v}(y):=r^{-p}\mathbf{u}(x+ry),\qquad
        \Sigma_{x,r}:=\frac{\Sigma-x}{r},
    \]
    where \(p:=1+\gamma\). Recalling \eqref{eq:QS-Weiss-weight-notation}, we write
    \[
        W^{\Sigma_{x,r}}_{0,\mathbf{v}}(1)
        :=\int_{B_1(0)}
            \bigl(|\nabla\mathbf{v}|^2+
            \msf d_{\Sigma_{x,r}}^{2\gamma}
            \chi_{\Omega_{\mathbf{v}}}\bigr)\,\dx{y}
            -p\int_{\partial B_1(0)}|\mathbf{v}|^2\,\dH{n-1}.
    \]
    Since \(\msf d_{\Sigma_{x,r}}(y)=r^{-1}\msf d_\Sigma(x+ry)\), a change of variables gives \(W^{\Sigma_{x,r}}_{0,\mathbf{v}}(1)=W_{x,\mathbf{u}}(r)\). Corrected monotonicity and the density gap therefore imply
    \begin{equation}\label{eq:QS-positive-Weiss}
        W^{\Sigma_{x,r}}_{0,\mathbf{v}}(1)
        \ge\vartheta_{\mathbf{u}}(x)-C_W\mathscr D_\Sigma(r)
        \ge\theta_1/2.
    \end{equation}
    As \(x\in B_2(0)\), its distance to \(\partial B_{10}(0)\) is larger than \(8\). Thus the radius \(2r\) satisfies the restrictions in \eqref{eq:optimal-growth} and \eqref{eq:Lipschitz-contact}, with standard scale \(10\). Since \(x\in\Sigma\), these estimates give
    \[
        \sup_{B_{2r}(x)}|\mathbf{u}|\le C(2r)^p,
        \qquad
        \|\nabla\mathbf{u}\|_{L^\infty(B_{2r}(x))}\le C(2r)^\gamma.
    \]
    Using \(\nabla\mathbf{v}(y)=r^{-\gamma}\nabla\mathbf{u}(x+ry)\), we obtain
    \begin{equation}\label{eq:QS-uniform-rescaled-bounds}
        \|\mathbf{v}\|_{L^\infty(B_2(0))}
        +\|\nabla\mathbf{v}\|_{L^\infty(B_2(0))}\le C,
    \end{equation}
    where \(C=C(n,m,\gamma)\). In particular, \(\mathsf H_{x,\mathbf{u}}(r)\le C_H\).

    We now prove the lower bound, put \(M:=\max_{\partial B_1(0)}|\mathbf{v}|\). Subharmonicity and nonnegativity of the components yield \(0\le v_i\le M\) in \(B_1(0)\), and hence \(|\mathbf{v}|\le\sqrt m\,M\) there. For \(0<\delta<1/2\), choose a Lipschitz cutoff \(0\le\eta_\delta\le1\) equal to \(0\) on \(B_{1-\delta}(0)\) and to \(1\) near \(\partial B_1(0)\), with \(|\nabla\eta_\delta|\le C/\delta\). Set \(A_\delta:=B_1(0)\setminus\overline{B_{1-\delta}(0)}\). The standard-scale property then permits comparison with \(\eta_\delta\mathbf{v}\) in \(B_1(0)\). Because \(0\in\Sigma_{x,r}\), we have \(\msf d_{\Sigma_{x,r}}(y)\le|y|\le1\) in this ball. A direct computation using the product rule gives
    \[
        |\nabla(\eta_\delta\mathbf{v})|^2
        \le2\eta_\delta^2|\nabla\mathbf{v}|^2
        +2|\mathbf{v}|^2|\nabla\eta_\delta|^2.
    \]
    Dropping the nonpositive boundary term in the Weiss energy
    and using this bound for the competitor gives
    \[
        \begin{aligned}
            W^{\Sigma_{x,r}}_{0,\mathbf{v}}(1)
            &\le J_{\Sigma_{x,r}}(\mathbf{v};B_1(0))
            \le J_{\Sigma_{x,r}}(\eta_\delta\mathbf{v};B_1(0))\\
            &\le\int_{A_\delta}
                \left(2|\nabla\mathbf{v}|^2
                +2|\mathbf{v}|^2|\nabla\eta_\delta|^2+1\right)
                \,\dx{y}\\
            &\le C|A_\delta|\left(1+\frac{M^2}{\delta^2}\right)
            \le C\left(\delta+\frac{M^2}{\delta}\right).
        \end{aligned}
    \]
    Here \(J_{\Sigma_{x,r}}\) denotes the functional with Bernoulli weight function \(\msf d_{\Sigma_{x,r}}^{2\gamma}\), and the last two inequalities use \eqref{eq:QS-uniform-rescaled-bounds} and \(|A_\delta|\le C(n)\delta\). If \(M=0\), the maximum principle gives \(\mathbf{v}=0\) on \(B_1(0)\), contradicting \eqref{eq:QS-positive-Weiss}.

    If \(0<M<1/2\), choosing \(\delta=M\) yields \(\theta_1/2\le2CM\). Together with the case \(M\ge1/2\), this gives
    \[
        M\ge c_*:=\min\{1/2,\theta_1/(4C)\}>0.
    \]
    Let \(\xi\in\partial B_1(0)\) satisfy \(|\mathbf{v}(\xi)|=M\), and let \(L=L(n,m,\gamma)\) be a Lipschitz bound given by \eqref{eq:QS-uniform-rescaled-bounds}. Set
    \[
        \rho:=\min\left\{\frac12,\frac{c_*}{2\max\{L,1\}}\right\}.
    \]
    For every \(\zeta\in\partial B_1(0)\cap B_\rho(\xi)\),
    \[
        |\mathbf{v}(\zeta)|
        \ge |\mathbf{v}(\xi)|-L|\zeta-\xi|
        \ge c_*/2.
    \]
    Since the spherical cap has measure at least \(c(n)\rho^{n-1}\), we have
    \[
        \mathsf H_{x,\mathbf{u}}(r)
        =\int_{\partial B_1(0)}|\mathbf{v}|^2\,\dH{n-1}
        \ge\frac{c_*^2}{4}c(n)\rho^{n-1}
        =:c_H>0.
    \]
    Both height constants depend only on \(n,m,\gamma\); the common geometric data enter only through the choice of \(r_T\).
\end{proof}

We may consequently define
\[
    T_{x,r}\mathbf{u}
    :=\frac{\mathbf{u}_{x,r}}{\sqrt{\mathsf H_{x,\mathbf{u}}(r)}},
    \qquad
    \|T_{x,r}\mathbf{u}\|_{L^2(\partial B_1)}=1.
\]
These normalized maps measure symmetry; energy comparisons use the natural rescalings \(\mathbf{u}_{x,r}\). More precisely, if \(h:=\mathsf H_{x,\mathbf{u}}(r)\), then \(T_{x,r}\mathbf{u}\) minimizes the rescaled functional with phase coefficient \(h^{-1}\msf d_{\Sigma_{x,r}}^{2\gamma}\), rather than the original coefficient \(\msf d_{\Sigma_{x,r}}^{2\gamma}\).

The compactness needed in quantitative stratification allows both the minimizer and the contact center to vary. Common graph control makes the hypersurfaces flatten uniformly at vanishing scales, while the height bound prevents the limit from vanishing. The next lemma is the vectorial Dini counterpart of the compactness used in \cite[Theorem~5.4 and Corollary~6.5]{McC24}; we give the details to identify which estimates remain uniform along the sequence.

\begin{lemma}[Compactness at moving contact points]\label{lem:sequential-compactness-moving-centers}
    Let \((\mathbf{u}_j,\Sigma_j)\in\mathscr A\), let \(x_j\in\Gamma_{\Sigma_j,\rm ND}(\mathbf{u}_j)\cap B_2(0)\), and let \(r_j>0\) with \(r_j\to0\). After passing to a subsequence,
    \[
        \widehat\Sigma_j:=\frac{\Sigma_j-x_j}{r_j}
        \longrightarrow T
        \quad\text{locally in }C^1\text{ graph coordinates},
    \]
    for a hyperplane \(T\) through the origin, and
    \[
        \mathbf{v}_j:=(\mathbf{u}_j)_{x_j,r_j}
        \longrightarrow\mathbf{v}
        \quad\text{locally uniformly and strongly in }
        W^{1,2}_{\rm loc}(\R^n;\R^m).
    \]
    The limit \(\mathbf{v}\) is a nonnegative, nonzero global minimizer of
    \begin{equation}\label{eq.flat-functional-T}
        J_T(\mathbf{w};B_R)
        :=\int_{B_R}
        \bigl(|\nabla\mathbf{w}|^2
            +\msf d_T^{2\gamma}\chi_{\Omega_{\mathbf{w}}}\bigr)
        \,\dx{y}.
    \end{equation}
    Moreover,
    \[
        \msf d_{\widehat\Sigma_j}^{2\gamma}
            \chi_{\Omega_{\mathbf{v}_j}}
        \longrightarrow
        \msf d_T^{2\gamma}\chi_{\Omega_{\mathbf{v}}}
        \quad\text{strongly in }L^1_{\rm loc}(\R^n),
    \]
    and, for every fixed \(R>0\),
    \begin{equation}\label{eq:QS-moving-Weiss-convergence}
        W^{\widehat\Sigma_j}_{0,\mathbf{v}_j}(R)
        \longrightarrow W^T_{0,\mathbf{v}}(R).
    \end{equation}
\end{lemma}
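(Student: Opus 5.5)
The plan is to repeat the proof of Proposition \ref{prop:blow-up-convergence} with moving centers and hypersurfaces. Along the way we check that every estimate used there is uniform over the class \(\mathscr A\). Non-triviality of the limit will then come from Lemma \ref{lem:T-and-homogeneous-rescaling-comparable}.

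\emph{Geometry.} For \(q=x_j\in\Sigma_j\cap B_2(0)\), Definition \ref{def:normalized-QS-class} gives a graph \(f_j\) over \(T_{x_j}\Sigma_j\) with radius \(r_\Sigma\), \(f_j(0)=\nabla f_j(0)=0\), and modulus \(\omega_\Sigma\). The normals of \(T_{x_j}\Sigma_j\) lie in \(\mathbb S^{n-1}\). Passing to a subsequence, they converge to a unit vector, which defines \(T\). The rescaled graphs \(r_j^{-1}f_j(r_j\cdot)\) have gradient oscillation at most \(\omega_\Sigma(r_jR)\to0\) on \(B_R'\). Combined with the convergence of the tangent planes, this yields \(\widehat\Sigma_j\to T\) locally in \(C^1\). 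We then argue as in \eqref{eq.claim}: nearest points of \(y\in B_R\) lie in \(B_{2R}\), so \(\msf d_{\widehat\Sigma_j}\to\msf d_T\) locally uniformly.

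\emph{Compactness of \(\mathbf v_j\).} By Remark \ref{rem:QS-uniform-contact-estimates}, and specifically \eqref{eq:QS-rescaled-contact-estimates}, the maps \(\mathbf v_j\) and their gradients are bounded on \(B_R\) by constants independent of \(j\) once \(Rr_j\le1\). Arzel\`a--Ascoli and weak compactness then give a limit \(\mathbf v\) along a subsequence. Strong \(W^{1,2}_{\rm loc}\) convergence follows verbatim from the identity \(\int\varphi|\nabla\mathbf w|^2=-\sum_i\int w_i\nabla w_i\cdot\nabla\varphi\), since harmonicity in positivity sets persists. Remark \ref{rem:QS-uniform-minimality} shows that \(\mathbf v_j\) admits absolute comparisons in \(B_{R+\delta}\) for the weight \(\msf d_{\widehat\Sigma_j}^{2\gamma}\) whenever \((R+\delta)r_j<8\). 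The cutoff competitor \(\eta\mathbf w+(1-\eta)\mathbf v_j\), together with Fatou's lemma and the uniform weight convergence, gives global minimality of \(\mathbf v\) for \(J_T\) and the convergence \(J_j(\mathbf v_j;B_R)\to J_T(\mathbf v;B_R)\). The argument with \(a_k,a_0\) from that proof then gives the \(L^1_{\rm loc}\) convergence of the weighted phases. Strong \(W^{1,2}\) convergence together with uniform convergence on \(\partial B_R\) gives convergence of the boundary term. Combined with the phase convergence, this yields \eqref{eq:QS-moving-Weiss-convergence}.

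\emph{Non-triviality.} This is the only genuinely new point. We apply Lemma \ref{lem:T-and-homogeneous-rescaling-comparable}: for \(r_j\le r_T\), we have \(\int_{\partial B_1}|\mathbf v_j|^2=\mathsf H_{x_j,\mathbf u_j}(r_j)\ge c_H\). Uniform convergence on \(\partial B_1\) then gives \(\int_{\partial B_1}|\mathbf v|^2\ge c_H>0\), so \(\mathbf v\not\equiv0\). The main obstacle is making sure that no constant secretly depends on the individual pair, namely on \(\varepsilon_0\) or on local graph radii. This is handled by the standard-scale normalization \(10\) and the common \((r_\Sigma,\omega_\Sigma)\) control. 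The one step needing care is that nearest-point projections for \(y\in B_R\) stay inside the common graph neighborhood \(B_4(0)\) of the original surfaces; this holds for large \(j\) by the argument in Remark \ref{rem:QS-uniform-geometry}.
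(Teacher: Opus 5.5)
Your proposal is correct and follows essentially the same route as the paper. You extract convergent tangent planes so that the rescaled graphs flatten uniformly via \(\omega_\Sigma(Rr_j)\to0\), and you use the uniform contact estimates of Remark~\ref{rem:QS-uniform-contact-estimates} for compactness. You then rerun the proof of Proposition~\ref{prop:blow-up-convergence} (localized harmonic identity, gluing comparison, the \(a_k,a_0\) phase argument) to get strong convergence, minimality, and phase and Weiss convergence, and obtain non-triviality from the height lower bound of Lemma~\ref{lem:T-and-homogeneous-rescaling-comparable}. Your explicit bookkeeping of uniformity over \(\mathscr A\) (standard scale \(10\), common \((r_\Sigma,\omega_\Sigma)\), \(r_j\le r_T\) eventually) simply spells out what the paper's terse proof leaves implicit.
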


\begin{proof}
    After extracting convergent tangent planes, the rescaled graph functions \(g_j(\xi)=r_j^{-1}f_{x_j}(r_j\xi)\) satisfy the estimate
    \[
        \|g_j\|_{L^\infty(B_R)}
        +R\|\nabla g_j\|_{L^\infty(B_R)}
        \le2R\,\omega_\Sigma(Rr_j)\longrightarrow0.
    \]
    The uniform growth and gradient estimates give local uniform and weak \(W^{1,2}\) compactness. The proof of Proposition \ref{prop:blow-up-convergence} then applies to this sequence: its localized harmonic identity gives strong \(W^{1,2}\) convergence, and its gluing comparison gives limit minimality and convergence of the weighted phase terms. The boundary height bound makes \(\mathbf{v}\) nonzero. Convergence of the bulk integrals and local uniform convergence on spheres give convergence of the Weiss energies, also for moving centers and radii as stated.
\end{proof}

\begin{remark}[Boundary normalization and homogeneity]
\label{rem:QS-normalized-limit}
    Local uniform convergence on \(\partial B_1\) gives
    \[
        \mathsf H_{x_j,\mathbf{u}_j}(r_j)
        \longrightarrow h:=\int_{\partial B_1}|\mathbf{v}|^2\,\dH{n-1}
        \in[c_H,C_H].
    \]
    Hence the same subsequence satisfies
    \[
        T_{x_j,r_j}\mathbf{u}_j
        \longrightarrow\frac{\mathbf{v}}{\sqrt h}
        \quad\text{locally uniformly and strongly in }W^{1,2}_{\rm loc}.
    \]
    This is the boundary-normalized compactness corresponding to \cite[Corollary~6.5]{McC24}.

    Neither limit is asserted to be homogeneous. The maps \(\mathbf{u}_j\) vary with \(j\), so a fixed ratio of rescaling radii need not have vanishing energy drop. If, in addition, for every fixed \(0<a<b<\infty\),
    \[
        \widetilde W_{x_j,\mathbf{u}_j}(br_j)
        -\widetilde W_{x_j,\mathbf{u}_j}(ar_j)
        \longrightarrow0,
    \]
    then \eqref{eq:QS-homogeneity-defect}, after rescaling and passage to the limit, gives \(y\cdot\nabla\mathbf{v}=p\mathbf{v}\) a.e. on every annulus. In that case \(\mathbf{v}\) is \(p\)-homogeneous. The displayed energies are defined for all sufficiently large \(j\), since \(br_j\le r_*\) eventually.
\end{remark}

We finally prove continuity with respect to the center. The upper semicontinuity of the limiting density is the same observation used in \cite[Lemma~6.3]{McC24}. We state its uniform version on the interior region where the common geometric data are available.

\begin{lemma}[Continuity in the center]
\label{lem:center-continuity-modified-Weiss}
    Let \((\mathbf{u},\Sigma)\in\mathscr A\). For every fixed \(0<s\le r_*\), the map \(x\mapsto\widetilde W_{x,\mathbf{u}}(s)\) is continuous on \(\Gamma_\Sigma(\mathbf{u})\cap B_2(0)\). The function \(x\mapsto\vartheta_{\mathbf{u}}(x)\) is upper semicontinuous there, and \(\Gamma_{\Sigma,\rm ND}(\mathbf{u})\cap B_2(0)\) is relatively closed in \(B_2(0)\). More generally, for a local minimizer in the original domain \(D\), \(\Gamma_{\Sigma,\rm ND}(\mathbf{u})\) is relatively closed in \(D\).
\end{lemma}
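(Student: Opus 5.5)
The argument has three steps: continuity of the Weiss energy at a fixed radius, upper semicontinuity of its limit, and closedness via the density gap.

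\emph{Step 1: continuity at a fixed radius.} Fix \(0<s\le r_*\) and let \(x_k\to x\) in \(\Gamma_\Sigma(\mathbf{u})\cap B_2(0)\). The correction \(C_W\mathscr D_\Sigma(s)\) does not depend on the center, so it suffices to show \(W_{x_k,\mathbf{u}}(s)\to W_{x,\mathbf{u}}(s)\). By \eqref{eq.Weiss}, this energy is made of three pieces.
\begin{itemize}
  \item The Dirichlet term \(\int_{B_s(x_k)}|\nabla\mathbf{u}|^2\). Since \(\nabla\mathbf{u}\in L^\infty(B_{2s}(x))\) by \eqref{eq:QS-uniform-contact-estimates}, the difference from the limit term is at most \(C\cL^n(B_s(x_k)\triangle B_s(x))\to0\).
  \item The phase term \(\int_{B_s(x_k)}\msf d_\Sigma^{2\gamma}\chi_{\Omega_{\mathbf{u}}}\). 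Its integrand is bounded on \(B_{2s}(x)\), so the same symmetric-difference bound applies.
  \item The boundary term \(\int_{\partial B_s(x_k)}|\mathbf{u}|^2\,\dH{n-1}=s^{n-1}\int_{\partial B_1}|\mathbf{u}(x_k+s\theta)|^2\,\dH{n-1}(\theta)\). It converges by uniform continuity of \(\mathbf{u}\) on \(\overline{B_{2s}(x)}\).
\end{itemize}
Hence \(x\mapsto\widetilde W_{x,\mathbf{u}}(s)\) is continuous on \(\Gamma_\Sigma(\mathbf{u})\cap B_2(0)\). The only uniformity needed is that \(r_*\) is admissible at every contact center in \(B_2(0)\), which is Remark~\ref{rem:QS-uniform-geometry}.

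\emph{Step 2: upper semicontinuity of \(\vartheta_{\mathbf{u}}\).} By Remark~\ref{rem:QS-corrected-monotonicity}, each map \(s\mapsto\widetilde W_{x,\mathbf{u}}(s)\) is nondecreasing, so
\[
  \vartheta_{\mathbf{u}}(x)=\inf_{0<s\le r_*}\widetilde W_{x,\mathbf{u}}(s).
\]
An infimum of continuous functions is upper semicontinuous. Concretely, given \(\varepsilon>0\), pick \(s\) with \(\widetilde W_{x,\mathbf{u}}(s)<\vartheta_{\mathbf{u}}(x)+\varepsilon\). Then
\[
  \limsup_k\vartheta_{\mathbf{u}}(x_k)\le\lim_k\widetilde W_{x_k,\mathbf{u}}(s)<\vartheta_{\mathbf{u}}(x)+\varepsilon.
\]

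\emph{Step 3: closedness.} Let \(x_k\in\Gamma_{\Sigma,\rm ND}(\mathbf{u})\cap B_2(0)\) with \(x_k\to x\in B_2(0)\). First, \(x\) is a contact point. Indeed, \(\Sigma\) is closed and \(\partial\Omega_{\mathbf{u}}\) is relatively closed in \(B_{10}(0)\), so \(x\in\Gamma_\Sigma(\mathbf{u})\). Next, the density gap of Proposition~\ref{prop:weighted-density-contact} gives \(\vartheta_{\mathbf{u}}(x_k)\ge\theta_1\). Upper semicontinuity then yields \(\vartheta_{\mathbf{u}}(x)\ge\limsup_k\vartheta_{\mathbf{u}}(x_k)\ge\theta_1>0\), so \(x\in\Gamma_{\Sigma,\rm ND}(\mathbf{u})\). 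Note that upper semicontinuity by itself gives only \(\vartheta_{\mathbf{u}}(x)\ge\limsup_k\vartheta_{\mathbf{u}}(x_k)\). Closedness therefore relies on the gap, which keeps the limit of the \(\vartheta_{\mathbf{u}}(x_k)\) away from \(0\).

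\emph{The original domain.} For a minimizer in \(D\), fix \(x\in D\) in the closure of \(\Gamma_{\Sigma,\rm ND}(\mathbf{u})\). Use the natural rescaling of Remark~\ref{rem:QS-uniform-minimality} around a nearby point, with \(10s\) below the standard scale and \(B_{10s}\subset\subset D\). This places the problem in \(\mathscr A\) with local common graph data. The rescaling preserves \(\Theta_\Sigma\) at every center, so the same argument applies near \(x\).

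The main obstacle is the uniformity in Step 1. The correction constant \(C_W\) and the radius \(r_*\) must not depend on the center, since the monotone-infimum representation in Step 2 uses one fixed range \((0,r_*]\) for all nearby centers. This uniformity is exactly what the normalization and Remark~\ref{rem:QS-uniform-geometry} provide.
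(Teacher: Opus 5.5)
Your proposal is correct and follows the paper's proof essentially step for step. You prove continuity of \(\widetilde W_{x,\mathbf{u}}(s)\) in the center; the paper handles the bulk term by dominated convergence, while your uniform Lipschitz bound together with the measure of \(B_s(x_k)\triangle B_s(x)\) is an equally valid variant. Then come upper semicontinuity of \(\vartheta_{\mathbf{u}}\) as an infimum of continuous functions, closedness via the density gap, and localization to a neighborhood with common interior scale and Dini data for the statement in \(D\). The one point to make explicit is that this last rescaling must be centered at a contact point, since membership in \(\mathscr A\) requires \(0\in\Gamma_\Sigma\); you can center it at \(x\) itself, which lies in \(\Gamma_\Sigma(\mathbf{u})\) because the contact set is relatively closed.
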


\begin{proof}
    Let \(x_k\to x\) in \(\Gamma_\Sigma(\mathbf{u})\cap B_2(0)\), and fix \(0<s\le r_*\). The bulk energy density
    \[
        E(z):=|\nabla\mathbf{u}(z)|^2+\msf d_\Sigma(z)^{2\gamma}\chi_{\Omega_{\mathbf{u}}}(z)
    \]
    is locally integrable. All the balls \(B_s(x_k)\) lie in one compact subset of \(B_{10}(0)\), and their characteristic functions converge a.e. to that of \(B_s(x)\). Dominated convergence therefore gives
    \[
        \int_{B_s(x_k)}E\,\dx{z}\longrightarrow\int_{B_s(x)}E\,\dx{z}.
    \]
    For the boundary term, use
    \[
        H_{x,\mathbf{u}}(s)=s^{n-1}\int_{\mathbb S^{n-1}}|\mathbf{u}(x+s\theta)|^2\,\dH{n-1}.
    \]
    Uniform continuity of \(\mathbf{u}\) on a common compact neighborhood of these spheres proves continuity of this integral in \(x\). Since the correction \(C_W\mathscr D_\Sigma(s)\) is independent of the center, the asserted continuity of \(\widetilde W\) follows.

    Corrected monotonicity gives
    \[
        \vartheta_{\mathbf{u}}(x)=\inf_{0<s\le r_*}\widetilde W_{x,\mathbf{u}}(s).
    \]
    An infimum of continuous functions is upper semicontinuous. The contact set is relatively closed, and the density gap gives
    \[
        \Gamma_{\Sigma,\rm ND}(\mathbf{u})\cap B_2(0)
        =\{x\in\Gamma_\Sigma(\mathbf{u})\cap B_2(0):
            \vartheta_{\mathbf{u}}(x)\ge\theta_1\}.
    \]
    This proves relative closedness in \(B_2(0)\).

    For the final assertion, let non-degenerate contact points \(x_k\) converge to \(x\in D\). Relative closedness of the contact set gives \(x\in\Gamma_\Sigma(\mathbf{u})\). Choose a neighborhood of \(x\) with a common interior scale and common local Dini data, and apply the preceding argument there. Then
    \[
        \vartheta_{\mathbf{u}}(x)\ge\limsup_{k\to\infty}\vartheta_{\mathbf{u}}(x_k)\ge\theta_1>0,
    \]
    so \(x\in\Gamma_{\Sigma,\rm ND}(\mathbf{u})\).
\end{proof}

\subsection{Effective symmetry and the Jones estimate}\label{subsec:QS-effective-symmetry}

Throughout, \((\mathbf{u},\Sigma)\in\mathscr A\), and \(p=1+\gamma\) and \(r_T\) are as in Subsection~\ref{subsec:QS-normalization-compactness}. A \(k\)-symmetric profile can vary in at most \(n-k\) directions. The effective \(k\)-stratum consists of points where one cannot approximate \(\mathbf{u}\) by a profile with \(k+1\) invariant directions at any of the specified scales. We use the boundary normalization of \cite[Definitions~2.11--2.12]{McC24}; the comparison maps here have the contact homogeneity \(p\).

\begin{definition}[Vector-valued symmetry]\label{def:QS-vector-symmetry}
    Let \(0\le j\le n\) be an integer and let \(\Phi:\R^n\to\R^m\) be a nonzero map with a square-integrable angular part. We call it \(j\)-symmetric if it is \(p\)-homogeneous and invariant under translations along a \(j\)-dimensional linear subspace. For \(x\in\Gamma_{\Sigma,\rm ND}(\mathbf{u})\cap B_2(0)\) and \(0<r\le r_T\), we say that \(\mathbf{u}\) is
    \((j,\varepsilon)\)-symmetric in \(B_r(x)\) if
    \[
        \|T_{x,r}\mathbf{u}-\Phi\|_{L^2(B_1(0);\R^m)}<\varepsilon
    \]
    for some \(j\)-symmetric \(\Phi\) satisfying \(\|\Phi\|_{L^2(\partial B_1(0))}=1\).
\end{definition}
For \(0\le k\le n-1\), \(\varepsilon>0\), and \(0<\rho\le R\le r_T/8\), define
\begin{equation}\label{eq:QS-effective-strata}
    \begin{aligned}
        \Gamma^k_{\varepsilon,\rho,R}(\mathbf{u})
        :=\bigl\{x\in\Gamma_{\Sigma,\rm ND}(\mathbf{u})\cap B_2(0):\;
        &\mathbf{u}\text{ is not }(k+1,\varepsilon)\text{-symmetric}\\[-1mm]
        &\text{in }B_s(x)\text{ for every }s\in[\rho,R]\bigr\},
    \end{aligned}
\end{equation}
and
\begin{equation}\label{eq:QS-limiting-effective-stratum}
    \Gamma^k_{\varepsilon,R}(\mathbf{u})
    :=\bigcap_{0<\rho<R}\Gamma^k_{\varepsilon,\rho,R}(\mathbf{u}).
\end{equation}
The estimates below concern these effective strata. This is stronger than a dimension estimate for a set defined only through exact blow-up limits: the same bound holds before the lower scale \(\rho\) tends to zero. For a finite Borel measure \(\mu\), write
\[
    [\beta_{\mu,2}^k(x,r)]^2
    :=r^{-k-2}\inf_{L^k}
    \int_{B_r(x)}\dist(y,L^k)^2\,\dx{\mu}(y),
\]
where \(L^k\) ranges over affine \(k\)-planes.\footnote{An affine \(k\)-plane is a translate \(a+V:=\{a+v:v\in V\}\) of a \(k\)-dimensional linear subspace \(V\subset\R^n\), with \(a\in\R^n\). It need not contain the origin; an affine \(0\)-plane is a single point.} This number measures how far the distribution of centers is from lying in one plane. The following estimate controls that geometric deviation by the homogeneity defects at the centers.
\begin{remark}
    The connection with the Weiss density is geometric: if \(\mathbf{u}\) is almost homogeneous about several centers, subtracting the corresponding homogeneity identities gives small derivatives in the directions joining those centers. Unless \(\mathbf{u}\) has an additional symmetry, there can be at most \(k\) independent such directions. The covariance argument below makes this observation quantitative.
\end{remark}
This is the argument of \cite[Theorem~5.1]{EE19}, whose vector-valued extension is described in \cite[Section~1.4 and Theorem~1.15]{EE19}. For the scalar degenerate functional, the corresponding estimate is \cite[Lemma~7.4]{McC24}. We provide the calculation to specify the vector-valued defect and the Dini correction used here.
\begin{lemma}[Vectorial Jones estimate]\label{lem:beta-estimate-vectorial}
    Let \(0\le k\le n-1\) and \(\varepsilon>0\). There exist \(\delta>0\) and \(C<\infty\), depending only on \(n,m,\gamma,k,\varepsilon\), with the following property. Suppose \(x\in\Gamma_{\Sigma,\rm ND}(\mathbf{u})\cap B_2(0)\), \(0<8r\le r_T\), and \(\mathbf{u}\) is \((0,\delta)\)-symmetric but not \((k+1,\varepsilon)\)-symmetric in \(B_{8r}(x)\).
    Then every finite Borel measure \(\mu\) supported in \(\Gamma_{\Sigma,\rm ND}(\mathbf{u})\cap B_2(0)\) satisfies
    \begin{equation}\label{eq:beta-estimate-vectorial}
        [\beta_{\mu,2}^k(x,r)]^2
        \le \frac{C}{r^k}\int_{B_r(x)}
        \bigl[
        \widetilde W_{y,\mathbf{u}}(8r)
        -\widetilde W_{y,\mathbf{u}}(r)
        \bigr]\,\dx{\mu}(y).
    \end{equation}
\end{lemma}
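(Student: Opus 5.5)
We will follow the covariance argument of \cite[Theorem~5.1]{EE19} (see also \cite[Lemma~7.4]{McC24}). Normalize by translating $x$ to the origin and rescaling by $r$, which by \eqref{eq:QS-Weiss-scaling} leaves both sides of \eqref{eq:beta-estimate-vectorial} scale invariant (the Dini correction $\mathscr D_\Sigma$ only becomes smaller). We may also assume $\mu(B_r(x))>0$. Let $\bar y$ be the $\mu$-barycenter of $B_r(x)$, and form the covariance matrix
\[
    \Sigma_\mu:=\frac1{\mu(B_r(x))}\int_{B_r(x)}(y-\bar y)\otimes(y-\bar y)\,\dx{\mu}(y),
\]
with eigenvalues $\lambda_1\ge\dots\ge\lambda_n\ge0$ and orthonormal eigenvectors $v_1,\dots,v_n$. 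The standard identity
\[
    [\beta^k_{\mu,2}(x,r)]^2=r^{-k-2}\mu(B_r(x))\sum_{i>k}\lambda_i
\]
reduces the claim to showing
\[
    \lambda_{k+1}\,\mu(B_r(x))\,r^{-2}\le C\int_{B_r(x)}\bigl[\widetilde W_{y,\mathbf{u}}(8r)-\widetilde W_{y,\mathbf{u}}(r)\bigr]\,\dx{\mu}(y).
\]

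\textbf{Pointwise defect identity.} From $\Sigma_\mu v_i=\lambda_i v_i$ and $\int(y-\bar y)\,\dx{\mu}=0$, for every point $z$ we get
\[
    \lambda_i\,(v_i\cdot\nabla\mathbf{u}(z))\,\mu(B_r)=-\int_{B_r(x)}(v_i\cdot(y-\bar y))\bigl[(z-y)\cdot\nabla\mathbf{u}(z)-p\mathbf{u}(z)\bigr]\,\dx{\mu}(y),
\]
componentwise in $\mathbf{u}$. Cauchy--Schwarz then gives
\[
    \lambda_i|v_i\cdot\nabla\mathbf{u}(z)|^2\mu(B_r)\le\int_{B_r(x)}|(z-y)\cdot\nabla\mathbf{u}(z)-p\mathbf{u}(z)|^2\,\dx{\mu}(y).
\]

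\textbf{Integration over an annulus.} Sum over $i\le k+1$ and integrate in $z$ over $A:=B_{4r}(x)\setminus B_{3r}(x)$. For $y\in B_r(x)$ we have $A\subset B_{5r}(y)\setminus B_{2r}(y)$ and $|z-y|\simeq r$. By Fubini and the corrected homogeneity-defect bound \eqref{eq:QS-homogeneity-defect} at each center $y$,
\[
    \lambda_{k+1}\mu(B_r)\sum_{i\le k+1}\int_A|v_i\cdot\nabla\mathbf{u}|^2
    \le Cr^{n+2p}\int\bigl[\widetilde W_{y,\mathbf{u}}(8r)-\widetilde W_{y,\mathbf{u}}(r)\bigr]\,\dx{\mu}(y).
\]
This step is legitimate because every $y\in\operatorname{spt}\mu$ is a nondegenerate contact point and $8r\le r_T\le r_*$.

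\textbf{Compactness lower bound (main obstacle).} It remains to prove
\[
    \sum_{i\le k+1}\int_{A}|v_i\cdot\nabla\mathbf{u}|^2\ge c\,r^{n-2+2p}\,\mathsf H_{x,\mathbf{u}}(r),
\]
with $c=c(n,m,\gamma,k,\varepsilon)$, whenever $\mathbf{u}$ is $(0,\delta)$- but not $(k+1,\varepsilon)$-symmetric in $B_{8r}(x)$. Once this holds, Lemma~\ref{lem:T-and-homogeneous-rescaling-comparable} gives $\mathsf H\ge c_H$ and the estimate follows.

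We argue by contradiction. Take sequences $(\mathbf{u}_j,\Sigma_j)$, $\delta_j\to0$, and orthonormal frames $\{v_i^j\}$ converging to $\{v_i\}$, and pass to the normalized maps $T_{x_j,8r_j}\mathbf{u}_j$. If $r_j\to0$, use Lemma~\ref{lem:sequential-compactness-moving-centers} and Remark~\ref{rem:QS-normalized-limit}. If $r_j\not\to0$, use plain Arzelà--Ascoli, which is available from the uniform bounds \eqref{eq:QS-rescaled-contact-estimates} and strong $W^{1,2}$ convergence via the harmonic identity.

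The limit $\Phi$ has unit boundary norm. Because $(0,\delta_j)$-symmetry with $\delta_j\to0$ gives $L^2$ convergence to $p$-homogeneous profiles, $\Phi$ is $p$-homogeneous. The vanishing gradient integrals give $v_i\cdot\nabla\Phi=0$ on the annulus, and homogeneity extends this to all of $\R^n$. Hence $\Phi$ is $(k+1)$-symmetric, so $T\mathbf{u}_j$ is $(k+1,\varepsilon)$-symmetric for large $j$, a contradiction.

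The delicate points are three. First, strong $L^2(B_1)$ convergence must pass to the limit in the symmetry definitions; it does, since the limit profiles are normalized on $\partial B_1$ and converge by homogeneity. Second, the normalization radius $8r$ must be matched with the annulus at radius $r$; the height comparability between the scales $r$ and $8r$ follows from Lemma~\ref{lem:T-and-homogeneous-rescaling-comparable}. Third, the argument uses nothing scalar, so the vector-valued nature only enters through componentwise sums.
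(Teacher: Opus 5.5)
Your proposal is correct and matches the paper's proof step for step. Both use the barycentric covariance identity with Cauchy--Schwarz, then integrate over the annulus \(B_{4r}(x)\setminus B_{3r}(x)\) using the corrected homogeneity defect \eqref{eq:QS-homogeneity-defect}, and close with a compactness contradiction for \(T_{x_j,8r_j}\mathbf{u}_j\) that gives the lower bound on the directional energy in \(k+1\) orthonormal directions. The remaining differences are cosmetic: you normalize the covariance by \(\mu(B_r(x))\), use \(\mathsf H_{x,\mathbf{u}}(r)\) instead of \(\mathsf H_{x,\mathbf{u}}(8r)\), and split into the cases \(r_j\to0\) or not, whereas the paper just uses the uniform bounds \eqref{eq:QS-rescaled-contact-estimates} and weak \(W^{1,2}\) compactness.
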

\begin{proof}
    We first explain the role of the missing \((k+1)\)-symmetry. For sufficiently small \(\delta\), every orthonormal family \(v_1,\ldots,v_{k+1}\) satisfies
    \begin{equation}\label{eq:directional-energy-lower-short}
        r^{-n-2\gamma}\sum_{i=1}^{k+1}\int_{B_{4r}(x)\setminus B_{3r}(x)}|\partial_{v_i}\mathbf{u}|^2\,\dx{z}\ge c\,\mathsf H_{x,\mathbf{u}}(8r)\ge c\,c_H.
    \end{equation}
    Indeed, otherwise there would be normalized maps \(F_j:=T_{x_j,8r_j}\mathbf{u}_j\), symmetry errors tending to zero, and orthonormal families \(v_{j,1},\ldots,v_{j,k+1}\) for which the corresponding directional energies on \(B_{1/2}(0)\setminus B_{3/8}(0)\) tend to zero. Estimate~\eqref{eq:QS-rescaled-contact-estimates} and Lemma~\ref{lem:T-and-homogeneous-rescaling-comparable} give a subsequence converging uniformly on \(\overline{B_1}\) and weakly in \(W^{1,2}(B_1)\) to a map \(F\) with \(\|F\|_{L^2(\partial B_1)}=1\). Its vanishing distance from \(p\)-homogeneous maps implies that \(F\) is \(p\)-homogeneous in \(B_1\). Passing to a subsequence in the orthonormal families gives \(\partial_{v_i}F=0\) on the annulus; homogeneity propagates these identities throughout \(B_1\). Consequently, the homogeneous extension of \(F\) is a normalized \((k+1)\)-symmetric map. This contradicts the assumed failure of \((k+1,\varepsilon)\)-symmetry and proves \eqref{eq:directional-energy-lower-short}. We may assume \(\mu(B_r(x))>0\). Let
    \[
        X:=\frac{1}{\mu(B_r(x))}\int_{B_r(x)}y\,\dx{\mu}(y),
    \]
    and define
    \[
        B(v,w):=\int_{B_r(x)}
        (v\cdot(y-X))(w\cdot(y-X))\,\dx{\mu}(y).
    \]
    Write \(\lambda_1\ge\cdots\ge\lambda_n\ge0\) for the eigenvalues of \(B\), with orthonormal eigenvectors \(v_1,\ldots,v_n\). Centering the measure and using the eigenvector identity gives, for each component \(u_\alpha\),
    \[
        \lambda_i\partial_{v_i}u_\alpha(z)
        =-\int_{B_r(x)}(v_i\cdot(y-X))
        \bigl[(z-y)\cdot\nabla u_\alpha(z)-pu_\alpha(z)\bigr]
        \,\dx{\mu}(y).
    \]
    Cauchy--Schwarz, followed by summation over the components, therefore yields
    \begin{equation}\label{eq:vectorial-covariance-identity}
        \lambda_i|\partial_{v_i}\mathbf{u}(z)|^2
        \le\int_{B_r(x)}
        |(z-y)\cdot\nabla \mathbf{u}(z)-p\mathbf{u}(z)|^2\,\dx{\mu}(y).
    \end{equation}
    The inequality is also immediate when \(\lambda_i=0\). For \(z\in A_r:=B_{4r}(x)\setminus B_{3r}(x)\) and \(y\in B_r(x)\cap\operatorname{spt}\mu\), we have \(2r<|z-y|<5r\). Hence \eqref{eq:QS-homogeneity-defect} gives
    \[
        \int_{A_r}|(z-y)\cdot\nabla\mathbf{u}(z)-p\mathbf{u}(z)|^2\,\dx{z}
        \le Cr^{n+2p}\bigl[
        \widetilde W_{y,\mathbf{u}}(8r)-\widetilde W_{y,\mathbf{u}}(r)
        \bigr].
    \]
    Integrating \eqref{eq:vectorial-covariance-identity} over \(A_r\), summing over \(i=1,\ldots,k+1\), and using \eqref{eq:directional-energy-lower-short}, we obtain
    \[
        \lambda_{k+1}
        \le Cr^2\int_{B_r(x)}
        \bigl[
        \widetilde W_{y,\mathbf{u}}(8r)-\widetilde W_{y,\mathbf{u}}(r)
        \bigr]\,\dx{\mu}(y).
    \]
    Finally, the affine plane through \(X\) spanned by \(v_1,\ldots,v_k\) minimizes the quadratic distance, so
    \[
        [\beta_{\mu,2}^k(x,r)]^2
        =r^{-k-2}\sum_{i=k+1}^n\lambda_i
        \le(n-k)r^{-k-2}\lambda_{k+1}.
    \]
    Combining the last two inequalities proves \eqref{eq:beta-estimate-vectorial}.
\end{proof}

\subsection{Packing and rectifiability}\label{subsec:QS-packing-rectifiability}

Packing and rectifiability use two different consequences of the Jones estimate. Packing bounds the number of balls needed to cover the stratum. Rectifiability also requires control of how its approximating planes change with scale. The discrete and rectifiable Reifenberg theorems of Naber--Valtorta \cite{NV17}, in the forms recalled in \cite[Theorems~2.1--2.2]{EE19}, provide these two implications.

The following result is the present version of \cite[Theorems~1.11--1.12]{EE19} and \cite[Theorem~1.3]{McC24}. Its covering input is verified in Proposition \ref{prop:EE-covering-package-Dini}. The rectifiability argument below follows \cite[Section~8, proof of Theorem~1.12]{EE19}.

\begin{theorem}[Effective-stratum estimates]\label{thm:minkowski-rectifiability-effective-strata}
    Let \(0\le k\le n-1\), \(\varepsilon>0\), and \(0<R\le r_T/8\). There is \(C=C(n,m,\gamma,k,\varepsilon,R,r_\Sigma,\omega_\Sigma)\) such that, for \(0<\rho\le r\le R\),
    \begin{equation}\label{eq:minkowski-effective-strata}
        \cL^n\!\left(
        N_r\bigl(
        \Gamma^k_{\varepsilon,\rho,R}(\mathbf{u})\cap B_1(0)
        \bigr)\right)
        \le Cr^{n-k}.
    \end{equation}
    Moreover, \(\Gamma^k_{\varepsilon,R}(\mathbf{u})\cap B_1(0)\) is countably \(k\)-rectifiable, and
    \begin{equation}\label{eq:Hk-effective-strata}
        \cH^k\bigl(
        \Gamma^k_{\varepsilon,R}(\mathbf{u})\cap B_1(0)
        \bigr)\le C.
    \end{equation}
\end{theorem}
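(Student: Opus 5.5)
We will follow the Naber--Valtorta scheme in the form of \cite[Sections~6--8]{EE19}, with the three analytic inputs supplied by the preceding lemmas. The first input is the Jones estimate, Lemma~\ref{lem:beta-estimate-vectorial}. The second is the monotone corrected energy \(\widetilde W\), with uniform bounds \(|\widetilde W|\le C\) from Remark~\ref{rem:QS-uniform-contact-estimates}. The third is compactness at moving centers together with center continuity, Lemmas~\ref{lem:sequential-compactness-moving-centers} and~\ref{lem:center-continuity-modified-Weiss}. The argument then has three steps.

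The first step is a quantitative rigidity statement, proved by contradiction via Lemma~\ref{lem:sequential-compactness-moving-centers} and Remark~\ref{rem:QS-normalized-limit}. For every \(\delta>0\) there are \(\eta>0\) and \(\bar r>0\) with the following property. If the energy drop \(\widetilde W_{x,\mathbf{u}}(8s)-\widetilde W_{x,\mathbf{u}}(\eta s)<\eta\) and \(s\le\bar r\), then \(\mathbf{u}\) is \((0,\delta)\)-symmetric in \(B_{8s}(x)\). The scale restriction \(s\le\bar r\) is needed so that the Dini correction \(C_W\mathscr D_\Sigma(8s)\) is small.

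We also need a cone-splitting statement. Suppose \(\mathbf{u}\) has small drop at \(k+1\) centers in \(B_s(x)\) that are \(\rho s\)-effectively independent. Then \(\mathbf{u}\) is \((k+1,\varepsilon)\)-symmetric. Again we argue by contradiction: in the limit, \(\mathbf{v}\) is \(p\)-homogeneous about \(k+1\) independent points. Subtracting the homogeneity identities gives invariance along their span. As a consequence, at pinched scales the set of centers with small drop lies within \(\rho s\) of a \(k\)-plane.

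The second step is the packing estimate \eqref{eq:minkowski-effective-strata}. We apply the discrete Reifenberg theorem \cite[Theorem~2.1]{EE19} to the measure \(\mu=\sum_i r_i^k\delta_{x_i}\), built from a Vitali family of centers in the stratum. Its hypothesis is a bound on the sum of \(\beta^2\) numbers over dyadic scales. By Lemma~\ref{lem:beta-estimate-vectorial}, this sum is dominated by the sum of energy drops over dyadic scales, which telescopes to at most \(C\). The Jones estimate, however, only applies at scales where \(\mathbf{u}\) is \((0,\delta)\)-symmetric. The bad scales, where the drop exceeds \(\eta\), number at most \(C/\eta\) at each center, since \(\widetilde W\) is monotone and bounded.

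The third step, and the main obstacle, is the covering construction of \cite[Section~7]{EE19}. Here one inductively splits balls into good balls, where the energy is pinched and the centers concentrate near a \(k\)-plane, and bad balls, where the energy drops by at least \(\eta\) across the cover. The number of bad generations is then bounded by \(C/\eta\). What must be verified is that every quantity used in this construction is uniform over \(\mathscr A\): the energy upper bound, the height bounds of Lemma~\ref{lem:T-and-homogeneous-rescaling-comparable}, and the Dini error. This is the content of Proposition~\ref{prop:EE-covering-package-Dini}, which we take as given. With it, the Minkowski bound \eqref{eq:minkowski-effective-strata} follows uniformly in \(\rho\).

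Finally, we let \(\rho\to0\) to obtain \eqref{eq:Hk-effective-strata}. The neighborhood bound gives the Minkowski content bound and hence the \(\cH^k\) bound. For rectifiability, we restrict to a Borel subset of positive measure on which the upper density is bounded. Applying the rectifiable Reifenberg theorem \cite[Theorem~2.2]{EE19} there, with the same \(\beta\)-summability, as in \cite[Section~8]{EE19}, shows that \(\Gamma^k_{\varepsilon,R}(\mathbf{u})\cap B_1(0)\) is countably \(k\)-rectifiable.
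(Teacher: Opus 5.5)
Your treatment of \eqref{eq:minkowski-effective-strata} and \eqref{eq:Hk-effective-strata} is essentially the paper's. Both defer the packing to Proposition~\ref{prop:EE-covering-package-Dini}, apply it on finitely many root balls $B_{a_0}(a)$ with $20a_0\le R$ (using $\Gamma^k_{\varepsilon,\rho,R}\subset\Gamma^k_{\varepsilon,\rho,20a_0}$), double the covering balls, and let $\rho\to0^+$.

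Two inaccuracies in your sketch of what lies inside that proposition are harmless for this reason, but worth correcting. First, the discrete Reifenberg theorem \cite[Theorem~2.1]{EE19} needs the $\beta$-sum to be below $\delta^2r^k$ for a \emph{small} $\delta$, not merely bounded by $C$; this is why the good balls must be energy-pinched. Second, cone splitting needs $k+2$ effectively independent centers to produce $(k+1,\varepsilon)$-symmetry, since $k+1$ centers span only $k$ directions.

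The genuine gap is in the rectifiability step. The rectifiable Reifenberg theorem \cite[Theorem~2.2]{EE19} also requires a \emph{small} square-function bound, uniformly in the center and the scale:
\[
\int_{B_t(w)}\int_0^{2t}[\beta^k_{\mu,2}(z,s)]^2\,\frac{\dx{s}}{s}\,\dx{\mu}(z)\le\delta_{\rm RR}\,t^k .
\]
``The same $\beta$-summability'' does not give this, for two reasons.
\begin{enumerate}
\item Globally, telescoping $\widetilde W$ only bounds the sum of drops by $E_0$, not by something small.
\item Lemma~\ref{lem:beta-estimate-vectorial} applies only at scales where $\mathbf u$ is $(0,\delta)$-symmetric. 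At the bad scales of each point, which vary from point to point, you have only the trivial $O(1)$ bound on $\beta$.
\end{enumerate}
Bounded upper density of $\mu$ fixes neither problem.

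The missing idea is a localization that makes the \emph{remaining} energy drop uniformly small. The paper does this as follows.
\begin{enumerate}
\item Slice $S$ by density: $S_q=\{q\zeta\le\vartheta_{\mathbf u}<(q+1)\zeta\}$.
\item Use continuity of $x\mapsto\widetilde W_{x,\mathbf u}(s)$ together with $\vartheta_{\mathbf u}=\inf_s\widetilde W_{\cdot,\mathbf u}(s)$ (Lemma~\ref{lem:center-continuity-modified-Weiss}). This gives, around each point, a ball in which $0\le\widetilde W_{y,\mathbf u}(s)-\vartheta_{\mathbf u}(y)\le2\zeta$ for all $y\in S_q$ and all $s\le128a$.
\item Lemma~\ref{lem:quantitative-rigidity-Dini} then gives $(0,\delta_J)$-symmetry at \emph{every} small scale, so the Jones estimate holds at all scales.
\item Fubini plus telescoping yield the bound $C\zeta t^k$, which is below $\delta_{\rm RR}$ once $\zeta$ is small.
\end{enumerate}
Only then does rectifiable Reifenberg apply, ball by ball and slice by slice.

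An Egorov-type restriction would also work: pass to a subset of nearly full $\cH^k$-measure on which $\widetilde W_{y,\mathbf u}(r)\downarrow\vartheta_{\mathbf u}(y)$ uniformly. Either way, some such pinching device is indispensable, and it is absent from your proposal.
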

\begin{proof}
    Remarks~\ref{rem:QS-uniform-contact-estimates} and \ref{rem:QS-corrected-monotonicity} give the uniform range
    \begin{equation}\label{eq:density-range-effective-strata-proof}
        0<\theta_1\le\vartheta_{\mathbf{u}}(x)
        \le\widetilde W_{x,\mathbf{u}}(s)\le E_0,
    \end{equation}
    for \(x\in\Gamma_{\Sigma,\rm ND}(\mathbf{u})\cap B_2(0)\) and \(0<s\le r_T\). Let \(r_{\rm qs}\) be the radius in Proposition~\ref{prop:EE-covering-package-Dini}. Choose a fixed \(a_0>0\) with \(20a_0\le R\), \(a_0\le r_{\rm qs}\), and all root balls \(B_{20a_0}(a)\), \(a\in B_1(0)\), contained in \(B_2(0)\). Cover \(B_1(0)\) by finitely many balls of radius \(a_0\). For \(\rho\le r\le a_0\), Proposition \ref{prop:EE-covering-package-Dini} covers \(\Gamma^k_{\varepsilon,\rho,R}(\mathbf{u})\cap B_1(0)\) by at most \(Cr^{-k}\) balls of radius \(r\). Their concentric doubles cover its \(r\)-neighborhood, proving \eqref{eq:minkowski-effective-strata}. For \(r>a_0\), the estimate follows by enlarging \(C\).
    
    Set \(S:=\Gamma^k_{\varepsilon,R}(\mathbf{u})\cap B_1(0)\). Applying the same cover for every \(r\to 0^+\) gives
    \begin{equation}\label{eq:Hk-bound-effective-proof}
        \cH^k(S)\le C.
    \end{equation}
    If \(k=0\), this is a bound on the number of points, so rectifiability is immediate. Assume \(k\ge1\). Let \(\delta_J\) be the constant in Lemma \ref{lem:beta-estimate-vectorial}, and let \(\eta_{\rm rig}\) and \(r_{\rm rig}\) be supplied by Lemma~\ref{lem:quantitative-rigidity-Dini} with \(\delta=\delta_J\). Following \cite[Section~8]{EE19}, divide \(S\) into density slices
    \[
        S_q:=\{x\in S:q\zeta\le\vartheta_{\mathbf{u}}(x)<(q+1)\zeta\},
        \qquad q\in\mathbb Z,
    \]
    where \(0<2\zeta\le\eta_{\rm rig}\) will be chosen small. There are only finitely many nonempty slices. For each \(x\in S_q\), Lemma~\ref{lem:center-continuity-modified-Weiss} and the definition of \(\vartheta_{\mathbf{u}}(x)\) allow us to choose \(a>0\) so small that
    \begin{equation}\label{eq:local-Weiss-oscillation-bound}
        0\le\widetilde W_{y,\mathbf{u}}(s)-\vartheta_{\mathbf{u}}(y)\le2\zeta
    \end{equation}
    for \(y\in S_q\cap B_{4a}(x)\) and \(0<s\le128a\). We also require \(128a<R\) and \(4a\le\min\{r_{\rm qs},r_{\rm rig}\}\). Write \(E:=S_q\cap B_{2a}(x)\) and \(\mu:=\cH^k\mres E\). Estimate~\eqref{eq:local-Hk-effective-stratum} in Proposition~\ref{prop:EE-covering-package-Dini} gives
    \begin{equation}\label{eq:local-upper-growth-mu}
        \mu(B_s(z))\le Cs^k,\qquad z\in\R^n,\quad 0<s\le4a.
    \end{equation}
    For \(z\in E\) and \(0<s\le a/16\), apply Lemma~\ref{lem:quantitative-rigidity-Dini} at radius \(8s\); by \eqref{eq:local-Weiss-oscillation-bound}, the required drop satisfies
    \[
        \widetilde W_{z,\mathbf{u}}(64s)
        -\widetilde W_{z,\mathbf{u}}(8\eta_{\rm rig}s)\le2\zeta.
    \]
    Thus \(\mathbf{u}\) is \((0,\delta_J)\)-symmetric in \(B_{8s}(z)\). Membership in \(S\) excludes \((k+1,\varepsilon)\)-symmetry there, so Lemma~\ref{lem:beta-estimate-vectorial} yields
    \begin{equation}\label{eq:Jones-on-density-slice}
        [\beta_{\mu,2}^k(z,s)]^2
        \le Cs^{-k}\int_{B_s(z)}
        \bigl[\widetilde W_{y,\mathbf{u}}(8s)
        -\widetilde W_{y,\mathbf{u}}(s)\bigr]\,\dx{\mu}(y).
    \end{equation}
    For \(w\in B_{a/4}(x)\) and \(0<t\le a/64\), integrate this inequality, use Fubini and \eqref{eq:local-upper-growth-mu}, and telescope the corrected energy using Remark~\ref{rem:QS-corrected-monotonicity}:
    \[
        \begin{aligned}
            &\int_0^{2t}\int_{B_t(w)}
            [\beta_{\mu,2}^k(z,s)]^2
            \,\dx{\mu}(z)\,\frac{\dx{s}}s\\
            &\quad\le C\int_{B_{3t}(w)}
            \int_0^{2t}
            \bigl[\widetilde W_{y,\mathbf{u}}(8s)
            -\widetilde W_{y,\mathbf{u}}(s)\bigr]
            \frac{\dx{s}}s\,\dx{\mu}(y)\\
            &\quad\le C\int_{B_{3t}(w)}
            \bigl[\widetilde W_{y,\mathbf{u}}(16t)
            -\vartheta_{\mathbf{u}}(y)\bigr]\,\dx{\mu}(y)
            \le C\zeta t^k.
        \end{aligned}
    \]
    Choose \(C\zeta\le\delta_{\rm RR}\), the threshold in \cite[Theorem~2.2]{EE19}. After rescaling a ball \(B_{a/128}(x)\), that theorem makes \(S_q\cap B_{a/128}(x)\) countably \(k\)-rectifiable. A countable covering in \(x\), followed by the union over \(q\), proves rectifiability of \(S\).
\end{proof}
The density slices have a simple purpose: on each sufficiently small portion of a slice, the total remaining energy drop is uniformly small. The Jones estimate turns this into the small square-function bound required by rectifiable Reifenberg. The preceding Hausdorff-measure estimate alone would not give this conclusion.

\subsection{The dimension of the non-degenerate contact set}\label{subsec:QS-contact-set-dimension}
The effective-stratum theorem applies to every \(k\). To identify the relevant value here, we exclude a profile with \(n-1\) invariant directions. Such a profile depends on one variable; harmonicity makes it affine on each positivity interval, which is incompatible with degree \(p>1\). This is the argument of \cite[Lemma~8.1]{McC24}, applied to each component.

\begin{lemma}[Exclusion of codimension-one symmetry]\label{lem:no-nminusone-symmetric-blowup}
    Let \(T\subset\R^n\) be a linear hyperplane and let
    \(\mathbf{v}:B_2(0)\to\R_+^m\) minimize \(J_T\) locally, with \(\mathbf{v}(0)=0\).
    If \(\mathbf{v}\) is \(p\)-homogeneous in \(B_1(0)\) and invariant there
    under translations along an \((n-1)\)-dimensional linear subspace,
    then \(\mathbf{v}\equiv0\) in \(B_1(0)\).
\end{lemma}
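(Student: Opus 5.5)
The plan is to reduce to a one-variable problem and then use harmonicity on the positivity intervals together with homogeneity of degree \(p=1+\gamma>1\). Let \(V\) be the invariant \((n-1)\)-dimensional subspace and \(e\) a unit vector orthogonal to \(V\). Translation invariance along \(V\) in \(B_1(0)\), combined with \(p\)-homogeneity, lets us write each component as \(v_i(y)=g_i(y\cdot e)\) for \(y\in B_1(0)\), with \(g_i:(-1,1)\to[0,\infty)\) continuous. Since \(\mathbf v\) is \(p\)-homogeneous and depends only on \(t=y\cdot e\), each \(g_i\) is \(p\)-homogeneous in \(t\). Hence
\[
g_i(t)=\alpha_i t_+^{p}+\beta_i t_-^{p},\qquad \alpha_i,\beta_i\ge0 .
\]
This is the only form compatible with \(g_i(\lambda t)=\lambda^p g_i(t)\), and it gives \(g_i(0)=0\), consistent with \(\mathbf v(0)=0\).

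Next I use harmonicity. By Lemma \ref{lem:basic_property_U0}\ref{item:harmonic_U0}, or rather its analogue for local minimizers of \(J_T\) (harmonicity in \(\Omega_{\mathbf v}\) comes from harmonic replacement exactly as in Corollary \ref{cor:gradient-estimate}), each \(v_i\) is harmonic in the open set \(\{|\mathbf v|>0\}\cap B_1(0)\). Suppose some \(\alpha_i>0\). Then the slab \(\{0<y\cdot e<1\}\cap B_1(0)\) lies in \(\Omega_{\mathbf v}\), so every component is harmonic there. A harmonic function of the single variable \(t\) is affine on an interval, so \(g_i''=0\) on \((0,1)\). But \(g_i''(t)=\alpha_i p(p-1)t^{p-2}\neq0\) because \(p>1\) and \(\alpha_i>0\), which is a contradiction. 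The same argument gives \(\beta_i=0\) on \((-1,0)\), so \(\mathbf v\equiv0\) in \(B_1(0)\). This one-dimensional step is the argument of \cite[Lemma~8.1]{McC24}, applied componentwise.

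The main obstacle is to justify the reduction to one variable rigorously. Invariance holds only inside \(B_1(0)\), and homogeneity is also asserted only there, so I must check that for each \(t\in(-1,1)\) the value \(v_i(te+w)\) is independent of \(w\in V\) with \(|te+w|<1\). This follows from connectedness of that slice and local translation invariance. Homogeneity then relates \(g_i(t)\) to \(g_i(\lambda t)\) for \(0<\lambda\le1\), which is enough to get the power-law form on each half-interval. Once this is settled, the contradiction is immediate, and the minimality is used only to obtain harmonicity.
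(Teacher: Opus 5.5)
Your proposal is correct and follows the paper's argument: write the map as a profile \(\Psi(t)=A^+t_+^p+A^-(-t)_+^p\) of \(t=x\cdot e\), use harmonicity of the components in the positivity set (Corollary \ref{cor:gradient-estimate}) to force them to be affine on each half-interval where they are nonzero, and conclude \(A^\pm=0\) because \(p>1\). Your extra check that the translation invariance and homogeneity, assumed only inside \(B_1(0)\), are enough for the one-variable reduction fills in a detail the paper leaves implicit.
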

\begin{proof}
    Write \(\mathbf{v}(x)=\Psi(x\cdot e)\), where \(e\) is normal to the invariant plane. Homogeneity gives
    \[
        \Psi(t)=A^+t_+^p+A^-(-t)_+^p.
    \]
    By part~\ref{item:harmonic} of Corollary~\ref{cor:gradient-estimate}, the components of \(\Psi\) are harmonic, and hence affine, on either half-interval where \(\Psi\ne0\). Since \(p>1\), this forces \(A^+=A^-=0\).
\end{proof}

Compactness makes this exclusion uniform at small scales. Each normalized rescaling centered on \(\Gamma_{\Sigma,\rm ND}(\mathbf{u})\) then stays a fixed distance from the forbidden models. This is the step that permits a measure bound for the entire contact set, rather than a countable union of effective strata.

\begin{theorem}[Measure bound for the non-degenerate contact set]\label{thm:ND-quantitative-bound}
    There are \(\varepsilon_*=\varepsilon_*(n,m,\gamma)>0\) and \(0<\widetilde r\le r_T/8\), with \(\widetilde r=\widetilde r(n,m,\gamma,r_\Sigma,\omega_\Sigma)\), such that
    \begin{equation}\label{eq:ND-contained-effective-stratum}
        \Gamma_{\Sigma,\rm ND}(\mathbf{u})\cap B_1(0)
        \subset
        \Gamma^{n-2}_{\varepsilon_*,\rho,\widetilde r}(\mathbf{u})\cap B_1(0)
        \qquad\text{for every }0<\rho<\widetilde r.
    \end{equation}
    Consequently, this set is countably \((n-2)\)-rectifiable and
    \begin{equation}\label{eq:ND-final-measure-bound}
        \cH^{n-2}\bigl(\Gamma_{\Sigma,\rm ND}(\mathbf{u})\cap B_1(0)\bigr)
        \le C.
    \end{equation}
    Moreover,
    \begin{equation}\label{eq:ND-final-minkowski-bound}
        \cL^n\!\left(
        N_r(\Gamma_{\Sigma,\rm ND}(\mathbf{u})\cap B_1(0))
        \right)\le Cr^2,
        \qquad 0<r\le\widetilde r,
    \end{equation}
    where \(C=C(n,m,\gamma,r_\Sigma,\omega_\Sigma)\).
\end{theorem}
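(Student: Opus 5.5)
The argument has two parts. The inclusion \eqref{eq:ND-contained-effective-stratum} comes from a contradiction argument combining compactness at moving centers with the exclusion of codimension-one symmetric models. The measure, Minkowski and rectifiability conclusions then follow from Theorem \ref{thm:minkowski-rectifiability-effective-strata} with \(k=n-2\).

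\textbf{Step 1: the inclusion.} Suppose it fails for every \(\varepsilon_*=1/j\) and \(\widetilde r=1/j\). Then there are pairs \((\mathbf{u}_j,\Sigma_j)\in\mathscr A\), points \(x_j\in\Gamma_{\Sigma_j,\rm ND}(\mathbf{u}_j)\cap B_1(0)\), and radii \(s_j\le 1/j\) such that \(\mathbf{u}_j\) is \((n-1,1/j)\)-symmetric in \(B_{s_j}(x_j)\). Uniformity of \(\varepsilon_*\) over the class is convenient here, but it could be relaxed to dependence on \(r_\Sigma,\omega_\Sigma\) if needed.

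Apply Lemma \ref{lem:sequential-compactness-moving-centers} together with Remark \ref{rem:QS-normalized-limit}. Along a subsequence, \(T_{x_j,s_j}\mathbf{u}_j\to \mathbf{v}/\sqrt h\) locally uniformly and strongly in \(W^{1,2}_{\rm loc}\). Here \(\mathbf{v}\) is a nonzero global minimizer of \(J_T\), \(h\in[c_H,C_H]\), and \(\mathbf{v}(0)=0\) since \(x_j\) are free boundary points.

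The \((n-1)\)-symmetric comparison maps \(\Phi_j\) have unit boundary norm and satisfy \(\|T_{x_j,s_j}\mathbf{u}_j-\Phi_j\|_{L^2(B_1)}\to0\). The set of normalized \(p\)-homogeneous maps depending on one variable is a finite-dimensional family: \(\Phi(x)=A^+(x\cdot e)_+^p+A^-(x\cdot e)_-^p\). It is therefore compact after normalization, so \(\Phi_j\to\Phi\) in \(L^2(B_1)\) and \(\mathbf{v}/\sqrt h=\Phi\) on \(B_1\). In particular \(\Phi\ne0\), because the \(L^2(\partial B_1)\) normalization passes to the limit by uniform convergence.

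Thus \(\mathbf{v}\) is \(p\)-homogeneous, \((n-1)\)-invariant in \(B_1\), and a local minimizer of \(J_T\). Lemma \ref{lem:no-nminusone-symmetric-blowup} forces \(\mathbf{v}\equiv0\) on \(B_1\), a contradiction.

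One point to check is that \(x\in B_1\) while the effective strata are defined over \(B_2\). This is harmless, since the strata are intersected with \(B_1(0)\) anyway.

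\textbf{Step 2: consequences.} With \(k=n-2\) and \(\varepsilon=\varepsilon_*\), fix \(R=\widetilde r\le r_T/8\).
\begin{itemize}
\item Taking \(\rho=r\) in \eqref{eq:minkowski-effective-strata} and using the inclusion gives \(\cL^n(N_r(\Gamma_{\Sigma,\rm ND}\cap B_1))\le Cr^2\) for \(r\le\widetilde r\).
\item Since the inclusion holds for every \(\rho\), the set lies in \(\Gamma^{n-2}_{\varepsilon_*,\widetilde r}(\mathbf{u})\). Hence \eqref{eq:Hk-effective-strata} and the rectifiability part of Theorem \ref{thm:minkowski-rectifiability-effective-strata} give the \(\cH^{n-2}\) bound and countable \((n-2)\)-rectifiability.
\end{itemize}
The constant depends on \(\widetilde r\), and hence on \(r_\Sigma,\omega_\Sigma\), which is consistent with the statement.

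\textbf{Main obstacle.} The delicate point is the compactness step. Lemma \ref{lem:no-nminusone-symmetric-blowup} requires the limit to be a genuine minimizer, which Lemma \ref{lem:sequential-compactness-moving-centers} provides because \(r_j\to0\) flattens \(\Sigma_j\). It also requires the limit to be nonzero, which needs the non-collapse bound of Lemma \ref{lem:T-and-homogeneous-rescaling-comparable} uniformly in \(j\). For that reason I would insist that \(\widetilde r\le r_T\), so that the height bounds hold at every scale used.

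One subtlety remains: Definition \ref{def:QS-vector-symmetry} measures symmetry in \(L^2(B_1)\), while the limit identification is on \(B_1\). Homogeneity of the model \(\Phi\) then identifies \(\mathbf{v}\) on \(B_1\), which is exactly the hypothesis Lemma \ref{lem:no-nminusone-symmetric-blowup} requires.
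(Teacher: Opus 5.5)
Your proof has the same skeleton as the paper's. You use compactness at moving contact centers (Lemma~\ref{lem:sequential-compactness-moving-centers} with Remark~\ref{rem:QS-normalized-limit}) to get a nonzero flat minimizer vanishing at the origin. You identify it with a normalized $(n-1)$-symmetric profile, exclude that profile by Lemma~\ref{lem:no-nminusone-symmetric-blowup}, and then invoke Theorem~\ref{thm:minkowski-rectifiability-effective-strata} with $k=n-2$. Your Step~2 is exactly the paper's.

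The one real difference is how $\varepsilon_*$ is produced, and it costs you part of the statement. You run a single contradiction with $\varepsilon_*=\widetilde r=1/j$ inside the fixed class $\mathscr A$. This only yields a threshold $j_0$ depending on $\mathscr A$, so your $\varepsilon_*$ depends on $r_\Sigma,\omega_\Sigma$ as well. The theorem asserts $\varepsilon_*=\varepsilon_*(n,m,\gamma)$. You hedge on exactly this point, but it is part of what must be proved.

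The paper decouples the two constants. Consider the boundary-normalized flat minimizers of $J_T$, with $T$ varying, vanishing at $0$, satisfying the universal growth and Lipschitz bounds, and with height in $[c_H,C_H]$. These and the normalized $(n-1)$-symmetric maps form two compact subsets of $L^2(B_1(0))$. The two sets are disjoint by Lemma~\ref{lem:no-nminusone-symmetric-blowup}. The paper takes $\varepsilon_*$ below a third of their distance, which depends only on $n,m,\gamma$. Only afterwards is $\widetilde r$ found by contradiction with $\varepsilon_*$ fixed, and the triangle inequality then rules out $(n-1,\varepsilon_*)$-symmetry.

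Your diagonal argument is slightly lighter, since it needs compactness only of the finite-dimensional symmetric family. To recover the stated dependence you have two options. You can add the separation step above. Alternatively, you can negate the correct quantifier order ($\exists\,\varepsilon_*\ \forall\,\mathscr A\ \exists\,\widetilde r$) and let the class vary with $j$. In that version, choose each $s_j$ small relative to $r_{\Sigma_j}$, $\omega_{\Sigma_j}$ and the $r_T$ of that class, so that the rescaled hypersurfaces still flatten. This works because the growth, Lipschitz and height constants $c_H,C_H$ are already independent of the geometric data.
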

\begin{proof}
    Consider maps minimizing \(J_T\) against every ball-supported comparison in \(B_2(0)\), allowing \(T\) to vary over hyperplanes through the origin. Require that the maps vanish at the origin and satisfy fixed universal growth and Lipschitz bounds on \(B_2(0)\), with boundary height in \([c_H,C_H]\). Their boundary-normalized restrictions form a compact family in \(L^2(B_1(0))\), by the compactness argument in the proof of Lemma~\ref{lem:sequential-compactness-moving-centers}, applied to flat weights, and Remark~\ref{rem:QS-normalized-limit}. The normalized \((n-1)\)-symmetric maps also form a compact family. Indeed, they have the form
    \[
        \Phi(x)=A^+(x\cdot e)_+^p+A^-(-x\cdot e)_+^p,
        \qquad e\in\mathbb S^{n-1},
    \]
    and the boundary normalization bounds \(A^\pm\). Lemma \ref{lem:no-nminusone-symmetric-blowup} makes these two compact families disjoint. Their \(L^2(B_1(0))\)-distance is therefore positive and depends only on \(n,m,\gamma\); choose \(\varepsilon_*\) smaller than one third of it. By Lemma~\ref{lem:sequential-compactness-moving-centers} and Remark~\ref{rem:QS-normalized-limit}, all \(T_{x,r}\mathbf{u}\), with \(x\in\Gamma_{\Sigma,\rm ND}(\mathbf{u})\cap B_1(0)\), lie within \(\varepsilon_*\) of the first family once \(r\le\widetilde r\), uniformly over \(\mathscr A\). Otherwise a sequence of vanishing scales would converge to a member of that family. Thus \(\mathbf{u}\) is not \((n-1,\varepsilon_*)\)-symmetric in any such \(B_r(x)\). This proves \eqref{eq:ND-contained-effective-stratum}.
    
    By \eqref{eq:QS-limiting-effective-stratum}, the inclusions \eqref{eq:ND-contained-effective-stratum} place \(\Gamma_{\Sigma,\rm ND}(\mathbf{u})\cap B_1(0)\) in \(\Gamma^{n-2}_{\varepsilon_*,\widetilde r}(\mathbf{u})\cap B_1(0)\). The Hausdorff-measure and rectifiability conclusions now follow from Theorem~\ref{thm:minkowski-rectifiability-effective-strata} with \(k=n-2\), \(\varepsilon=\varepsilon_*\), and \(R=\widetilde r\). For the neighborhood estimate, apply \eqref{eq:minkowski-effective-strata} with any \(0<\rho<\min\{r,\widetilde r\}\).
\end{proof}
\begin{corollary}
    If \(n=2\), then \(\Gamma_{\Sigma,\rm ND}(\mathbf{u})\cap B_1(0)\) is finite.
\end{corollary}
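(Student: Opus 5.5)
The plan is to derive the finiteness directly from the Minkowski-type bound \eqref{eq:ND-final-minkowski-bound} of Theorem~\ref{thm:ND-quantitative-bound}, specialized to \(n=2\). In that case \(n-2=0\), and the bound reads
\[
    \cL^2\bigl(N_r(\Gamma_{\Sigma,\rm ND}(\mathbf{u})\cap B_1(0))\bigr)\le Cr^2,
    \qquad 0<r\le\widetilde r,
\]
with \(C\) independent of \(r\). A set whose \(r\)-neighborhoods have area \(O(r^2)\) is a bounded packing of points, and this is what we will exploit.

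Suppose \(x_1,\dots,x_N\) are distinct points of \(\Gamma_{\Sigma,\rm ND}(\mathbf{u})\cap B_1(0)\). Choose \(r\le\widetilde r\) smaller than half the minimal mutual distance. Then the discs \(B_r(x_i)\) are pairwise disjoint and all lie inside \(N_r(\Gamma_{\Sigma,\rm ND}(\mathbf{u})\cap B_1(0))\). Comparing areas gives
\[
    N\pi r^2\le Cr^2,
    \qquad\text{so}\qquad
    N\le C/\pi.
\]
Since this bound holds for every finite subfamily, the set has at most \(C/\pi\) points. Equivalently, we could invoke \eqref{eq:ND-final-measure-bound}: it gives \(\cH^0\le C\), and \(\cH^0\) is exactly the counting measure.

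There is no real obstacle here, because everything rests on Theorem~\ref{thm:ND-quantitative-bound}. The only point to check is that the normalized setting of Definition~\ref{def:normalized-QS-class} is the one in which \(B_1(0)\) is meant. The general statement for compact \(K\subset\subset D\) in Theorem~\ref{thm:measure-GammaND} then follows by two steps. First, we rescale around contact points via Remark~\ref{rem:QS-uniform-minimality}. Second, we cover \(K\cap\Sigma\) by finitely many such normalized balls. Since \(\Gamma_{\Sigma,\rm ND}\subset\Sigma\), points of \(K\) away from \(\Sigma\) play no role.
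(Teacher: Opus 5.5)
Your proof is correct and matches the paper, which simply notes that for \(n=2\) the bound \eqref{eq:ND-final-measure-bound} is a bound on \(\cH^0\), the counting measure, so the number of points is at most \(C\). Your packing argument from \eqref{eq:ND-final-minkowski-bound} is an equally valid variant of that one-line deduction from Theorem~\ref{thm:ND-quantitative-bound}.
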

\begin{proof}
    In this case \(\cH^{n-2}=\cH^0\) counts points, so \eqref{eq:ND-final-measure-bound} bounds their number.
\end{proof}
Finally, let \(K\subset\subset D\) be compact. Lemma~\ref{lem:center-continuity-modified-Weiss} makes \(\Gamma_{\Sigma,\rm ND}(\mathbf{u})\cap K\) compact, and Remark~\ref{rem:QS-uniform-minimality} supplies a finite cover by contact balls that can be rescaled into the normalized class. Rescaling \eqref{eq:ND-final-measure-bound} gives
\[
    \cH^{n-2}\bigl(\Gamma_{\Sigma,\rm ND}(\mathbf{u})\cap K\bigr)<\infty,
\]
together with countable \((n-2)\)-rectifiability. This proves Theorem \ref{thm:measure-GammaND}.

\subsubsection*{Acknowledgement}
\hyphenpenalty=10
\sloppy
L.~Du is supported by National Natural Science Foundation of China under Grants 12125102,12526202, 12671245, 12631009. C.~Yang is supported by National Natural Science Foundation of China under Grants 12601435.

\subsubsection*{AI declarations}
OpenAI’s GPT was used only for grammar, spelling and citation checks, with all edits reviewed to safeguard mathematical reasoning. 

\subsubsection*{Data availability} No data was used in this research.

\appendix

\section{Stabilization of the local active set}\label{appendix:local-active-stabilization}

The main aim of this section is to prove the following lemma. 

\begin{lemma}[Stabilization of the local active set]
    Let \(x_0\in \Gamma_{\Sigma,\rm ND}(\mathbf{u})\) satisfy \eqref{eq:touching-branch-assumption}, and set 
    \[
        \bar R_{x_0}:=\frac 14\min\left\{
            r_0,r_\Sigma,R_{x_0},\dist(x_0,\partial D)
        \right\}.
    \]
    There exist \(r_{x_0}\in(0,\bar R_{x_0})\) and a nonempty set \(\mathcal I_{\mathbf{u}}(x_0)\subset\{1,\dots,m\}\) such that
    \[
        \mathcal I_{\mathbf{u}}(x_0,r)=\mathcal I_{\mathbf{u}}(x_0)\quad\text{ for every }0<r<r_{x_0}.
    \]
\end{lemma}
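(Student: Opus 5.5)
The plan is to prove monotonicity of \(r\mapsto\mathcal I_{\mathbf{u}}(x_0,r)\) with respect to inclusion. Finiteness of the power set of \(\{1,\dots,m\}\) then forces stabilization. Nonemptiness will come from the touching-branch assumption \eqref{eq:touching-branch-assumption} together with Lemma \ref{lem:structure-free-boundary}.

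\emph{Step 1 (nonemptiness for each \(r\)).} Fix \(0<r<\bar R_{x_0}\). By \eqref{eq:touching-branch-assumption} there is some \(\mathcal O\in\mathscr C_{\mathbf{u}}(x_0;r)\). Since \(\mathcal O\) is a connected open subset of \(\Omega_{\mathbf{u}}\), there is a point where \(|\mathbf{u}|>0\), hence some \(u_i>0\) there. Each \(u_i\) is nonnegative and harmonic in \(\mathcal O\) by Corollary \ref{cor:gradient-estimate}. The strong maximum principle therefore gives \(u_i>0\) throughout \(\mathcal O\), so \(I_{\mathbf{u}}(\mathcal O)\ne\varnothing\) and \(\mathcal I_{\mathbf{u}}(x_0,r)\ne\varnothing\). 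The same argument shows each \(u_j\) is either \(>0\) or \(\equiv0\) on every component, so active sets are well defined.

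\emph{Step 2 (monotonicity).} Let \(0<s<r\) and take \(\mathcal O'\in\mathscr C_{\mathbf{u}}(x_0;s)\). Then \(\mathcal O'\) is a connected subset of \(\Omega_{\mathbf{u}}\cap B_r(x_0)\), so it lies in a unique component \(\mathcal O\) of \(\Omega_{\mathbf{u}}\cap B_r(x_0)\). Since \(x_0\in\partial\mathcal O'\subset\overline{\mathcal O}\) and \(x_0\notin\Omega_{\mathbf{u}}\supset\mathcal O\), we get \(x_0\in\partial\mathcal O\), hence \(\mathcal O\in\mathscr C_{\mathbf{u}}(x_0;r)\). By the dichotomy in Step 1, \(I_{\mathbf{u}}(\mathcal O')=I_{\mathbf{u}}(\mathcal O)\). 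Therefore \(\mathcal I_{\mathbf{u}}(x_0,s)\subset\mathcal I_{\mathbf{u}}(x_0,r)\), so \(r\mapsto\mathcal I_{\mathbf{u}}(x_0,r)\) is nondecreasing for inclusion.

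\emph{Step 3 (stabilization).} The cardinality \(N(r):=\#\,\mathcal I_{\mathbf{u}}(x_0,r)\) is a nondecreasing integer-valued function with values in \(\{1,\dots,m\}\) on \((0,\bar R_{x_0})\). Let \(N_*:=\min N\) and choose \(r_{x_0}\) with \(N(r_{x_0}/2)=N_*\). For \(0<r<r_{x_0}\), compare each such \(r\) with the fixed radius \(r_{x_0}/2\) by monotonicity. If \(r\le r_{x_0}/2\), then \(\mathcal I_{\mathbf{u}}(x_0,r)\subset\mathcal I_{\mathbf{u}}(x_0,r_{x_0}/2)\) with equal cardinality \(N_*\), so the two sets are equal.

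The case \(r\in(r_{x_0}/2,r_{x_0})\) needs more care. To avoid it, I will redefine \(r_{x_0}\) as the infimum-based choice: let \(\mathcal I_*:=\bigcap_{r}\mathcal I_{\mathbf{u}}(x_0,r)\). This is a decreasing intersection of nonempty finite sets as \(r\downarrow0\), so it is attained at some \(r_1\). Setting \(r_{x_0}:=r_1\), monotonicity gives \(\mathcal I_*\subset\mathcal I_{\mathbf{u}}(x_0,r)\subset\mathcal I_{\mathbf{u}}(x_0,r_1)=\mathcal I_*\) for all \(r<r_1\). Taking \(\mathcal I_{\mathbf{u}}(x_0):=\mathcal I_*\), which is nonempty by Step 1, completes the argument.

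The only delicate point is Step 2, the passage of touching components to larger balls. It relies on \(x_0\notin\Omega_{\mathbf{u}}\), which holds because \(x_0\in\partial\Omega_{\mathbf{u}}\) and \(\Omega_{\mathbf{u}}\) is open.
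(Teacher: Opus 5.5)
Your proof is correct and follows essentially the same route as the paper. Both arguments get nestedness of \(r\mapsto\mathcal I_{\mathbf{u}}(x_0,r)\) from the strong maximum principle on touching components, then use finiteness. Your choice of the minimal set in the chain, \(\bigcap_r\mathcal I_{\mathbf{u}}(x_0,r)\), is equivalent to the paper's choice of a radius \(\rho\) minimizing \(\#\,\mathcal I_{\mathbf{u}}(x_0,\rho)\). The abandoned first attempt in Step 3 can simply be deleted.
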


\begin{proof}
    Note that every touching component has a nonempty active index set. Indeed, at least one component of \(\mathbf{u}\) is positive at some point, and positivity propagates throughout that component by the strong maximum principle.

    If \(0<r_1<r_2<\bar R_{x_0}\), every \(\cO_1\in\mathscr C_{\mathbf{u}}(x_0;r_1)\) is contained in some \(\cO_2\in\mathscr C_{\mathbf{u}}(x_0;r_2)\). The strong maximum principle gives \(I_{\mathbf{u}}(\cO_1)\subseteq I_{\mathbf{u}}(\cO_2)\). Consequently,
    \[
        \varnothing\ne\mathcal I_{\mathbf{u}}(x_0,r_1)
        \subseteq\mathcal I_{\mathbf{u}}(x_0,r_2)
        \subseteq\{1,\dots,m\}.
    \]
    Choose \(\rho\in(0,\bar R_{x_0})\) for which \(\#\,\mathcal I_{\mathbf{u}}(x_0,\rho)\) is minimal. Nestedness and minimality imply \(\mathcal I_{\mathbf{u}}(x_0,r)=\mathcal I_{\mathbf{u}}(x_0,\rho)\) for every \(0<r<\rho\). Taking \(r_{x_0}=\rho/2\) and \(\mathcal I_{\mathbf{u}}(x_0)=\mathcal I_{\mathbf{u}}(x_0,\rho)\) proves the assertion.
\end{proof}

\section{\texorpdfstring{Planar configurations for \(\gamma>1\)}{Planar configurations for gamma>1}}\label{sec:planar-blow-up-appendix}

In the case \(n=2\) and \(\gamma>1\), the nontrivial blow-up limits \(\mathbf{u}_0\) are linear combinations of the four homogeneous solutions \(\Phi_0,\Phi_{\pi/2},\Phi_\pi,\Phi_{3\pi/2}\) defined in \eqref{eq:planar-canonical-profile}. The fifteen possible structural forms of \(\mathbf{u}_0\) are listed in Table~\ref{tab:planar-blowups-gamma-large}. Each form is determined by a subset \(K\subset\{0,\pi/2,\pi,3\pi/2\}\) of the angles of the branches, and the corresponding coefficients \(\mathbf{a}_\theta\in\mathbb S^{m-1}\cap\R_+^m\) for \(\theta\in K\). The number of branches is \(\#\,K=1,2,3,4\).

\begin{table}[!htbp]
    \centering
    \caption{The fifteen possible forms of a nontrivial planar blow-up limit when \(\gamma>1\).}
    \label{tab:planar-blowups-gamma-large}

    \scriptsize
    \setlength{\tabcolsep}{4pt}
    \renewcommand{\arraystretch}{1.15}

    \begin{tabular}{
        @{}
        c
        >{\centering\arraybackslash}p{0.26\textwidth}
        >{\raggedright\arraybackslash}p{0.62\textwidth}
        @{}
    }
        \toprule
        \(\#\,K\)
        &
        \(K\)
        &
        Corresponding form of \(\mathbf{u}_0\)
        \\
        \midrule


        \(1\)
        &
        \(\{0\}\)
        &
        \(\displaystyle \mathbf{u}_0=\mathbf{a}_0\Phi_0\)
        \\

        \(1\)
        &
        \(\left\{\frac{\pi}{2}\right\}\)
        &
        \(\displaystyle \mathbf{u}_0=\mathbf{a}_{\pi/2}\Phi_{\pi/2}\)
        \\

        \(1\)
        &
        \(\{\pi\}\)
        &
        \(\displaystyle \mathbf{u}_0=\mathbf{a}_\pi\Phi_\pi\)
        \\

        \(1\)
        &
        \(\left\{\frac{3\pi}{2}\right\}\)
        &
        \(\displaystyle \mathbf{u}_0=\mathbf{a}_{3\pi/2}\Phi_{3\pi/2}\)
        \\

        \midrule


        \(2\)
        &
        \(\left\{0,\frac{\pi}{2}\right\}\)
        &
        \(\displaystyle
            \mathbf{u}_0
            =
           \mathbf{a}_0\Phi_0
            +
            \mathbf{a}_{\pi/2}\Phi_{\pi/2}
        \)
        \\

        \(2\)
        &
        \(\left\{\frac{\pi}{2},\pi\right\}\)
        &
        \(\displaystyle
            \mathbf{u}_0
            =
            \mathbf{a}_{\pi/2}\Phi_{\pi/2}
            +
            \mathbf{a}_\pi\Phi_\pi
        \)
        \\

        \(2\)
        &
        \(\left\{\pi,\frac{3\pi}{2}\right\}\)
        &
        \(\displaystyle
            \mathbf{u}_0
            =
            \mathbf{a}_\pi\Phi_\pi
            +
            \mathbf{a}_{3\pi/2}\Phi_{3\pi/2}
        \)
        \\

        \(2\)
        &
        \(\left\{0,\frac{3\pi}{2}\right\}\)
        &
        \(\displaystyle
            \mathbf{u}_0
            =
            \mathbf{a}_0\Phi_0
            +
            \mathbf{a}_{3\pi/2}\Phi_{3\pi/2}
        \)
        \\

        \(2\)
        &
        \(\{0,\pi\}\)
        &
        \(\displaystyle
            \mathbf{u}_0
            =
            \mathbf{a}_0\Phi_0
            +
            \mathbf{a}_\pi\Phi_\pi
        \)
        \\

        \(2\)
        &
        \(\left\{\frac{\pi}{2},\frac{3\pi}{2}\right\}\)
        &
        \(\displaystyle
            \mathbf{u}_0
            =
            \mathbf{a}_{\pi/2}\Phi_{\pi/2}
            +
            \mathbf{a}_{3\pi/2}\Phi_{3\pi/2}
        \)
        \\

        \midrule


        \(3\)
        &
        \(\left\{0,\frac{\pi}{2},\pi\right\}\)
        &
        \(\displaystyle
            \begin{aligned}
                \mathbf{u}_0={}&
                \mathbf{a}_0\Phi_0
                +
                \mathbf{a}_{\pi/2}\Phi_{\pi/2}
                +
                \mathbf{a}_\pi\Phi_\pi
            \end{aligned}
        \)
        \\

        \(3\)
        &
        \(\left\{\frac{\pi}{2},\pi,\frac{3\pi}{2}\right\}\)
        &
        \(\displaystyle
            \begin{aligned}
                \mathbf{u}_0={}&
                \mathbf{a}_{\pi/2}\Phi_{\pi/2}
                +
                \mathbf{a}_\pi\Phi_\pi
                +
                \mathbf{a}_{3\pi/2}\Phi_{3\pi/2}
            \end{aligned}
        \)
        \\

        \(3\)
        &
        \(\left\{0,\pi,\frac{3\pi}{2}\right\}\)
        &
        \(\displaystyle
            \begin{aligned}
                \mathbf{u}_0={}&
                \mathbf{a}_0\Phi_0
                +
                \mathbf{a}_\pi\Phi_\pi
                +
                \mathbf{a}_{3\pi/2}\Phi_{3\pi/2}
            \end{aligned}
        \)
        \\

        \(3\)
        &
        \(\left\{0,\frac{\pi}{2},\frac{3\pi}{2}\right\}\)
        &
        \(\displaystyle
            \begin{aligned}
                \mathbf{u}_0={}&
                \mathbf{a}_0\Phi_0
                +
                \mathbf{a}_{\pi/2}\Phi_{\pi/2}
                +
                \mathbf{a}_{3\pi/2}\Phi_{3\pi/2}
            \end{aligned}
        \)
        \\

        \midrule


        \(4\)
        &
        \(\left\{
            0,\frac{\pi}{2},\pi,\frac{3\pi}{2}
        \right\}\)
        &
        \(\displaystyle
            \begin{aligned}
                \mathbf{u}_0={}&
                \mathbf{a}_0\Phi_0
                +
                \mathbf{a}_{\pi/2}\Phi_{\pi/2}
                +
                \mathbf{a}_\pi\Phi_\pi
                +
                \mathbf{a}_{3\pi/2}\Phi_{3\pi/2}
            \end{aligned}
        \)
        \\

        \bottomrule
    \end{tabular}
\end{table}

\section{Quantitative-stratification inputs}\label{app:quantitative-stratification-inputs}

We present the standard rigidity, symmetry, and covering arguments of Naber--Valtorta and Edelen--Engelstein \cite{NV17,EE19}, in the form needed for the present distance weight. We follow McCurdy's scalar degenerate adaptation \cite[Section~7]{McC24}; for positive weights, the vector-valued framework is already included in \cite[Section~1.4 and Theorem~1.15]{EE19}. The proofs below verify the analytic substitutions for the \(C^{1,\rm Dini}\) setting and refer to these sources for the geometric construction.

Throughout, \((\mathbf{u},\Sigma)\in\mathscr A\), \(p=1+\gamma\), and \(\mathcal F:=\Gamma_{\Sigma,\rm ND}(\mathbf{u})\cap B_2(0)\). We use Lemma~\ref{lem:sequential-compactness-moving-centers} and Remark~\ref{rem:QS-normalized-limit} for compactness and boundary normalization. Their convergence statements also give fixed-radius Weiss convergence at convergent rescaled contact centers; the height bounds in Lemma~\ref{lem:T-and-homogeneous-rescaling-comparable} pass to those centers. All radii below are chosen so that the enlarged balls and corrected energies lie in the ranges fixed in Subsection~\ref{subsec:QS-normalization-compactness}.

\subsection{Rigidity and the accumulation of symmetries}

A small Weiss drop forces approximate homogeneity. This is the compactness principle of \cite[Lemma~3.1]{EE19} and \cite[Lemma~7.3]{McC24}.

\begin{lemma}[Quantitative rigidity with Dini error]\label{lem:quantitative-rigidity-Dini}
    For every \(\delta>0\), there are \(\eta_{\rm rig}\in(0,1)\) and \(0<r_{\rm rig}\le r_T\) such that
    \[
        \widetilde W_{x,\mathbf{u}}(8r)
        -\widetilde W_{x,\mathbf{u}}(\eta_{\rm rig}r)\le\eta_{\rm rig}
    \]
    implies that \(\mathbf{u}\) is \((0,\delta)\)-symmetric in \(B_r(x)\), whenever \(x\in\mathcal F\) and \(0<8r\le r_{\rm rig}\). Here \(\eta_{\rm rig}\) depends only on \(n,m,\gamma,\delta\); the radius \(r_{\rm rig}\) may also depend on \(r_\Sigma,\omega_\Sigma\).
\end{lemma}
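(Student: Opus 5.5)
We argue by contradiction and compactness, in the spirit of \cite[Lemma~3.1]{EE19} and \cite[Lemma~7.3]{McC24}. Fix \(\delta>0\) and suppose the statement fails. Then, for each \(j\), with \(\eta_j:=1/j\) and \(r_{{\rm rig},j}\to0\), there are pairs \((\mathbf{u}_j,\Sigma_j)\in\mathscr A\), centers \(x_j\in\Gamma_{\Sigma_j,\rm ND}(\mathbf{u}_j)\cap B_2(0)\) and radii \(r_j\) with \(8r_j\le r_{{\rm rig},j}\) with the following properties:
\[
    \widetilde W_{x_j,\mathbf{u}_j}(8r_j)-\widetilde W_{x_j,\mathbf{u}_j}(\eta_jr_j)\le\eta_j,
\]
while \(\mathbf{u}_j\) is not \((0,\delta)\)-symmetric in \(B_{r_j}(x_j)\). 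Since \(r_j\to0\), Lemma~\ref{lem:sequential-compactness-moving-centers} applies to \(\mathbf{v}_j:=(\mathbf{u}_j)_{x_j,r_j}\). After a subsequence, \(\widehat\Sigma_j\to T\), and \(\mathbf{v}_j\to\mathbf{v}\) locally uniformly and strongly in \(W^{1,2}_{\rm loc}\). Here \(\mathbf{v}\) is a nonzero global minimizer of \(J_T\).

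By Remark~\ref{rem:QS-normalized-limit}, the heights \(\mathsf H_{x_j,\mathbf{u}_j}(r_j)\) converge to some \(h\in[c_H,C_H]\), and \(T_{x_j,r_j}\mathbf{u}_j\to\mathbf{v}/\sqrt h\) in \(L^2(B_1)\).

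The next step is to show that \(\mathbf{v}\) is \(p\)-homogeneous. Fix \(0<a<b\le 8\). For \(j\) large we have \(\eta_j\le a\), so monotonicity of \(\widetilde W\) (Remark~\ref{rem:QS-corrected-monotonicity}) gives
\[
    0\le\widetilde W_{x_j,\mathbf{u}_j}(br_j)-\widetilde W_{x_j,\mathbf{u}_j}(ar_j)\le\eta_j\to0 .
\]
Rescaling the homogeneity-defect inequality \eqref{eq:QS-homogeneity-defect} to the unit scale then bounds the weighted \(L^2\) norm of \(y\cdot\nabla\mathbf{v}_j-p\mathbf{v}_j\) on \(B_b\setminus B_a\) by \(\eta_j/2\). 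Strong \(W^{1,2}\) convergence lets us pass to the limit, so \(y\cdot\nabla\mathbf{v}=p\mathbf{v}\) a.e. on \(B_8\setminus B_a\) for every \(a>0\). Hence \(\mathbf{v}\) is \(p\)-homogeneous in \(B_8\setminus\{0\}\).

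We extend \(\mathbf{v}/\sqrt h\) homogeneously to all of \(\R^n\) and call the result \(\Phi\). It is nonzero, \(p\)-homogeneous (that is, \(0\)-symmetric), and has \(\|\Phi\|_{L^2(\partial B_1)}=1\). Since \(T_{x_j,r_j}\mathbf{u}_j\to\Phi\) in \(L^2(B_1)\), for large \(j\) we get \(\|T_{x_j,r_j}\mathbf{u}_j-\Phi\|_{L^2(B_1)}<\delta\). This contradicts the assumed failure of \((0,\delta)\)-symmetry. By construction the contradiction sequence varies \(\Sigma_j\) within the class, so \(\eta_{\rm rig}\) depends only on \(n,m,\gamma,\delta\) once \(r_{\rm rig}\) is allowed to depend on \(r_\Sigma,\omega_\Sigma\).

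The main obstacle is the uniformity of the Dini correction and of the flattening. We need \(C_W\mathscr D_\Sigma(8r)\to0\) uniformly over the class, which holds because every \(\Sigma_j\) shares the fixed modulus \(\omega_\Sigma\). This is exactly why the parameter \(r_{\rm rig}\) is introduced: choose it so that \(8r_{\rm rig}\le r_T\) and \(C_W\mathscr D_\Sigma(8r_{\rm rig})\) is small, so that the energy ranges in Subsection~\ref{subsec:QS-normalization-compactness} remain valid. We also need the graph convergence \(\widehat\Sigma_j\to T\) to be uniform. Lemma~\ref{lem:sequential-compactness-moving-centers} supplies this through the bound \(2R\,\omega_\Sigma(Rr_j)\to0\).

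A further point to check is that the monotone corrected energy \(\widetilde W\), not \(W\) itself, controls the defect on each intermediate annulus. This is provided by \eqref{eq:QS-homogeneity-defect}.
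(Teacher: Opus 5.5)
Your compactness argument is sound as far as it goes. Your route to homogeneity of the limit is more direct than the paper's: you use monotonicity of \(\widetilde W\) to shrink the drop onto each annulus, then pass to the limit in \eqref{eq:QS-homogeneity-defect}. The paper instead goes through \eqref{eq:QS-moving-Weiss-convergence} and the flat monotonicity \eqref{eq.flat-Weiss-U0}.

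The gap is in the dependence of \(\eta_{\rm rig}\), which is part of the statement. A single contradiction with \(\eta_j=1/j\) and \(r_{{\rm rig},j}\to0\) inside one fixed class \(\mathscr A\) only negates ``there exist \(\eta,r_{\rm rig}\) for this class''. It therefore produces an \(\eta_{\rm rig}\) that may depend on \(r_\Sigma,\omega_\Sigma\). Your sentence ``the contradiction sequence varies \(\Sigma_j\) within the class, so \(\eta_{\rm rig}\) depends only on \(n,m,\gamma,\delta\)'' does not follow. Uniformity over \(\mathscr A\) is not uniformity over the data \(r_\Sigma,\omega_\Sigma\) that index \(\mathscr A\). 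To get the stated dependence you must separate the two parameters.

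The paper does this in two steps.
\begin{enumerate}
\item It proves the rigidity statement, with error \(\delta/2\), for the flat limiting class: global \(J_T\)-minimizers with the universal growth and Lipschitz bounds and boundary height in \([c_H,C_H]\). This threshold involves no geometric data.
\item It fixes \(\eta_{\rm rig}\) as a fixed fraction of that threshold and argues by contradiction only in the scale, \(r_{{\rm rig},j}\to0\) with \(\eta_{\rm rig}\) held fixed.
\end{enumerate}
In the second step the drop no longer tends to zero, so your defect argument cannot give homogeneity of the limit. Instead, \eqref{eq:QS-moving-Weiss-convergence} together with \(C_W\mathscr D_\Sigma(Rr)\to0\) shows that the flat limit satisfies the flat criterion. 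The limit is then \((0,\delta/2)\)-symmetric, and boundary-normalized convergence contradicts the assumed failure of \((0,\delta)\)-symmetry.

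Alternatively, you can keep your one-shot argument but let the geometric data \((r_{\Sigma,j},\omega_{\Sigma,j})\) vary with \(j\). Choose each \(r_j\) so small relative to the \(j\)-th data that \(8r_j\) lies below that class's \(r_T\), and the flattening error and the Dini correction at scale \(8r_j\) both tend to zero. The bounds \eqref{eq:QS-rescaled-contact-estimates} and the constants \(c_H,C_H\) depend only on \(n,m,\gamma\), so the proof of Lemma~\ref{lem:sequential-compactness-moving-centers} still goes through along this diagonal sequence.
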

\begin{proof}
    Apply the compactness argument of the cited lemmas first to the flat limiting class. Height non-collapse gives a nonzero limit, and vanishing Weiss drop makes it \(p\)-homogeneous by \eqref{eq.flat-Weiss-U0}. This gives a flat rigidity threshold with error \(\delta/2\), depending only on \(n,m,\gamma,\delta\). Reduce that threshold by a fixed factor. Then \eqref{eq:QS-moving-Weiss-convergence}, together with \(C_W\mathscr D_\Sigma(Rr)\to0\) for each fixed \(R\), transfers the criterion to \(\mathscr A\) at sufficiently small physical scales. Otherwise, a counterexample sequence would converge to a flat profile satisfying the \(\delta/2\)-criterion, contradicting failure of \((0,\delta)\)-symmetry. The geometric data enter only through \(r_{\rm rig}\).
\end{proof}

Homogeneity about independent centers produces translation invariance in the directions joining them. We use the following form of the symmetry argument in \cite[proof of Lemma~3.3]{EE19} and \cite[proof of Lemma~7.6]{McC24}.

\begin{lemma}[Quantitative cone splitting]\label{lem:cone-splitting-Dini}
    Let \(0\le j\le n-1\) and \(\varepsilon,\tau>0\). There are \(\delta_{\rm cs}>0\) and \(0<r_{\rm cs}\le r_T\) such that the following holds. Suppose \(0<8r\le r_{\rm cs}\) and \(x_0,\ldots,x_j\in\mathcal F\), with \(x_i\in B_r(x_0)\) for \(i\ge1\), and
    \[
        \dist\left(x_i,\operatorname{aff}
        \{x_0,\ldots,x_{i-1}\}\right)\ge\tau r,
        \qquad i=1,\ldots,j.
    \]
    If \(\mathbf{u}\) is \((0,\delta_{\rm cs})\)-symmetric in every \(B_{4r}(x_i)\), then it is \((j,\varepsilon)\)-symmetric in \(B_r(x_0)\). The same conclusion with \(j+1\) holds after adjoining one further center in \(\mathcal F\cap B_r(x_0)\) at distance at least \(\tau r\) from the preceding affine span, provided the same almost-homogeneity assumption holds there.
\end{lemma}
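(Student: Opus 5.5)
We argue by contradiction and compactness, as in \cite[Lemma~3.3]{EE19}. Fix \(j,\varepsilon,\tau\) and suppose no \(\delta_{\rm cs},r_{\rm cs}\) work. Then there are pairs \((\mathbf{u}_k,\Sigma_k)\in\mathscr A\), radii \(8r_k\le 1/k\), and centers \(x_{k,0},\ldots,x_{k,j}\in\Gamma_{\Sigma_k,\rm ND}(\mathbf{u}_k)\cap B_2(0)\) with the stated \(\tau r_k\)-independence. Each \(\mathbf{u}_k\) is \((0,1/k)\)-symmetric in every \(B_{4r_k}(x_{k,i})\), but \(\mathbf{u}_k\) is not \((j,\varepsilon)\)-symmetric in \(B_{r_k}(x_{k,0})\).

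We rescale at \(x_{k,0}\) by \(r_k\) and write \(y_{k,i}:=(x_{k,i}-x_{k,0})/r_k\in \overline{B_1}\). After a subsequence, \(y_{k,i}\to y_i\), and the limit points keep the independence \(\dist(y_i,\operatorname{aff}\{y_0,\ldots,y_{i-1}\})\ge\tau\), with \(y_0=0\). By Lemma~\ref{lem:sequential-compactness-moving-centers}, the rescalings \(\mathbf{v}_k:=(\mathbf{u}_k)_{x_{k,0},r_k}\) converge locally uniformly and strongly in \(W^{1,2}_{\rm loc}\) to a nonzero global minimizer \(\mathbf{v}\) of \(J_T\). By Remark~\ref{rem:QS-normalized-limit}, \(T_{x_{k,0},r_k}\mathbf{u}_k\to\mathbf{v}/\sqrt h\). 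The recentered maps \((\mathbf{u}_k)_{x_{k,i},r_k}\) are translates of \(\mathbf{v}_k\). They also have heights in \([c_H,C_H]\) by Lemma~\ref{lem:T-and-homogeneous-rescaling-comparable}, so they converge to \(\mathbf{v}(y_i+\cdot)\).

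The key step is to pass the \((0,1/k)\)-symmetry in \(B_{4r_k}(x_{k,i})\) to the limit. The normalized maps \(T_{x_{k,i},4r_k}\mathbf{u}_k\) lie within \(1/k\) in \(L^2(B_1)\) of normalized \(p\)-homogeneous maps \(\Phi_{k,i}\). These \(\Phi_{k,i}\) are bounded in \(L^2(B_1)\) and, being homogeneous with unit boundary norm, are bounded in \(L^2(B_R)\) for every \(R\). The left side converges strongly, so \(\Phi_{k,i}\) converges in \(L^2(B_1)\) to a \(p\)-homogeneous limit, since homogeneity is a closed condition. Hence \(\mathbf{v}(y_i+\cdot)\) coincides on \(B_4\) with a \(p\)-homogeneous map, that is, \(\mathbf{v}\) is \(p\)-homogeneous about \(y_i\) on \(B_4(y_i)\supset B_2\). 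We then upgrade this to global homogeneity about each \(y_i\). Here we use that \(\mathbf{v}\) is a flat minimizer: the flat Weiss formula \eqref{eq.flat-Weiss-U0} applies at each center, and unique continuation for the harmonic components on each positivity cone lets the homogeneous extension agree with \(\mathbf{v}\). It is enough to work inside \(B_2\), since symmetry is measured in \(L^2(B_1)\) and we may replace \(\mathbf{v}\) on \(B_1\) by its homogeneous extension about \(0\).

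Next comes the elementary splitting. If \(\mathbf{v}\) is \(p\)-homogeneous about both \(0\) and \(y\) on a ball, then \(\mathbf{v}(x+ty)=\mathbf{v}(x)\). Indeed, subtracting \(x\cdot\nabla\mathbf{v}=p\mathbf{v}\) from \((x-y)\cdot\nabla\mathbf{v}=p\mathbf{v}\) gives \(y\cdot\nabla\mathbf{v}=0\). Applied to \(y_1,\ldots,y_j\), whose directions span a \(j\)-dimensional space by the \(\tau\)-independence, this shows that the homogeneous extension of \(\mathbf{v}/\sqrt h\) about \(0\) is a normalized \(j\)-symmetric map. We then get \(\|T_{x_{k,0},r_k}\mathbf{u}_k-\mathbf{v}/\sqrt h\|_{L^2(B_1)}\to0\), which contradicts the failure of \((j,\varepsilon)\)-symmetry. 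The final assertion, adjoining one more center, is the same argument with \(j+1\) centers.

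The main obstacle is the step transferring approximate homogeneity to exact homogeneity of the limit on a region large enough to contain all the \(y_i\), and identifying that limit as a single map symmetric about all centers at once. This is why the hypothesis uses the enlarged balls \(B_{4r}(x_i)\). The uniform choice of \(\delta_{\rm cs}\) depends only on \(n,m,\gamma,\varepsilon,\tau,j\). The radius \(r_{\rm cs}\) absorbs the Dini geometry through the requirement \(r_k\to0\) in Lemma~\ref{lem:sequential-compactness-moving-centers}.
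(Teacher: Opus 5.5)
Your proposal is correct and follows essentially the same route as the paper. Both argue by contradiction and compactness to obtain a nonzero flat limit that is \(p\)-homogeneous about the independent limit centers \(y_0=0,y_1,\ldots,y_j\), subtract the Euler identities to get \(y_i\cdot\nabla\mathbf{v}=0\), and use the \(j\)-symmetric homogeneous extension together with boundary-normalized convergence to contradict the failure of \((j,\varepsilon)\)-symmetry. Your aside about upgrading to global homogeneity via the flat Weiss formula and unique continuation is unjustified, since free-boundary minimizers do not enjoy unique continuation across the free boundary. It is also unnecessary: as you observe, homogeneity on \(B_4(y_i)\supset B_3(0)\) already suffices.
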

\begin{proof}
    Use the compactness argument of the cited proofs. A counterexample sequence with vanishing symmetry errors and scales would yield a nonzero limit \(\mathbf{v}\), homogeneous about independent centers \(y_0=0,y_1,\ldots,y_j\). Subtracting their Euler identities gives
    \[
        (y_i-y_0)\cdot\nabla\mathbf{v}=0,\qquad i=1,\ldots,j.
    \]
    Homogeneity extends these identities to the homogeneous profile, which is therefore \(j\)-symmetric. Boundary-normalized convergence contradicts the assumed failure of \((j,\varepsilon)\)-symmetry. An additional independent center gives one more invariant direction.
\end{proof}

\subsection{Density propagation and covering}

The covering argument uses the stronger density-propagation statement of \cite[Lemma~3.3 and Theorem~3.2]{EE19}, as adapted in \cite[Lemmas~7.5--7.6]{McC24}: independent high-density centers determine an affine plane near which the density remains high and the effective stratum must concentrate.
\begin{lemma}[Density propagation and the covering dichotomy]\label{lem:EE-density-propagation}
    Fix \(0\le k\le n-1\), \(\varepsilon>0\), \(0<\varrho<1/20\), \(0<\lambda<1\), and \(\eta'>0\). There are \(\beta_{\rm dp}\in(0,\varrho)\), \(\eta\in(0,\lambda\varrho)\), and a sufficiently small radius \(0<r_{\rm dp}\le r_T/160\) with the following property. Let \(a\in\mathcal F\cap B_{3/2}(0)\), \(0<s\le r_{\rm dp}\), \(B_{20s}(a)\subset B_2(0)\), and suppose
    \[
        \sup_{z\in\mathcal F\cap B_{2s}(a)}
        \widetilde W_{z,\mathbf{u}}(8s)\le E\le E_0.
    \]
    Set
    \[
        \mathcal S:=
        \Gamma^k_{\varepsilon,\eta s,20s}(\mathbf{u})\cap B_s(a),
    \]
    and 
    \[
        \mathcal H:=
        \{y\in\mathcal F\cap B_s(a):
        \widetilde W_{y,\mathbf{u}}(2\eta s)\ge E-\eta\}.
    \]
    If \(y_0,\ldots,y_k\in\mathcal H\) satisfy
    \[
        \dist\left(y_i,\operatorname{aff}\{y_0,\ldots,y_{i-1}\}\right)
        \ge\varrho s,
        \qquad i=1,\ldots,k,
    \]
    and \(L:=\operatorname{aff}\{y_0,\ldots,y_k\}\), then
    \begin{equation}\label{eq:EE-density-propagation}
        \widetilde W_{z,\mathbf{u}}(\lambda\varrho s)\ge E-\eta',
    \end{equation}
    for \(z\in\mathcal F\cap B_s(a)\cap N_{\beta_{\rm dp}s}(L)\), and \(\mathcal S\subset N_{\beta_{\rm dp}s}(L)\). Consequently, either
    \[
        \widetilde W_{z,\mathbf{u}}(\lambda\varrho s)\ge E-\eta'
        \quad\text{for every }z\in\mathcal S,
    \]
    or \(\mathcal H\) is contained in the \(\varrho s\)-neighborhood of an affine \((k-1)\)-plane. For \(k=0\), the latter alternative means \(\mathcal H=\varnothing\).
\end{lemma}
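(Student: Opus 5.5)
The plan is a compactness-and-contradiction argument in the spirit of \cite[Lemma~3.3]{EE19} and \cite[Lemma~7.5]{McC24}. The corrected Weiss energy serves as a near-monotone quantity, and the rigidity and cone-splitting lemmas supply the symmetry.

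\emph{Step 1: the high-density centers are almost homogeneous.} Fix \(y\in\mathcal H\). Corrected monotonicity (Remark~\ref{rem:QS-corrected-monotonicity}), together with \(y\in B_s(a)\subset B_{2s}(a)\), gives
\[
  \widetilde W_{y,\mathbf{u}}(8s)-\widetilde W_{y,\mathbf{u}}(2\eta s)\le E-(E-\eta)=\eta .
\]
Choose \(\eta\) below the threshold \(\eta_{\rm rig}\) of Lemma~\ref{lem:quantitative-rigidity-Dini} for a target \(\delta\). This requires the ratio \(2\eta s\) to \(8s\) to fit the lemma's scale pair, which can be arranged by iterating the lemma over a bounded number of dyadic steps and using smallness of \(\eta\). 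The lemma then shows that \(\mathbf{u}\) is \((0,\delta)\)-symmetric in \(B_{r}(y)\) for every \(r\in[\eta s,s]\), at each \(y_i\).

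\emph{Step 2: density propagation near \(L\).} This step gives \eqref{eq:EE-density-propagation} and is the main obstacle. I would argue by contradiction. Suppose there are sequences \((\mathbf{u}_j,\Sigma_j)\in\mathscr A\), scales \(s_j\to0\), centers \(y_{j,i}\), and points \(z_j\) with \(\dist(z_j,L_j)\le\beta_j s_j\to0\) but \(\widetilde W_{z_j}(\lambda\varrho s_j)<E_j-\eta'\), while \(\eta_j\to0\). Rescale at \(a_j\) by \(s_j\). Lemma~\ref{lem:sequential-compactness-moving-centers} and Remark~\ref{rem:QS-normalized-limit} then give a flat minimizer \(\mathbf{v}\) of \(J_T\), with moving centers \(y_i\to\bar y_i\) forming a \(\varrho\)-independent configuration and \(z_j\to\bar z\in\bar L\). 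Weiss convergence \eqref{eq:QS-moving-Weiss-convergence} holds at these convergent centers, and the Dini corrections vanish in the limit. Hence \(W^T_{\bar y_i,\mathbf{v}}\) is constant, equal to \(\bar E\), on \((0,8)\). Step~1 with \(\delta\to0\) shows that \(\mathbf{v}\) is \(p\)-homogeneous about each \(\bar y_i\). Cone splitting (Lemma~\ref{lem:cone-splitting-Dini}, exact version) makes \(\mathbf{v}\) invariant along \(\bar L-\bar y_0\). Consequently \(\mathbf{v}\) is \(p\)-homogeneous about every point of \(\bar L\), with the same Weiss energy \(\bar E\). The delicate point is that the weight \(\mathsf d_T^{2\gamma}\) is invariant only along \(T\). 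I therefore expect to first show that \(\bar L\subset T\), because nonzero flat blow-ups are centered on contact points. Upper semicontinuity in the center (Lemma~\ref{lem:center-continuity-modified-Weiss}, passed to the limit) would then give
\[
  \liminf_j \widetilde W_{z_j}(\lambda\varrho s_j)\ge W^T_{\bar z,\mathbf{v}}(\lambda\varrho)=\bar E,
\]
since the Weiss energy of a flat minimizer at a radius is continuous under convergence. This contradicts \(E_j-\eta'\to\bar E-\eta'\).

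\emph{Step 3: concentration of \(\mathcal S\) near \(L\).} Take \(z\in\mathcal S\) outside \(N_{\beta_{\rm dp}s}(L)\). Applying cone splitting with the centers \(y_0,\dots,y_k\) and the extra center \(z\), all almost homogeneous at the relevant scales, would make \(\mathbf{u}\) \((k+1,\varepsilon)\)-symmetric in some \(B_r(z)\) with \(r\in[\eta s,20s]\). This contradicts \(z\in\Gamma^k_{\varepsilon,\eta s,20s}\). Two ingredients are needed here. First, Step~2 supplies the almost-homogeneity of \(z\). Second, if \(z\) has energy drop at least \(\eta'\), it still lies within \(\beta_{\rm dp}s\) of \(L\), again via a compactness argument. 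I would choose \(\beta_{\rm dp}\) only after \(\delta_{\rm cs}\) and \(\eta'\) are fixed.

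\emph{Step 4: the dichotomy.} If \(\mathcal H\) contains \(k+1\) points that are \(\varrho s\)-independent in the sense above, Steps~2--3 give the first alternative. Otherwise, build such a family greedily: choose \(y_0\in\mathcal H\) and pick each next point at distance at least \(\varrho s\) from the current affine span. This process stops at some \(i\le k\), which places \(\mathcal H\) in the \(\varrho s\)-neighborhood of an affine plane of dimension at most \(k-1\). When \(k=0\) the process stops immediately, so \(\mathcal H=\varnothing\).
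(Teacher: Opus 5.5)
Your Steps 2 and 4 follow the paper's proof. Both use a contradiction limit with $\beta_j,\eta_j\to0$. Both get homogeneity of the flat limit $\mathbf v$ about the $\bar y_i$ from constancy of $W^T_{\bar y_i,\mathbf v}$ on $(0,8)$, and translation invariance along $\bar L-\bar y_0$, with $\bar L\subset T$ so that $\msf d_T^{2\gamma}$ is preserved. Both finish with successive-point selection.

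Two small corrections there. First, the lower bound on $\widetilde W_{z_j,\mathbf u_j}(\lambda\varrho s_j)$ comes from fixed-radius Weiss convergence at convergent contact centers, \eqref{eq:QS-moving-Weiss-convergence}. It does not come from upper semicontinuity, which gives the opposite inequality. Second, Step~1 cannot give $(0,\delta)$-symmetry for every $r\in[\eta s,s]$. Rigidity at scale $r$ needs a lower energy bound at $\eta_{\rm rig}r$, which you have only when $\eta_{\rm rig}r\ge 2\eta s$. In any case Step~2 does not need Step~1.

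The genuine gap is Step~3, the inclusion $\mathcal S\subset N_{\beta_{\rm dp}s}(L)$. For $z\in\mathcal S$ with $\dist(z,L)\ge\beta_{\rm dp}s$, the only energy information is the ceiling $\widetilde W_{z,\mathbf u}(8s)\le E$. Step~2 concerns only points within $\beta_{\rm dp}s$ of $L$, so it says nothing about these. Hence $z$ need not be $(0,\delta_{\rm cs})$-symmetric at any scale comparable to $s$, and you cannot invoke Lemma~\ref{lem:cone-splitting-Dini} with $z$ as an extra center. Your ``second ingredient'' does not fill this. You give no argument for it, and as stated it runs against Step~2: the contrapositive of Step~2 places points with $\widetilde W_{z,\mathbf u}(8s)-\widetilde W_{z,\mathbf u}(\lambda\varrho s)>\eta'$ outside $N_{\beta_{\rm dp}s}(L)$.

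What is missing is a second compactness argument, which is how the paper proves this assertion. Hold $\beta_{\rm dp}$ fixed as produced by Step~2, and send only $\eta_j$ and $s_j$ to zero. This is legitimate because $\mathcal H$ and $\mathcal S$ both shrink as $\eta$ decreases, so Step~2 survives.

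In the limit, $\mathbf v$ is homogeneous about every point of $\bar L\subset T$, and $z_j\to\bar z\in T$ with $\dist(\bar z,\bar L)\ge\beta_{\rm dp}$. Because $\eta_j\to0$, boundary-normalized convergence and the bounds $c_H\le\mathsf H\le C_H$ transfer the failure of $(k+1,\varepsilon)$-symmetry at $z_j$ on $[\eta_js_j,20s_j]$. It becomes failure for $\mathbf v$ at $\bar z$ at every scale in $(0,20]$.

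Now take a tangent of $\mathbf v$ at $\bar z$. It has the following properties:
\begin{itemize}
\item it is $p$-homogeneous, by the flat Weiss formula, which is valid since $\bar z\in T$;
\item it is nonzero, by the inherited height lower bound;
\item it is invariant along $\bar L-\bar y_0$;
\item it is invariant along $e:=\bar z-\pi_{\bar L}(\bar z)$, because the Euler identity about $\pi_{\bar L}(\bar z)$ gives $e\cdot\nabla\mathbf v_{\bar z,r}(y)=r\bigl(p\,\mathbf v_{\bar z,r}(y)-y\cdot\nabla\mathbf v_{\bar z,r}(y)\bigr)\to0$.
\end{itemize}
This $(k+1)$-symmetric tangent contradicts the non-symmetry of $\mathbf v$ at small scales. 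So the almost homogeneity at $z$ is found far below scale $s$, where the exclusion built into $\mathcal S$ still holds. It is not found at scale $s$, as your Step~3 presumes.
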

\begin{proof}
    Follow \cite[Lemma~3.3]{EE19} and \cite[Lemma~7.6]{McC24}, using the compactness and height bounds recalled above. In a contradiction limit, the near-ceiling hypotheses make the flat minimizer \(\mathbf{v}\) \(p\)-homogeneous about the independent limiting centers \(y_0,\ldots,y_k\), with common Weiss energy \(E\). Their differences generate translation invariance along \(L-y_0\). All centers lie in the limiting tangent plane \(T\), so these translations also preserve the flat weight. The Weiss density is therefore \(E\) at the relevant centers of \(L\), and fixed-radius convergence gives the first assertion of \eqref{eq:EE-density-propagation}.

    If an effective-stratum point remained away from \(L\), the usual blow-up argument in those proofs would give a homogeneous tangent at a limiting contact point outside \(L\), with one additional invariant direction. The inherited height lower bound makes this tangent nonzero. Boundary-normalized convergence would then contradict the \((k+1,\varepsilon)\)-symmetry exclusion, proving the second assertion. The final dichotomy follows by the successive-point selection in \cite[proof of Theorem~3.2]{EE19}; for \(k=0\), selection fails exactly when \(\mathcal H=\varnothing\).
\end{proof}

We now use the centered packing and covering construction of \cite[Lemma~6.1 and Sections~7--8]{EE19}, following the same division of work as \cite[Lemma~7.8 and Section~7.2]{McC24}. The only error summation needed for the Dini weight is recorded below.

\begin{proposition}[Local covering with the corrected density]\label{prop:EE-covering-package-Dini}
    Let \(0\le k\le n-1\) and \(\varepsilon>0\). There are \(0<r_{\rm qs}\le\min\{r_T/160,r_\Sigma/200\}\) and \(C_{\rm pack}<\infty\), with the dependencies specified for the normalized class and also depending on \(k,\varepsilon\), such that the following holds. If \(a\in B_{3/2}(0)\), \(0<s\le r_{\rm qs}\), and \(B_{20s}(a)\subset B_2(0)\), then for every \(0<\rho\le\tau\le s\), the set \(\Gamma^k_{\varepsilon,\rho,20s}(\mathbf{u})\cap B_s(a)\) admits a cover by balls \(\{B_\tau(p)\}_{p\in\mathcal P}\) satisfying
    \begin{equation}\label{eq:local-EE-packing}
        \#\mathcal P\,\tau^k\le C_{\rm pack}s^k.
    \end{equation}
    In particular,
    \begin{equation}\label{eq:local-Hk-effective-stratum}
        \cH^k\bigl(
        \Gamma^k_{\varepsilon,20s}(\mathbf{u})\cap B_s(a)\bigr)
        \le C_{\rm pack}s^k.
    \end{equation}
\end{proposition}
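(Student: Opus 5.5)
The plan is to run the Naber--Valtorta / Edelen--Engelstein covering construction of \cite[Lemma~6.1 and Sections~7--8]{EE19}, in the form of \cite[Lemma~7.8]{McC24}. The corrected Weiss energy \(\widetilde W\) plays the role of the monotone quantity. Two inputs drive it: the dichotomy of Lemma~\ref{lem:EE-density-propagation}, and the Jones estimate of Lemma~\ref{lem:beta-estimate-vectorial}, which feeds the discrete Reifenberg theorem \cite[Theorem~2.1]{EE19}.

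First I fix the parameters. Choose \(\varrho\) and \(\lambda\) as dimensional constants, as in \cite{EE19}. Take \(\delta_J\) from Lemma~\ref{lem:beta-estimate-vectorial}, then \(\eta_{\rm rig}\) from Lemma~\ref{lem:quantitative-rigidity-Dini} with \(\delta=\delta_J\). Fix \(\eta'\ll\eta_{\rm rig}\) and obtain \(\eta,\beta_{\rm dp},r_{\rm dp}\) from Lemma~\ref{lem:EE-density-propagation}. Set \(r_{\rm qs}\) to be the minimum of \(r_{\rm dp}\), \(r_{\rm rig}\), \(r_{\rm cs}\), \(r_T/160\) and \(r_\Sigma/200\), further reduced so that \(C_W\mathscr D_\Sigma(r_{\rm qs})\le\eta'\). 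This last reduction is the only point where the Dini data enter.

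Let \(E_0\) be the uniform upper bound from \eqref{eq:density-range-effective-strata-proof}. The energy \(\widetilde W\) is nondecreasing in the radius and lies in \([\theta_1,E_0]\). Hence along any chain of nested balls the energy can drop by \(\eta'\) only \(N\le E_0/\eta'\) times.

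The construction is the usual tree iteration.
\begin{enumerate}
\item Start from the ball \(B_s(a)\) with ceiling \(E:=\sup_{\mathcal F\cap B_{2s}(a)}\widetilde W(8s)\).
\item Apply the dichotomy of Lemma~\ref{lem:EE-density-propagation} to the ball.
\item In the "good" case, every point of \(\mathcal S\) has energy at least \(E-\eta'\) at scale \(\lambda\varrho s\). Cover \(\mathcal S\) by balls of radius \(\varrho s\), which are \(\beta_{\rm dp}s\)-close to a \(k\)-plane \(L\); this uses at most \(C\varrho^{-k}\) balls. Continue inside them with the same ceiling.
\item In the "bad" case, the high-density set lies near a \((k-1)\)-plane. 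Cover that part by \(C(n)\varrho^{1-k}\) balls. Points away from it have lost \(\eta\) of energy, so their ceiling drops, and those balls are stopped and restarted with the lower ceiling.
\item Stop the iteration at radius \(\tau\ge\rho\).
\end{enumerate}
The effective-stratum constraint, being not \((k+1,\varepsilon)\)-symmetric at scales in \([\rho,20s]\), is preserved along the tree because \(\eta s\ge\rho\) in every application.

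The main obstacle is the packing bound \eqref{eq:local-EE-packing} for the good balls. Counting balls naively loses a factor \((C\varrho^{-k}\varrho^{k})^{\#\text{levels}}\), so the counting must instead be done by discrete Reifenberg. To do this, attach to the final family the measure \(\mu:=\sum_p\tau^k\delta_p\). At each good center the rigidity lemma gives \((0,\delta_J)\)-symmetry at scale \(8t\), because the drop there is at most \(2\eta'\le\eta_{\rm rig}\) (the Dini correction is already absorbed). The centers also lie in the stratum, so the Jones estimate \eqref{eq:beta-estimate-vectorial} applies to \(\mu\).

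Next, sum \(\beta^2\) over dyadic scales. The sum telescopes into \(\widetilde W(16t)-\widetilde W(\tau)\le C\eta'\), exactly as in the rectifiability computation of Theorem~\ref{thm:minkowski-rectifiability-effective-strata}. Choosing \(\eta'\) small forces this below the threshold of \cite[Theorem~2.1]{EE19}, which yields \(\mu(B_t)\le C t^k\) inductively from small to large scales. Bad balls occur at most \(N\) times along any chain. Each such occurrence costs a factor \(C(n)\varrho\), which is below \(1/2\) relative to \(k\)-dimensional packing, so their total contribution sums geometrically.

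Together these give \(\#\mathcal P\,\tau^k\le C_{\rm pack}s^k\). Finally, letting \(\rho=\tau\to0\) and using that \(\Gamma^k_{\varepsilon,20s}\subset\Gamma^k_{\varepsilon,\tau,20s}\), the definition of Hausdorff measure gives \eqref{eq:local-Hk-effective-stratum}.
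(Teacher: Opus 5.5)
Your architecture is the same as the paper's. It is the good/bad tree construction of \cite{EE19} at a fixed density ceiling, with Lemma~\ref{lem:EE-density-propagation} supplying the dichotomy, Lemmas~\ref{lem:quantitative-rigidity-Dini} and~\ref{lem:beta-estimate-vectorial} feeding a discrete-Reifenberg packing bound, the Dini error absorbed into \(\widetilde W\) and into the choice of \(r_{\rm qs}\), and finitely many ceiling decreases by \eqref{eq:density-range-effective-strata-proof}.

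However, the counting you give for the step you yourself call the main obstacle does not close. A measure \(\mu=\sum_p\tau^k\delta_p\) on the radius-\(\tau\) final balls does not control the bad leaves of a good tree. Those leaves have larger, varying radii and are the roots of the bad trees, so their packing \(\sum_b r_b^k\) is exactly what must be bounded. In \cite[Lemma~6.1 and Section~7]{EE19} the packing measure is \(\sum_b r_b^k\delta_b\) over \emph{all} leaves of the good tree, after shrinking to disjoint balls. This is legitimate because every child of a good ball, whether it turns out good or bad, is centered in \(\mathcal S\) and is therefore near the ceiling at scale \(\lambda\) times its own radius.

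Your claim that bad balls occur at most \(N\) times along a chain is also false, since chains of bad balls can be arbitrarily long. What \(N\lesssim E_0/\eta\) bounds is the number of ceiling drops (the decrease is \(\eta\), not \(\eta'\)). These drops occur at the leaves of a bad ball lying away from its \((k-1)\)-plane, and those leaves cost a large fixed factor of order \(C(n)\eta^{k-n}\), not \(C(n)\varrho\). The correct bookkeeping, with \(C_{\rm DR}(n)\) the discrete-Reifenberg constant, is:
\begin{enumerate}
\item a good tree has leaf packing at most \(C_{\rm DR}(n)\) times its root;
\item a bad tree passes at most \(2C(n)\varrho\) times its root on to its good leaves;
\item the alternation therefore converges once \(\varrho=\varrho(n)\) satisfies \(2C_{\rm DR}(n)C(n)\varrho\le\frac12\);
\item the ceiling-drop leaves are restarted at most \(N\) times, so \(C_{\rm pack}\) is of order \(C(n,\eta)^N\).
\end{enumerate}
This is \cite[Theorem~4.1]{EE19} together with the iteration in the proof of \cite[Theorem~1.11]{EE19}, which is what the paper invokes.

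Three further points need repair.
\begin{enumerate}
\item \(\lambda\) cannot be a dimensional constant fixed before \(\eta_{\rm rig}\). Bounding \(\beta_{\mu,2}^k(z,t)\) requires Lemma~\ref{lem:beta-estimate-vectorial} at every \(t\) down to a fixed fraction of the leaf radius \(r_z\). That in turn needs near-ceiling energy at scale \(8\eta_{\rm rig}t\) for the rigidity lemma, while the good alternative only provides it at scale \(\lambda r_z\). So fix \(\varrho(n)\), then \(\delta_J\) and \(\eta_{\rm rig}\), then \(\lambda\lesssim\eta_{\rm rig}\) and \(\eta'\), and only then \(\eta,\beta_{\rm dp},r_{\rm dp}\).
\item The telescoping you import from the proof of Theorem~\ref{thm:minkowski-rectifiability-effective-strata} uses \eqref{eq:local-upper-growth-mu} in its Fubini step, and that bound is derived from the present proposition, so importing it is circular. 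Here it must be the inductive hypothesis on truncated measures. The induction closes because the discrete-Reifenberg constant does not depend on the inductive constant.
\item Lemma~\ref{lem:EE-density-propagation} applies only while \(\eta\) times the current radius is at least the lower cutoff. So run the construction for \(\Gamma^k_{\varepsilon,\tau,20s}(\mathbf{u})\supset\Gamma^k_{\varepsilon,\rho,20s}(\mathbf{u})\), stop once \(\eta\) times the radius falls below \(\tau\), and subdivide the terminal balls into radius-\(\tau\) balls at a cost \(C(n)\eta^{k-n}\), as the paper does.
\end{enumerate}
Finally, for \(k=0\) the paper replaces the Reifenberg step by an elementary separation argument; your unified treatment should either reproduce that or justify the \(k=0\) case of discrete Reifenberg.
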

\begin{proof}
    Choose \(r_{\rm qs}\) so that the preceding lemmas apply on the required enlarged balls and \(C_W\mathscr D_\Sigma(100r_{\rm qs})\) is below their thresholds. Finite subdivision reduce nonempty root balls to contact-centered balls. For \(r_j:=2^{-j}r\) and \(0<8r\le r_T\), corrected monotonicity and bounded overlap give, for \(y\in\mathcal F\),
    \begin{equation}\label{eq:QS-Dini-telescoping}
        \begin{split}
            \sum_{j\ge0}\bigl[
                \widetilde W_{y,\mathbf{u}}(8r_j)
                -\widetilde W_{y,\mathbf{u}}(r_j)\bigr]
            &\le3\bigl[
                \widetilde W_{y,\mathbf{u}}(8r)-\vartheta_{\mathbf{u}}(y)\bigr],\\
            \sum_{j\ge0}\mathscr D_\Sigma(r_j,8r_j)
            &\le3\mathscr D_\Sigma(8r).
        \end{split}
    \end{equation}
    Only increments of the Dini correction are summed; the intervals \([r_j,8r_j]\) have overlap at most three, up to endpoints.

    For \(k\ge1\), apply the truncated-measure induction of \cite[Lemma~6.1]{EE19}, with Lemmas~\ref{lem:quantitative-rigidity-Dini} and~\ref{lem:beta-estimate-vectorial} supplying its analytic inputs. At each packing ball's stopping scale, the centered near-ceiling hypothesis bounds the corresponding truncated corrected-density sum by \(C\eta\). Thus \eqref{eq:QS-Dini-telescoping} supplies precisely the summability used in that induction. Lemma~\ref{lem:EE-density-propagation} then supplies the dichotomy for the good- and bad-tree constructions \cite[Theorems~7.4, 7.6, and 7.7]{EE19}. Their resulting cover \cite[Theorem~4.1]{EE19} consists of terminal balls and balls with a fixed decrease in the density ceiling. Iterate on the latter as in the proof of \cite[Theorem~1.11]{EE19}. The uniform range \eqref{eq:density-range-effective-strata-proof} bounds the number of density levels and gives \eqref{eq:local-EE-packing}. Apply the construction to \(\Gamma^k_{\varepsilon,\tau,20s}(\mathbf{u})\), which contains the stated stratum; fixed changes of terminal scale are absorbed by finite subdivision into radius-\(\tau\) balls.

    For \(k=0\), the centered packing step has an elementary replacement. In a disjoint family \(\{B_{\theta r_p}(p)\}\) satisfying the centered near-ceiling hypothesis, with fixed \(\theta>0\) and stopping radii above the effective cutoff, the separation of two centers controls both stopping radii. If their separation were sufficiently small relative to the root radius, Lemma~\ref{lem:quantitative-rigidity-Dini} would give almost homogeneity at both centers at a common scale comparable to their separation and above their stopping radii. Lemma~\ref{lem:cone-splitting-Dini} would then contradict the effective \(0\)-stratum condition. Thus the centers are separated by a fixed fraction of the root radius, and their number is bounded. In the bad alternative, Lemma~\ref{lem:EE-density-propagation} gives \(\mathcal H=\varnothing\). The same bounded density-level iteration proves \eqref{eq:local-EE-packing} also for \(k=0\).

    Finally, apply \eqref{eq:local-EE-packing} to the limiting effective stratum and let \(\tau\to0^+\) to obtain \eqref{eq:local-Hk-effective-stratum}.
\end{proof}


\begin{thebibliography}{99}
    \bibitem[AFT82]{AFT82}C.~J. Amick, L.~E. Fraenkel and J.~F. Toland, On the Stokes conjecture for the wave of extreme form, {\it Acta Math.} {\bf 148} (1982), 193--214.

    \bibitem[AC81]{AC81}H. Alt and L. Caffarelli, Existence and regularity for a minimum problem with free boundary, {\it J. Reine Angew. Math.} {\bf 325} (1981), 105--144.

    \bibitem[AL12]{AL12}D. Arama and G. Leoni, On a variational approach for water waves, {\it Comm. Partial Differential Equations} {\bf 37} (2012), no.~5, 833--874.

    \bibitem[BFS24]{BFS24}M. Bayrami, M. Fotouhi and H. Shahgholian, Lipschitz regularity of a weakly coupled vectorial almost-minimizers for the \(p\)-Laplacian, {\it J. Differential Equations} {\bf 412} (2024), 447--473. 

    \bibitem[BM93]{BM93} G. Buttazzo and G. Dal~Maso, An existence result for a class of shape optimization problems, {\it Arch. Rational Mech. Anal.} {\bf 122} (1993), no.~2, 183--195.
        
    \bibitem[CL08]{CL08}L.~Caffarelli and F. Lin, Singularly perturbed elliptic systems and multi-valued harmonic functions with free boundaries, {\it J. Amer. Math. Soc.} {\bf 21} (2008), no.~3, 847--862. 
        
    \bibitem[CSY18]{CSY18}L.~Caffarelli, H. Shahgholian and K. Yeressian, A minimization problem with free boundary related to a cooperative system, {\it Duke Math. J.} {\bf 167} (2018), no.~10, 1825--1882. 
        
    \bibitem[CTV05]{CTV05}M.~Conti, S.~Terracini and G.~Verzini, Asymptotic estimates for the spatial segregation of competitive systems, {\it Adv. Math.} {\bf 195} (2005), no.~2, 524--560. 

    \bibitem[DS24]{DS24} D. De~Silva and O.~V. Savin, An energy model for harmonic functions with junctions, {\it Adv. Math.} {\bf 447} (2024), Paper No. 109682, 54 pp. 

    \bibitem[DESV21]{DESV21} G.~De Philippis, M.~Engelstein, L.~Spolaor, and B.~Velichkov, Rectifiability and almost everywhere uniqueness of the blow-up for the vectorial Bernoulli free boundaries\arxiv{2107.12485}, 2021.

    \bibitem[DPY25]{DPY25}L.~Du, Y.~Pu and J.~Yang, Singular profile of free boundary of incompressible inviscid fluid with external force, {\it SIAM J. Math. Anal.} {\bf 57} (2025), no.~4, 4275--4313. 

    \bibitem[DSV21]{DSV21} G.~De Philippis, L.~Spolaor, and B.~Velichkov, Regularity of the free boundary for the two-phase Bernoulli problem, {\it Invent. Math.} {\bf 225} (2021), no.~2, 347--394.
    
    \bibitem[DST20]{DST20} D.~De Silva and G.~Tortone, Improvement of flatness for vector valued free boundary problems, {\it Math. Eng.} {\bf 2} (2020), no.~4, 598--613.

	\bibitem[EE19]{EE19} N. Edelen and M. Engelstein, Quantitative stratification for some free-boundary problems, {\it Trans. Amer. Math. Soc.} {\bf 371} (2019), no.~3, 2043--2072.
        
    \bibitem[FV25]{FV25}L. Ferreri and B. Velichkov, A one-sided two phase Bernoulli free boundary problem, {\it J. Math. Pures Appl.} (9) {\bf 195} (2025), Paper No. 103659, 42 pp. 
        
    \bibitem[FS24]{FS24}M. Fotouhi and H. Shahgholian, A minimization problem with free boundary for $p$-Laplacian weakly coupled system, {\it Adv. Nonlinear Anal.} {\bf 13} (2024), no.~1, Paper No. 20230138, 24 pp. 

    \bibitem[KL18]{KL18} D.~Kriventsov and F.~Lin, Regularity for shape optimizers: the nondegenerate case, {\it Comm. Pure Appl. Math.} {\bf 71} (2018), no.~8, 1535--1596.

    \bibitem[KL19]{KL19} D. Kriventsov and F. Lin, Regularity for shape optimizers: the degenerate case, {\it Comm. Pure Appl. Math.} {\bf 72} (2019), no.~8, 1678--1721. 

    \bibitem[MTV17]{MTV17} D.~Mazzoleni, S.~Terracini and B.~Velichkov, Regularity of the optimal sets for some spectral functionals, {\it Geom. Funct. Anal.} {\bf 27} (2017), no.~2, 373--426.
        
    \bibitem[MTV20]{MTV20}D.~Mazzoleni, S.~Terracini and B.~Velichkov, Regularity of the free boundary for the vectorial Bernoulli problem, {\it Anal. PDE} {\bf 13} (2020), no.~3, 741--764.

	\bibitem[McC24]{McC24}S. McCurdy, One-phase free-boundary problems with degeneracy, {\it Calc. Var. Partial Differential Equations} {\bf 63} (2024), no.~1, Paper No. 10, 39 pp.

	\bibitem[NV17]{NV17}A. Naber and D. Valtorta, Rectifiable-Reifenberg and the regularity of stationary and minimizing harmonic maps, {\it Ann. of Math.} (2) {\bf 185} (2017), no.~1, 131--227. 

    \bibitem[SV19]{SV19} L.~Spolaor and B.~Velichkov, An epiperimetric inequality for the regularity of some free boundary problems: the \(2\)-dimensional case, {\it Comm. Pure Appl. Math.} {\bf 72} (2019), no.~2, 375--421.

    \bibitem[SV26a]{SV26a} G.~Siclari and B.~Velichkov, On the blow-up of the vectorial Bernoulli free boundary problem\arxiv{2602.00741}, 2026.

    \bibitem[SV26b]{SV26b} G.~Siclari and B.~Velichkov, On a multiphase vectorial Bernoulli free boundary problem\arxiv{2605.19757}, 2026.
        
    \bibitem[STV20]{STV20}L.~Spolaor, B.~Trey and B.~Velichkov, Free boundary regularity for a multiphase shape optimization problem, {\it Comm. Partial Differential Equations} {\bf 45} (2020), no.~2, 77--108. 
        
    \bibitem[Sto80]{Sto80}G.~Stokes, {\it Mathematical and physical papers. Volume 1}, Reprint of the 1880 original, Cambridge Library Collection, Cambridge Univ. Press, Cambridge, 2009. 

    \bibitem[TV26]{TV26} G. Tortone and B. Velichkov, Vectorial Bernoulli problems and free boundary systems, {\it Matematica} {\bf 5} (2026), no.~1, Paper No. 16, 51 pp. 

	\bibitem[Vel23]{Vel23} B. Velichkov, {\it Regularity of the one-phase free boundaries}, Lecture Notes of the Unione Matematica Italiana, 28, Springer, Cham, 2023. 
        
    \bibitem[VW11]{VW11} E.~V\u{a}rv\u{ a}ruc\u{a} and G.~Weiss, A geometric approach to generalized Stokes conjectures, {\it Acta Math.} {\bf 206} (2011), no.~2, 363--403.

    \bibitem[VW12]{VW12} E.~V\u{a}rv\u{ a}ruc\u{a} and G.~Weiss, The Stokes conjecture for waves with vorticity, Ann. Inst. H. Poincar\'{e} C Anal. Non Lin\'{e}aire {\bf 29} (2012), no.~6, 861--885. 

    \bibitem[VW14]{VW14}E.~V\u{a}rv\u{ a}ruc\u{a} and G.~Weiss, Singularities of steady axisymmetric free surface flows with gravity, {\it Comm. Pure Appl. Math.} {\bf 67} (2014), no.~8, 1263--1306. 
\end{thebibliography}
\end{document}